\numberwithin{equation}{section}
\newtheorem{theorem}{Theorem}[section]
\newtheorem{lemma}[theorem]{Lemma}
\newtheorem{corollary}[theorem]{Corollary}
\newtheorem{proposition}[theorem]{Proposition}
\theoremstyle{definition} 
\newtheorem{remark}[theorem]{Remark}
\newtheorem{definition}[theorem]{Definition}
\newtheorem{example}[theorem]{Example}
\theoremstyle{remark}
\newcommand{\C}{{\mathbb C}}
\newcommand{\N}{{\mathbb N}}
\newcommand{\R}{{\mathbb R}}
\newcommand{\Z}{{\mathbb Z}}
\definecolor{blue}{rgb}{0,0,0.45}
\definecolor{red}{rgb}{0.7,0,0}
\begin{document}

\title{Generalized Morrey spaces and trace operator}

\author[S.~Nakamura]{Shohei Nakamura}
\author[T.~Noi]{Takahiro Noi}
\author[Y.~Sawano]{Yoshihiro Sawano}
\address{Department of Mathematics and Information Science, 
Tokyo Metropolitan University, Hachioji, 192-0397, Japan}
\email{pokopoko9131@icloud.com, taka.noi.hiro@gmail.com, ysawano@tmu.ac.jp}
\subjclass[2010]{Primary 41A17; Secondary 42B35.}
\keywords{
Morrey space,
trace operator,
decomposition
}

\maketitle

\begin{abstract}
The theory of generalized Besov-Morrey spaces
and generalized Triebel-Lizorkin-Morrey spaces
is developed.
Generalized Morrey spaces,
which T. Mizuhara and E. Nakai proposed, are equipped
with a parameter and a function.
The trace property is one of the main focuses
of the present paper, 
which
will clarify the role of the parameter
of generalized Morrey spaces.
The quarkonial decomposition
is obtained as an application of atomic decomposition.
In the end, the relation between
the function spaces dealt in the present paper
and
the foregoing researches is discussed.
\end{abstract}

\section{Introduction}

In the present paper,
we systematically develop
the theory of generalized Besov-Morrey spaces
and generalized Triebel-Lizorkin-Morrey spaces
and then we compare our results
with existing ones in Section \ref{s6}.
Our results will polish existing ones,
as is seen from Section \ref{s6}.

Let $0<q \le p<\infty$.
Then, the Morrey space ${\mathcal M}^p_q({\mathbb R}^n)$
is the set of all measurable functions $f$ for which
the quasi-norm
\[
\|f\|_{{\mathcal M}^p_q}
\equiv 
\sup_{Q \in {{\mathcal D}}}
|Q|^{\frac{1}{p}-\frac{1}{q}}
\left(\int_Q |f(y)|^q\,dy\right)^{\frac1q}
\]
is finite,
where ${\mathcal D}$ denotes the set of all dyadic cubes.

In the present paper,
we consider the role of the parameter $q$
in ${\mathcal M}^p_q({\mathbb R}^n)$
by considering the generalized Morrey space
${\mathcal M}^\varphi_q({\mathbb R}^n)$.
\begin{definition}{\cite{Nakai94}}
Let $0<q<\infty$ and $\varphi:(0,\infty) \to (0,\infty)$
be a function.
Then define
\[
\|f\|_{{\mathcal M}^\varphi_q}
\equiv 
\sup_{Q \in {{\mathcal D}}}
\varphi(\ell(Q))
\left(\frac{1}{|Q|}\int_Q |f(y)|^q\,dy\right)^{\frac1q}
\]
for a measurable function $f$.
The space
${\mathcal M}^\varphi_q({\mathbb R}^n)$
is the set of all measurable functions
$f$ for which the quasi-norm
$\|f\|_{{\mathcal M}^\varphi_q}$
is finite.
\end{definition}

{
We envisage the following functions as examples
of $\varphi$:
\begin{example}
Let $0<q\le p<\infty$ and $\varphi:(0,\infty) \to (0,\infty)$
be a function.
\begin{enumerate}
\item
We can recover the Morrey space ${\mathcal M}^p_q({\mathbb R}^n)$
by letting $\varphi(t)=t^{n/p}$ for $t>0$.
We discuss why we need to generalize
the parameter $p$ in Proposition \ref{prop:150312-100}
and Remark \ref{rem:150821-2}.
\item
We can recover the Lebesgue space $L^q({\mathbb R}^n)$
by letting $\varphi(t)=t^{n/q}$ for $t>0$.
\item
A simple but standard example is as follows:
\[
\varphi(t)=\frac{t}{l_n(t)} \in {\mathcal G}_1
\equiv
\bigcap_{0<s<t<\infty}
\{\varphi:(0,\infty) \to (0,\infty)\,:\,
\varphi(s) \le \varphi(t), \varphi(s)s^{-n} \ge \varphi(t)t^{-n}\},
\]
where $l_n(t)$ is given inductively by:
\[
l_0(t)=t, \quad
l_n(t)=\log(3+l_{n-1}(t)) \quad
(n=1,2,\ldots)
\]
for $t>0$.
\end{enumerate}
\end{example}
}
We shall {establish} that the parameter $q$
in ${\mathcal M}^\varphi_q({\mathbb R}^n)$
plays the role of local regularity
by considering the trace property
of generalized Besov-Morrey spaces
and
generalized Triebel-Lizorkin-Morrey spaces.
To define these spaces,
we use the following {notation} in the present paper:
\begin{itemize}
\item
By \lq \lq cube" we mean a compact cube
whose edges are parallel to the coordinate {\it axes}.
If a cube has center $x$ and {side-length} $r$,
we denote it by $Q(x,r)$.
{}From the definition of $Q(x,r)$,
\begin{equation}\label{eq:150825-1}
|Q(x,r)|=(2r)^n.
\end{equation}
We write $Q(r)$ instead of 
{
$Q(0,r)$.
}
{Conversely,}
given a cube $Q$,
we denote by $c(Q)$ {\it the center of $Q$}
and by $\ell(Q)$ the {\it {{ side-length }} of $Q$}:
$\ell(Q)=|Q|^{1/n}$,
where $|Q|$ denotes the volume of the cube $Q$.
\item
Let $a \in \R$.
Then write
$a_+\equiv \max(a,0)$ and $a_-\equiv \min(a,0)$.
The Gauss sign $[a]$ is defined
to be the largest integer $m$
which is less than or equal to $a$.
\item
Let $A,B \ge 0$.
Then $A \lesssim B$ means
that there exists a constant $C>0$
such that $A \le C B$,
where $C$ depends only on the parameters
of importance.
When $A \lesssim B \lesssim A$,
write $A \sim B$.
When we want to stress that the implicit constants
in these symbols depend on important parameters,
add them as subscripts.
For example, $A \lesssim_p B$ means that
there exists a constant $C>0$ depending only on $p$
such that $A \le C B$.
\item
Let $a\in{\mathbb R}^n$. 
We define $\langle a\rangle \equiv \sqrt{1+|a|^2}$. 
\item
For $N \in {\mathbb N}$ and $\varphi \in C^N({\mathbb R}^n)$,
one defines
\[
p_N(\varphi){\equiv}\sum_{|\alpha| \le N}
\sup_{x \in {\mathbb R}^n}
(1+|x|)^N|\partial^\alpha \varphi(x)|.
\]
\item
The Schwartz space ${\mathcal S}({\mathbb R}^n)$ is defined
to be the set of all $f \in C^\infty({\mathbb R}^n)$
for which the semi-norm $p_N(f)$ is finite for all $N \in {\mathbb N}_0$. 
\item
The space ${\mathcal S}_\infty({\mathbb R}^n)$ is the set of all
$f \in {\mathcal S}({\mathbb R}^n)$ for which 
\[
\int_{{\mathbb R}^n}x^\alpha f(x)\,dx=0
\]
for all 
{
$\alpha \in {\mathbb N}_0^{\ n}$.
}
\item
The topological dual of 
${\mathcal S}({\mathbb R}^n)$
and 
${\mathcal S}_\infty({\mathbb R}^n)$
are {denoted} by 
${\mathcal S}'({\mathbb R}^n)$
and 
${\mathcal S}_\infty'({\mathbb R}^n)$,
respectively.
Equip 
${\mathcal S}'({\mathbb R}^n)$
and 
${\mathcal S}_\infty'({\mathbb R}^n)$
with the weak-* topology.
\item
For $j \in {\mathbb Z}$ and
$m=(m_1,m_2,\ldots,m_n) \in {\mathbb Z}^n$,
we define
$\displaystyle
Q_{jm}
\equiv \prod_{k=1}^n \left[\frac{m_k}{2^j},\frac{m_k+1}{2^j}\right).
$
If notational confusion seems likely,
we write
$Q_{j,m}$ instead of $Q_{jm}$.
Denote by ${\mathcal D}={\mathcal D}({\mathbb R}^n)$ the set of such cubes.
The elements in ${\mathcal D}$ are called
dyadic cubes.
{
In the present paper, 
$\mathcal{D}$ does not stand for the set of all
compactly supported functions $C^\infty_{\rm c}{({\mathbb R}^n)}$.
}
\item
Define the Fourier transform and its inverse by
\begin{align*}
\begin{cases}
\displaystyle
{\mathcal F}f(\xi)
\equiv 
\frac{1}{\sqrt{(2\pi)^n}}\int_{{\mathbb R}^n} f(x)e^{-i x \cdot \xi} \,dx
\quad(\xi \in {\mathbb R}^n)\\
\displaystyle
{\mathcal F}^{-1}f(x)
\equiv 
\frac{1}{\sqrt{(2\pi)^n}}\int_{{\mathbb R}^n} f(\xi)e^{i x \cdot \xi} \,d\xi
\quad(x \in {\mathbb R}^n)
\end{cases}
\end{align*}
if $f$ is an integrable function.
{
In a standard way, we extend the definition of $\mathcal{F}$ and $\mathcal{F}^{-1}$
to {the space of all} tempered distributions $\mathcal{S}'(\R^n)$.
}
\item
For $\varphi \in {\mathcal S}({\mathbb R}^n)$
and $f \in {\mathcal S}'({\mathbb R}^n)$
we write $\varphi(D)f\equiv{\mathcal F}^{-1}[\varphi{\mathcal F} f]$,
or equivalently we define
\[
\varphi(D)f(x)\equiv\frac{1}{\sqrt{(2\pi)^n}}
\langle f,{\mathcal F}^{-1}\varphi(x-\cdot) \rangle.
\] 
\item
The Kronecker delta is given by:
\[
\delta_{a b} \equiv 
\begin{cases}
1&a=b,\\
0&a\ne b
\end{cases}
\]
for $a,b \in {\mathbb Z}$.
\item
We make use of the following notation:
for $m=(m_1,m_2,\ldots,m_n) \in {\mathbb Z}^n$,
we define
$m'\equiv(m_1,m_2,\ldots,m_{n-1}) \in {\mathbb Z}^{n-1}.$
Conversely, we shall write
$m=(m',m_n)$
for $m'=(m_1,m_2,\ldots,m_{n-1}) \in {\mathbb Z}^{n-1}$
and $m_n \in {\mathbb Z}$.
\item 
Denote by ${\rm BC}({\mathbb R}^n)$ 
the {Banach} space of all bounded continuous functions. 
Let $f\in {\rm BC}({\mathbb R}^n)$. Then we define $||f||_{{\rm BC}}$ 
{by} $||f||_{{\rm BC}}\equiv||f||_{L^{\infty}}$. 
\item Let $m\in\N_0$. Then, denote by ${\rm BC}^m({\mathbb R}^n)$ the linear space of functions $f\,:\,{\mathbb R}^n\longrightarrow \C$ such that 
$f\in C^m$ and $\partial^{\alpha}f\in{\rm BC}$ 
for any multi-index $\alpha$ with $|\alpha|\le m$. 
We define the norm such that 
\[
\| f\|_{{\rm BC}^m} = 
\sum_{|\alpha|\le m}\left\| \partial^{\alpha}f\right\|_{ {\rm BC} }. 
\]
\item Denote by ${\rm BUC}{({\mathbb R}^n)}$ the Banach 
space consisting of bounded uniformly continuous functions. 
Then we define 
$||f||_{{\rm BUC}}\equiv||f||_{{\infty}}$.
\item
{
Let $\nu>0$.
}
The space
$H_2^{\nu}({\mathbb R}^n)$ stands for the ($L^2$-based) potential space
of order $\nu$;
$$
{H_2^{\nu}({\mathbb R}^n)\equiv}
\{H\in{\mathcal S}'({\mathbb R}^n)\,:\,(1-\Delta)^{\nu/2}H\in 
L^2({\mathbb R}^n)\}.
$$
Equip $H_2^{\nu}({\mathbb R}^n)$ with the norm:
\[
\|H\|_{H^{\nu}_{2}}\equiv \| (1-\Delta)^{\nu/2}H\|_2
\quad (H \in H_2^{\nu}({\mathbb R}^n)).
\]
\item
Let $K$ be a compact set.
The set ${\mathcal S}'_K{({\mathbb R}^n)}$ denotes the set 
of all $f \in {\mathcal S}'{({\mathbb R}^n)}$
such that {${\mathcal F}f$ is supported on $K$}.
Likewise define
${\mathcal S}_K({\mathbb R}^n)
{\equiv}
{\mathcal S}{({\mathbb R}^n)} 
\cap 
{\mathcal S}'_K{({\mathbb R}^n)}$.
\item
When two Banach spaces $X$ and $Y$ are isomorphic,
write $X \simeq Y$.
\end{itemize}

Now let us define generalized Besov-Morrey spaces
and
generalized Triebel-Lizorkin-Morrey spaces.
{
Let $0<q< \infty$.
Denote by ${\mathcal G}_q$ 
the set of all nondecreasing functions $\varphi:(0,\infty) \to (0,\infty)$
such that
\begin{equation}\label{eq:140820-6}
\varphi(t_1)t_1{}^{-n/q}
\ge
\varphi(t_2)t_2{}^{-n/q}
\quad (0<t_1 \le t_2<\infty).
\end{equation}
}

\begin{definition}
Let $0<q<\infty$, $0<r \le \infty$, $s \in {\mathbb R}$
 and $\varphi \in {\mathcal G}_q$.
Let $\theta$ and $\tau$ be compactly supported functions
satisfying
\[
0 \notin {\rm supp}(\tau), \quad
\theta(\xi)>0 \mbox{\rm \, if \, } \xi \in Q(2), \quad
\tau(\xi)>0 \mbox{\rm \, if \,} \xi \in Q(2) \setminus Q(1).
\]
Define $\tau_k(\xi) \equiv \tau(2^{-k}\xi)$
for $\xi \in {\mathbb R}^n$ and $k \in {\mathbb N}$.
\begin{enumerate}
\item
{\it The $($nonhomogeneous$)$ generalized Besov-Morrey space}
${\mathcal N}_{{\mathcal M}^\varphi_q,r}^s({\mathbb R}^n)$
is the set of all
$f \in {\mathcal S}'({\mathbb R}^n)$
for which the quasi-norm
\begin{equation}\label{eq:140820-140}
\|f\|_{{\mathcal N}_{{\mathcal M}^\varphi_q,r}^s}
\equiv 
\begin{cases}
\displaystyle
\|\theta(D)f\|_{{\mathcal M}^\varphi_q}
+
\left(\sum_{j=1}^\infty
2^{jsr}\|\tau_j(D)f\|_{{\mathcal M}^\varphi_q}^r
\right)^{\frac1r}&(r<\infty),\\
\displaystyle
\|\theta(D)f\|_{{\mathcal M}^\varphi_q}
+
\sup_{j \in {\mathbb N}}
2^{js}\|\tau_j(D)f\|_{{\mathcal M}^\varphi_q}
&(r=\infty)
\end{cases}
\end{equation}
is finite.
\item
{\it The $($nonhomogeneous$)$ generalized Triebel-Lizorkin-Morrey space}
${\mathcal E}_{{\mathcal M}^\varphi_q,r}^s({\mathbb R}^n)$
is the set of all
$f \in {\mathcal S}'({\mathbb R}^n)$
for which the quasi-norm
\begin{equation}\label{eq:140820-141}
\|f\|_{{\mathcal E}_{{\mathcal M}^\varphi_q,r}^s}
\equiv 
\begin{cases}
\displaystyle
\|\theta(D)f\|_{{\mathcal M}^\varphi_q}
+
\left\|
\left(\sum_{j=1}^\infty
2^{jsr}|\tau_j(D)f|^r
\right)^{\frac1r}\right\|_{{\mathcal M}^\varphi_q}&(r<\infty),\\
\displaystyle
\|\theta(D)f\|_{{\mathcal M}^\varphi_q}
+\left\|
\sup_{j \in {\mathbb N}}
2^{js}|\tau_j(D)f|
\right\|_{{\mathcal M}^\varphi_q}
&(r=\infty)
\end{cases}
\end{equation}
is finite.
\item
The space
${\mathcal A}^s_{{\mathcal M}^\varphi_q,r}({\mathbb R}^n)$
denotes either
${\mathcal N}^s_{{\mathcal M}^\varphi_q,r}({\mathbb R}^n)$
or
${\mathcal E}^s_{{\mathcal M}^\varphi_q,r}({\mathbb R}^n)$.
\end{enumerate}
\end{definition}

The next theorem answers
the most fundamental question on these function spaces:
do the definitions of 
${\mathcal A}^s_{{\mathcal M}^\varphi_q,r}({\mathbb R}^n)$
depend on the different choices of admissible $\theta$ and $\tau$ ?
\begin{theorem}\label{thm:150205-1}
{
Let $0<q \le r<\infty$, $s \in {\mathbb R}$ and $\varphi \in {\mathcal G}_q$.}
Assume
{
that there exist constants $\varepsilon>0$ and $C>0$ such that
\begin{equation*}
\frac{t^{\varepsilon}}{\varphi(t)}
\le
C\frac{r^\varepsilon}{\varphi(r)}
\quad (t\ge r),
\end{equation*}
}
in the case
when
${\mathcal A}^s_{{\mathcal M}^\varphi_q,r}({\mathbb R}^n)
={\mathcal E}^s_{{\mathcal M}^\varphi_q,r}({\mathbb R}^n)$
with $r<\infty$.
Then
different choices of admissible $\theta$ and $\tau$
will yield equivalent norms. 
\end{theorem}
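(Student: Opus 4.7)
The plan is to use a Peetre-type maximal function characterization to reduce the independence question to a (vector-valued) maximal inequality on ${\mathcal M}^\varphi_q({\mathbb R}^n)$. Fix two admissible pairs $(\theta,\tau)$ and $(\tilde\theta,\tilde\tau)$; by symmetry it suffices to bound the norm computed with $(\tilde\theta,\tilde\tau)$ by the norm computed with $(\theta,\tau)$. For $a>0$ and $j\in{\mathbb N}$, introduce the Peetre maximal functions
$$
\tau_{j,a}^*f(x)\equiv\sup_{y\in{\mathbb R}^n}\frac{|\tau_j(D)f(x-y)|}{(1+2^j|y|)^a},
\qquad
\theta_a^*f(x)\equiv\sup_{y\in{\mathbb R}^n}\frac{|\theta(D)f(x-y)|}{(1+|y|)^a}.
$$
Since $\tau_j(D)f$ and $\theta(D)f$ are entire functions whose Fourier supports live at scales $\sim 2^j$ and $\lesssim 1$ respectively, the Plancherel--Polya--Nikolskii inequality yields, for any $t>0$ with $a>n/t$, the pointwise bound
$$
\tau_{j,a}^*f(x)\lesssim\bigl[M\bigl(|\tau_j(D)f|^t\bigr)(x)\bigr]^{1/t},
$$
with $M$ the Hardy--Littlewood maximal operator, and an analogous bound for $\theta_a^*f$.

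The next step is a frequency-localization argument. Exploiting the positivity of $\tau$ and $\theta$ on the regions specified in the definition, one constructs smooth multipliers $\Psi_{j,k}$ such that
$$
\tilde\tau_k(D)f=\sum_{|j-k|\le N_0}\Psi_{j,k}(D)\tau_j(D)f + (\text{low-frequency remainder}),
$$
with each $\Psi_{j,k}$ having Schwartz seminorms bounded uniformly in $j$ and $k$, and the low-frequency remainder absorbed into the $\theta(D)f$-contribution. A routine convolution-kernel estimate then yields the pointwise bound
$$
|\tilde\tau_k(D)f(x)|\lesssim\sum_{|j-k|\le N_0}\tau_{j,a}^*f(x),
$$
together with an analogous estimate for $\tilde\theta(D)f$. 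Choosing $0<t<\min(q,r)$ and $a>n/t$ and combining with the Peetre bound, the proof reduces to either (i) the scalar inequality $\|Mg\|_{{\mathcal M}^\varphi_q}\lesssim\|g\|_{{\mathcal M}^\varphi_q}$ in the ${\mathcal N}$-case, or (ii) a vector-valued Fefferman--Stein-type inequality of the form
$$
\left\|\left(\sum_{j=1}^\infty(Mg_j)^{r/t}\right)^{t/r}\right\|_{{\mathcal M}^{\varphi^t}_{q/t}}
\lesssim
\left\|\left(\sum_{j=1}^\infty|g_j|^{r/t}\right)^{t/r}\right\|_{{\mathcal M}^{\varphi^t}_{q/t}}
$$
on the rescaled Morrey space, in the ${\mathcal E}$-case.

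The main obstacle will be (ii). Bound (i) is essentially classical: it follows from $\varphi\in{\mathcal G}_q$ via Nakai's theorem on the boundedness of $M$ on ${\mathcal M}^\varphi_q$. For (ii), one first checks that the rescaled weight $\varphi^t$ still lies in an admissible class, so that $M$ remains bounded on ${\mathcal M}^{\varphi^t}_{q/t}$; the condition $q\le r$ then puts the rescaled exponents $(q/t,r/t)$ in the standard regime $1<q/t\le r/t$, and the additional growth hypothesis on $\varphi$ in the statement is exactly what is needed to pass from the scalar to the vector-valued maximal bound on the generalized Morrey space. This discussion also explains the structural asymmetry in the hypotheses: in the ${\mathcal N}$-case only scalar $M$-boundedness is required, so the growth condition becomes unnecessary.
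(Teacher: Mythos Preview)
Your proposal is correct and follows essentially the same route as the paper. The paper writes the multiplier identity
\[
\tau_j(D)f=\Bigl(\tfrac{\tau}{\tilde\tau_{-1}+\tilde\tau+\tilde\tau_1}\Bigr)_j(D)\,
\bigl[(\tilde\tau_{j-1}+\tilde\tau_j+\tilde\tau_{j+1})(D)f\bigr]
\]
and then invokes a prepackaged ``Multiplier result'' (whose proof is exactly Peetre maximal function $\Rightarrow$ Plancherel--Polya--Nikol'skii $\Rightarrow$ $M^{(\eta)}$ $\Rightarrow$ vector-valued maximal inequality on ${\mathcal M}^\varphi_q$); you unpack the same chain of ideas directly. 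One small remark: the vector-valued Fefferman--Stein inequality on ${\mathcal M}^\varphi_q$ does not actually require the ordering $q\le r$ (only $q/t,r/t>1$ after rescaling), so your comment that $q\le r$ is needed to land in the ``standard regime'' is unnecessary, though harmless given the stated hypothesis.
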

Theorem \ref{thm:150205-1} is a starting point
of the present paper.
Based upon this result,
we investigate
the decomposition properties and the fundamental theorems.

In the present paper we investigate the role of the parameter $q$
in ${\mathcal M}^\varphi_q({\mathbb R}^n)$.
{An experience in \cite{EGNS14,SaWa13}
shows} that a passage 
from the classical Morrey space ${\mathcal M}^p_q({\mathbb R}^n)$
to the generalized Morrey space ${\mathcal M}^\varphi_q({\mathbb R}^n)$
is not a mere quest to generalization.
It naturally emerges when we consider the limiting case
of the Sobolev embedding;
see \cite{EGNS14,SaWa13}.

{We structure
the remaining part of the present paper as follows:}
Section \ref{s2} reviews the {fundamental} property
of the underlying space ${\mathcal M}^\varphi_q({\mathbb R}^n)$;
we transform the results obtained
earlier to a form we use in the present paper.
In Section \ref{s3},
{we justify the definition of generalized Besov spaces and}
generalized Triebel-Lizorkin-Morrey spaces on ${\mathbb R}^n$.
{
We prove Theorem \ref{thm:150205-1}.
Theorem \ref{thm:150205-1} can be generalized in many directions;
see Sections \ref{s7.3} and \ref{s7.5} for some hints to generalize
what we obtain.
We also investigate some fundamental properties.
}
Section \ref{s4} considers decompositions;
atomic decomposition,
molecular decomposition
and 
quarkonial decomposition will be obtained.
As applications of {these results,} in Section \ref{s5},
we establish fundamental theorems in these function spaces.
The first one is the boundedness of the trace operator,
which is new.
{In the general setting, it is difficult to describe
the image of the trace operator.}
Next, we investigate the pointwise multiplication property.
Finally, we consider the diffeomorphism properties.
The pointwise multiplication property is partially obtained 
in \cite[{Section 5}]{LSUYY2}.
However, our result will be sharper;
in the earlier work \cite{LSUYY2}
the authors depended on the Peetre maximal operator
but in the present paper
we do not have to rely upon this maximal operator.
For the definition of the Peetre maximal operator,
see Lemma \ref{lem:140817-1} below.
Comparing \cite[Theorems 4.9 and 4.12]{SaTa2007} 
with \cite[{Section 4}]{LSUYY2},
we see that the postulates in the theorems of decomposition 
in \cite[{Theorems 4.9 and 4.12}]{SaTa2007}
can be milder than those 
in \cite[{Section 4}]{LSUYY2}.
Our results {carry} over to homogeneous spaces,
which will be done in Section \ref{s5.5}.
We discuss a property of the topologies of
${\mathcal S}({\mathbb R}^n)$ and
${\mathcal S}_\infty({\mathbb R}^n)$ as well in Section \ref{s5.5}.
Finally,
in Section \ref{s6},
{we} describe the recent development
of the related function spaces
and compare our results with the ones obtained earlier.
In particular,
let us recall that we used the following 
{notation} in \cite{SaTa2007}:
\[
{\mathcal N}^s_{pqr}({\mathbb R}^n)
=
{\mathcal N}^s_{{\mathcal M}^{\varphi}_q,r}({\mathbb R}^n), \quad
{\mathcal E}^s_{pqr}({\mathbb R}^n)
=
{\mathcal E}^s_{{\mathcal M}^{\varphi}_q,r}({\mathbb R}^n), \quad
{\mathcal A}^s_{pqr}({\mathbb R}^n)
=
{\mathcal A}^s_{{\mathcal M}^{\varphi}_q,r}({\mathbb R}^n)
\]
when $\varphi(t)=t^{n/p}$,
so that this paper will reinforce \cite{SaTa2007}.
Of course, 
{we can recover}
the Besov space $B^s_{pq}({\mathbb R}^n)$
and the Triebel-Lizorkin space $F^s_{pq}({\mathbb R}^n)$
{for any $0<p,q \le \infty$ and $s \in {\mathbb R}$:}
\[
B^s_{pq}({\mathbb R}^n)={\mathcal N}^s_{ppq}({\mathbb R}^n), \quad
F^s_{pq}({\mathbb R}^n)={\mathcal E}^s_{ppq}({\mathbb R}^n),
\]
respectively. 
As a consequence,
the notation $A^s_{pq}({\mathbb R}^n)$ agrees with
${\mathcal A}^s_{ppq}({\mathbb R}^n)$;
see \cite{SaTa2007}.
We will discuss what results were obtained earlier
and where our results in the present papers can be located.
Based on the results obtained here,
we will discuss some possible extensions of the results.

\section{Structure of ${\mathcal M}^\varphi_q({\mathbb R}^n)$}
\label{s2}

\subsection{Assumptions on $\varphi$}

The next lemma justifies our class
${\mathcal G}_q$.

\begin{lemma}\label{lem:140820-1}
Let $0<q<\infty$.
\begin{enumerate}
\item
{\rm \cite[p.446]{Nakai00}}
For all $\varphi:(0,\infty) \to (0,\infty)$,
there exists $\varphi^*\in {\mathcal G}_q$ such that
\[
{\mathcal M}^\varphi_q({\mathbb R}^n)
\simeq
{\mathcal M}^{\varphi^*}_q({\mathbb R}^n)
\]
in the sense of norm equivalence.
\item
For any function
$\varphi:(0,\infty) \to (0,\infty)$,
${\mathcal M}^{\varphi}_q \ne \{0\}$
if and only if
$$
\varphi^*(t){\equiv}\sup_{s \in [t,\infty)}
t^{\frac{n}{q}}s^{-\frac{n}{q}}\varphi(s)
$$
is finite for all $t>0$.
\end{enumerate}
\end{lemma}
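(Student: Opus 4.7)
The plan is to handle (1) by constructing $\varphi^*$ explicitly as the smallest function in $\mathcal{G}_q$ that "controls" $\varphi$, and (2) by first using a nonzero element of $\mathcal{M}^\varphi_q$ as a test function, then reversing by a direct construction.

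For part (1), I would set
\[
\varphi^*(t) \equiv \sup_{s>0} \varphi(s)\,\min\!\left(1,\bigl(t/s\bigr)^{n/q}\right).
\]
This envelope is designed so that both defining conditions of $\mathcal{G}_q$ drop out: $\min(1,(t/s)^{n/q})$ is nondecreasing in $t$ for each fixed $s$, so $\varphi^*$ is nondecreasing; and $\varphi^*(t)/t^{n/q}=\sup_s \varphi(s)/\max(t^{n/q},s^{n/q})$ is manifestly nonincreasing in $t$, giving \eqref{eq:140820-6}. The bound $\|f\|_{\mathcal{M}^\varphi_q}\le\|f\|_{\mathcal{M}^{\varphi^*}_q}$ is immediate from $\varphi\le\varphi^*$ (take $s=t$ in the sup).

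For the reverse inequality $\|f\|_{\mathcal{M}^{\varphi^*}_q}\lesssim\|f\|_{\mathcal{M}^\varphi_q}$, I would fix a dyadic $Q$ and, inside the supremum defining $\varphi^*(\ell(Q))$, split into two cases. When $s\ge\ell(Q)$, I would embed $Q$ in a dyadic cube $Q'\supseteq Q$ with $\ell(Q')$ comparable to $s$; monotonicity of the integral $\int_Q|f|^q\le\int_{Q'}|f|^q$ and a rescaling then reduce the estimate to the definition of $\|f\|_{\mathcal{M}^\varphi_q}$ applied at $Q'$. When $s<\ell(Q)$, I would tile $Q$ by dyadic subcubes $\{R_k\}$ of side approximately $s$, apply the definition of $\|f\|_{\mathcal{M}^\varphi_q}$ on each $R_k$, and sum using $\sum_k|R_k|=|Q|$ together with $(\sum a_k^q)^{1/q}\le (\sum a_k)^{1/q}$ or a direct averaging, picking up only a dyadic-factor loss.

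For part (2), the $(\Rightarrow)$ direction is quantitative: given a nonzero $f\in\mathcal{M}^\varphi_q$, I would select a dyadic $Q_0$ with $a\equiv(|Q_0|^{-1}\int_{Q_0}|f|^q)^{1/q}>0$ and test against every dyadic supercube $Q\supseteq Q_0$. Using $\int_Q|f|^q\ge a^q|Q_0|$, the defining inequality yields
\[
\varphi(\ell(Q))\,a\,\bigl(\ell(Q_0)/\ell(Q)\bigr)^{n/q}\le\|f\|_{\mathcal{M}^\varphi_q},
\]
which bounds $\varphi(s)/s^{n/q}$ for dyadic $s\ge\ell(Q_0)$; a dyadic-to-continuous comparison at each scale $t>0$ then gives $\varphi^*(t)<\infty$. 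For $(\Leftarrow)$, I would build a nontrivial $f$ explicitly, for instance $f=\chi_{Q_*}$ for a suitably chosen large dyadic $Q_*$, using the assumption $\varphi(s)\le\varphi^*(t)(s/t)^{n/q}$ for $s\ge t$ to control the contribution of supercubes, while the contribution of subcubes is handled either by choosing $Q_*$ at a scale where $\varphi$ is finite by assumption, or by first replacing $\varphi$ with the envelope $\varphi^*$ from part (1).

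The main obstacle I expect is the bookkeeping in the two-case dyadic estimate in part (1) — keeping the proportionality constants under control when $s$ ranges over a continuum but $\ell(Q)$ is dyadic — and, in part (2), the careful passage from the discrete dyadic bound to the continuous sup in the definition of $\varphi^*$.
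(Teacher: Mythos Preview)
For part (1) and the ``only if'' direction of (2) your plan is correct and aligned with the paper. The paper does not prove (1) at all but cites Nakai; your explicit two-sided envelope $\varphi^*(t)=\sup_{s>0}\varphi(s)\min\bigl(1,(t/s)^{n/q}\bigr)$ is the natural construction, and your two-case dyadic argument for the reverse inequality is right. For the ``only if'' half of (2) the paper argues by contradiction where you argue directly, but the substance is identical: test a nonzero $f$ against cubes of side $s\ge t$ and read off a bound on $(t/s)^{n/q}\varphi(s)$.

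There is, however, a genuine gap in your $(\Leftarrow)$ argument for (2), and the paper's one-line proof shares it. For $f=\chi_{Q_*}$ the contribution of dyadic subcubes $Q\subset Q_*$ to $\|f\|_{{\mathcal M}^\varphi_q}$ is $\sup_{0<s\le\ell(Q_*)}\varphi(s)$, and finiteness of $\varphi^*(t)=\sup_{s\ge t}(t/s)^{n/q}\varphi(s)$ says nothing about this quantity. Your fallback of passing to the part-(1) envelope does not help either, since that envelope already contains the term $\sup_{s\le t}\varphi(s)$, whose finiteness is precisely what is in question. Concretely, with $q=n$ and $\varphi(s)=1/s$ one has $\varphi^*(t)=1/t<\infty$ for every $t>0$, yet the Morrey condition $(|Q|^{-1}\int_Q|f|^q)^{1/q}\le M\,\ell(Q)$ for all dyadic $Q$ forces $f=0$ a.e.\ by Lebesgue differentiation, so ${\mathcal M}^\varphi_q=\{0\}$. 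The paper invokes $\|\chi_{[0,t]^n}\|_{{\mathcal M}^\varphi_q}=\varphi(t)$, but that identity is established only for $\varphi\in{\mathcal G}_q$ and fails in this example. The ``if'' direction as stated therefore appears to require an additional hypothesis, such as local boundedness of $\varphi$ near $0$ (equivalently, finiteness of the full two-sided envelope from part (1)).
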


\begin{proof}
(1) is known{, cf.} {\rm \cite[p.446]{Nakai00}}.
The \lq \lq if part" of (2) is trivial,
because we know that
\[
\|\chi_{[0,t]^n}\|_{{\mathcal M}^\varphi_q}
=
\varphi(t)
\]
for $t>0$.
So, we concentrate on the \lq \lq only if part" of (2).

Assume 
${\mathcal M}^\varphi_q({\mathbb R}^n)$ 
contains a nonzero function;
$f \in {\mathcal M}^{\varphi}_q \setminus \{0\}$
and suppose {to the contrary}
$\varphi^*(t)=\infty$.
Then
there exists $x_f \in {\mathbb R}^n$ such that
\[
\int_{Q(x_f,t)}|f(y)|^q\,dy>0.
\]
For each $m \in {\mathbb N}$,
we can find $s_m \in [t,\infty)$
such that
$t^{\frac{n}{q}}s_m{}^{-\frac{n}{q}}
{
\varphi
}
(s_m)>m$.
Therefore,
\begin{align*}
\|f\|_{{\mathcal M}^{\varphi}_q}
&\ge
\sup_{r>0}\varphi(r)
\left(\frac{1}{r^n}\int_{Q(x_f,r)}|f(y)|^q\,dy\right)^{1/q}\\
&\ge
\varphi(s_m)s_m{}^{-\frac{n}{q}}
\left(\int_{Q(x_f,t)}|f(y)|^q\,dy\right)^{\frac{1}{q}}\\
&\ge
m
\left(\frac{1}{t^n}\int_{Q(x_f,t)}|f(y)|^q\,dy\right)^{\frac{1}{q}}.
\end{align*}
Since $m$ is arbitrary and independent of $t$,
this contradicts to $f \in {\mathcal M}^\varphi_q({\mathbb R}^n)$.
\end{proof}

The next lemma ensures
that ${\mathcal M}^\varphi_q({\mathbb R}^n)$ 
contains a nonzero function.
\begin{lemma}{\rm \cite[Proposition A]{ES}}
Let $0<q<\infty$ and $\varphi \in {\mathcal G}_q$.
Then
\begin{equation}\label{eq:150205-3}
\|\chi_{Q(x,R)}\|_{{\mathcal M}^\varphi_q}
=
\varphi(R)
\end{equation}
for all $x \in {\mathbb R}^n$ and $R>0$.
\end{lemma}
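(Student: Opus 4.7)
The plan is to realize the quasi-norm as a supremum over test cubes $Q$ of the quantity $\varphi(\ell(Q))(|Q \cap Q(x,R)|/|Q|)^{1/q}$, and to bound this above and below by $\varphi(R)$. For the lower bound $\|\chi_{Q(x,R)}\|_{\mathcal{M}^\varphi_q} \ge \varphi(R)$, I would plug in $Q = Q(x,R)$ itself (or, if one wishes to stay strictly with dyadic competitors, exhaust $Q(x,R)$ from inside by dyadic subcubes and pass to the limit, using that $\varphi$ is nondecreasing): the ratio is $1$ and the expression reduces to $\varphi(R)$.

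For the matching upper bound, I would split the supremum into two regimes according to the relative size of $Q$ and $Q(x,R)$. When $\ell(Q) \le \ell(Q(x,R))$, the ratio is at most $1$, and the monotonicity of $\varphi \in \mathcal{G}_q$ gives the contribution bound $\varphi(\ell(Q)) \le \varphi(R)$. When $\ell(Q) > \ell(Q(x,R))$, the trivial estimate $|Q \cap Q(x,R)| \le |Q(x,R)|$ yields a ratio at most $(R/\ell(Q))^n$, and the second defining inequality of $\mathcal{G}_q$, namely $\varphi(t) t^{-n/q}$ nonincreasing in $t$, gives $\varphi(\ell(Q))(R/\ell(Q))^{n/q} \le \varphi(R)$ after rearrangement.

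Combining the two estimates yields the equality $\|\chi_{Q(x,R)}\|_{\mathcal{M}^\varphi_q} = \varphi(R)$. There is no real obstacle here; the lemma is essentially a verification that the two structural conditions built into the class $\mathcal{G}_q$ --- monotonicity of $\varphi$ and the opposite monotonicity of $\varphi(t) t^{-n/q}$ --- are precisely what is required for the characteristic-function norm to collapse to the clean value $\varphi(R)$, with no spurious constants. The only bookkeeping point is keeping the conventions on $\ell(Q)$ and the ``size parameter'' of $Q(x,R)$ consistent with the paper's definitions so that the equality (not merely equivalence) is attained.
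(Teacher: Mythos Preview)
The paper does not supply its own proof of this lemma; it simply cites \cite[Proposition A]{ES}. Your argument is the standard one and is correct: the two defining monotonicity conditions of $\mathcal{G}_q$ are exactly what make the supremum collapse, with the small-cube regime handled by $\varphi$ nondecreasing and the large-cube regime by $\varphi(t)t^{-n/q}$ nonincreasing, while the lower bound comes from testing on $Q(x,R)$ itself.

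Your caveat about bookkeeping is well placed and is in fact the only genuine point to watch. The paper defines the Morrey norm as a supremum over \emph{dyadic} cubes $Q \in \mathcal{D}$, and $Q(x,R)$ is generally not dyadic, so to recover the \emph{exact} equality (rather than a two-sided equivalence with a dimensional constant) one either silently passes to the equivalent definition with arbitrary cubes, or one argues as you outline by exhausting from inside with dyadic cubes and using continuity/monotonicity of $\varphi$. The paper itself is somewhat loose on this point (compare the dyadic supremum in the definition with the use of $\sup_{r>0}\varphi(r)(r^{-n}\int_{Q(x_f,r)}|f|^q)^{1/q}$ in the proof of Lemma~\ref{lem:140820-1}), and there is also the convention $|Q(x,R)|=(2R)^n$ versus $\ell(Q)=|Q|^{1/n}$ to reconcile. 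None of this affects the substance of your argument.
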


A direct consequence of the above quantitative information is:
\begin{corollary}\label{cor:140820-1}
Let $0<q<\infty$ and $\varphi:(0,\infty) \to (0,\infty)$
be a function in the class ${\mathcal G}_q$.
If $N_0>n/q$,
then
$(1+|\cdot|)^{-N_0}\in {\mathcal M}^\varphi_q({\mathbb R}^n)$.
\end{corollary}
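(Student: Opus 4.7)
The goal is to bound $\|(1+|\cdot|)^{-N_0}\|_{\mathcal{M}^\varphi_q}$, i.e., to control
\[
\varphi(\ell(Q))\left(\frac{1}{|Q|}\int_Q (1+|y|)^{-N_0 q}\,dy\right)^{1/q}
\]
uniformly over all dyadic cubes $Q$. The plan is a simple case split depending on whether $Q$ is near or far from the origin (relative to its own side-length), using the two properties built into the definition of $\mathcal{G}_q$: that $\varphi$ is nondecreasing and that $t \mapsto \varphi(t)t^{-n/q}$ is nonincreasing.

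\emph{Case 1: $|c(Q)| \lesssim \ell(Q)$.} Here we discard the weight and use $\int_Q (1+|y|)^{-N_0 q}\,dy \le \int_{\R^n}(1+|y|)^{-N_0 q}\,dy$, which is finite precisely because $N_0 > n/q$, i.e., $N_0 q > n$. This reduces the quantity to a constant multiple of $\varphi(\ell(Q))\,\ell(Q)^{-n/q}$. Since $t \mapsto \varphi(t)t^{-n/q}$ is nonincreasing by \eqref{eq:140820-6}, this quantity is largest for small $\ell(Q)$; for small $\ell(Q)$ we are comparing against $\varphi(1)$, and for large $\ell(Q)$ we are bounded by $\varphi(\ell(Q))\ell(Q)^{-n/q} \le \varphi(1)$ directly. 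In either range the bound is uniform.

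\emph{Case 2: $|c(Q)| \gg \ell(Q)$.} Then every $y \in Q$ satisfies $|y| \ge |c(Q)| - \tfrac{\sqrt n}{2}\ell(Q) \ge \tfrac{1}{2}|c(Q)|$, so pointwise on $Q$ we have the bound $(1+|y|)^{-N_0} \lesssim (1+|c(Q)|)^{-N_0}$, giving
\[
\left(\frac{1}{|Q|}\int_Q(1+|y|)^{-N_0 q}\,dy\right)^{1/q} \lesssim (1+|c(Q)|)^{-N_0}.
\]
On the other hand, since $\ell(Q) \le |c(Q)|$, the nondecreasing property yields $\varphi(\ell(Q)) \le \varphi(|c(Q)|)$, and applying \eqref{eq:140820-6} at the pair $(1,|c(Q)|)$ (assuming $|c(Q)|\ge 1$; otherwise use the nondecreasing property alone) gives $\varphi(|c(Q)|) \le \varphi(1)|c(Q)|^{n/q}$. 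Altogether the quantity is $\lesssim \varphi(1)(1+|c(Q)|)^{n/q - N_0}$, which is uniformly bounded because $N_0 > n/q$.

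The main technical point is being careful with the two parts of the $\mathcal{G}_q$ condition in the two cases --- one uses monotonicity of $\varphi$ to absorb the extra factor in Case 2, the other uses monotonicity of $\varphi(t)t^{-n/q}$ to absorb the normalization factor $|Q|^{-1/q}$ in Case 1 --- but once the split is made, no further machinery is required beyond $N_0 q > n$.
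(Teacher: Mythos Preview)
Your direct case-split argument is correct in spirit and genuinely different from the paper's proof. The paper instead writes the pointwise estimate
\[
(1+|x|)^{-N_0}\lesssim \sum_{j=1}^\infty \max(1,j-1)^{-N_0}\chi_{Q(j)}(x),
\]
applies the $\min(1,q)$-triangle inequality for $\mathcal{M}^\varphi_q$, and invokes the exact value $\|\chi_{Q(j)}\|_{\mathcal{M}^\varphi_q}=\varphi(j)$ from (\ref{eq:150205-3}) together with $\varphi(j)\le\varphi(1)j^{n/q}$ to get a convergent series. Your approach avoids both the quasi-triangle inequality and the lemma on $\|\chi_Q\|_{\mathcal{M}^\varphi_q}$, trading them for a direct pointwise analysis on each test cube; this is arguably more elementary. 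The paper's route, on the other hand, makes transparent why (\ref{eq:150205-3}) is the ``quantitative information'' being exploited.

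One point in your write-up needs to be made explicit. In Case~1 you first reduce to $\varphi(\ell(Q))\ell(Q)^{-n/q}$ via $\int_Q\le\int_{\R^n}$, and then note that this expression is \emph{not} uniformly bounded for small $\ell(Q)$ (since $t\mapsto\varphi(t)t^{-n/q}$ is nonincreasing, it can blow up as $t\to 0$). Your sentence ``for small $\ell(Q)$ we are comparing against $\varphi(1)$'' is the right fix but is too compressed: what you mean is that for $\ell(Q)\le 1$ one should \emph{not} pass to the full integral, but instead use the trivial pointwise bound $(1+|y|)^{-N_0}\le 1$ on $Q$, so the average is at most $1$ and the whole expression is $\le\varphi(\ell(Q))\le\varphi(1)$ by monotonicity of $\varphi$. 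Spell this sub-case out; as written, a reader following the displayed reduction literally would be stuck. With that clarification the proof is complete.
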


\begin{proof}
Since $\varphi \in {\mathcal G}_q$,
we have $\varphi(t)t^{-n/q} \le \varphi(1)$
for all $t \ge 1$.
we have
{
\[
\|(1+|\cdot|)^{-N_0}\|_{{\mathcal M}^\varphi_q}
\lesssim
\sum_{j=1}^\infty
\max(1,j-1)^{-N_0}
\|\chi_{Q(j)}\|_{{\mathcal M}^\varphi_q}
\le
\sum_{j=1}^\infty
\max(1,j-1)^{-N_0}
\varphi(j)<\infty.
\]
}
Here for the second inequality
we invoked (\ref{eq:150205-3}).
\end{proof}

We also use the following inequality:
\begin{lemma}\label{eq:141021-10}
Let $0<q<\infty$ and $\varphi:(0,\infty) \to (0,\infty)$
be a function.
Then
\[
(\|f+g\|_{{\mathcal M}^\varphi_q})^{\min(1,q)}
\le
(\|f\|_{{\mathcal M}^\varphi_q})^{\min(1,q)}
+
(\|g\|_{{\mathcal M}^\varphi_q})^{\min(1,q)}
\]
for all $f,g \in {\mathcal M}^\varphi_q({\mathbb R}^n)$.
\end{lemma}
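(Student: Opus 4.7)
The plan is to split into two cases according to whether $q \ge 1$ or $0 < q < 1$, since $\min(1,q)$ behaves differently in each.

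First, observe that for any dyadic cube $Q \in {\mathcal D}$, the functional
\[
f \mapsto \varphi(\ell(Q))\left(\frac{1}{|Q|}\int_Q |f(y)|^q\,dy\right)^{1/q}
\]
is an $L^q$-style seminorm on $Q$ weighted by the constant $\varphi(\ell(Q))|Q|^{-1/q}$, and $\|\cdot\|_{{\mathcal M}^\varphi_q}$ is its supremum over $Q \in {\mathcal D}$. When $q \ge 1$, the ordinary Minkowski inequality gives, for each fixed $Q$,
\[
\left(\int_Q |f(y)+g(y)|^q\,dy\right)^{1/q}
\le \left(\int_Q |f(y)|^q\,dy\right)^{1/q}
+\left(\int_Q |g(y)|^q\,dy\right)^{1/q}.
\]
Multiplying by $\varphi(\ell(Q))|Q|^{-1/q}$, bounding each summand on the right by $\|f\|_{{\mathcal M}^\varphi_q}$ and $\|g\|_{{\mathcal M}^\varphi_q}$ respectively, and taking the supremum over $Q \in {\mathcal D}$ gives the desired bound with exponent $1 = \min(1,q)$.

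When $0 < q < 1$, I would use the elementary subadditivity $(a+b)^q \le a^q + b^q$ for $a,b \ge 0$ pointwise, so that
\[
|f(y)+g(y)|^q \le |f(y)|^q + |g(y)|^q.
\]
Integrating over $Q$ and multiplying by $\varphi(\ell(Q))^q/|Q|$ yields
\[
\varphi(\ell(Q))^q\cdot \frac{1}{|Q|}\int_Q |f(y)+g(y)|^q\,dy
\le (\|f\|_{{\mathcal M}^\varphi_q})^q + (\|g\|_{{\mathcal M}^\varphi_q})^q,
\]
and taking the supremum over $Q$ produces the inequality with exponent $q = \min(1,q)$.

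There is essentially no obstacle here: the result is a direct transplant of the familiar facts that $L^q$ is a norm for $q\ge 1$ and a $q$-norm for $0<q<1$, applied to each cube and then passed through the supremum in the definition of $\|\cdot\|_{{\mathcal M}^\varphi_q}$. The only mild point to be careful about is that no assumption on $\varphi$ beyond positivity is needed, which is consistent with the hypotheses of the lemma.
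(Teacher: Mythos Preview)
Your proof is correct. The paper states this lemma without proof, treating it as a standard fact; your argument supplies exactly the elementary verification one would expect, splitting on $q\ge 1$ (Minkowski) versus $0<q<1$ (the pointwise $q$-subadditivity $(a+b)^q\le a^q+b^q$), and passing through the supremum over dyadic cubes.
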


We also verify the relation between
$f$ and $|f|^u$ in the next lemma.
\begin{lemma}
\label{2014-9-4-4}
Let $0<u,q<\infty$ and $\varphi:(0,\infty) \to (0,\infty)$.
Then
\[
\|\,|f|^u\,\|_{{\mathcal M}^\varphi_q}
=
\left(\|f\|_{{\mathcal M}^{\varphi^{1/u}}_{u q}}\right)^{u}
\]
for all 
{
$f \in {\mathcal M}^{\varphi^{1/u}}_{u q}({\mathbb R}^n)$.
}
\end{lemma}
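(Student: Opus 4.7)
The plan is to prove the identity by a direct unfolding of the Morrey quasi-norm and matching the two sides term by term; there is no substantive analytic obstruction here, only a careful bookkeeping of the exponents.

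First, I would write out the definition of $\|\,|f|^u\,\|_{{\mathcal M}^\varphi_q}$, obtaining
\[
\|\,|f|^u\,\|_{{\mathcal M}^\varphi_q}
=
\sup_{Q\in {\mathcal D}}
\varphi(\ell(Q))
\left(\frac{1}{|Q|}\int_Q |f(y)|^{uq}\,dy\right)^{\frac1q}.
\]
Then I would write out the right-hand side, namely
\[
\|f\|_{{\mathcal M}^{\varphi^{1/u}}_{uq}}
=
\sup_{Q\in {\mathcal D}}
\varphi(\ell(Q))^{1/u}
\left(\frac{1}{|Q|}\int_Q |f(y)|^{uq}\,dy\right)^{\frac{1}{uq}}.
\]

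Next, since the map $t\mapsto t^u$ is nondecreasing on $[0,\infty)$ and all quantities inside the supremum are nonnegative, taking the $u$-th power commutes with the supremum:
\[
\left(\|f\|_{{\mathcal M}^{\varphi^{1/u}}_{uq}}\right)^u
=
\sup_{Q\in {\mathcal D}}
\left[\varphi(\ell(Q))^{1/u}
\left(\frac{1}{|Q|}\int_Q |f(y)|^{uq}\,dy\right)^{\frac{1}{uq}}\right]^{u}
=
\sup_{Q\in {\mathcal D}}
\varphi(\ell(Q))
\left(\frac{1}{|Q|}\int_Q |f(y)|^{uq}\,dy\right)^{\frac{1}{q}},
\]
which coincides exactly with the expression for $\|\,|f|^u\,\|_{{\mathcal M}^\varphi_q}$ derived above. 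This proves the equality.

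The only mildly delicate point, which I would explicitly flag, is the justification that taking a power commutes with the supremum even when $u<1$; this is valid because the supremum is taken over nonnegative reals and $t\mapsto t^u$ is monotone on $[0,\infty)$. Apart from this, the argument is purely formal, and the identity holds without any assumption on $\varphi$ beyond positivity; in particular $\varphi$ need not belong to ${\mathcal G}_q$ or ${\mathcal G}_{uq}$.
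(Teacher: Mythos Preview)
Your proof is correct and follows essentially the same direct computation as the paper: expand both norms from the definition, then use the monotonicity of $t\mapsto t^u$ on $[0,\infty)$ to pull the $u$-th power through the supremum. Your explicit remark justifying the commutation of the power with the supremum (even for $u<1$) is a welcome clarification that the paper leaves implicit.
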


\begin{proof}
Although the proof is simple,
we include it for reader's convenience.
We calculate that
\begin{align*}
\||f|^u\|_{{\mathcal M}^\varphi_q}
&=
\sup_{y \in {\mathbb R}^n, r>0}
\varphi(r)
\left(\frac{1}{|Q(x,r)|}\int_{Q(x,r)}
|f(y)|^{u q}\,dy
\right)^{\frac{1}{q}}
\\
&=
\sup_{y \in {\mathbb R}^n, r>0}
\left(
\varphi(r)^{\frac{1}{u}}
\left(\frac{1}{|Q(x,r)|}\int_{Q(x,r)}
|f(y)|^{u q}\,dy
\right)^{\frac1{qu}}\right)^{u}\\
&=
\left(\|f\|_{{\mathcal M}^{\varphi^{1/u}}_{u q}}\right)^{u},
\end{align*}
{as was to be shown}.
\end{proof}

In addition to general facts above,
we need to exclude some special case
where ${\mathcal M}^\varphi_{q}({\mathbb R}^n)$
is close to $L^\infty({\mathbb R}^n)$.
We invoke the following proposition
from \cite[Lemma 2]{Nakai94}.
\begin{proposition}\label{prop:Nakai}
If a nonnegative locally integrable function
$\varphi$ and a positive constant $C>0$
satisfy
\begin{equation}\label{eq:Nakai-1}
\int_r^\infty \frac{ds}{\varphi(s)s}
\le \frac{C}{\varphi(r)} \quad (r>0),
\end{equation}
then
\begin{equation}\label{eq:Nakai-2}
\int_r^\infty \frac{ds}{\varphi(s)s^{1-\varepsilon}}
\le \frac{C}{1-C\varepsilon}\cdot
\frac{r^\varepsilon}{\varphi(r)} \quad (r>0)
\end{equation}
for all $0<\varepsilon<C^{-1}$.
\end{proposition}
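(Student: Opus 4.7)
The strategy is to write $s^\varepsilon$ as a telescoping integral of its derivative starting from $r$, then swap the order of integration (Fubini) and invoke the hypothesis twice: once to bound the boundary-like term, and once inside the iterated integral.

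Concretely, for $s \ge r$ one has $s^\varepsilon = r^\varepsilon + \varepsilon\int_r^s t^{\varepsilon-1}\,dt$. Multiplying by $1/(\varphi(s)s)$ and integrating $s$ from $r$ to some truncation $R$ (I would work with a finite $R$ first to keep everything integrable), Fubini gives
\[
\int_r^R\frac{ds}{\varphi(s)s^{1-\varepsilon}}
= r^\varepsilon\int_r^R\frac{ds}{\varphi(s)s}
+ \varepsilon\int_r^R t^{\varepsilon-1}\left(\int_t^R\frac{ds}{\varphi(s)s}\right)dt.
\]
Applying the hypothesis \eqref{eq:Nakai-1} (extended to $\infty$) to each of the inner integrals bounds this by
\[
\frac{Cr^\varepsilon}{\varphi(r)}
+ C\varepsilon\int_r^R \frac{t^\varepsilon}{\varphi(t)t}\,dt
= \frac{Cr^\varepsilon}{\varphi(r)}
+ C\varepsilon\int_r^R \frac{dt}{\varphi(t)t^{1-\varepsilon}}.
\]

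Now comes the key absorption step: denoting the left-hand side by $I_R(r)$, we have $I_R(r) \le \tfrac{Cr^\varepsilon}{\varphi(r)} + C\varepsilon\, I_R(r)$, where $I_R(r)$ is a priori finite because the integrand is locally integrable and the integration interval is bounded. Since $0<\varepsilon<C^{-1}$, the factor $1-C\varepsilon$ is positive, and we may rearrange to obtain
\[
I_R(r) \le \frac{C}{1-C\varepsilon}\cdot\frac{r^\varepsilon}{\varphi(r)}.
\]
Letting $R\to\infty$ via monotone convergence yields \eqref{eq:Nakai-2}.

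The only subtle point I anticipate is the need for the a priori finiteness of $I_R(r)$: without the truncation, the absorption argument is formally circular (one cannot subtract $C\varepsilon\cdot I(r)$ from both sides until $I(r)<\infty$ is established). The truncation by $R$ circumvents this cleanly, and the uniform bound passes to the limit. Another small bookkeeping point is to verify the Fubini swap, but since all integrands are nonnegative this is automatic by Tonelli.
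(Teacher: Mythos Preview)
The paper does not supply its own proof of this proposition; it simply cites \cite[Lemma~2]{Nakai94}. Your argument is correct and is in fact the standard Gronwall-type absorption proof one finds for such integral self-improvement statements.

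One small remark on bookkeeping: your justification that $I_R(r)$ is a priori finite (``because the integrand is locally integrable'') does not follow from the stated hypothesis that $\varphi$ itself is locally integrable. The clean way to see finiteness is to use the hypothesis \eqref{eq:Nakai-1} directly: on $[r,R]$ one has $s^{-(1-\varepsilon)}=s^\varepsilon\cdot s^{-1}\le R^{\varepsilon}s^{-1}$, so
\[
I_R(r)=\int_r^R\frac{ds}{\varphi(s)s^{1-\varepsilon}}
\le R^{\varepsilon}\int_r^R\frac{ds}{\varphi(s)s}
\le \frac{C\,R^{\varepsilon}}{\varphi(r)}<\infty.
\]
With this in place, the absorption and the passage $R\to\infty$ go through exactly as you wrote.
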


When we consider the vector-valued inequalities,
the following observation will be useful.
\begin{proposition}\label{prop:150312-1}
Let $\varphi$ be a nonnegative locally integrable function
such that there exists a constant $C>0$ such that
$\varphi(s) \le C\varphi(r)$
for all $r,s>0$ with $\dfrac12 \le \dfrac{r}{s} \le 2$.
Then the following are equivalent:
\begin{enumerate}
\item
$\varphi$ satisfies $(\ref{eq:Nakai-1})$.
\item
$\varphi$ satisfies $(\ref{eq:Nakai-2})$
for some $\varepsilon>0$.
\item
There exist constants $\varepsilon>0$ and $C>0$ such that
\begin{equation}\label{eq:Nakai-3}
\frac{t^\varepsilon}{\varphi(t)} 
\le 
\frac{Cr^{\varepsilon}}{\varphi(r)}
\quad (t \ge r)
\end{equation}
\end{enumerate}
If $(1)$--$(3)$ are satisfied,
then
\begin{equation}\label{eq:Nakai-4}
\int_r^\infty \frac{ds}{\varphi(s)^us}
\le \frac{C}{\varphi(r)^u} \quad (r>0)
\end{equation}
for all $0<u<\infty$,
where $C$ depends only on $u$.
\end{proposition}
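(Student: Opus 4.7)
The plan is to prove the cyclic implications $(1)\Rightarrow(2)\Rightarrow(3)\Rightarrow(1)$ and then derive the final inequality (\ref{eq:Nakai-4}) directly from (3). Observe first that $(1)\Rightarrow(2)$ is already contained in Proposition \ref{prop:Nakai} and requires no further argument here.

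For $(3)\Rightarrow(1)$, I would simply rewrite $\int_r^\infty \frac{ds}{\varphi(s)s}$ as $\int_r^\infty \frac{s^\varepsilon}{\varphi(s)}\cdot \frac{ds}{s^{1+\varepsilon}}$, bound $s^\varepsilon/\varphi(s)$ above by $C r^\varepsilon/\varphi(r)$ using (\ref{eq:Nakai-3}), and compute the resulting elementary integral $\int_r^\infty s^{-1-\varepsilon}\,ds = \varepsilon^{-1}r^{-\varepsilon}$. This yields (\ref{eq:Nakai-1}) with constant $C/\varepsilon$.

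The main work lies in $(2)\Rightarrow(3)$, where I must extract a pointwise growth bound from an averaged one; here the doubling hypothesis $\varphi(s)\le C\varphi(r)$ for $\tfrac12\le r/s\le 2$ is essential. The idea is: for $t\ge 2r$, restrict the integral in (\ref{eq:Nakai-2}) to the interval $[t/2,t]$, where $\varphi(s)\le C\varphi(t)$, to obtain
\[
\frac{Cr^\varepsilon}{\varphi(r)}
\ge
\int_{t/2}^t \frac{ds}{\varphi(s)s^{1-\varepsilon}}
\ge
\frac{1}{C\varphi(t)}\int_{t/2}^t s^{\varepsilon-1}\,ds
=
\frac{(1-2^{-\varepsilon})t^\varepsilon}{C\varepsilon\,\varphi(t)},
\]
which rearranges to (\ref{eq:Nakai-3}) on $t\ge 2r$. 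For the leftover range $r\le t<2r$, I would use the doubling property once more to write $\varphi(t)\ge C^{-1}\varphi(r)$ and $t^\varepsilon\le 2^\varepsilon r^\varepsilon$, absorbing everything into a larger constant. The delicate point, and what I expect to be the main obstacle, is keeping track of how the constants depend on $\varepsilon$ so that the eventual constant in (\ref{eq:Nakai-3}) is finite; this forces $\varepsilon$ in (3) to be the same as in (2), though possibly smaller by a benign amount.

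For the concluding inequality (\ref{eq:Nakai-4}), I would use the equivalent form (3) already in hand: from $\varphi(s)\ge C^{-1} s^\varepsilon r^{-\varepsilon}\varphi(r)$ for $s\ge r$, raising both sides to the $u$-th power gives $\varphi(s)^{-u}\le C^u r^{u\varepsilon} s^{-u\varepsilon}\varphi(r)^{-u}$, and then
\[
\int_r^\infty \frac{ds}{\varphi(s)^u s}
\le
\frac{C^u r^{u\varepsilon}}{\varphi(r)^u}\int_r^\infty s^{-1-u\varepsilon}\,ds
=
\frac{C^u}{u\varepsilon\,\varphi(r)^u},
\]
producing (\ref{eq:Nakai-4}) with a constant depending only on $u$, as claimed.
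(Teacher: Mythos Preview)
Your proposal is correct and follows essentially the same approach as the paper. The only minor difference is in $(2)\Rightarrow(3)$: the paper integrates over $[t,2t]$ rather than $[t/2,t]$, which has the small advantage that $[t,2t]\subset[r,\infty)$ holds for every $t\ge r$, so no separate treatment of the range $r\le t<2r$ is needed; otherwise the arguments are identical.
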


\begin{proof}
The implication $(1) \Longrightarrow (2)$
follows from Proposition \ref{prop:Nakai}.

Assume (2).
Then we have
\[
\frac{t^\varepsilon}{\varphi(t)} 
{\lesssim}
\int_t^{2t}\frac{dv}{v^{1-\varepsilon}\varphi(v)}
{\lesssim}
\frac{r^\varepsilon}{\varphi(r)}
\]
thanks to the doubling property of $\varphi$,
proving (3).

If we assume (3), then we have
\[
\int_r^\infty \frac{ds}{\varphi(s)s}
=
\int_r^\infty \frac{s^\varepsilon}{\varphi(s)}
\frac{ds}{s^{1+\varepsilon}}
\lesssim
\int_r^\infty \frac{r^\varepsilon}{\varphi(r)}
\frac{ds}{s^{1+\varepsilon}}
= \frac{1}{\varepsilon\varphi(r)},
\]
which implies (1).
Note that (3) also implies (\ref{eq:Nakai-4})
because $\varphi^u$ satisfies (3) as well.
\end{proof}

\begin{remark}
Inequality (\ref{eq:Nakai-4}) is known to be
necessary for (\ref{eq:Nakai-1});
just remark that one can apply
Proposition \ref{prop:150312-1}
to $\varphi^u$.
\end{remark}

\subsection{Vector-valued maximal inequality for ${\mathcal M}^\varphi_q({\mathbb R}^n)$}

Here we prove the following vector-valued inequality:
\begin{theorem}\label{thm:vector-maximal} 
We denote by $M$ the Hardy--Littlewood maximal operator defined by 
\begin{equation}\label{eq:150825-2}
Mf(x)
{\equiv}
\sup_{Q}\frac{\chi_Q(x)}{|Q|}\int_Q|f(y)|{\rm d}y
\end{equation}
for $f\in L^{1}_{{\rm loc}}(\R^n)$, 
where the supremum is taken over all cubes $Q$. 

Let $1<q<\infty$, $1<r \le \infty$ and $\varphi:(0,\infty) \to (0,\infty)$
be a function. 
\begin{enumerate}
\item
For a measurable function $f:{\mathbb R}^n \to {\mathbb C}$,
we have
\begin{equation}\label{eq:140820-204}
\|Mf\|_{{\mathcal M}^\varphi_q}
\lesssim
\|f\|_{{\mathcal M}^\varphi_q}.
\end{equation}
In particular, for any sequence $\{f_j\}_{j=1}^\infty$
of ${\mathcal M}^\varphi_q({\mathbb R}^n)$-functions, 
\begin{equation}\label{eq:140820-203}
\left\|
\sup_{j \in {\mathbb N}}Mf_j
\right\|_{{\mathcal M}^\varphi_q}
\lesssim.
\left\|
\sup_{j \in {\mathbb N}}|f_j|
\right\|_{{\mathcal M}^\varphi_q}
\end{equation}
\item
Assume $(\ref{eq:Nakai-3})$.
Then for any sequence $\{f_j\}_{j=1}^\infty$
of ${\mathcal M}^\varphi_q({\mathbb R}^n)$-functions,
\begin{equation}\label{eq:140820-2041}
\left\|\left(\sum_{j=1}^\infty Mf_j{}^r\right)^{\frac{1}{r}}
\right\|_{{\mathcal M}^\varphi_q}
\lesssim
\left\|\left(\sum_{j=1}^\infty|f_j|^r\right)^{\frac{1}{r}}
\right\|_{{\mathcal M}^\varphi_q}.
\end{equation}
\end{enumerate}
\end{theorem}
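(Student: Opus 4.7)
The plan is to treat both parts in parallel via the classical localization argument for Morrey spaces: fix a cube $Q$, split each function as $f_j=f_j\chi_{3Q}+f_j\chi_{\R^n\setminus 3Q}$, handle the local part with the $L^q$ (respectively vector-valued $L^q(\ell^r)$) boundedness of the Hardy--Littlewood maximal operator, and control the far part through a dyadic annular decomposition. Since the definition of $\mathcal{M}^\varphi_q$ in the theorem permits an arbitrary $\varphi$, I first invoke Lemma \ref{lem:140820-1}(1) to reduce to $\varphi\in\mathcal{G}_q$; one verifies that hypothesis (\ref{eq:Nakai-3}) is inherited by the monotonized $\varphi^*$, so both the doubling of $\varphi^*$ and the geometric decay provided by (\ref{eq:Nakai-3}) will be available.

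For \textbf{part (1)}, fix a dyadic cube $Q$ and write $f_0\equiv f\chi_{3Q}$, $f_\infty\equiv f-f_0$. The $L^q$-boundedness of $M$ gives $\|Mf_0\|_{L^q(Q)}\lesssim \|f\|_{L^q(3Q)}$, which together with the doubling of $\varphi$ produces the desired local bound. For the far part, any cube $R\ni x\in Q$ that meets $\R^n\setminus 3Q$ satisfies $R\subset c\cdot 2^kQ$ for some $k\geq 1$ with $\ell(R)\sim 2^k\ell(Q)$, whence
\[
Mf_\infty(x)\lesssim \sup_{k\geq 1}\frac{1}{|2^kQ|}\int_{2^kQ}|f|
\leq \sup_{k\geq 1}\left(\frac{1}{|2^kQ|}\int_{2^kQ}|f|^q\right)^{1/q}
\leq \sup_{k\geq 1}\frac{\|f\|_{\mathcal{M}^\varphi_q}}{\varphi(2^k\ell(Q))}
\leq \frac{\|f\|_{\mathcal{M}^\varphi_q}}{\varphi(\ell(Q))},
\]
using that $\varphi$ is nondecreasing in the final step. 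Multiplying by $\varphi(\ell(Q))$ and averaging on $Q$ gives (\ref{eq:140820-204}), and (\ref{eq:140820-203}) follows at once from the pointwise bound $\sup_j Mf_j\leq M(\sup_k|f_k|)$.

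For \textbf{part (2)}, I set $G\equiv(\sum_j|f_j|^r)^{1/r}$ and decompose each $f_j$ as above. The local contribution will be estimated by the classical Fefferman--Stein vector-valued maximal inequality on $L^q$, valid since $1<q<\infty$ and $1<r\leq\infty$, combined with the doubling of $\varphi$. For the far contribution, using $\sup_k a_k\leq\sum_k a_k$ together with Minkowski's inequality in $\ell^r$ (applicable because $r>1$) and Minkowski's integral inequality, I obtain
\[
\left(\sum_j\bigl(Mf_j^\infty(x)\bigr)^r\right)^{1/r}
\lesssim \sum_{k\geq 1}\frac{1}{|2^kQ|}\int_{2^kQ}G
\leq \|G\|_{\mathcal{M}^\varphi_q}\sum_{k\geq 1}\frac{1}{\varphi(2^k\ell(Q))},
\]
where the last inequality uses H\"older and the definition of the Morrey norm. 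Multiplying by $\varphi(\ell(Q))$ and invoking (\ref{eq:Nakai-3}) through the estimate $\varphi(\ell(Q))/\varphi(2^k\ell(Q))\lesssim 2^{-k\varepsilon}$ yields a convergent geometric series and finishes the bound. The case $r=\infty$ of (\ref{eq:140820-2041}) reduces to (\ref{eq:140820-203}) and needs no extra hypothesis.

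The \textbf{main obstacle} is precisely the reason (\ref{eq:Nakai-3}) is imposed in (2) but absent in (1): in the scalar case one controls $Mf_\infty$ by a \emph{supremum} of terms $\varphi(2^k\ell(Q))^{-1}$, which collapses by monotonicity of $\varphi$ alone, whereas in the vector case the $\ell^r$-norm cannot be pulled through a supremum over scales and one is forced, via Minkowski, to replace sup by a \emph{sum} over $k$. Condition (\ref{eq:Nakai-3}) is exactly what converts this scale-by-scale sum into a convergent geometric one; the remainder of the argument is then a routine assembly of the local and far estimates.
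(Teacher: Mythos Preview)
Your argument is correct and is the standard self-contained proof of this result. The paper itself does not prove the theorem in text: it simply refers (\ref{eq:140820-204}) to \cite[Theorem~2.3]{Sawano08}, derives (\ref{eq:140820-203}) from (\ref{eq:140820-204}) via the pointwise estimate $\sup_j Mf_j \le M(\sup_j|f_j|)$ exactly as you do, and refers (\ref{eq:140820-2041}) to \cite[Theorem~2.5]{Sawano08}. What you have written out is the argument underlying those cited theorems: the local/far splitting relative to $3Q$, $L^q$ (respectively Fefferman--Stein $L^q(\ell^r)$) control of the near part, and annular control of the far part. Your diagnosis of the role of (\ref{eq:Nakai-3}) --- needed in (2) because Minkowski in $\ell^r$ converts the scalar supremum over scales into a sum, whereas in (1) the supremum collapses by monotonicity of $\varphi$ alone --- is exactly right and is a point the paper leaves implicit in its citation.

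One small remark: your reduction to $\varphi\in\mathcal{G}_q$ via Lemma~\ref{lem:140820-1}(1), together with the assertion that (\ref{eq:Nakai-3}) is inherited by the regularized $\varphi^*$, is stated without verification. In practice the paper works throughout with $\varphi\in\mathcal{G}_q$ (and Proposition~\ref{prop:150312-1}, which relates (\ref{eq:Nakai-1})--(\ref{eq:Nakai-3}), already presupposes doubling), so this is a cosmetic matter rather than a genuine gap; your proof is otherwise complete and more informative than the paper's citation.
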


\begin{proof}
\
\begin{enumerate}
\item
See \cite[Theorem 2.3]{Sawano08}
for (\ref{eq:140820-204}).
Note that (\ref{eq:140820-203}) is a direct consequence
of (\ref{eq:140820-204}) and
\[
\sup_{j \in {\mathbb N}}Mf_j(x)
\le
M\left[\sup_{j \in {\mathbb N}}|f_j|\right](x)
\quad (x \in {\mathbb R}^n).
\]
\item
See \cite[Theorem 2.5]{Sawano08}
for (\ref{eq:140820-2041}).
\end{enumerate}
\end{proof}

Next, we recall the following fundamental estimate:
\begin{lemma}\label{lem:140820-101}
Let $R$ be a cube.
Then
\begin{equation}\label{150818-1}
M[\chi_R](x) \sim \frac{|R|}{|R|+|x-c(R)|^n}
\quad (x \in {\mathbb R}^n),
\end{equation}
where the implicit constants in $(\ref{150818-1})$
depend only on $n$.
\end{lemma}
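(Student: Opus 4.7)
The plan is to prove the two-sided inequality separately, using the elementary observation that
$|R|+|x-c(R)|^n \sim \bigl(\ell(R)+|x-c(R)|\bigr)^n$, with implicit constants depending only on $n$.

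For the lower bound $M[\chi_R](x)\gtrsim |R|/(|R|+|x-c(R)|^n)$, I would exhibit a single admissible cube achieving the bound. Specifically, let $Q$ be any cube containing both $x$ and $R$ with $\ell(Q)\le C_n(\ell(R)+|x-c(R)|)$, e.g.\ the cube centered at $c(R)$ with side-length $2(\ell(R)+|x-c(R)|)$, which plainly contains $R$ and $x$. Since $R\subset Q$, we have $|Q\cap R|=|R|$, and $|Q|\sim (\ell(R)+|x-c(R)|)^n \sim |R|+|x-c(R)|^n$. Therefore, from the definition \eqref{eq:150825-2},
\[
M[\chi_R](x)\;\ge\;\frac{\chi_Q(x)}{|Q|}\int_Q \chi_R(y)\,dy\;=\;\frac{|R|}{|Q|}\;\gtrsim\;\frac{|R|}{|R|+|x-c(R)|^n}.
\]

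For the upper bound, I would take an arbitrary cube $Q\ni x$ with $Q\cap R\neq\emptyset$ and estimate $|Q\cap R|/|Q|$. By picking $y\in Q\cap R$ and applying the triangle inequality,
\[
|x-c(R)|\;\le\;|x-y|+|y-c(R)|\;\le\;\sqrt{n}\,\ell(Q)+\tfrac{\sqrt{n}}{2}\ell(R),
\]
so $\ell(Q)\gtrsim |x-c(R)|-\ell(R)$. I would then split into two cases. If $|x-c(R)|\le 2\sqrt{n}\,\ell(R)$, the right-hand side of the target inequality is $\sim 1$, and the trivial bound $M[\chi_R](x)\le 1$ suffices. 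Otherwise $\ell(Q)\gtrsim |x-c(R)|$, hence $|Q|\gtrsim |x-c(R)|^n$, and
\[
\frac{|Q\cap R|}{|Q|}\;\le\;\frac{|R|}{|Q|}\;\lesssim\;\frac{|R|}{|x-c(R)|^n}\;\sim\;\frac{|R|}{|R|+|x-c(R)|^n}.
\]
Taking the supremum over all such $Q$ completes the argument.

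There is no genuine obstacle here; the lemma is a routine geometric computation. The only mildly subtle point is that the maximal operator in \eqref{eq:150825-2} is the \emph{uncentered} version (the sup is over all cubes, not cubes centered at $x$), so for the lower bound one must explicitly exhibit a non-centered cube of the correct size containing both $x$ and $R$, rather than using a ball or cube centered at $x$, which would force the side-length to be at least $|x-c(R)|+\ell(R)/2$ even when $x$ lies inside $R$.
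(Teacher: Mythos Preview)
Your proof is correct. The paper takes a somewhat different route: it partitions $\mathbb{R}^n$ into the shells $3R$ and $3^{l+1}R\setminus 3^lR$ for $l\in\mathbb{N}$, and on each shell records that both $M[\chi_R]$ and $1+|x-c(R)|/\ell(R)$ lie between explicit powers of $3$ (up to dimensional constants), so that both sides of \eqref{150818-1} are comparable to $3^{-ln}$ on the $l$-th shell. Your argument is more direct: one explicit test cube for the lower bound, and a triangle-inequality constraint on $\ell(Q)$ for the upper bound, with a single threshold $|x-c(R)|\lesssim\ell(R)$ replacing the infinite family of shells. Both are elementary; your version is slightly more self-contained (the paper explicitly calls its proof an outline), while the annular decomposition makes the implicit constants easier to read off.
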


\begin{proof}
The proof is standard.
We content ourselves with its outline.
For the proof, we shall distinguish two cases.
\begin{enumerate}
\item
$x \in 3R$.
In this case, we can show that
\[
\frac{1}{3^n} \le M[\chi_R] \le 1, \quad
1 \le 1+\frac{|x-c(R)|}{\ell(R)} \le 1+3n.
\]
\item
$x \in 3^{l+1}R \setminus 3^l R$
for some $l \in {\mathbb N}$.
In this case, we can show that
\[
\frac{1}{3^{(l+1)n}} \le M[\chi_R](x) \le \frac{4^n}{3^{ln}}, \quad
\frac{3^l}{2} \le 1+\frac{|x-c(R)|}{\ell(R)} \le 1+n \cdot 3^{l+1}.
\]
\end{enumerate}
\end{proof}

We present a function of $f \in {\mathcal M}^\varphi_q({\mathbb R}^n)$.
\begin{proposition}
Let $1 \le q<\infty$ and $\varphi \in {\mathcal G}_q$.
Define
\[
f \equiv \sum_{j=-\infty}^\infty \frac{\chi_{[2^{-j-1},2^{-j}]^n}}{\varphi(2^{-j})}, \quad
g \equiv \sum_{j=-\infty}^\infty \frac{\chi_{[0,2^{-j}]^n}}{\varphi(2^{-j})}, \quad
h \equiv \sup_{j \in {\mathbb Z}}\frac{\chi_{[0,2^{-j}]^n}}{\varphi(2^{-j})}.
\]
Define a decreasing function $\varphi^\dagger$ by: 
$\varphi^\dagger(t){\equiv}\varphi(t)t^{-n/q}$ for $t>0$.

\begin{enumerate}
\item
Then the following are equivalent;
\begin{enumerate}
\item[$(a)$]
$f \in {\mathcal M}^\varphi_q({\mathbb R}^n)$,
\item[$(b)$]
$h \in {\mathcal M}^\varphi_q({\mathbb R}^n)$,
\item[$(c)$]
$\varphi^\dagger$ satisfies the integral condition,
or equivalently,
there exists a constant $C>0$ such that
$\displaystyle
\sum_{j=\infty}^l \frac{1}{\varphi^\dagger(2^{-j})} \le \frac{C}{\varphi^\dagger(2^{-l})}
$
{for all $l \in {\mathbb Z}$}.
\end{enumerate}
\item
There exists a constant $C>0$ such that
$\displaystyle
\sum_{j=l}^\infty \frac{1}{\varphi(2^{-j})} \le \frac{C}{\varphi(2^{-l})}
$
and that
$\displaystyle
\sum_{j=\infty}^l \frac{1}{\varphi^\dagger(2^{-j})} \le \frac{C}{\varphi^\dagger(2^{-l})}
$
for all $l \in {\mathbb Z}$ if and only if
$g \in {\mathcal M}^\varphi_q({\mathbb R}^n)$.
\end{enumerate}
\end{proposition}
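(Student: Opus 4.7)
The plan is to exploit the self-similar, dyadically organized structure of $f$, $g$ and $h$ to express each Morrey norm, up to universal multiplicative constants, as the supremum of an explicit geometric-type series in $\varphi(2^{-l})$ and $\varphi^\dagger(2^{-l})$. A preliminary reduction, which I carry out first, is that the sup defining $\|\,\cdot\,\|_{\mathcal{M}^\varphi_q}$ is realized on the cubes $Q_l \equiv [0,2^{-l}]^n$, $l \in \Z$: any cube $Q \in \mathcal{D}$ either misses the support of the building blocks and contributes nothing, or sits inside a bounded dilate of some $Q_{l'}$, and the doubling property of $\varphi$ inherited from $\varphi \in \mathcal{G}_q$ then absorbs the universal constant.

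\textbf{Part (1).} The shells $\{[2^{-k-1},2^{-k}]^n\}_{k\in\Z}$ are pairwise disjoint up to measure zero, hence pointwise $f^q = \sum_k \varphi(2^{-k})^{-q}\chi_{[2^{-k-1},2^{-k}]^n}$. Restricting to $Q_l$ and substituting $\varphi(2^{-k})^q = 2^{-kn}\varphi^\dagger(2^{-k})^q$ yields
\[
\varphi(2^{-l})^q \cdot \frac{1}{|Q_l|}\int_{Q_l} f^q\,dx \sim \varphi^\dagger(2^{-l})^q \sum_{k \ge l}\varphi^\dagger(2^{-k})^{-q}.
\]
A parallel calculation for $h$ uses that $h(x) = \varphi(2^{-k})^{-1}$ exactly on the corner shell $[0,2^{-k}]^n \setminus [0,2^{-k-1}]^n$, because $\varphi$ is nondecreasing and the sup over containing cubes is attained at the smallest one; this produces the same series up to a universal factor. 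Since $h \ge f$ pointwise this gives (b)$\Rightarrow$(a), while the matching series gives the converse, proving (a)$\Leftrightarrow$(b). Condition (c) is the $L^1$ form of the same tail condition on $\varphi^\dagger$, and Proposition \ref{prop:150312-1} applied with $\varphi^\dagger$ in place of $\varphi$, which is legitimate since $\varphi^\dagger$ inherits the required doubling from $\mathcal{G}_q$, yields its equivalence to the $L^q$ form that emerged from the computation.

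\textbf{Part (2).} For $x$ with $\max_i |x_i| \in (2^{-k-1},2^{-k}]$, the cubes $[0,2^{-j}]^n$ containing $x$ are precisely those with $j \le k$, so $g(x) = S_k \equiv \sum_{j \le k}\varphi(2^{-j})^{-1}$. Summing over the corner shells in $Q_l$ produces $|Q_l|^{-1}\int_{Q_l} g^q\,dx \sim \sum_{k \ge l} 2^{(l-k)n}S_k^q$. Necessity is extracted from two pointwise minorants: first, $g \ge S_l$ on $Q_l$ forces $\varphi(2^{-l}) S_l \lesssim 1$, which is the first tail condition; second, $g \ge h$ pointwise combined with part (1) forces the second condition on $\varphi^\dagger$. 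For sufficiency I substitute $S_k \lesssim \varphi(2^{-k})^{-1}$ into the double series, whereupon Proposition \ref{prop:150312-1} applied to $\varphi^\dagger$ with exponent $q$ collapses everything to $\varphi^\dagger(2^{-l})^{-q}$, which cancels precisely with the prefactor $\varphi(2^{-l})^q = 2^{-ln}\varphi^\dagger(2^{-l})^q$ after incorporating the leading $2^{ln}$.

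\textbf{Main obstacle.} The delicate point is the initial reduction of the Morrey supremum to the cubes $Q_l$. The singular behavior of $f$, $g$ and $h$ accumulates at the single point $0$, so a generic dyadic cube $Q(y,r)$ is either comfortably far from the origin, giving a subdominant contribution that can be absorbed into the value at some $Q_{l'}$, or it is comparable to a $Q_l$, in which case the $\mathcal{G}_q$-monotonicity relates $\varphi(r)$ to $\varphi(2^{-l})$ up to a constant. Making this case analysis rigorous and tracking the doubling constants is routine but not entirely immediate; once it is handled, the rest of the argument is the elementary series manipulation sketched above.
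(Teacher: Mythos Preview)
Your argument is correct, and for part (2) it essentially coincides with the paper's: both extract the first tail condition from $g\ge S_l$ on $Q_l$, the second from $g\ge h$ and part (1), and for sufficiency both substitute $S_k\lesssim \varphi(2^{-k})^{-1}$ to reduce $g$ to $f$.

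The genuine difference is in part (1). The paper does \emph{not} compute the two Morrey norms separately; instead it observes the pointwise sandwich $f\le h\le 2^n Mf$ and invokes the boundedness of the Hardy--Littlewood maximal operator on $\mathcal{M}^\varphi_q$ (Theorem~\ref{thm:vector-maximal}) to get $(a)\Leftrightarrow(b)$ in one line. For $(a)\Leftrightarrow(c)$ the paper then appeals to the radial structure of $f$. Your route, by contrast, computes both $\|f\|_{\mathcal{M}^\varphi_q}$ and $\|h\|_{\mathcal{M}^\varphi_q}$ directly over the cubes $Q_l$, shows each equals $\sup_l \varphi^\dagger(2^{-l})\bigl(\sum_{k\ge l}\varphi^\dagger(2^{-k})^{-q}\bigr)^{1/q}$ up to constants, and then passes from the $L^q$ tail condition to the $L^1$ one via Proposition~\ref{prop:150312-1}. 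What you gain is that the argument is entirely elementary---no maximal inequality is needed---and that your treatment of $(a)\Leftrightarrow(c)$ is actually more honest than the paper's: the paper asserts $f=f_0(\|\cdot\|_\infty)$, which is not literally true since $f$ vanishes off the diagonal dyadic cubes. What the paper's approach buys is brevity once the maximal theorem is available, and it makes the structural reason for $(a)\Leftrightarrow(b)$ transparent. Your reduction of the Morrey supremum to the cubes $Q_l$ does need the case analysis you flag, but it is indeed routine given the doubling of $\varphi$ from $\mathcal{G}_q$; one small point to make explicit is that invoking Proposition~\ref{prop:150312-1} in both directions (the $L^1\Leftrightarrow L^q$ equivalence) requires applying it once to $\psi(t)=\varphi^\dagger(1/t)$ and once to $\psi^q$, which is fine since both inherit the doubling property.
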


\begin{proof}
\
\begin{enumerate}
\item
Note that
$f \le h \le 2^n Mf$,
where $M$ denotes the Hardy-Littlewood maximal operator
given by {(\ref{eq:150825-2})}.
Observe that $M$ is bounded on ${\mathcal M}^\varphi_q({\mathbb R}^n)$.
Thus, $(a)$ and $(b)$ are equivalent.
Since $f$ is expressed as $f=f_0(\|\cdot\|_\infty)$,
that is, there exists a function $f_0:[0,\infty) \to {\mathbb R}$ 
such that $f(x)=f_0(\|x\|_{\infty})$
for all $x\in\R^n$,
where $\|\cdot\|_\infty$ denotes the $\ell^\infty$-norm,
it follows that $(a)$ and $(c)$ are equivalent.
\item
Suppose first $g \in {\mathcal M}^\varphi_q({\mathbb R}^n)$.
Then
\[
\varphi(2^{-l})\left(\frac{1}{|[0,2^{-l}]^n|}\int_{[0,2^{-l}]^n}
\left(\sum_{j=l}^\infty \frac{1}{\varphi(2^{-j})}\right)^q\,dx\right)^{\frac{1}{q}}
\le {\|g\|_{{\mathcal M}^\varphi_q}}.
\]
Thus,
$\displaystyle
\sum_{j=l}^\infty \frac{1}{\varphi(2^{-j})} \le 
\frac{\|g\|_{{\mathcal M}^\varphi_q}}{\varphi(2^{-l})}
$
for all $l \in {\mathbb Z}$.
This implies that $f(x) \le h(x) \le g(x) {\lesssim} f(x)$.
Thus, from (1), 
$\displaystyle
\sum_{j=\infty}^l \frac{1}{\varphi^\dagger(2^{-j})} \le 
\frac{C}{\varphi^\dagger(2^{-l})}
$
holds as well.

Conversely, 
assume that 
\begin{equation}\label{eq:150824-3}
\sum_{j=\infty}^l \frac{1}{\varphi^\dagger(2^{-j})} 
\le \frac{C}{\varphi^\dagger(2^{-l})}
\end{equation}
and
\begin{equation}\label{eq:150824-4}
\sum_{j=l}^\infty \frac{1}{\varphi(2^{-j})} \le \frac{C}{\varphi(2^{-l})}
\end{equation}
hold for all $l \in {\mathbb Z}$.
Then we have $g \sim f$ from {(\ref{eq:150824-4})}.
Thus, $f\in {\mathcal M}^\varphi_q({\mathbb R}^n)$ by {(\ref{eq:150824-3})},
from which it follows that $g \in {\mathcal M}^\varphi_q({\mathbb R}^n)$.
\end{enumerate}
\end{proof}

Let $0<\eta<\infty$.
We define the powered Hardy-Littlewood maximal operator 
$M^{(\eta)}$ by
\begin{equation}\label{eq:M-eta}
M^{(\eta)}f(x)
\equiv \sup_{R>0}
\left(
\frac{1}{|Q(x,R)|}\int_{Q(x,R)}|f(y)|^\eta\,dy
\right)^\frac{1}{\eta}
\quad ({x \in {\mathbb R}^n})
\end{equation}
{for a measurable function $f$.}
When we consider the atomic decomposition,
{we use the following observation:}
\begin{lemma}{\rm \cite[Lemma A.2]{FJ90}}\label{lem:140821-1}
Let
$\kappa \ge n$
and
$\varepsilon>0$. 
Then, 
\begin{equation}\label{eq:13.18}
\left|
\sum_{m \in {\mathbb Z}^n}\lambda_{\nu m}
\langle 2^\nu x-m \rangle^{-\kappa-\varepsilon}
\right|
\lesssim_\varepsilon
M^{\left(\frac{n}{\kappa}\right)}
\left[
\sum_{m \in {\mathbb Z}^n}\lambda_{\nu m}\chi_{Q_{\nu m}}
\right](x).
\end{equation}
Here,
$M^{\left(\frac{n}{\kappa}\right)}$
denotes 
is the powered Hardy-Littlewood maximal operator 
with $\eta\equiv \frac{n}{\kappa}$.
\end{lemma}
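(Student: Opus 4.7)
The plan is to reduce to scale $\nu = 0$ via dilation, perform an annular decomposition of the sum over $m$, and on each annulus convert $\sum \lambda_m$ into an $L^\eta$ spatial average of $g \equiv \sum_m \lambda_{\nu m}\chi_{Q_{\nu m}}$ with $\eta \equiv n/\kappa \le 1$. The parameter $\varepsilon > 0$ enters only to produce a convergent geometric series at the end.

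Without loss of generality $\lambda_{\nu m} \ge 0$. A direct change of variable $w = 2^\nu z$ shows that $M^{(\eta)}[g](x) = M^{(\eta)}[\tilde g](2^\nu x)$, where $\tilde g \equiv \sum_m \lambda_{\nu m}\chi_{Q_{0m}}$, while the left-hand side of $(\ref{eq:13.18})$ already depends on $2^\nu x$ alone. Therefore it suffices to treat $\nu = 0$ and prove the bound for $g = \sum_m \lambda_m \chi_{Q_{0m}}$. I would then partition $\Z^n$ into $A_0 \equiv \{m : |x-m| \le 1\}$ and $A_k \equiv \{m : 2^{k-1} < |x-m| \le 2^k\}$ for $k \ge 1$; on $A_k$ one has $\langle x-m\rangle \sim 2^k$, and the cubes $\{Q_{0m}\}_{m \in A_k}$ are pairwise disjoint and contained in a fixed dilate $Q(x, C 2^k)$ of volume $\sim 2^{kn}$.

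The core estimate on each annulus exploits $\eta \le 1$ together with the concavity inequality $(\sum_i a_i)^\eta \le \sum_i a_i^\eta$, which yields $\sum_{m \in A_k}\lambda_m \le (\sum_{m \in A_k}\lambda_m^\eta)^{1/\eta}$. Since the unit cubes are disjoint, $\sum_m \lambda_m^\eta \chi_{Q_{0m}} = g^\eta$, and combining with $2^{-k\kappa} = 2^{-kn/\eta} \sim |Q(x, C2^k)|^{-1/\eta}$ one obtains
\[
2^{-k(\kappa+\varepsilon)}\sum_{m \in A_k}\lambda_m
\lesssim
2^{-k\varepsilon}\left(\frac{1}{|Q(x,C2^k)|}\int_{Q(x,C2^k)}g(y)^\eta\,dy\right)^{1/\eta}
\le
2^{-k\varepsilon}M^{(\eta)}g(x).
\]
Summing over $k \ge 0$ contributes the geometric series $\sum_{k \ge 0} 2^{-k\varepsilon} = (1-2^{-\varepsilon})^{-1}$, which is absorbed into the implicit constant and accounts for its dependence on $\varepsilon$.

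The only delicate point is the indispensability of $\varepsilon > 0$: at $\varepsilon = 0$ the geometric series diverges and the inequality fails in general. All remaining ingredients---the annular partition of $\Z^n$, disjointness of the unit cubes, and subadditivity of $t \mapsto t^\eta$---are entirely elementary, so no substantive obstacle is anticipated.
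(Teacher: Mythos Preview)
Your argument is correct and is essentially the classical proof from \cite[Lemma A.2]{FJ90}, which the paper cites without reproducing. The dilation reduction to $\nu=0$, the dyadic annular decomposition of $\mathbb{Z}^n$, the use of the $\eta$-subadditivity $\bigl(\sum_i a_i\bigr)^\eta \le \sum_i a_i^\eta$ to pass from $\sum_{m\in A_k}\lambda_m$ to an $L^\eta$-average of $g$, and the final geometric summation in $k$ (which is where $\varepsilon>0$ is needed) are exactly the standard steps; there is nothing to add or correct.
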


{
\begin{remark}
The vector-valued inequality is a key ingredient
throughout the present paper.
Probably it is easier to handle Herz spaces
(see Section \ref{s7.3} for the definition)
or Musielak-Orlicz spaces
(see Section \ref{s7.5} for the definition)
than Morrey spaces.
In fact,
Herz spaces and Musielak-Orlicz spaces 
have $L^\infty_{\rm c}({\mathbb R}^n)$
as a dense subspace.
\end{remark}
}

\subsection{A Hardy type inequality}

We will need the following Hardy type inequality
for later consideration:
\begin{proposition}\label{prop:140820-11}
Let $0<r \le \infty$ and $\delta>0$.
Then for all nonnegative sequences $\{A_j\}_{j=1}^\infty$,
\begin{equation}\label{eq:140817-5}
\left(\sum_{k=1}^\infty 
\left(\sum_{j=1}^\infty 2^{-|j-k|\delta}A_j\right)^r
\right)^{\frac1r}
\lesssim
\left(\sum_{k=1}^\infty A_k^r
\right)^{\frac1r}.
\end{equation}
{
If $r=\infty$,
$(\ref{eq:140817-5})$ reads
}
;
\begin{equation}
\label{150815-1}
\sup_{k \in {\mathbb N}}\left(\sum_{j=1}^\infty 2^{-|j-k|\delta}A_j\right)
\lesssim
\sup_{k \in {\mathbb N}}A_k.
\end{equation}
\end{proposition}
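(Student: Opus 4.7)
The plan is to treat the inequality as a discrete convolution estimate, splitting according to whether $r \ge 1$ or $r \le 1$, and handling $r=\infty$ separately.

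For the range $1 \le r < \infty$, I would view the inner sum as the convolution (on $\mathbb{Z}$, after extending $A_j=0$ for $j\le 0$) of the sequence $\{A_j\}$ with the kernel $\{c_k\}_{k\in\mathbb{Z}}$, where $c_k\equiv 2^{-|k|\delta}$. Since $\delta>0$, the geometric series gives $\|c\|_{\ell^1}=\sum_{k\in\mathbb{Z}}2^{-|k|\delta}<\infty$. Discrete Young's convolution inequality then yields $\|c * A\|_{\ell^r}\le \|c\|_{\ell^1}\|A\|_{\ell^r}$, which is exactly (\ref{eq:140817-5}).

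For the range $0<r<1$, Young's inequality is not available, so I would instead exploit the $r$-subadditivity $\bigl(\sum_j a_j\bigr)^r\le \sum_j a_j^r$ valid when $0<r\le 1$. This gives
\[
\left(\sum_{j=1}^\infty 2^{-|j-k|\delta}A_j\right)^r
\le \sum_{j=1}^\infty 2^{-|j-k|\delta r}A_j^r,
\]
after which swapping the order of summation in $j,k$ and bounding $\sum_{k=1}^\infty 2^{-|j-k|\delta r}\lesssim 1$ (uniformly in $j$, since $\delta r>0$) yields (\ref{eq:140817-5}) with the implicit constant depending on $\delta$ and $r$.

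The case $r=\infty$, that is (\ref{150815-1}), is immediate: for every $k\in\mathbb{N}$,
\[
\sum_{j=1}^\infty 2^{-|j-k|\delta}A_j
\le \Bigl(\sup_{j\in\mathbb{N}}A_j\Bigr)\sum_{j\in\mathbb{Z}}2^{-|j-k|\delta}
\lesssim \sup_{j\in\mathbb{N}}A_j,
\]
and taking the supremum in $k$ finishes it. There is no real obstacle here; the only subtlety is to notice that the Young-type argument fails for $r<1$ and must be replaced by the $r$-subadditivity trick, so the write-up should be cleanly split into the two ranges $r\ge 1$ and $r<1$ (plus the trivial $r=\infty$) rather than sought in a single unified formula.
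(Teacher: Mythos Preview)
Your proof is correct. The paper itself does not give a self-contained argument but simply cites \cite[Lemma A.2.1]{FJ90} for (\ref{eq:140817-5}) and calls (\ref{150815-1}) the discrete Hardy inequality; your write-up supplies exactly the standard proof behind that citation, with the clean split into $r\ge 1$ (Young's inequality for $\ell^1*\ell^r$), $0<r<1$ ($r$-subadditivity), and $r=\infty$ (trivial).
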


\begin{proof}
{
The inequality (\ref{eq:140817-5})
is proved in \cite[Lemma A.2.1]{FJ90},
{while
the inequality (\ref{150815-1})
is known as} the discrete Hardy inequality.
}
\if0
The proof being quite simple,
we recall it.
When $r=\infty$, the proof is simple;
\[
\sup_{k \in {\mathbb N}}\left(\sum_{j=1}^\infty 2^{-|j-k|\delta}A_j\right)
\le
\sup_{k \in {\mathbb N}}\left(\sum_{j=1}^\infty 2^{-|j-k|\delta}
\sup_{l \in {\mathbb N}}A_l\right)
\le
\sum_{j=0}^\infty 2^{1-j\delta}
\sup_{k \in {\mathbb N}}A_k
\sim_\delta
\sup_{k \in {\mathbb N}}A_k.
\]
When $1<r<\infty$,
we decompose
\[
\sum_{j=1}^\infty 2^{-|j-k|\delta}A_j
=
\sum_{j=1}^\infty 2^{-|j-k|\delta/2}\cdot 2^{-|j-k|\delta/2}A_j
\]
and we use the H\"{o}lder inequality to have
\begin{align*}
\left(\sum_{k=1}^\infty 
\left(\sum_{j=1}^\infty 2^{-|j-k|\delta}A_j\right)^r
\right)^{\frac1r}
&\lesssim
\left(\sum_{k=1}^\infty 
\left(\sum_{j=1}^\infty 2^{-|j-k|r'\delta/2}\right)^{\frac{r}{r'}}
\sum_{j=1}^\infty 2^{-|j-k|r\delta/2}A_j^r
\right)^{\frac1r}\\
&=
\left(\sum_{k=1}^\infty 
\sum_{j=1}^\infty 2^{-|j-k|r\delta/2}A_j^r
\right)^{\frac1r}\\
&=
\left(\sum_{j=1}^\infty 
\sum_{k=1}^\infty 2^{-|j-k|r\delta/2}A_j^r
\right)^{\frac1r}\\
&\lesssim
\left(\sum_{k=1}^\infty A_k^r
\right)^{\frac1r}.
\end{align*}
When $0<r \le 1$,
we use the $r$-triangle inequality
$(a+b)^r \le a^r+b^r$ for $a,b \ge 0$
to obtain
\begin{align*}
\left(\sum_{k=1}^\infty 
\left(\sum_{j=1}^\infty 2^{-|j-k|\delta}A_j\right)^r
\right)^{\frac1r}
&\lesssim
\left(\sum_{k=1}^\infty 
\sum_{j=1}^\infty 2^{-|j-k|r\delta}A_j^r
\right)^{\frac1r}\\
&=
\left(\sum_{j=1}^\infty 
\sum_{k=1}^\infty 2^{-|j-k|r\delta}A_j^r
\right)^{\frac1r}\\
&\lesssim
\left(\sum_{k=1}^\infty A_k^r
\right)^{\frac1r}.
\end{align*}
\fi
\end{proof}

\subsection{A convolution estimate}

We will make use of the following estimate
on integrals.
We define ${\mathbb N}_0=\{0,1,2,\ldots\}$.
\begin{lemma}{\rm \cite[p.466]{Grafakos08}}\label{lem:Grafakos}
Let $\nu,\mu \in {\mathbb Z}$
with $\nu \ge \mu$, 
$M>0$ and $L \in {\mathbb N}_0$, 
and $N>M+L+n$. Suppose that 
a $C^L({\mathbb R}^n)$-function $\varphi$ and $x_\varphi$
are
such that
\begin{equation}\label{eq:140820-8}
|\nabla^L \varphi(x)| \le
\frac{2^{\mu(n+L)}}{(1+2^\mu|x-x_\varphi|)^M}
\end{equation}
for all $x \in {\mathbb R}^n$.
Assume, in addition, 
that $\psi$ is a measurable function such that
\[
\int_{{\mathbb R}^n}x^\beta \psi(x)\,dx=0, \mbox{\rm \, if \, } 
|\beta| \le L-1
\] 
and that, for some $x_\psi \in {\mathbb R}^n$,
\[
|\psi(x)| \le \frac{2^{\nu
n}}{(1+2^\nu|x-x_\psi|)^N}
\]
for all $x \in {\mathbb R}^n$.
Then 
\begin{align*}
\left|\int_{{\mathbb R}^n}\varphi(x)\psi(x)\,dx\right| 
\lesssim
\frac{2^{\mu n-(\nu-\mu)L}}{(1+2^\mu|x_\varphi-x_\psi|)^M}.
\end{align*}
\end{lemma}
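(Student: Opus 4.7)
The plan is to exploit the vanishing moment condition on $\psi$ by replacing $\varphi$ with $\varphi - P_{L-1}$, where $P_{L-1}$ is the Taylor polynomial of $\varphi$ at $x_\psi$ of degree $L-1$; the case $L=0$ is handled by the convention $P_{-1}\equiv 0$, which simply amounts to working with $\varphi$ directly. Since by hypothesis $\psi$ annihilates all polynomials of degree at most $L-1$, one has $\int\varphi\psi\,dx = \int[\varphi(x)-P_{L-1}(x)]\psi(x)\,dx$; the integral form of Taylor's remainder then gives the pointwise bound
\[
|\varphi(x) - P_{L-1}(x)| \lesssim |x-x_\psi|^L \sup_{t \in [0,1]} |\nabla^L\varphi(x_\psi + t(x-x_\psi))|,
\]
so everything reduces to controlling an integral involving this remainder against $|\psi(x)|$.

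Set $A \equiv 2^\mu|x_\varphi - x_\psi|$. I would split the domain of integration into a near region $\{|x-x_\psi| \le \tfrac12|x_\varphi - x_\psi|\}$ and its complement. In the near region, every point on the segment from $x_\psi$ to $x$ has distance at least $\tfrac12|x_\varphi - x_\psi|$ from $x_\varphi$, so the hypothesis $(\ref{eq:140820-8})$ on $\nabla^L\varphi$ yields
\[
\sup_{t \in [0,1]}|\nabla^L\varphi(x_\psi + t(x-x_\psi))| \lesssim \frac{2^{\mu(n+L)}}{(1+A)^M}.
\]
Combining this with the size estimate on $\psi$ and performing the change of variables $u = 2^\nu(x - x_\psi)$, the matter reduces to the convergent integral $\int_{{\mathbb R}^n}|u|^L(1+|u|)^{-N}\,du$ (finite since $N > L + n$), and we obtain the desired bound $2^{\mu n - (\nu-\mu)L}(1+A)^{-M}$ on the near part.

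The main technical point is the far region $\{|x-x_\psi| > \tfrac12|x_\varphi-x_\psi|\}$, where the $(1+A)^{-M}$ factor cannot be extracted from $\varphi$ and must instead come from the decay of $\psi$. Since $\nu \ge \mu$, in this region $1+2^\nu|x-x_\psi| \gtrsim 1+A$, so one would split the exponent as $N = M + (N-M)$, use $(1+2^\nu|x-x_\psi|)^{-M} \lesssim (1+A)^{-M}$, and keep the remaining factor $(1+2^\nu|x-x_\psi|)^{-(N-M)}$ for integrability. Using the crude bound $|\nabla^L\varphi| \le 2^{\mu(n+L)}$ (which follows from $(\ref{eq:140820-8})$) and the same change of variables, the remaining integral is $\int_{{\mathbb R}^n}|u|^L(1+|u|)^{-(N-M)}\,du$, finite precisely because $N - M > L + n$. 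This is exactly where the hypothesis $N > M + L + n$ is used, and is the only step requiring real care.
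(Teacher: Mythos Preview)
Your argument is correct and is the standard Taylor-remainder-plus-near/far-split proof of this estimate. The paper does not supply its own proof of this lemma; it simply cites Grafakos's textbook, and the approach you have written out is essentially the one found there.
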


Here are examples of applications of Lemma \ref{lem:Grafakos}.
\begin{example}
Let $\Theta,f \in {\mathcal S}({\mathbb R}^n)$.
By using Lemma \ref{lem:Grafakos}
with
\[
\varphi={\mathcal F}^{-1}[\Theta(2^{-k}\cdot)](-\cdot), \quad 
\psi=f, \quad x_\varphi=0, \quad x_\psi=x,
\]
and
\[
L=\mu=0, \quad \nu=k, \quad
M=N_0, \quad
N=[1+N_0],
\]
where $N_0$ is a constant obtained in Corollary \ref{cor:140820-1},
we obtain
\[
|\Theta_k(D)f(x)| \lesssim p_{[1+N_0]}(f)(1+|x|)^{-N_0}.
\]
In fact, for example, we can check (\ref{eq:140820-8})
as follows:
\begin{align*}
|\nabla^L \varphi(x)|
&=
|\nabla^L[2^{kn}{\mathcal F}^{-1}\tau(-2^k\cdot)](x)|\\
&=
2^{kn+kL}|\nabla^L[{\mathcal F}^{-1}\tau](-2^kx)|\\
&\lesssim
2^{kn+kL}
p_{[1+N_0]}({\mathcal F}^{-1}\tau)
(1+2^k|x|)^{-N_0}.
\end{align*}
\end{example}

\subsection{{Plancherel-Polya-Nikol'skii inequality}}

Let $\Omega$ be a compact subset of ${\mathbb R}^n$. 
Recall that $\mathcal{S}_{\Omega}({\mathbb R}^n)$ denotes the space of all elements $\varphi\in\mathcal{S}({\mathbb R}^n)$ which satisfies 
${\rm supp\,}{\mathcal F}\varphi\subset\Omega$. 

The following inequality will be used
throughout the present paper.

\begin{theorem}[Plancherel-Polya-Nikol'skii inequality, 
{\rm \cite[Theorem 1.3.1, Section 1.4.1]{Triebel1}}]
\label{thm:PPN}
Let
$\eta>0$
and $\varphi$ be a function in ${\mathcal S}_{Q(1)}$.
Then we have
\[
\sup_{z \in {\mathbb R}^n}
\frac{|\varphi(x-z)|}{(1+|z|)^{\frac{n}{\eta}}}
\lesssim_\eta
M^{(\eta)}\varphi(x),
\]
where $M^{(\eta)}$ is the maximal function given by
$(\ref{eq:M-eta})$.
{In particular,} 
for any $R, \eta>0$ and $\varphi\in\mathcal{S}_{{Q}(R)}$, 
we have the pointwise estimate 
\begin{align*}
\sup_{z \in {\mathbb R}^n}
\frac{|\varphi(x-z)|}{(1+R|z|)^{\frac{n}{\eta}}}
\lesssim_\eta
M^{(\eta)}\varphi(x).
\end{align*}

\end{theorem}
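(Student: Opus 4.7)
My plan is to use Peetre's reproducing-kernel argument, combined with a self-improvement (bootstrap) step when $0 < \eta < 1$. First, it suffices to prove the inequality for $R = 1$: if $\varphi \in \mathcal{S}_{Q(R)}$, set $\psi(u) := \varphi(u/R)$; then $\mathcal{F}\psi$ is supported in $Q(1)$, and applying the $R = 1$ estimate to $\psi$ at the point $Rx$ with dummy variable $Rz$, together with the change of variable identity $M^{(\eta)}\psi(Rx) = M^{(\eta)}\varphi(x)$, recovers the general-$R$ statement.

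Fix a $\Phi \in \mathcal{S}({\mathbb R}^n)$ with $\mathcal{F}\Phi$ compactly supported and $\mathcal{F}\Phi \equiv 1$ on an open neighborhood of $Q(1)$, so that $\varphi = \varphi \ast \Phi$ in $\mathcal{S}({\mathbb R}^n)$. The Schwartz decay of $\Phi$ gives, for every $L$,
\[
|\varphi(x-z)| \le C_L \int_{{\mathbb R}^n} |\varphi(y)| (1+|x-z-y|)^{-L} \, dy.
\]

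Next I would push an $\eta$-th power inside the integral. When $\eta \ge 1$, Jensen's inequality applied to the finite measure $|\Phi(x-z-y)|\,dy$ gives
\[
|\varphi(x-z)|^\eta \le C \int_{{\mathbb R}^n} |\varphi(y)|^\eta (1+|x-z-y|)^{-L}\,dy.
\]
When $0 < \eta < 1$, set $F(x) := \sup_{z \in {\mathbb R}^n} |\varphi(x-z)|/(1+|z|)^{n/\eta}$, which is finite because $\varphi \in \mathcal{S}({\mathbb R}^n)$, and split $|\varphi(y)| = |\varphi(y)|^\eta |\varphi(y)|^{1-\eta}$ with $|\varphi(y)|^{1-\eta} \le F(x)^{1-\eta}(1+|x-y|)^{n(1-\eta)/\eta}$ (a consequence of the definition of $F$); the resulting factor $F(x)^{1-\eta}$ on the right can be absorbed into $F(x)$ on the left. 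In either case, dividing by $(1+|z|)^{n/\eta}$, taking $\sup_z$, and using the inequality $(1+|z|)(1+|x-z-y|) \ge 1+|x-y|$ with $L$ chosen large enough to leave spare decay, produces an estimate of the form
\[
F(x)^\eta \lesssim \int_{{\mathbb R}^n} |\varphi(y)|^\eta (1+|x-y|)^{-n-\varepsilon}\,dy
\]
for some $\varepsilon > 0$; a standard dyadic-annulus decomposition then dominates the right-hand side by $M(|\varphi|^\eta)(x) = M^{(\eta)}\varphi(x)^\eta$, completing the proof.

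The main obstacle is the range $0 < \eta < 1$. First, the bootstrap requires a priori finiteness of $F(x)$; this is where the Schwartz hypothesis on $\varphi$ (rather than $\varphi$ merely being a tempered distribution) is essential. Second, the powers in the inequality $(1+|z|)(1+|x-z-y|) \ge 1+|x-y|$ must be apportioned carefully to leave a strict surplus $\varepsilon > 0$, since the borderline decay $(1+|x-y|)^{-n}$ is not integrable enough to be dominated by the Hardy-Littlewood maximal function; this is where the freedom to choose $L$ arbitrarily large in the reproducing formula is crucial.
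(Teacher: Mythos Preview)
The paper does not supply its own proof of this theorem: it is stated with a citation to Triebel's book \cite[Theorem 1.3.1, Section 1.4.1]{Triebel1} and used as a black box. Your sketch is precisely the standard argument from that reference---the reproducing formula $\varphi = \varphi * \Phi$, Jensen for $\eta \ge 1$, and the self-improvement/bootstrap trick for $0 < \eta < 1$ via the a priori finite Peetre maximal function---so your approach is correct and matches the cited source.
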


Here we recall a typical application
of the above theorem.
\begin{example}\label{example:140820-3}
Let $\Theta$ be a function supported in $Q(2)$.
Define
$\Theta_j(\xi) \equiv \Theta(2^{-j}\xi)$
for $j \in {\mathbb N}_0$.
Then let us prove
\begin{equation}\label{150815-2}
|\Theta_j(D)f(x)|
\lesssim
\frac{1}{\varphi(2^{-j})}\|\Theta_j(D)f\|_{{\mathcal M}^\varphi_q}.
\end{equation}
In fact,
for any points $x$ and $y$ satisfying
$|x-y| \le 2^{-j}$
we have
\begin{equation}\label{eq:140820-11}
|\Theta_j(D)f(x)|
\lesssim
\sup_{z \in {\mathbb R}^n}
\frac{|\Theta_j(D)f(z)|}{(1+2^j|z-y|)^{2n/q}}
\lesssim
M^{(q/2)}[\Theta_j(D)f](y).
\end{equation}
Therefore,
for all $x \in {\mathbb R}^n$,
\begin{align*}
|\Theta_j(D)f(x)|
&\le
\left(2^{j{(n+1)}}
\int_{Q(x,2^{-j})}
M^{(q/2)}[\Theta_j(D)f](y)\,dy
\right)^{\frac{1}{q}}\\
&=
\frac{1}{\varphi(2^{-j})}
\cdot
\varphi(2^{-j})
\left(2^{j{(n+1)}}
\int_{Q(x,2^{-j})}
M^{(q/2)}[\Theta_j(D)f](y)\,dy
\right)^{\frac{1}{q}}
\end{align*}
{
thanks to (\ref{eq:150825-1}).
By using the Morrey norm and Lemma \ref{2014-9-4-4},
we obtain}
\begin{align*}
|\Theta_j(D)f(x)|
&\le 
\frac{1}{\varphi(2^{-j})}
\cdot
\|M^{(q/2)}[\Theta_j(D)f]\|_{{\mathcal M}^\varphi_q}\\
&\le 
\frac{1}{\varphi(2^{-j})}
\cdot
(\|M[|\Theta_j(D)f|^{q/2}]\|_{{\mathcal M}^{\varphi^{q/2}}_2})^{2/q}\\
&\lesssim
\frac{1}{\varphi(2^{-j})}
\cdot
(\||\Theta_j(D)f|^{q/2}\|_{{\mathcal M}^{\varphi^{q/2}}_2})^{2/q}\\
&{=
\|\Theta_j(D)f\|_{{\mathcal M}^\varphi_q}},
\end{align*}
{
which proves (\ref{150815-2}).
}
\end{example}

The following result is a consequence
of the maximal inequalities 
{in Theorem \ref{thm:vector-maximal}}
and the Plancherel-Polya-Nikol'skii
inequality.
\begin{theorem}[Multiplier result]
\label{Multiplier result}
Let $0<q < \infty$, $0< r\le \infty$,
$s \in {\mathbb R}$ and $\varphi\in {\mathcal G}_q$
and
let
$$
\nu>\frac{n}{\min(1,q,r)}+\frac{n}{2}.
$$
\begin{enumerate}
\item
The following inequality is true:
\begin{equation}
\label{2014-9-8-1} 
\|2^{js}H_{(j)}(D)f_j^*\|_{{\mathcal M}^\varphi_q}
\lesssim \left( \sup_{k\in{\mathbb N} } \|H_{(k)}(d_k\cdot)\|_{H^{\nu}_{2}}\right)
\|2^{js}f_j\|_{{\mathcal M}^\varphi_q}
\end{equation}
holds for any $j\in{\mathbb N}$. 
\item
{Assume $(\ref{eq:Nakai-3})$ in addition.}
Suppose that,
for each $j=1,2,\ldots$, 
we are given
a compact set $K_j$ of ${\mathbb R}^n$
with diameter $d_j$,
$H_{(j)}\in H_2^{\nu}({\mathbb R}^n)$
and
$f_j\in {\mathcal M}^\varphi_q({\mathbb R}^n)$
such that
${\rm supp} ({\mathcal F}f_j)\subseteq K_j$.
\noindent
Define
{
\[
H_{(j)}(D)f_j^*(x)
\equiv
\sup_{z \in {\mathbb R}^n}
\frac{|{\mathcal F}^{-1}[H_{(j)}{\mathcal F} f_j](x-z)|}
{(1+d_j|z|)^{n/\eta}},
\]
where $\eta{\equiv}\min{(1,q,r)}/2$.
}
If a collection $\{ f_j\}_{j=1}^{\infty}$
of measurable functions satisfies
$$ \left\|
\left(\sum_{j=1}^\infty
2^{jsr}|f_j|^r
\right)^{\frac1r}\right\|_{{\mathcal M}^\varphi_q}<\infty,
$$ 
then we have 
\begin{equation}
\label{2014-9-4-3} 
{
\left\|
\left(\sum_{j=1}^\infty
2^{jsr}|\left(H_{(j)}(D)f_j\right)^*|^r
\right)^{\frac1r}\right\|_{{\mathcal M}^\varphi_q}
}
\lesssim \left( \sup_{k\in{\mathbb N} } \|H_{(k)}(d_k\cdot)\|_{H^{\nu}_{2}}\right)
 \left\|
\left(\sum_{j=1}^\infty
2^{jsr}|f_j|^r
\right)^{\frac1r}\right\|_{{\mathcal M}^\varphi_q}. 
\end{equation}
\end{enumerate}
\end{theorem}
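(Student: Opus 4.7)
The plan is to dominate the Peetre-type quantity $H_{(j)}(D)f_j^*$ pointwise by $\|H_{(j)}(d_j\cdot)\|_{H^\nu_2}\cdot M^{(\eta)}[M^{(\eta_0)}[f_j]]$ for a suitable auxiliary exponent $\eta_0$, and then pass to the Morrey norm using Theorem \ref{thm:vector-maximal}. Writing $\tilde H_j=H_{(j)}(d_j\cdot)$, the hypothesis $\nu>n/\min(1,q,r)+n/2$ is exactly what makes the interval
\[
\Bigl(\tfrac{n}{\nu-n/2},\,\min(1,q,r)\Bigr)
\]
nonempty; fix $\eta_0$ inside it, so that $\nu>n/\eta_0+n/2$ and $\eta_0<\min(1,q,r)\le q,r$.

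The first main step is the scalar pointwise bound
\[
|H_{(j)}(D)f_j(y)|
\lesssim
\|\tilde H_j\|_{H^{\nu}_{2}}\,M^{(\eta_0)}[f_j](y)
\qquad(y\in\R^n).
\]
Starting from the convolution representation
\[
H_{(j)}(D)f_j(y)=\int_{\R^n}\mathcal{F}^{-1}[\tilde H_j](u)\,f_j(y-u/d_j)\,du
\]
(after the change of variable $w=u/d_j$), the Peetre-type majorization $|f_j(y-u/d_j)|\le f_j^{\natural}(y)(1+|u|)^{n/\eta_0}$ with $f_j^{\natural}(y):=\sup_{w}|f_j(y-w)|(1+d_j|w|)^{-n/\eta_0}$, combined with Cauchy-Schwarz using the weight $\langle u\rangle^{\nu}$, produces the $L^2$-factor $\|\langle u\rangle^{\nu}\mathcal{F}^{-1}[\tilde H_j]\|_{L^2}$, which equals $\|\tilde H_j\|_{H^\nu_2}$ via the identity $\mathcal{F}^{-1}[\tilde H_j](u)=\mathcal{F}[\tilde H_j](-u)$ and Plancherel, and the companion factor $\|\langle u\rangle^{n/\eta_0-\nu}\|_{L^2}$, which is finite thanks to $\nu>n/\eta_0+n/2$. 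Finally, Theorem \ref{thm:PPN} applied to $f_j$ (Fourier-supported in a set of diameter $d_j$) yields $f_j^{\natural}(y)\lesssim M^{(\eta_0)}[f_j](y)$.

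Since $H_{(j)}(D)f_j$ is itself Fourier-supported in a set of diameter $d_j$, a second application of Theorem \ref{thm:PPN} (after a modulation recentering the support at the origin, which preserves absolute values) gives $H_{(j)}(D)f_j^*(x)\lesssim M^{(\eta)}[H_{(j)}(D)f_j](x)$, so that
\[
H_{(j)}(D)f_j^*(x)
\lesssim
\|\tilde H_j\|_{H^\nu_2}\,M^{(\eta)}\bigl[M^{(\eta_0)}[f_j]\bigr](x).
\]
For part (1), Lemma \ref{2014-9-4-4} reduces $\|M^{(\eta)}g\|_{{\mathcal M}^\varphi_q}$ to $\|M[|g|^{\eta}]\|_{{\mathcal M}^{\varphi^\eta}_{q/\eta}}^{1/\eta}$, which is controlled by $\|g\|_{{\mathcal M}^\varphi_q}$ via (\ref{eq:140820-204}) since $q/\eta>1$; the same reasoning with $\eta_0$ in place of $\eta$ gives $\|M^{(\eta_0)}[f_j]\|_{{\mathcal M}^\varphi_q}\lesssim\|f_j\|_{{\mathcal M}^\varphi_q}$, and (\ref{2014-9-8-1}) follows after multiplying by $2^{js}$.

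For part (2), the same scheme is run in vector-valued form using (\ref{eq:140820-2041}). Lemma \ref{2014-9-4-4} transforms the $\ell^r$-valued Morrey norm into one in ${\mathcal M}^{\varphi^{\eta}}_{q/\eta}$ with summability exponent $r/\eta>1$, and (\ref{eq:140820-2041}) applies provided $\varphi^\eta$ inherits the condition (\ref{eq:Nakai-3}) from $\varphi$; this transfer is guaranteed by Proposition \ref{prop:150312-1}. A second round with $\eta_0$ in place of $\eta$ then delivers (\ref{2014-9-4-3}). The main technical point is the bookkeeping in the pointwise step: the hypothesis $\nu>n/\min(1,q,r)+n/2$ is precisely what allows $\eta_0$ to lie strictly between $n/(\nu-n/2)$ and $\min(1,q,r)$, so that the weighted $L^2$-integral in Cauchy-Schwarz converges while the vector-valued maximal inequalities remain applicable at both subsequent steps.
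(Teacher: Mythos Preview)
Your argument is correct, but it takes a different path from the paper. The paper handles the Peetre supremum over $z$ in a single stroke: writing out the convolution defining $H_{(j)}(D)f_j(x-z)$, pulling out the Peetre maximal function $\sup_u|f_j(x-u)|/(1+d_j|u|)^{n/\eta}$, and then using the submultiplicativity $(1+d_j|x-y|)\le(1+d_j|z|)(1+d_j|x-y-z|)$ to cancel the denominator $(1+d_j|z|)^{n/\eta}$. After Cauchy--Schwarz and one application of Theorem~\ref{thm:PPN}, they arrive directly at
\[
H_{(j)}(D)f_j^*(x)\ \lesssim\ \|H_{(j)}(d_j\cdot)\|_{H^\nu_2}\,M^{(\eta)}[f_j](x),
\]
so only \emph{one} maximal function and one call to Theorem~\ref{thm:vector-maximal} are needed. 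Your route instead first bounds the unshifted value $|H_{(j)}(D)f_j(y)|$ by $\|\tilde H_j\|_{H^\nu_2}\,M^{(\eta_0)}[f_j](y)$, and then invokes Theorem~\ref{thm:PPN} a second time on $H_{(j)}(D)f_j$ to control the $z$-supremum, ending with the iterated $M^{(\eta)}[M^{(\eta_0)}[f_j]]$ and hence two passes through the maximal inequality. The advantage of the paper's approach is economy (no auxiliary exponent $\eta_0$, no iteration); the advantage of yours is modularity---each step is a direct citation of a standard lemma, and the role of the hypothesis $\nu>n/\min(1,q,r)+n/2$ is made completely explicit as the nonemptiness of the interval for $\eta_0$. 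One small remark: the modulation trick you mention for the second application of Theorem~\ref{thm:PPN} is equally needed in the first (the support $K_j$ need not be centered at the origin), though this is of course harmless.
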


\begin{proof}
{The heart of the matter is to prove
(\ref{eq:150825-5}) and (\ref{eq:150825-6}) below.
For the sake of the convenience for readers,
we prove 
(\ref{eq:150825-5}) and (\ref{eq:150825-6}).}
Let $\delta$ satisfy $\nu=\frac{n}{\eta}+\frac{n+\delta}{2}$. 
By the definition of $H_{(j)}(D)f_j$, we see that 
\begin{align}
\lefteqn{
| 2^{js}{\mathcal F}^{-1}[H_{(j)}{\mathcal F} f_j](x-z)|
}\nonumber\\ 
&\lesssim 
\int_{{\mathbb R}^n}2^{js}
\frac{|({\mathcal F}^{-1}H_{(j)})(x-z-y)|}{(1+d_j|x-y|)^{n/\eta}}|f_j(y)|(1+d_j|x-y|)^{n/\eta} dy \notag \\ 
&\lesssim 
\sup_{u\in{\mathbb R}^n}\frac{2^{js}|f_j(u)|}{(1+d_j|x-u|)^{n/\eta}} 
\int_{{\mathbb R}^n} 
|({\mathcal F}^{-1}H_{(j)})(x-z-y)|(1+d_j|x-y|)^{n/\eta} dy. \notag 
\end{align}
Therefore, we have 
\begin{align}
\lefteqn{
\frac{| 2^{js}{\mathcal F}^{-1}[H_{(j)}{\mathcal F} f_j](x-z)|}
{(1+d_j|z|)^{n/\eta}}
}\nonumber\\
&\lesssim 
\sup_{u\in{\mathbb R}^n}\frac{2^{js}|f_j(u)|}{(1+d_j|x-u|)^{n/\eta}} 
\int_{{\mathbb R}^n} 
|({\mathcal F}^{-1}H_{(j)})(x-z-y)|\left(\frac{1+d_j|x-y|}{1+d_j|z|}\right)^{n/\eta} dy \notag \\ 
&\lesssim 
\sup_{u\in{\mathbb R}^n}\frac{2^{js}|f_j(u)|}{(1+d_j|x-u|)^{n/\eta}} 
\int_{{\mathbb R}^n} 
|({\mathcal F}^{-1}H_{(j)})(x-z-y)|(1+d_j|x-y-z|)^{n/\eta} dy, \notag 
\end{align}
where we used $1+d_j|x-y|\le (1+d_j|x-y-z|)(1+d_j|z|)$. 
After we apply the H\"older inequality to the integrand as above, 
we use the chain rule of differentiation and 
${\mathcal F}f(d_j\cdot)=d_j^{-n}{\mathcal F}[f(d_j^{-1}\cdot)]$ ($d_j>0$), 
then we obtain 
\begin{align}
\frac{| 2^{js}{\mathcal F}^{-1}[H_{(j)}{\mathcal F} f_j](x-z)|}
{(1+d_j|z|)^{n/\eta}}
&
\lesssim 
\| H_{(j)}(d_j\cdot)\|_{H^{\nu}_{2}}
\sup_{u\in{\mathbb R}^n}\frac{2^{js}|f_j(x-u)|}{(1+d_j|u|)^{n/\eta}}. 
\notag 
\end{align}
If we combine this estimate with 
Plancherel-Polya-Nikol'skii inequality (Theorem \ref{thm:PPN}),
then we have
\begin{equation}\label{eq:150825-5}
\frac{| 2^{js}{\mathcal F}^{-1}[H_{(j)}{\mathcal F} f_j](x-z)|}
{(1+d_j|z|)^{n/\eta}}
\lesssim 
\left(
\sup_{k \in {\mathbb N}}
\| H_{(k)}(d_k\cdot)\|_{H^{\nu}_{2}}
\right)M^{(\eta)}[2^{js}f_j](x)
\end{equation}
and hence
\begin{equation}\label{eq:150825-6}
\left(\sum_{j=1}^\infty
2^{jsr}{\mathcal F}^{-1}[H_{(j)}{\mathcal F} f_j]^*(x)^r
\right)^{\frac1r}
\lesssim 
\left(
\sup_{k \in {\mathbb N}}
\| H_{(k)}(d_k\cdot)\|_{H^{\nu}_{2}}
\right)
\left(\sum_{j=1}^\infty
M^{(\eta)}[2^{js}f_j](x)^r
\right)^{\frac1r}.
\end{equation}
If we consider the ${\mathcal M}^\varphi_q({\mathbb R}^n)$-norm,
\begin{align*}
\lefteqn{
\left\|
\left(\sum_{j=1}^\infty
(2^{js}{\mathcal F}^{-1}[H_{(j)}{\mathcal F} f_j]^*)^r
\right)^{\frac1r}\right\|_{{\mathcal M}^\varphi_q}
}\\
&\lesssim 
\left(
\sup_{k \in {\mathbb N}}
\| H_{(k)}(d_k\cdot)\|_{H^{\nu}_{2}}
\right)
\left\|
\left(\sum_{j=1}^\infty
M^{(\eta)}[2^{js}f_j]^r
\right)^{\frac1r}
\right\|_{{\mathcal M}^\varphi_q}\\
&\lesssim 
\left(
\sup_{k \in {\mathbb N}}
\| H_{(k)}(d_k\cdot)\|_{H^{\nu}_{2}}
\right)
\left(
\left\|
\left(\sum_{j=1}^\infty
M[|2^{js}f_j|^\eta]^{r/\eta}
\right)^{\eta/r}
\right\|_{{\mathcal M}^{\varphi/\eta}_{q/\eta}}
\right)^{1/\eta}.
\end{align*}
Hence 
by combining 
Lemma \ref{2014-9-4-4} and Theorem \ref{thm:vector-maximal}, 
we obtain the desired inequality (\ref{2014-9-4-3}).
\end{proof}

\subsection{Reproducing formula}

Rychkov proved the following reproducing formula:
\begin{proposition}\label{prop:Rychkov}
Suppose that $\varphi_0 \in C^\infty_{\rm c}({\mathbb R}^n)$ 
with $\displaystyle{ \int_{{\mathbb R}^n} \varphi_0(x)\,dx \neq 0}$.
Set 
\begin{equation}\label{eq:140817-6}
\varphi_j(x)\equiv 2^{j n}\varphi_0(2^j x)-2^{(j-1)n}\varphi_0(2^{j-1}x)
\quad (x \in {\mathbb R}^n)
\end{equation}
for $j \in {\mathbb N}$.
Let $L \in {\mathbb N}$.
Then there exists $\psi_0 \in C^\infty_{\rm c}({\mathbb R}^n)$
such that 
\[
\int_{{\mathbb R}^n}x^\alpha \psi_1(x)\,dx=0
\] 
for all $|\alpha| \le L$
and that
\begin{equation}
\label{eq:reproducing1}
f=\sum_{j \in {\mathbb N}_0}\psi_j*\varphi_j*f \mbox{\rm \, in \, } 
{\mathcal S}'({\mathbb R}^n)
\end{equation}
for all $f \in {\mathcal S}'({\mathbb R}^n)$.
Here 
\begin{equation}\label{eq:140817-7}
\psi_j(x)\equiv 2^{j n}\psi_0(2^j x)-2^{(j-1)n}\psi_0(2^{j-1}x)
\quad (x \in {\mathbb R}^n)
\end{equation}
for $j \in {\mathbb N}$.
\end{proposition}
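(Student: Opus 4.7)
My plan is to pass to the Fourier side. Writing $\Phi \equiv \widehat{\varphi_0}$ and $\Psi \equiv \widehat{\psi_0}$, the scalings $(\ref{eq:140817-6})$ and $(\ref{eq:140817-7})$ give $\widehat{\varphi_j}(\xi) = \Phi(2^{-j}\xi) - \Phi(2^{-j+1}\xi)$ and $\widehat{\psi_j}(\xi) = \Psi(2^{-j}\xi) - \Psi(2^{-j+1}\xi)$ for $j \ge 1$, so that the reproducing identity $(\ref{eq:reproducing1})$ is equivalent to the Fourier-side identity
\[
\Phi(\xi)\Psi(\xi) + \sum_{j=1}^\infty \bigl[\Phi(2^{-j}\xi) - \Phi(2^{-j+1}\xi)\bigr]\bigl[\Psi(2^{-j}\xi) - \Psi(2^{-j+1}\xi)\bigr] = 1,
\]
interpreted against Schwartz test functions. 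After rescaling $\varphi_0$ I may assume $\Phi(0) = 1$; evaluating the identity at $\xi = 0$ then forces $\Psi(0) = 1$.

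Next I would translate the moment condition. Since $\widehat{\psi_1}(\xi) = \Psi(\xi/2) - \Psi(\xi)$, the requirement $\int x^\alpha \psi_1(x)\,dx = 0$ for $|\alpha| \le L$ amounts to $(2^{-|\alpha|} - 1)\partial^\alpha\Psi(0) = 0$, which is automatic for $|\alpha| = 0$ and equivalent to $\partial^\alpha\Psi(0) = 0$ (equivalently $\int x^\alpha \psi_0(x)\,dx = 0$) for $1 \le |\alpha| \le L$. The task thus reduces to producing $\psi_0 \in C^\infty_{\rm c}({\mathbb R}^n)$ with these moment conditions which also satisfies the Fourier identity.

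I would construct $\psi_0$ iteratively. First choose $\tilde\psi_0 \in C^\infty_{\rm c}$ satisfying $\int \tilde\psi_0 = 1$ and $\int x^\alpha \tilde\psi_0 = 0$ for $1 \le |\alpha| \le L$, obtained as a linear combination of translates of a single bump via a Vandermonde-type linear system. With $\tilde\Psi = \widehat{\tilde\psi_0}$, the Fourier identity is violated by some error $E_0(\xi)$ that enjoys controlled decay: each term $[\Phi(2^{-j}\xi)-\Phi(2^{-j+1}\xi)][\tilde\Psi(2^{-j}\xi)-\tilde\Psi(2^{-j+1}\xi)]$ vanishes to order $L+2$ at the origin (order $1$ from the $\Phi$-difference plus order $L+1$ from the $\tilde\Psi$-difference thanks to the vanishing moments), and has Schwartz decay in $2^{-j}|\xi|$ for large $|\xi|$. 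I would then correct $\tilde\psi_0$ by a geometrically convergent series of perturbations $\delta_k \in C^\infty_{\rm c}$, each supported in a common fixed compact set, engineered to annihilate $E_0$ on successive dyadic frequency shells.

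The hardest part will be maintaining $\psi_0 \in C^\infty_{\rm c}({\mathbb R}^n)$ (not merely $\mathcal{S}({\mathbb R}^n)$) throughout the iteration and verifying $\mathcal{S}'({\mathbb R}^n)$-convergence of the series $\sum_{j \ge 0}\psi_j * \varphi_j * f$. Compact support survives by manufacturing every corrector from convolution powers of $\varphi_0$ with fixed bumps, so that all objects live in a common bounded region of physical space. For the distributional convergence I would pair against $g \in \mathcal{S}({\mathbb R}^n)$ and apply the convolution estimate of Lemma \ref{lem:Grafakos}: the vanishing moments of $\psi_1$ combined with the Schwartz decay of the pairings yield $|\langle \psi_j * \varphi_j * f, g\rangle| \lesssim 2^{-j\varepsilon}$ for some $\varepsilon > 0$ depending on $L$ and on the orders of $f$ and $g$, so the series converges absolutely for each fixed $f$ and $g$, and the Fourier identity then transfers back to $(\ref{eq:reproducing1})$.
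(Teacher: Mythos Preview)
The paper does not prove this proposition at all; it simply writes ``See \cite{Ry2}'' and moves on. So you are attempting a genuine proof where the paper only gives a citation, and your outline should be compared with Rychkov's original argument rather than with anything in the present paper.

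Your reduction to the Fourier side and your translation of the moment condition on $\psi_1$ into $\partial^\alpha\Psi(0)=0$ for $1\le|\alpha|\le L$ are both correct. The gap is in the heart of the construction. The identity you must solve,
\[
\Phi(\xi)\Psi(\xi)+\sum_{j\ge 1}\bigl[\Phi(2^{-j}\xi)-\Phi(2^{-j+1}\xi)\bigr]\bigl[\Psi(2^{-j}\xi)-\Psi(2^{-j+1}\xi)\bigr]=1,
\]
couples the values of $\Psi$ at \emph{all} dyadic scales $2^{-j}\xi$ simultaneously: for each fixed $\xi$, the left side depends on $\Psi(\xi),\Psi(\xi/2),\Psi(\xi/4),\ldots$. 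Your plan to ``annihilate $E_0$ on successive dyadic frequency shells'' treats these as decoupled, but any perturbation of $\Psi$ designed to kill the error on the shell $|\xi|\sim 2^k$ simultaneously alters every term in the sum at every other $\xi$. You have not explained why the iteration contracts, nor why the limit lands in $C^\infty_{\rm c}$ rather than merely in $\mathcal S$; ``manufacturing every corrector from convolution powers of $\varphi_0$ with fixed bumps'' is a slogan, not a mechanism. Moreover, the vanishing-to-order-$(L+2)$ observation only controls the error near $\xi=0$; it says nothing about why the global identity becomes \emph{exact} in the limit.

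Rychkov's actual construction bypasses all of this with a purely algebraic device: he takes $\psi_0$ to be a \emph{finite} linear combination of convolution powers $\varphi_0^{*k}$, so that $\Psi$ is a polynomial in $\Phi$. The coefficients come from a Bezout-type polynomial identity (of the flavor $1=(1-t)^{M}P(t)+t^{M}Q(t)$ with $t=1-\Phi$), which forces the required vanishing of moments and makes the Fourier sum telescope by design. No iteration is needed, compact support of $\psi_0$ is automatic (finite sums of convolutions of compactly supported functions), and the $\mathcal S'$-convergence reduces to the elementary fact that $\widehat{\varphi_0}(2^{-J}\cdot)\to 1$ in the multiplier sense. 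If you want a self-contained proof, that is the route to take.
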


\begin{proof}
See \cite{Ry2}.
\end{proof}

Before we go further, {a couple of} remarks
may be in order.
\begin{remark}
\
\begin{enumerate}
\item
A rescaling {argument} allows us to assume
that $\varphi_0$ and $\psi_0$ are supported in $[-1/4,1/4]^n$.
\item
Let $L_1 \in {\mathbb N}$ be an arbitrary number.
In Proposition \ref{prop:Rychkov},
we can assume that there exists
$\Phi$ such that $\Delta^{L_1} \Phi \equiv \varphi_1$.
As a result,
\[
\int_{{\mathbb R}^n}x^\beta \varphi_1(x)\,dx=0
\]
for all $\beta$ with $|\beta| \le 2L_1-1$.
\end{enumerate}
\end{remark}

The next lemma explains how to {construct}
atoms.
\begin{lemma}
Let $\{\varphi_j\}_{j \in {\mathbb N}_0}$ and $\{\psi_j\}_{j \in {\mathbb N}_0}$
as above.
Assume in addition that
\[
\int_{{\mathbb R}^n}x^\beta \varphi_1(x)\,dx=0
\]
for all $\beta$ with $|\beta| \le L$.
Define
\begin{equation}\label{eq:140821-11}
{{\gamma_{jm}(x) }}
\equiv 
\int_{Q_{jm}}\varphi_j(x-y) \cdot f*\psi_j(y)\,dy
\quad (x \in {\mathbb R}^n)
\end{equation}
for $f \in {\mathcal S}'({\mathbb R}^n)$
and $j \in {\mathbb N}_0$ and $m \in {\mathbb Z}^n$.
Then we have;
\begin{enumerate}
\item
${{\gamma_{jm} }}\in C^\infty({\mathbb R}^n)$
for all $j \in {\mathbb N}_0$ and $m \in {\mathbb Z}^n$,
\item
${\rm supp}({{\gamma_{jm}}}) \subset 3Q_{jm}$
for all $j \in {\mathbb N}_0$ and $m \in {\mathbb Z}^n$,
\item
$\displaystyle
\int_{{\mathbb R}^n}x^\beta {{\gamma_{jm}(x)}}\,dx=0
$
for all
{$\beta\in \N_0^{\ n}$ with $|\beta|\le L$,}
$j \in {\mathbb N}$ and $m \in {\mathbb Z}^n$.
\end{enumerate}
\end{lemma}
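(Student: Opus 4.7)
The plan is to verify the three properties in order, drawing on the support and moment information already obtained for $\varphi_j$ and $\psi_j$.

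First, for the smoothness assertion (1), I would note that by the rescaling remark, $\varphi_0, \psi_0 \in C^\infty_{\rm c}({\mathbb R}^n)$ are supported in $[-1/4,1/4]^n$, hence $\varphi_j \in C^\infty_{\rm c}({\mathbb R}^n)$ for every $j$, and $\psi_j \in \mathcal{S}({\mathbb R}^n)$, so the distribution $f*\psi_j$ is a smooth function of at most polynomial growth on every compact set. Since the integration in (\ref{eq:140821-11}) is over the bounded cube $Q_{jm}$ and $\varphi_j(x-\cdot) \in C^\infty_{\rm c}({\mathbb R}^n)$, standard differentiation under the integral gives $\gamma_{jm} \in C^\infty({\mathbb R}^n)$.

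For (2), the key is to track the supports. With $\varphi_0$ supported in $[-1/4,1/4]^n$, both $\varphi_0(2^j \cdot)$ and $\varphi_0(2^{j-1}\cdot)$ are supported in $[-2^{-j-1}, 2^{-j-1}]^n$ (for $j\ge 1$; for $j=0$ we use the original support of $\varphi_0$). Thus $\mathrm{supp}(\varphi_j) \subset Q(2^{-j-1})$, a cube of side length $\ell(Q_{jm}) = 2^{-j}$. Since the integral defining $\gamma_{jm}(x)$ vanishes unless there exists $y \in Q_{jm}$ with $x-y \in \mathrm{supp}(\varphi_j)$, we get $\mathrm{supp}(\gamma_{jm}) \subset \mathrm{supp}(\varphi_j) + Q_{jm}$, which is a cube centered at $c(Q_{jm})$ with half-side at most $2 \cdot 2^{-j-1} = 2^{-j} \le \tfrac32 \ell(Q_{jm})$. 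This is exactly $3Q_{jm}$.

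For (3), I would use Fubini together with the moment conditions. Since $\varphi_j = 2^{jn}\varphi_0(2^j\cdot) - 2^{(j-1)n}\varphi_0(2^{j-1}\cdot)$, and by hypothesis $\int x^\beta \varphi_1(x)\,dx = 0$ for $|\beta| \le L$, a rescaling $x \mapsto 2^{j-1}x$ inside the integral shows
\[
\int_{{\mathbb R}^n} z^\alpha \varphi_j(z)\,dz = 0 \qquad (|\alpha|\le L, \ j \in {\mathbb N}).
\]
Then, since the relevant integrand has compact $y$-support and $f*\psi_j \in C^\infty({\mathbb R}^n)$, Fubini and the binomial expansion $(y+z)^\beta = \sum_{\alpha \le \beta}\binom{\beta}{\alpha}y^{\beta-\alpha}z^\alpha$ give
\[
\int_{{\mathbb R}^n} x^\beta \gamma_{jm}(x)\,dx
= \int_{Q_{jm}} f*\psi_j(y) \sum_{\alpha \le \beta}\binom{\beta}{\alpha} y^{\beta-\alpha}\left(\int_{{\mathbb R}^n} z^\alpha \varphi_j(z)\,dz\right) dy = 0.
\]
The only genuine obstacle is the bookkeeping on supports in step (2); everything else reduces to the moment property transported from $\varphi_1$ to $\varphi_j$ by scaling and to an application of Fubini that is justified by the compact integration region and the smoothness of $f * \psi_j$.
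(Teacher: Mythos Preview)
Your proof is correct and follows the same approach as the paper, which in fact only sketches the support computation for (2) and declares the remaining assertions ``easy to check.'' Your arguments for (1) and (3) supply exactly the routine details the paper omits, and your support calculation in (2) matches the paper's (your bound $[-2^{-j-1},2^{-j-1}]^n$ for $\mathrm{supp}(\varphi_j)$ is the correct one, yielding containment in $3Q_{jm}$ as required).
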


\begin{proof}
{All the assertions are easy to check.}
For example,
we can check
$(2)$
as follows:
\begin{align*}
{\rm supp}({{\gamma_{jm}}}) 
&\subset 
Q_{jm}+[-2^{-j-2},2^{-j-2}]^n\\
&=
2^{-j}m+[0,2^{-j})^n+[-2^{-j-2},2^{-j-2}]^n\\
&=
2^{-j}m+[-2^{-j-2},5 \cdot 2^{-j-2})^n\\
&=
3Q_{jm}
\end{align*}
for all $j \in {\mathbb N}_0$ and $m\in{\mathbb Z}^n$.
\end{proof}

\section{Generalized Triebel-Lizorkin-Morrey spaces on ${\mathbb R}^n$}
\label{s3}

\subsection{Proof of Theorem \ref{thm:150205-1}}

{We start with a setup.}
We recall that $\theta$ and $\tau$ are compactly supported functions
satisfying
\[
0 \notin {\rm supp}(\tau), \quad
\theta(\xi)>0 \mbox{\rm \, if \, } \xi \in Q(2), \quad
\tau(\xi)>0 \mbox{\rm \, if \,} \xi \in Q(2) \setminus Q(1).
\]

Let $\tilde{\theta}$ and $\tilde{\tau}$ be compactly supported functions
satisfying
\[
0 \notin {\rm supp}(\tilde{\tau}), \quad
\tilde{\theta}(\xi)>0 \mbox{\rm \, if \, } \xi \in Q(2), \quad
\tilde{\tau}(\xi)>0 \mbox{\rm \, if \,} \xi \in Q(2) \setminus Q(1).
\]
We define 
\begin{equation}\label{eq:150821-112}
\tau_k(\xi) \equiv \tau(2^{-k}\xi),
\end{equation} 
and
\begin{equation}\label{eq:150821-113}
\tilde{\tau}_k(\xi) \equiv \tilde{\tau}(2^{-k}\xi)
\end{equation}
for $\xi \in {\mathbb R}^n$ and $k \in {\mathbb N}$.

We define 
$\displaystyle \|f\|_{{\mathcal A}_{{\mathcal M}^\varphi_q,r}^s(\theta, \tau)}
\equiv \|f\|_{{\mathcal A}_{{\mathcal M}^\varphi_q,r}^s}$ 
as in (\ref{eq:140820-140}) and (\ref{eq:140820-141}). 

By the symmetry, it is sufficient to prove that 
\begin{equation}
\label{2014-9-4-1}
\|f\|_{{\mathcal A}_{{\mathcal M}^\varphi_q,r}^s(\theta, \tau)} 
\lesssim 
\|f\|_{{\mathcal A}_{{\mathcal M}^\varphi_q,r}^s(\tilde{\theta}, \tilde{\tau})}. 
\end{equation}

{With the above setup in mind,} we prove 
\begin{equation}
\label{2014-9-4-2}
\left(\sum_{j=1}^\infty
2^{(j+3)sr}\|\tau_{j+3}(D)f\|_{{\mathcal M}^\varphi_q}^r
\right)^{\frac1r}
\lesssim 
\left(\sum_{j=1}^\infty
2^{jsr}\|\tilde{\tau}_j(D)f\|_{{\mathcal M}^\varphi_q}^r
\right)^{\frac1r}. 
\end{equation}
Once (\ref{2014-9-4-2}) is proved,
we can prove
\begin{equation}\label{eq:150821-111}
\left(\sum_{j=1}^3
2^{j sr}\|\tau_{j}(D)f\|_{{\mathcal M}^\varphi_q}{}^r
\right)^{\frac1r}
\lesssim
\|\tilde{\theta}(D)f\|_{{\mathcal M}^\varphi_q}
\left(\sum_{j=1}^\infty
2^{jsr}\|\tilde{\tau}_j(D)f\|_{{\mathcal M}^\varphi_q}{}^r
\right)^{\frac1r}
\end{equation}
similarly to (\ref{2014-9-4-2}).

We can prove (\ref{2014-9-4-2})
with the help of Theorem \ref{Multiplier result}.
If $j\ge 4$, we see that 
\begin{align*}
\tau_{j}(D)f 
&= 
{\mathcal F}^{-1}[\tau_j {\mathcal F}f] \\ 
&= {\mathcal F}^{-1}
\left[ (\tilde{\tau}_{j-1}+\tilde{\tau}_{j}+\tilde{\tau}_{j+1})\frac{\tau_j}{(\tilde{\tau}_{j-1}+\tilde{\tau}_{j}+\tilde{\tau}_{j+1})}{\mathcal F}f \right] \\ 
&= \frac{\tau_j}{(\tilde{\tau}_{j-1}+\tilde{\tau}_{j}+\tilde{\tau}_{j+1})}(D)
{\mathcal F}^{-1} [(\tilde{\tau}_{j-1}+\tilde{\tau}_{j}+\tilde{\tau}_{j+1}){\mathcal F}f] \\ 
&= \left( \frac{\tau}{\tilde{\tau}_{-1}+\tilde{\tau}+\tilde{\tau}_{1}}\right)_j(D)
{\mathcal F}^{-1} [(\tilde{\tau}_{j-1}+\tilde{\tau}_{j}+\tilde{\tau}_{j+1}){\mathcal F}f]. 
\end{align*}
By Theorem \ref{Multiplier result}, we have the desired assertion.

\subsection{Fundamental properties}

First, we note that the following
$\min(1,q,r)$-triangle inequality holds.
The proof being standard, we omit the proof.
\begin{lemma}\label{lem:141020-1}
Let $0<q<\infty$, $0<r \le \infty$, $s \in {\mathbb R}$
 and $\varphi \in {\mathcal G}_q$.
Assume
$(\ref{eq:Nakai-3})$
in the case
when
${\mathcal A}^s_{{\mathcal M}^\varphi_q,r}({\mathbb R}^n)
={\mathcal E}^s_{{\mathcal M}^\varphi_q,r}({\mathbb R}^n)$
with $r<\infty$.
Then
\[
(\|f_1+f_2\|_{{\mathcal A}^s_{{\mathcal M}^\varphi_q,r}})^{\min(1,q,r)}
\le
(\|f_1\|_{{\mathcal A}^s_{{\mathcal M}^\varphi_q,r}})^{\min(1,q,r)}
+
(\|f_2\|_{{\mathcal A}^s_{{\mathcal M}^\varphi_q,r}})^{\min(1,q,r)}
\]
\end{lemma}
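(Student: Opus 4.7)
Set $p := \min(1, q, r)$, so that $p \le \min(1,q)$, $p \le r$, and $q/p \ge 1$. The strategy is to establish the $p$-subadditivity of the norm $\|\cdot\|_{\mathcal{A}^s_{\mathcal{M}^\varphi_q, r}}$ by handling its low-frequency piece $\|\theta(D)f\|_{\mathcal{M}^\varphi_q}$ and its high-frequency piece separately, and then combining them via the reverse Minkowski inequality for $\ell^p$. Linearity of the Fourier multipliers $\theta(D)$ and $\tau_j(D)$ is of course used throughout. The first step is a $p$-triangle inequality for the underlying Morrey space itself: Lemma \ref{eq:141021-10} supplies the $\min(1,q)$-triangle inequality, and since $p \le \min(1,q)$, the elementary estimate $(x+y)^t \le x^t + y^t$ for $0 < t \le 1$ (with $t = p/\min(1,q)$) upgrades it to $\|g_1 + g_2\|_{\mathcal{M}^\varphi_q}^p \le \|g_1\|_{\mathcal{M}^\varphi_q}^p + \|g_2\|_{\mathcal{M}^\varphi_q}^p$.

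For the high-frequency part in the Besov case, applying the Morrey $p$-triangle inequality to $\tau_j(D)(f_1 + f_2)$ for each $j$, multiplying by $2^{jsp}$, and then invoking the classical Minkowski inequality in $\ell^{r/p}$ (with $r/p \ge 1$; for $r = \infty$ one uses the subadditivity of $\sup$) yields the $p$-subadditivity of $\bigl(\sum_j 2^{jsr}\|\tau_j(D)f\|_{\mathcal{M}^\varphi_q}^r\bigr)^{1/r}$. In the Triebel--Lizorkin case, the same pattern is executed one level lower: the pointwise inequalities $|a+b|^p \le |a|^p + |b|^p$ combined with $\ell^{r/p}$ Minkowski give, for every $x \in \R^n$, the pointwise estimate $G(x)^p \le G_1(x)^p + G_2(x)^p$, where $G_i(x) \equiv \bigl(\sum_j 2^{jsr}|\tau_j(D)f_i(x)|^r\bigr)^{1/r}$ and $G$ is defined analogously for $f_1 + f_2$. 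Lemma \ref{2014-9-4-4} applied with $u = 1/p$ then rewrites $\|G\|_{\mathcal{M}^\varphi_q}^p = \|G^p\|_{\mathcal{M}^{\varphi^p}_{q/p}}$; since $q/p \ge 1$, the space $\mathcal{M}^{\varphi^p}_{q/p}$ is a genuine Banach space, so one invocation of the ordinary triangle inequality closes the step.

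Finally, to merge the two subadditive pieces into subadditivity of their sum, let $\alpha_i$ and $\beta_i$ denote the low- and high-frequency contributions to $\|f_i\|_{\mathcal{A}^s_{\mathcal{M}^\varphi_q, r}}$; the previous two steps deliver $\alpha \le (\alpha_1^p + \alpha_2^p)^{1/p}$ and $\beta \le (\beta_1^p + \beta_2^p)^{1/p}$ for the corresponding quantities associated to $f_1 + f_2$. The reverse Minkowski inequality in $\ell^p$ for $0 < p \le 1$, namely $\|u\|_{\ell^p} + \|v\|_{\ell^p} \le \|u + v\|_{\ell^p}$ for coordinatewise nonnegative vectors $u, v$, applied to $u = (\alpha_1, \alpha_2)$ and $v = (\beta_1, \beta_2)$, then yields $(\alpha + \beta)^p \le (\alpha_1 + \beta_1)^p + (\alpha_2 + \beta_2)^p = \|f_1\|_{\mathcal{A}^s_{\mathcal{M}^\varphi_q, r}}^p + \|f_2\|_{\mathcal{A}^s_{\mathcal{M}^\varphi_q, r}}^p$, which is the desired inequality. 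I expect the Triebel--Lizorkin step to be the only genuinely delicate point, since the quasi-norm structure of $\mathcal{M}^\varphi_q$ when $q < 1$ forces one to leave the original space and pass through the rescaling identity of Lemma \ref{2014-9-4-4}; the Besov part, the $r = \infty$ variant, and the final assembly are essentially formal.
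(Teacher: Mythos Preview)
Your argument is correct and complete. The paper itself omits the proof entirely, remarking only that it is standard; your write-up supplies precisely the expected details---the $p$-triangle inequality for $\mathcal{M}^\varphi_q$ via Lemma~\ref{eq:141021-10}, the $\ell^{r/p}$ Minkowski step, the rescaling through Lemma~\ref{2014-9-4-4} in the Triebel--Lizorkin case, and the reverse Minkowski assembly---so there is nothing further to compare.
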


The next proposition deals with the lifting property.
\begin{proposition}[Lift operator, Lifting property]\label{prop:140820-1}
Let $0<q<\infty$, $0<r \le \infty$, $s \in {\mathbb R}$
 and $\varphi \in {\mathcal G}_q$.
Assume in addition
that $\varphi$ satisfies
$(\ref{eq:Nakai-3})$
when $r<\infty$ and ${\mathcal A}={\mathcal E}$.
Then
\[
(1-\Delta)^{-M/2}:{\mathcal A}^s_{{\mathcal M}^\varphi_q,r}({\mathbb R}^n)
\to {\mathcal A}^{s+M}_{{\mathcal M}^\varphi_q,r}({\mathbb R}^n)
\]
is an isomorphism.
\end{proposition}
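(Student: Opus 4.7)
The plan is to establish the bound $\|(1-\Delta)^{-M/2}f\|_{\mathcal{A}^{s+M}_{\mathcal{M}^\varphi_q,r}} \lesssim \|f\|_{\mathcal{A}^s_{\mathcal{M}^\varphi_q,r}}$; the reverse bound then follows on replacing $M$ by $-M$ (and $s$ by $s+M$), and since the two operators $(1-\Delta)^{\pm M/2}$ are Fourier multipliers by $\langle\xi\rangle^{\mp M}$ and hence mutually inverse on $\mathcal{S}'(\mathbb{R}^n)$, the isomorphism claim follows. Throughout I fix admissible $(\theta,\tau)$, permissible by Theorem \ref{thm:150205-1}, and set $g \equiv (1-\Delta)^{-M/2}f$.

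The main step is the high-frequency estimate, obtained from Theorem \ref{Multiplier result}. Choose $\tilde\tau \in C^\infty_{\rm c}(\mathbb{R}^n)$ with $\tilde\tau\equiv 1$ on $\mathrm{supp}\,\tau$ and $0\notin \mathrm{supp}\,\tilde\tau$, put $\tilde\tau_j(\xi)\equiv \tilde\tau(2^{-j}\xi)$, and consider
\[
H_{(j)}(\xi) \equiv 2^{jM}\tilde\tau_j(\xi)\langle\xi\rangle^{-M}\qquad(j\ge1).
\]
Since $\tilde\tau_j\tau_j = \tau_j$, one has $H_{(j)}(D)(\tau_j(D)f) = 2^{jM}\tau_j(D)g$, and the Fourier support of $\tau_j(D)f$ lies in a set of diameter $d_j \sim 2^j$. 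The rescaled symbol computes to
\[
H_{(j)}(d_j\xi) = \tilde\tau(\xi)(2^{-2j}+|\xi|^2)^{-M/2},
\]
which is smooth on the fixed compact set $\mathrm{supp}\,\tilde\tau$ (bounded away from the origin) and, crucially, has all partial derivatives uniformly bounded in $j\ge 1$; hence $\sup_{j}\|H_{(j)}(d_j\cdot)\|_{H^\nu_2}<\infty$. Applying (\ref{2014-9-8-1}) and taking $\ell^r$-sums handles the Besov case, while (\ref{2014-9-4-3}) together with the pointwise estimate $|H_{(j)}(D)(\tau_j(D)f)|\le (H_{(j)}(D)(\tau_j(D)f))^*$ handles the Triebel-Lizorkin case; the assumption (\ref{eq:Nakai-3}) is invoked precisely so that (\ref{2014-9-4-3}) is at our disposal. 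For the low-frequency piece, I would write $\theta(D)g = H_0(D)[\tilde\theta(D)f]$ with $\tilde\theta\in C^\infty_{\rm c}(\mathbb{R}^n)$ equal to $1$ on $\mathrm{supp}\,\theta$ and $H_0(\xi)\equiv\theta(\xi)\langle\xi\rangle^{-M}\in\mathcal{S}(\mathbb{R}^n)$; one application of (\ref{2014-9-8-1}) followed by a finite-term multiplier manipulation as in the proof of Theorem \ref{thm:150205-1} reduces $\|\theta(D)g\|_{\mathcal{M}^\varphi_q}$ to $\|\theta(D)f\|_{\mathcal{M}^\varphi_q}+\sum_{j=1}^{N_0}\|\tau_j(D)f\|_{\mathcal{M}^\varphi_q}\lesssim\|f\|_{\mathcal{A}^s_{\mathcal{M}^\varphi_q,r}}$, for some $N_0$ depending only on $\mathrm{supp}\,\tilde\theta$.

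The principal obstacle is the uniform Sobolev bound on the rescaled symbols $H_{(j)}(d_j\cdot)$: the support condition $0\notin\mathrm{supp}\,\tilde\tau$ is essential, since without it the factor $(2^{-2j}+|\xi|^2)^{-M/2}$ would either degenerate (if $M>0$) or blow up (if $M<0$) near the origin as $j\to\infty$, and the uniform $H^\nu_2$-bound would fail. Once this bound is verified, assembling the Besov/Triebel-Lizorkin norm of $g$ via the quasi-triangle inequality of Lemma \ref{lem:141020-1} is routine.
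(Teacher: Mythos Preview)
Your proposal is correct and follows exactly the approach the paper intends: the paper's own proof is a one-line citation of Theorem~\ref{Multiplier result} (and of \cite[Theorem~3.10]{LSUYY2}), and you have carried out precisely that reduction, with the right choice of multipliers $H_{(j)}(\xi)=2^{jM}\tilde\tau_j(\xi)\langle\xi\rangle^{-M}$ and a correct verification of the uniform $H^\nu_2$-bound on the rescaled symbols. Your handling of the low-frequency term and the isomorphism argument via $M\leftrightarrow -M$ are also in line with the paper's intent.
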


\begin{proof}
This is a consequence of Theorem \ref{Multiplier result}
or \cite[{Theorem 3.10}]{LSUYY2}.
\end{proof}

Next, we verify the embedding properties.
\begin{proposition}\label{prop:140820-2}
Let $0<q<\infty$, $0<r_1,r_2 \le \infty$, $s \in {\mathbb R}$,
$\varepsilon>0$ and $\varphi \in {\mathcal G}_q$.
Then
\[
{\mathcal A}^s_{{\mathcal M}^\varphi_q,r_1}({\mathbb R}^n)
\hookrightarrow
{\mathcal A}^{s-\varepsilon}_{{\mathcal M}^\varphi_q,r_2}({\mathbb R}^n).
\]
\end{proposition}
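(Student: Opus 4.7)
The plan is to reduce both the Besov ($\mathcal{A}=\mathcal{N}$) and Triebel--Lizorkin ($\mathcal{A}=\mathcal{E}$) cases to the elementary observation that $\{2^{-j\varepsilon}\}_{j\ge 1}\in \ell^{r_2}$ for any $r_2\in(0,\infty]$, combined with the continuous embedding $\ell^{r_1}\hookrightarrow\ell^{\infty}$ (understood pointwise in the $\mathcal{E}$ case). Fix the same admissible pair $\theta,\tau$ for both sides; then the summand involving $\theta(D)f$ is literally identical on the two sides of the embedding, so it suffices to bound the high-frequency part. I would first treat the Besov case: set $a_j \equiv 2^{js}\|\tau_j(D)f\|_{\mathcal{M}^{\varphi}_q}$ and observe that the left-hand norm dominates $\sup_j a_j$ (this is trivial for $r_1=\infty$, and for $r_1<\infty$ follows from $\|\cdot\|_{\ell^{\infty}}\le\|\cdot\|_{\ell^{r_1}}$). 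Then
\[
\Bigl(\sum_{j=1}^\infty 2^{j(s-\varepsilon)r_2}\|\tau_j(D)f\|_{\mathcal{M}^{\varphi}_q}^{r_2}\Bigr)^{1/r_2}
= \|2^{-j\varepsilon}a_j\|_{\ell^{r_2}}
\le \Bigl(\sup_{j\ge 1} a_j\Bigr)\Bigl(\sum_{j=1}^\infty 2^{-j\varepsilon r_2}\Bigr)^{1/r_2}
\lesssim_\varepsilon \|f\|_{\mathcal{N}^s_{\mathcal{M}^{\varphi}_q,r_1}},
\]
with the obvious modification (supremum in place of a sum) when $r_2=\infty$.

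For the Triebel--Lizorkin case I would carry out the same argument pointwise in $x\in\mathbb{R}^n$: combining the geometric-sum estimate with $\ell^{r_1}\hookrightarrow\ell^{\infty}$ applied to the sequence $\{2^{js}|\tau_j(D)f(x)|\}_j$ yields
\[
\Bigl(\sum_{j=1}^\infty 2^{j(s-\varepsilon)r_2}|\tau_j(D)f(x)|^{r_2}\Bigr)^{1/r_2}
\lesssim_\varepsilon \sup_{j\ge 1} 2^{js}|\tau_j(D)f(x)|
\le \Bigl(\sum_{j=1}^\infty 2^{jsr_1}|\tau_j(D)f(x)|^{r_1}\Bigr)^{1/r_1},
\]
again with the natural reading when $r_1$ or $r_2$ equals $\infty$. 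Taking the $\mathcal{M}^{\varphi}_q$-norm of both sides and adding $\|\theta(D)f\|_{\mathcal{M}^{\varphi}_q}$ finishes the proof.

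There is essentially no obstacle here beyond bookkeeping of the endpoint cases $r_1=\infty$ and $r_2=\infty$; the proof is ``free'' once the definitions of the norms in \eqref{eq:140820-140} and \eqref{eq:140820-141} are spelled out and Theorem \ref{thm:150205-1} is invoked to justify working with one fixed choice of $(\theta,\tau)$. Implicit in the $\mathcal{E}$-case is the assumption \eqref{eq:Nakai-3} needed to make the quasi-norm well-defined independently of $(\theta,\tau)$ when $r_1<\infty$ or $r_2<\infty$; I would simply inherit it from the context of Section \ref{s3}.
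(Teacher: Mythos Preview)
Your proof is correct and is precisely the standard argument the paper defers to: the paper's own proof merely says that for $r_1\le r_2$ one uses $\ell^{r_1}\hookrightarrow\ell^{r_2}$, and for $r_1>r_2$ one uses ``the same argument of Besov and Triebel--Lizorkin spaces situations,'' then omits the details. Your route through $\ell^{r_1}\hookrightarrow\ell^\infty$ together with $\{2^{-j\varepsilon}\}\in\ell^{r_2}$ handles both cases at once and is exactly the classical computation the paper has in mind; you have in fact supplied more detail than the paper does.
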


\begin{proof}
When $r_1\le r_2$, 
then it is easy to see that the desired inequality holds 
by 
$\ell^{r_1}{({\mathbb N}_0)} \hookrightarrow \ell^{r_2}{({\mathbb N}_0)}$. 
In the case of $r_1>r_2$, we can prove the desired inequality 
by using {the} same argument of Besov and Triebel-Lizorkin spaces situations. 
So we omit the proof.
\end{proof}

Next, we investigate the relation
between
$
{\mathcal S}({\mathbb R}^n),
{\mathcal A}^s_{{\mathcal M}^\varphi_q,r}({\mathbb R}^n)
$
and
${\mathcal S}'({\mathbb R}^n)$.

We begin with the following quantitative estimate:
\begin{lemma}\label{lem:150203-11}
Let $0<q<\infty, 0<r \le \infty$ and $\varphi \in {\mathcal G}_q$.
Assume that $s>0$ is such that
\begin{equation}\label{eq:150213-1}
\sum_{j=1}^\infty \frac{1}{2^{js}\varphi(2^{-j})}<\infty.
\end{equation}
Then
\[
{\mathcal A}^s_{{\mathcal M}^\varphi_q,r}({\mathbb R}^n)
\hookrightarrow 
B^0_{\infty 1}({\mathbb R}^n).
\]
In particular, for such $s$,
\[
{\mathcal A}^s_{{\mathcal M}^\varphi_q,r}({\mathbb R}^n)
\hookrightarrow 
B^0_{\infty 1}({\mathbb R}^n)
\hookrightarrow
{\rm BUC}({\mathbb R}^n)
\hookrightarrow
{\mathcal S}'({\mathbb R}^n).
\]
\end{lemma}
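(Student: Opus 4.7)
The plan is to compare the $B^0_{\infty 1}$-norm of $f$ to the ${\mathcal A}^s_{{\mathcal M}^\varphi_q,r}$-norm by passing each Littlewood--Paley piece from $L^\infty$ to the underlying Morrey space via the Plancherel--Polya--Nikol'skii pointwise bound, then summing geometrically using the hypothesis (\ref{eq:150213-1}). Recall that $\|f\|_{B^0_{\infty 1}} \sim \|\theta(D)f\|_{L^\infty} + \sum_{j=1}^\infty \|\tau_j(D)f\|_{L^\infty}$, so it suffices to control each of these $L^\infty$-norms.

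The first step is exactly the pointwise bound established in Example~\ref{example:140820-3}: for each $j \in {\mathbb N}$,
\[
\|\tau_j(D)f\|_{L^\infty}
\lesssim
\frac{1}{\varphi(2^{-j})}\|\tau_j(D)f\|_{{\mathcal M}^\varphi_q},
\]
and the same PPN argument applied to $\theta(D)f$, whose Fourier support is a fixed compact set containing the origin, gives $\|\theta(D)f\|_{L^\infty} \lesssim \|\theta(D)f\|_{{\mathcal M}^\varphi_q}$. The second step is the uniform estimate
\[
\sup_{j \ge 1} 2^{js}\|\tau_j(D)f\|_{{\mathcal M}^\varphi_q}
\lesssim
\|f\|_{{\mathcal A}^s_{{\mathcal M}^\varphi_q,r}}.
\]
For ${\mathcal A}={\mathcal N}$ this is the trivial embedding $\ell^r \hookrightarrow \ell^\infty$; for ${\mathcal A}={\mathcal E}$ one uses the pointwise inequality $2^{js}|\tau_j(D)f(x)| \le \bigl(\sum_k 2^{ksr}|\tau_k(D)f(x)|^r\bigr)^{1/r}$ (with the obvious sup-modification when $r=\infty$) and then takes the Morrey norm of both sides.

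Combining these two steps with the hypothesis gives
\[
\sum_{j=1}^\infty \|\tau_j(D)f\|_{L^\infty}
\lesssim
\sum_{j=1}^\infty \frac{2^{js}\|\tau_j(D)f\|_{{\mathcal M}^\varphi_q}}{2^{js}\varphi(2^{-j})}
\le
\|f\|_{{\mathcal A}^s_{{\mathcal M}^\varphi_q,r}}\sum_{j=1}^\infty \frac{1}{2^{js}\varphi(2^{-j})},
\]
and the last series is finite by (\ref{eq:150213-1}). Together with the estimate for $\theta(D)f$, this proves ${\mathcal A}^s_{{\mathcal M}^\varphi_q,r}({\mathbb R}^n) \hookrightarrow B^0_{\infty 1}({\mathbb R}^n)$. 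The classical chain $B^0_{\infty 1}({\mathbb R}^n) \hookrightarrow {\rm BUC}({\mathbb R}^n) \hookrightarrow {\mathcal S}'({\mathbb R}^n)$ then follows because $\theta(D)f + \sum_j \tau_j(D)f$ is an absolutely and uniformly convergent series of band-limited, hence bounded uniformly continuous, functions.

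There is no real obstacle; the only bookkeeping point is to notice that both the Besov and Triebel--Lizorkin cases reduce to the same uniform bound $\sup_j 2^{js}\|\tau_j(D)f\|_{{\mathcal M}^\varphi_q} \lesssim \|f\|_{{\mathcal A}^s_{{\mathcal M}^\varphi_q,r}}$ via an elementary pointwise argument, so that no extra Nakai-type hypothesis on $\varphi$ beyond $\varphi \in {\mathcal G}_q$ is required; the vector-valued maximal inequality of Theorem~\ref{thm:vector-maximal} is never invoked, only the scalar PPN estimate of Example~\ref{example:140820-3}.
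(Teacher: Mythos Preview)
Your proof is correct and follows essentially the same route as the paper: both invoke Example~\ref{example:140820-3} for the pointwise bound $\|\tau_j(D)f\|_{L^\infty}\lesssim \varphi(2^{-j})^{-1}\|\tau_j(D)f\|_{{\mathcal M}^\varphi_q}$ (and its analogue for $\theta(D)f$), reduce the ${\mathcal A}$-norm to $\sup_j 2^{js}\|\tau_j(D)f\|_{{\mathcal M}^\varphi_q}$ by the trivial embedding into ${\mathcal N}^s_{{\mathcal M}^\varphi_q,\infty}$, and then sum against the series in~(\ref{eq:150213-1}). Your write-up is slightly more explicit than the paper's in separating the ${\mathcal N}$ and ${\mathcal E}$ cases and in noting that no Nakai-type condition on $\varphi$ is needed, but the argument is the same.
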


\begin{proof}
Let 
$f \in {\mathcal N}^{s}_{{\mathcal M}^\varphi_q,\infty}({\mathbb R}^n)$.
Then we have
\[
|\tau_j(D)f(x)|
\lesssim
\frac{1}{\varphi(2^{-j})}
\cdot
\|f\|_{{\mathcal N}^{s}_{{\mathcal M}^\varphi_q,\infty}}
\]
for all $x \in {\mathbb R}^n$
thanks to Example \ref{example:140820-3}.
Likewise, we have
\[
|\theta(D)f(x)|
\lesssim
\left(
\int_{Q(x,1)}M[|\theta(D)f|^{q/2}](y)^2\,dy
\right)^{\frac1q}
\lesssim
\frac{1}{\varphi(1)}
\cdot
\|f\|_{{\mathcal N}^{s}_{{\mathcal M}^\varphi_q,\infty}}.
\]
Thus,
we have 
${\mathcal A}^s_{{\mathcal M}^\varphi_q,r}({\mathbb R}^n)
\hookrightarrow 
B^0_{\infty 1}({\mathbb R}^n)
\hookrightarrow
{\rm BUC}({\mathbb R}^n)
\hookrightarrow
{\mathcal S}'({\mathbb R}^n)$.
\end{proof}

Condition (\ref{eq:150213-1}) is a natural one
as the following remark implies:
\begin{remark}
\
\begin{enumerate}
\item
(\ref{eq:150213-1}) is satisfied
when $s>\frac{n}{q}$.
\item
(\ref{eq:150213-1}) is also necessary
for
${\mathcal N}^s_{{\mathcal M}^\varphi_q,\infty}({\mathbb R}^n)
\hookrightarrow B^0_{\infty 1}({\mathbb R}^n)$.
In fact,
if $\zeta \in {\mathcal S}$ is such that
$\chi_{Q(1.6) \setminus Q(1.5)} \le \zeta \le \chi_{Q(1.7) \setminus Q(1.4)}$,
then
\[
f=\sum_{j=1}^\infty 
\frac{({\mathcal F}^{-1}\zeta)(2^j\cdot)}{\varphi(2^{-j})}
\in {\mathcal A}^s_{{\mathcal M}^\varphi_q,\infty}({\mathbb R}^n),
\]
since
\[
\|f\|_{{\mathcal N}^s_{{\mathcal M}^\varphi_q,\infty}}
=
\sup_{j \in {\mathbb N}}
\left\|
\frac{({\mathcal F}^{-1}\zeta)(2^j\cdot)}{\varphi(2^{-j})}
\right\|_{{\mathcal M}^\varphi_q}
\lesssim
\sup_{j \in {\mathbb N}}
\left\|
\frac{[M[\chi_{Q(2^{-j})}])^{2/q}}{\varphi(2^{-j})}
\right\|_{{\mathcal M}^\varphi_q}
\lesssim 1.
\]
Meanwhile,
we have
\[
\|f\|_{B^0_{\infty 1}}
\sim
\sum_{j=1}^\infty \frac{1}{\varphi(2^{-j})2^{js}}.
\]
\end{enumerate}
\end{remark}

\begin{proposition}
Let $0<q<\infty$, $0<r \le \infty$, $s \in {\mathbb R}$
 and $\varphi \in {\mathcal G}_q$.
Assume in addition
that $\varphi$ satisfies
$(\ref{eq:Nakai-3})$
when $r<\infty$ and ${\mathcal A}={\mathcal E}$.
Then
\[
{\mathcal S}({\mathbb R}^n) 
\hookrightarrow 
{\mathcal A}^s_{{\mathcal M}^\varphi_q,r}({\mathbb R}^n)
\hookrightarrow 
{\mathcal S}'({\mathbb R}^n)
\]
in the sense of continuous embeddings.
\end{proposition}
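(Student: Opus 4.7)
The plan is to prove the two embeddings separately. For $\mathcal{S}(\mathbb{R}^n) \hookrightarrow \mathcal{A}^s_{\mathcal{M}^\varphi_q,r}(\mathbb{R}^n)$, I would exploit the fact that, since $\tau \in C^\infty_{\rm c}(\mathbb{R}^n)$ with $0 \notin \operatorname{supp}(\tau)$, all derivatives of $\tau_j$ vanish at the origin; consequently $\mathcal{F}^{-1}\tau_j$ has vanishing moments of every order. Applying Lemma \ref{lem:Grafakos} with $\psi = \mathcal{F}^{-1}\tau_j$ (centered at $x_\psi = 0$, scale $\nu = j$) and $\varphi = f(x-\cdot)$ (centered at $x_\varphi = x$, scale $\mu = 0$), using that $f \in \mathcal{S}$ yields $|\nabla^L f(y)| \lesssim p_{L+M}(f)(1+|y|)^{-M}$ for any $L,M$, I obtain
\[
|\tau_j(D)f(x)| \lesssim_{L,M} 2^{-jL}\,p_{N_L}(f)\,(1+|x|)^{-M}
\quad (j \in \mathbb{N}),
\]
for any $L,M \ge 0$, with $N_L$ depending on $L,M$. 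The same sort of argument (without vanishing moments) gives $|\theta(D)f(x)| \lesssim p_{N}(f)(1+|x|)^{-M}$.

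Combining these pointwise bounds with Corollary \ref{cor:140820-1}, which asserts $\|(1+|\cdot|)^{-M}\|_{\mathcal{M}^\varphi_q} < \infty$ provided $M > n/q$, I would choose $M > n/q$ and $L > |s| + 1$. For the $\mathcal{N}$-case I sum the geometric series
\[
\sum_{j=1}^\infty 2^{jsr}\|\tau_j(D)f\|_{\mathcal{M}^\varphi_q}^r
\lesssim p_{N}(f)^r \sum_{j=1}^\infty 2^{-jr(L-s)} \lesssim p_N(f)^r,
\]
while for the $\mathcal{E}$-case the pointwise bound pulls the $\ell^r$-norm out of $x$:
\[
\left(\sum_{j=1}^\infty 2^{jsr}|\tau_j(D)f(x)|^r\right)^{1/r}
\lesssim p_N(f)(1+|x|)^{-M},
\]
and one takes the Morrey norm. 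This establishes the first embedding, with no reliance on $(\ref{eq:Nakai-3})$ beyond what is already built into the very definition of $\mathcal{A}^s_{\mathcal{M}^\varphi_q,r}(\mathbb{R}^n)$.

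For $\mathcal{A}^s_{\mathcal{M}^\varphi_q,r}(\mathbb{R}^n) \hookrightarrow \mathcal{S}'(\mathbb{R}^n)$ the idea is to reduce to the already-established high-smoothness embedding into $\mathrm{BUC}(\mathbb{R}^n)$. Since $\varphi \in \mathcal{G}_q$, one has $\varphi(2^{-j}) \ge \varphi(1) 2^{-jn/q}$, hence for any $s_0 > n/q$ the summability condition $(\ref{eq:150213-1})$ holds. I would pick $M \in 2\mathbb{N}$ with $s + M > n/q$, so that Lemma \ref{lem:150203-11} applies at the level $s + M$. By the lifting property (Proposition \ref{prop:140820-1}), which requires precisely the assumption $(\ref{eq:Nakai-3})$ in the excluded $\mathcal{E}$-case, the operator $(1-\Delta)^{-M/2}$ is an isomorphism $\mathcal{A}^s_{\mathcal{M}^\varphi_q,r} \to \mathcal{A}^{s+M}_{\mathcal{M}^\varphi_q,r}$, so
\[
\|(1-\Delta)^{-M/2}f\|_{L^\infty} \lesssim \|(1-\Delta)^{-M/2}f\|_{\mathcal{A}^{s+M}_{\mathcal{M}^\varphi_q,r}} \lesssim \|f\|_{\mathcal{A}^s_{\mathcal{M}^\varphi_q,r}}.
\]
Then for $\phi \in \mathcal{S}(\mathbb{R}^n)$, since $(1-\Delta)^{M/2}$ is continuous $\mathcal{S} \to \mathcal{S}$, I pair
\[
|\langle f,\phi\rangle|
= \bigl|\langle (1-\Delta)^{-M/2}f,\,(1-\Delta)^{M/2}\phi\rangle\bigr|
\le \|(1-\Delta)^{-M/2}f\|_{L^\infty}\,\|(1-\Delta)^{M/2}\phi\|_{L^1},
\]
and $\|(1-\Delta)^{M/2}\phi\|_{L^1} \lesssim p_N(\phi)$ for some $N = N(M,n)$. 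This yields $|\langle f,\phi\rangle| \lesssim \|f\|_{\mathcal{A}^s_{\mathcal{M}^\varphi_q,r}}\,p_N(\phi)$, which is the continuity of the embedding into $\mathcal{S}'(\mathbb{R}^n)$.

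The main obstacle is organizational rather than technical: one must verify that the auxiliary results (Proposition \ref{prop:140820-1}, Lemma \ref{lem:150203-11}, Corollary \ref{cor:140820-1}) can be applied simultaneously under the standing hypotheses of the proposition, in particular that the choice of $M$ in the lifting step can be made large enough that $(\ref{eq:150213-1})$ is satisfied at level $s+M$ and that $(1-\Delta)^{M/2}$ maps $\mathcal{S}$ continuously to $\mathcal{S}$ with operator $L^1$-norm controlled by a Schwartz seminorm of $\phi$; both facts are classical, so no substantive new estimate is required.
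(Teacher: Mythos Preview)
Your proof is correct. For the second embedding $\mathcal{A}^s_{\mathcal{M}^\varphi_q,r}\hookrightarrow\mathcal{S}'$ you follow essentially the paper's route: lift to large smoothness via Proposition~\ref{prop:140820-1} and then invoke Lemma~\ref{lem:150203-11}; your explicit duality pairing with $(1-\Delta)^{M/2}\phi$ just spells out the last step the paper leaves implicit.

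For the first embedding $\mathcal{S}\hookrightarrow\mathcal{A}^s_{\mathcal{M}^\varphi_q,r}$ you take a genuinely more direct path. The paper first reduces to the case $\mathcal{A}=\mathcal{N}$, $r=\infty$, $s\le 0$ by combining the lifting isomorphism (Proposition~\ref{prop:140820-1}) with the embedding of Proposition~\ref{prop:140820-2}, and only then applies Lemma~\ref{lem:Grafakos} in the form of Example~2.19, where $L=0$ and no $j$-decay is extracted. You instead swap the roles of $\varphi$ and $\psi$ in Lemma~\ref{lem:Grafakos}, using that $\mathcal{F}^{-1}\tau_j$ has vanishing moments of every order, to obtain the factor $2^{-jL}$ directly; this lets you sum in $j$ for arbitrary $s$ and handle both the $\mathcal{N}$- and $\mathcal{E}$-scales in one stroke, without invoking the lifting property. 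The payoff is that your argument for this direction does not rely on condition~(\ref{eq:Nakai-3}) at all, whereas the paper's reduction via Proposition~\ref{prop:140820-1} formally does. The paper's approach, on the other hand, isolates the elementary estimate in the simplest possible setting ($r=\infty$, $s\le 0$) and reuses the already-proved structural results.
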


\begin{proof}
Let us prove
${\mathcal S}({\mathbb R}^n)
\hookrightarrow 
{\mathcal A}^s_{{\mathcal M}^\varphi_q,r}({\mathbb R}^n)$.
The key observation is (\ref{eq:150205-3}).
Once this is obtained,
we can resort to \cite[Theorem 3.17]{LSUYY2}.
Here for the sake of convenience for readers 
we outline the proof of \cite[Theorem 3.17]{LSUYY2}
by adapting it to our setting.

Since 
\begin{equation}\label{eq:140820-5}
(1-\Delta)^{-M/2}:
{\mathcal A}^s_{{\mathcal M}^\varphi_q,r}({\mathbb R}^n)
\to
{\mathcal A}^{s+M}_{{\mathcal M}^\varphi_q,r}({\mathbb R}^n)
\end{equation}
is an isomorphism for all $M>0$
by virtue of Proposition \ref{prop:140820-1} and
\begin{equation}\label{eq:140820-3}
{\mathcal N}^s_{{\mathcal M}^\varphi_q,\infty}({\mathbb R}^n)
\hookrightarrow 
{\mathcal A}^{s-\varepsilon}_{{\mathcal M}^\varphi_q,r}({\mathbb R}^n).
\end{equation}
in the sense of continuous embedding
thanks to Proposition \ref{prop:140820-2} for all $\varepsilon>0$,
we have only to prove
\begin{equation}\label{eq:140820-4}
{\mathcal S}({\mathbb R}^n)
\hookrightarrow
{\mathcal N}^s_{{\mathcal M}^\varphi_q,\infty}({\mathbb R}^n)
\end{equation}
for all $s \le 0$.
Indeed, combining (\ref{eq:140820-3}) and (\ref{eq:140820-4}),
we obtain
\[
{\mathcal S}({\mathbb R}^n)
\hookrightarrow 
{\mathcal A}^{s-\varepsilon}_{{\mathcal M}^\varphi_q,r}({\mathbb R}^n).
\]
By the use of (\ref{eq:140820-5}),
we have
\[
{\mathcal S}({\mathbb R}^n)
=
(1-\Delta)^{-M/2}[{\mathcal S}({\mathbb R}^n)]
\hookrightarrow 
(1-\Delta)^{-M/2}[{\mathcal A}^{s-\varepsilon}_{{\mathcal M}^\varphi_q,r}({\mathbb R}^n)]
={\mathcal A}^{s+M-\varepsilon}_{{\mathcal M}^\varphi_q,r}({\mathbb R}^n).
\]
Thus, the matters are reduced
to proving (\ref{eq:140820-4}).

Let 
$f \in {\mathcal S}({\mathbb R}^n)$
and
let $N_0$ be a constant obtained in Corollary \ref{cor:140820-1}.
Then according to Lemma \ref{lem:Grafakos},
we have
\[
|\theta(D)f(x)|+|\tau_k(D)f(x)|
\lesssim p_{[1+N_0]}(f)
(1+|x|)^{-N_0},
\]
where the implicit constant in $\lesssim$ does not depend on $k$.
Thus,
\begin{align*}
\|f\|_{{\mathcal N}^s_{{\mathcal M}^\varphi_q,\infty}}
&=
\|\theta(D)f\|_{{\mathcal M}^\varphi_q}
+
\sup_{k \in {\mathbb N}}2^{ks}
\|\tau_k(D)f\|_{{\mathcal M}^\varphi_q}\\
&\lesssim p_{[1+N_0]}(f)
\left(
\|(1+|\cdot|)^{-N_0}\|_{{\mathcal M}^\varphi_q}
+
\sup_{k \in {\mathbb N}}2^{ks}
\|(1+|\cdot|)^{-N_0}\|_{{\mathcal M}^\varphi_q}
\right)\\
&\lesssim p_{[1+N_0]}(f),
\end{align*}
which proves 
${\mathcal S}({\mathbb R}^n)
\hookrightarrow 
{\mathcal A}^s_{{\mathcal M}^\varphi_q,r}({\mathbb R}^n)$.

Let us now prove
${\mathcal A}^s_{{\mathcal M}^\varphi_q,r}({\mathbb R}^n)
\hookrightarrow 
{\mathcal S}'({\mathbb R}^n)$.
Since 
\[
(1-\Delta)^{-M/2}:
{\mathcal A}^s_{{\mathcal M}^\varphi_q,r}({\mathbb R}^n)
\to
{\mathcal A}^{s+M}_{{\mathcal M}^\varphi_q,r}({\mathbb R}^n)
\]
is an isomorphism for all $M>0$ and
\[
{\mathcal A}^s_{{\mathcal M}^\varphi_q,r}({\mathbb R}^n)
\hookrightarrow 
{\mathcal N}^{s}_{{\mathcal M}^\varphi_q,\infty}({\mathbb R}^n)
\]
in the sense of continuous embedding,
we have only to prove
\[
{\mathcal N}^{s}_{{\mathcal M}^\varphi_q,\infty}({\mathbb R}^n)
\hookrightarrow 
{\mathcal S}'({\mathbb R}^n)
\]
for $s\gg 1$,
which is already done in Lemma \ref{lem:150203-11}.
\end{proof}

\section{Decompositions}
\label{s4}

\subsection{Atomic decomposition}

We consider the atomic decomposition.
\begin{definition}
Let $0<q<\infty$, $0<r \le \infty$, $s \in {\mathbb R}$
 and $\varphi \in {\mathcal G}_q$.
\begin{enumerate}
\item
{\it The $($nonhomogeneous$)$ generalized Besov-Morrey sequence space}
${\bf n}_{{\mathcal M}^\varphi_q,r}^s({\mathbb R}^n)$
is the set of all
{
doubly indexed sequences
}
$\lambda
=\{\lambda_{jm}\}_{j \in {\mathbb N}_0, \, m \in {\mathbb Z}^n}$
for which the quasi-norm
\[
\|\lambda\|_{{\bf n}_{{\mathcal M}^\varphi_q,r}^s}
\equiv 
\begin{cases}
\displaystyle
\left(\sum_{j=0}^\infty
2^{jsr}
\left\|\sum_{m \in {\mathbb Z}^n}\lambda_{jm}\chi_{Q_{jm}}
\right\|_{{\mathcal M}^\varphi_q}^r
\right)^{\frac1r}&(r<\infty),\\
\displaystyle
\sup_{j \in {\mathbb N}_0}
2^{js}
\left\|\sum_{m \in {\mathbb Z}^n}\lambda_{jm}\chi_{Q_{jm}}
\right\|_{{\mathcal M}^\varphi_q}
&(r=\infty)
\end{cases}
\]
is finite.
\item
{\it The $($nonhomogeneous$)$ generalized Triebel-Lizorkin-Morrey sequence space}
${\bf e}_{{\mathcal M}^\varphi_q,r}^s({\mathbb R}^n)$
is the set of all
{
$\lambda=\{\lambda_{jm}\}_{j\in\N_0,m\in\Z^n}$
}
for which the quasi-norm
\[
\|\lambda\|_{{\bf e}_{{\mathcal M}^\varphi_q,r}^s}
\equiv 
\begin{cases}
\displaystyle
\left\|
\left\{\sum_{j=0}^\infty
2^{jsr}
\left(
\sum_{m \in {\mathbb Z}^n}|\lambda_{jm}|\chi_{Q_{jm}}
\right)^r
\right\}^{\frac1r}\right\|_{{\mathcal M}^\varphi_q}&(r<\infty),\\
\displaystyle
\left\|
\sup_{j \in {\mathbb N}_0}
2^{js}
\left(\sum_{m \in {\mathbb Z}^n}|\lambda_{jm}|\chi_{Q_{jm}}\right)
\right\|_{{\mathcal M}^\varphi_q}
&(r=\infty)
\end{cases}
\]
is finite.
\item
The space
${\bf a}^s_{{\mathcal M}^\varphi_q,r}({\mathbb R}^n)$
denotes either
${\bf n}^s_{{\mathcal M}^\varphi_q,r}({\mathbb R}^n)$
or
${\bf e}^s_{{\mathcal M}^\varphi_q,r}({\mathbb R}^n)$.
Assume $(\ref{eq:Nakai-3})$
in the case
when
${\bf a}^s_{{\mathcal M}^\varphi_q,r}({\mathbb R}^n)
={\bf e}^s_{{\mathcal M}^\varphi_q,r}({\mathbb R}^n)$
with $r<\infty$.
\end{enumerate}
\end{definition}

\begin{definition}
Let $L \in {\mathbb N}_0 \cup \{-1\}$ and $K \in {\mathbb N}_0$.
\begin{enumerate}
\item
Let $m \in {\mathbb Z}^n$.
A $C^K$-function $a:{\mathbb R}^n \to {\mathbb C}$
is said to be a $(K,L)$-atom supported near $Q_{0m}$,
if 
\begin{equation}\label{eq:150311-3}
|\partial^\alpha a(x)|\le \chi_{3Q_{0m}}(x)
\end{equation}
for all $\alpha$ with $|\alpha| \le K$.
\item
Let $j=1,2,\ldots$ and $m \in {\mathbb Z}^n$.
A $C^K$-function $a:{\mathbb R}^n \to {\mathbb C}$
is said to be a $(K,L)$-atom supported near $Q_{jm}$,
if 
\begin{equation}\label{eq:140817-3}
2^{-j|\alpha|}|\partial^\alpha a(x)|\le \chi_{3Q_{jm}}(x)
\end{equation}
for all $\alpha$ with $|\alpha| \le K$
and
\begin{equation}\label{eq:140817-4}
\int_{{\mathbb R}^n}x^\beta a(x)\,dx=0
\end{equation}
for all $\beta$ with $|\beta| \le L$ when $L \ge 0$.
\item
Denote by ${\mathfrak A}={\mathfrak A}({\mathbb R}^n)$ 
the set of all collections
$\{a_{jm}\}_{j \in {\mathbb N}_0, m \in {\mathbb Z}^n}$
of $C^K$-functions
such that
each $a_{jm}$ is a $(K,L)$-atom supported near $Q_{jm}$.
\end{enumerate}
\end{definition}

Before we proceed further,
a helpful remark may be in order.
\begin{remark}
The number $3$ does not count in the above definition;
any number $d$ will do as long as $d>1$.
\end{remark}

\begin{theorem}\label{thm:decomposition-1}
Let $0<q<\infty$, $0<r \le \infty$, $s \in {\mathbb R}$
 and $\varphi \in {\mathcal G}_q$.
Let also $L \in {\mathbb N}_0 \cup \{-1\}$ and $K \in {\mathbb N}_0$.
Assume
\begin{equation}\label{eq:L}
K \ge [1+s]_+, \quad L \ge \max(-1,[\sigma_q-s]),
\end{equation}
where
$\sigma_q \equiv n\left(\frac{1}{q}-1\right)_+$.
\begin{enumerate}
\item
Let $f \in {\mathcal N}^s_{{\mathcal M}^\varphi_q,r}({\mathbb R}^n)$.
Then there exist a family
$\{a_{jm}\}_{j \in {\mathbb N}_0, m \in {\mathbb Z}^n}
\in {\mathfrak A}$
and a
{
doubly indexed complex sequence
}
$\lambda=\{\lambda_{jm}\}_{j \in {\mathbb N}_0, m \in {\mathbb Z}^n}
\in {\bf n}^s_{{\mathcal M}^\varphi_q,r}({\mathbb R}^n)$
such that
\begin{equation}\label{eq:140817-101}
f=\sum_{j=0}^\infty 
\left(\sum_{m \in {\mathbb Z}^n}\lambda_{jm}a_{jm}\right)
\mbox{\rm \, in \, }
{\mathcal S}'({\mathbb R}^n)
\end{equation} 
and that
\begin{equation}\label{eq:140817-102}
\|\lambda\|_{{\bf n}^s_{{\mathcal M}^\varphi_q,r}}
\lesssim
\|f\|_{{\mathcal N}^s_{{\mathcal M}^\varphi_q,r}}.
\end{equation}
\item
Let
$\{a_{jm}\}_{j \in {\mathbb N}_0, m \in {\mathbb Z}^n}
\in {\mathfrak A}$
and 
$\lambda=\{\lambda_{jm}\}_{j \in {\mathbb N}_0, m \in {\mathbb Z}^n}
\in {\bf n}^s_{{\mathcal M}^\varphi_q,r}({\mathbb R}^n)$.
Then
$$
f \equiv \sum_{j=0}^\infty 
\left(
\sum_{m \in {\mathbb Z}^n}\lambda_{jm}a_{jm}
\right)
$$
converges in ${\mathcal S}'({\mathbb R}^n)$ 
and belongs to
${\mathcal N}^s_{{\mathcal M}^\varphi_q,r}({\mathbb R}^n)$.
Furthermore,
\[
\|f\|_{{\mathcal N}^s_{{\mathcal M}^\varphi_q,r}}
\lesssim
\|\lambda\|_{{\bf n}^s_{{\mathcal M}^\varphi_q,r}}.
\]

\end{enumerate}
\end{theorem}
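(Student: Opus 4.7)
The plan is to prove the two halves of the theorem separately, each reducing to the maximal-function and multiplier machinery already established in Section~\ref{s2}.

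For part (1), I would apply Proposition~\ref{prop:Rychkov}, choosing the auxiliary integer so that $\varphi_1$ has at least $L+1$ vanishing moments, and use the building blocks $\gamma_{jm}$ of (\ref{eq:140821-11}). The lemma following that equation already records their support, smoothness, and vanishing-moment properties. Differentiating under the integral, together with $|\partial^\alpha \varphi_j(x)| \lesssim 2^{j(n+|\alpha|)}$ and $|Q_{jm}|=2^{-jn}$, gives
\[
2^{-j|\alpha|}|\partial^\alpha \gamma_{jm}(x)| \lesssim \chi_{3Q_{jm}}(x)\sup_{y \in Q_{jm}}|\psi_j * f(y)|,
\]
so I would set $\lambda_{jm}\equiv C\sup_{y\in Q_{jm}}|\psi_j * f(y)|$ and $a_{jm}\equiv \lambda_{jm}^{-1}\gamma_{jm}$ to obtain a family $\{a_{jm}\}\in{\mathfrak A}$ realizing (\ref{eq:140817-101}). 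For the sequence-space norm, a Peetre-type estimate in the spirit of Example~\ref{example:140820-3} yields
\[
\sum_m \lambda_{jm}\chi_{Q_{jm}}(x) \lesssim M^{(\eta)}[\psi_j * f](x)
\]
for any $\eta\in(0,q)$; Lemma~\ref{2014-9-4-4} combined with Theorem~\ref{thm:vector-maximal}(1) converts this into a Morrey bound, and a multiplier comparison via Theorem~\ref{Multiplier result}(1) controls $\|\psi_j*f\|_{{\mathcal M}^\varphi_q}$ by $\sum_{|\ell|\le 1}\|\tau_{j+\ell}(D)f\|_{{\mathcal M}^\varphi_q}$ for $j\ge 1$ (with an extra $\|\theta(D)f\|_{{\mathcal M}^\varphi_q}$ contribution at $j=0$). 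Weighted $\ell^r$-summation in $j$ then yields (\ref{eq:140817-102}).

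For part (2), the aim is the pointwise-in-$k$ estimate
\[
\|\tau_k(D) f\|_{{\mathcal M}^\varphi_q} \lesssim \sum_{j=0}^\infty 2^{-|j-k|\delta}\Bigl\|\sum_m \lambda_{jm}\chi_{Q_{jm}}\Bigr\|_{{\mathcal M}^\varphi_q}
\]
for some $\delta>0$ strictly larger than $|s|$, after which the discrete Hardy inequality (Proposition~\ref{prop:140820-11}) finishes the norm bound. The pointwise estimate comes from Lemma~\ref{lem:Grafakos} applied in the two regimes $k\le j$ and $k>j$: in the first, I would put the derivatives on ${\mathcal F}^{-1}\tau_k$ and exploit the $L$ vanishing moments of $a_{jm}$; in the second, the derivatives go on $a_{jm}$ and one uses that $\tau$ vanishes near the origin, so ${\mathcal F}^{-1}\tau_k$ has vanishing moments of all orders. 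The resulting pointwise bound has the form
\[
|\tau_k(D)a_{jm}(x)| \lesssim \frac{2^{-|j-k|\delta_0}\,2^{\min(j,k)n}}{(1+2^{\min(j,k)}|x-c(Q_{jm})|)^M},
\]
with $\delta_0=L+1$ when $k\le j$ and $\delta_0=K$ when $k>j$. Weighting by $\lambda_{jm}$, summing over $m$ and invoking Lemma~\ref{lem:140821-1} converts the right side into $M^{(\eta)}\bigl[\sum_m\lambda_{jm}\chi_{Q_{jm}}\bigr]$ up to a geometric prefactor, and its Morrey norm is controlled via Lemma~\ref{2014-9-4-4} and Theorem~\ref{thm:vector-maximal}(1). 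Convergence of the atomic series in ${\mathcal S}'({\mathbb R}^n)$ follows from the same pointwise bounds.

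The main obstacle is the parameter bookkeeping in part~(2): I must choose $\eta\in(0,q)$ and $M$ in Lemma~\ref{lem:Grafakos} so that three things happen simultaneously---Lemma~\ref{lem:140821-1} applies (requires $M$ large relative to $n/\eta$), $M^{(\eta)}$ is bounded on ${\mathcal M}^\varphi_q$ (requires $\eta<q$), and the geometric factor $2^{-|j-k|\delta_0}$ dominates the weight $2^{(j-k)s}$ in Hardy's inequality. The hypothesis (\ref{eq:L}), namely $K\ge [1+s]_+$ and $L\ge \max(-1,[\sigma_q-s])$, is engineered exactly so that this is possible, since $\sigma_q=n(1/q-1)_+$ encodes the loss one pays when $q<1$ in replacing characteristic functions by their powered maximal functions. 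Outside of this, the argument follows the classical Frazier--Jawerth atomic-decomposition template, with (\ref{eq:150205-3}) and Theorem~\ref{thm:vector-maximal}(1) playing the role of their $L^p$ counterparts.
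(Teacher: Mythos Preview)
Your plan matches the paper's line for line: part~(1) is deferred there to \cite[Theorem~10.15]{LSUYY2}, but the Rychkov reproducing formula and the building blocks $\gamma_{jm}$ of (\ref{eq:140821-11}) that you invoke are exactly the machinery the paper sets up for this purpose; part~(2) runs through Lemma~\ref{lem:Grafakos} (packaged as Corollary~\ref{cor:150821-1}), then Lemma~\ref{lem:140821-1}, Proposition~\ref{prop:140820-11}, and the scalar maximal inequality Theorem~\ref{thm:vector-maximal}(1).

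One correction: your displayed pointwise bound on $\tau_k(D)a_{jm}$ carries a spurious factor $2^{\min(j,k)n}$ that would blow up in $j$ and prevent summation. The correct output of Lemma~\ref{lem:Grafakos} (cf.\ Corollary~\ref{cor:150821-1}) is
\[
|\tau_k(D)a_{jm}(x)|\lesssim
\begin{cases}
2^{-(k-j)K}\,(1+2^{j}|x-2^{-j}m|)^{-P} & (k\ge j),\\
2^{(k-j)(L+n+1)}\,(1+2^{k}|x-2^{-j}m|)^{-P} & (k\le j),
\end{cases}
\]
because $a_{jm}$ is normalised by $|a_{jm}|\le 1$, not by $2^{jn}$. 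Converting the $k\le j$ line to the $2^j$-scale costs $2^{(j-k)P}$, after which both cases read $\lesssim 2^{-|j-k|\delta}M[\chi_{Q_{jm}}](x)^{P/n}$ with $\delta=\min(K-s,\,L+1+n-P+s)$ once the $2^{ks}$ weight is distributed; the paper's choice $n/\min(1,q)<P<L+n+1+s$, which (\ref{eq:L}) permits, is what makes $\delta>0$. Your values $\delta_0=L+1$ and $\delta_0=K$ are thus not the ones that feed into Hardy's inequality---the conversion to the maximal-function form shaves off $P-n$ from the first. Finally, the paper does not deduce $\mathcal S'$-convergence directly from the pointwise bounds but first truncates to $j<N$, proves the estimate uniformly in $N$, shows convergence in the weaker space $\mathcal N^{s-\rho}_{\mathcal M^\varphi_q,r}$ for a small $\rho>0$ still allowed by (\ref{eq:L}), and then invokes the Fatou property of $\mathcal M^\varphi_q$; your one-line remark on convergence would need this two-step argument.
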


\begin{theorem}\label{thm:decomposition-2}
Let $0<q<\infty$, $0<r \le \infty$, $s \in {\mathbb R}$
 and $\varphi:(0,\infty) \to (0,\infty) \in {\mathcal G}_q$.
Let also $L \in {\mathbb N}_0 \cup \{-1\}$ and $K \in {\mathbb N}_0$.
Assume
\[
K \ge [1+s]_+, \quad L \ge \max(-1,[\sigma_{qr}-s]),
\]
where $\sigma_{qr} \equiv \max(\sigma_q,\sigma_r)$.
\begin{enumerate}
\item
Let $f \in {\mathcal E}^s_{{\mathcal M}^\varphi_q,r}({\mathbb R}^n)$.
Then there exist a family
$\{a_{jm}\}_{j \in {\mathbb N}_0, m \in {\mathbb Z}^n}
\in {\mathfrak A}$
and a 
{
doubly indexed complex sequence
}
$\lambda=\{\lambda_{jm}\}_{j \in {\mathbb N}_0, m \in {\mathbb Z}^n}
\in {\bf e}^s_{{\mathcal M}^\varphi_q,r}({\mathbb R}^n)$
satisfying 
$(\ref{eq:140817-101})$ 
and that
\begin{equation}\label{eq:140817-103}
\|\lambda\|_{{\bf e}^s_{{\mathcal M}^\varphi_q,r}}
\lesssim
\|f\|_{{\mathcal E}^s_{{\mathcal M}^\varphi_q,r}}.
\end{equation}
\item
Let
$\{a_{jm}\}_{j \in {\mathbb N}_0, m \in {\mathbb Z}^n}
\in {\mathfrak A}$
and 
$\lambda=\{\lambda_{jm}\}_{j \in {\mathbb N}_0, m \in {\mathbb Z}^n}
\in {\bf e}^s_{{\mathcal M}^\varphi_q,r}({\mathbb R}^n)$.
Then
$$
f \equiv \sum_{j=0}^\infty 
\left(\sum_{m \in {\mathbb Z}^n}\lambda_{jm}a_{jm}\right)
$$
converges in ${\mathcal S}'({\mathbb R}^n)$ 
and belongs to
${\mathcal E}^s_{{\mathcal M}^\varphi_q,r}({\mathbb R}^n)$.
Furthermore,
\begin{equation}\label{eq:150826-1}
\|f\|_{{\mathcal E}^s_{{\mathcal M}^\varphi_q,r}}
\lesssim
\|\lambda\|_{{\bf e}^s_{{\mathcal M}^\varphi_q,r}}.
\end{equation}
\end{enumerate}
\end{theorem}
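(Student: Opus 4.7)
The plan is to prove both parts in parallel with the Besov case (Theorem \ref{thm:decomposition-1}), the key difference being that the $\mathcal{E}$-quasi-norm mixes summation in $j$ inside the Morrey norm, so one is forced to invoke the vector-valued maximal inequality from Theorem \ref{thm:vector-maximal}(2), which is precisely why assumption $(\ref{eq:Nakai-3})$ is required. I would handle part (2) (synthesis) first, then part (1) (analysis via Rychkov's formula), since both reduce to the same core pointwise estimate.

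For part (2), let $f \equiv \sum_{j,m}\lambda_{jm} a_{jm}$ formed from $(K,L)$-atoms. The first step is a pointwise decay estimate for $|\tau_k(D) a_{jm}(x)|$. Writing $\tau_k(D) a_{jm}(x) = \int a_{jm}(y)\,({\mathcal F}^{-1}\tau_k)(x-y)\,dy$ and applying Lemma \ref{lem:Grafakos} separately in the two regimes $j \le k$ (using the $L+1$ vanishing moments of $a_{jm}$) and $j > k$ (using the vanishing moments of ${\mathcal F}^{-1}\tau_k$ arising from $0\notin {\rm supp}(\tau)$, together with the $K$ derivatives of $a_{jm}$), one obtains
\[
|\tau_k(D)a_{jm}(x)| \lesssim 2^{-|j-k|\delta}\,\frac{2^{\min(j,k)n}}{(1+2^{\min(j,k)}|x-c(Q_{jm})|)^N}
\]
for some $\delta>0$ and any large $N$. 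The thresholds $K \ge [1+s]_+$ and $L \ge \max(-1,[\sigma_{qr}-s])$ guarantee that $\delta$ exceeds the critical value for the subsequent Hardy step. Summing over $m$ and invoking Lemma \ref{lem:140821-1} then yields
\[
\sum_m |\lambda_{jm}||\tau_k(D)a_{jm}(x)| \lesssim 2^{-|j-k|\delta}\,M^{(\eta)}\Bigl[\sum_m |\lambda_{jm}|\chi_{Q_{jm}}\Bigr](x)
\]
for $\eta$ chosen with $\eta<\min(q,r)$.

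Next I would weight by $2^{ks}$, take the $\ell^r$-norm in $k$, apply the discrete Hardy inequality (Proposition \ref{prop:140820-11}) to absorb the $2^{-|j-k|\delta}$ factor, and finally take the ${\mathcal M}^\varphi_q$-norm. Here Lemma \ref{2014-9-4-4} lets me move the $\eta$-power outside, so that Theorem \ref{thm:vector-maximal}(2), applied to $M$ on the $\ell^{r/\eta}$-valued space ${\mathcal M}^{\varphi^{1/\eta}}_{q/\eta}$, produces $(\ref{eq:150826-1})$. Convergence of $\sum_{j,m}\lambda_{jm}a_{jm}$ in $\mathcal{S}'$ is simultaneously obtained, since the partial sums are Cauchy in ${\mathcal E}^s_{{\mathcal M}^\varphi_q,r}({\mathbb R}^n) \hookrightarrow \mathcal{S}'({\mathbb R}^n)$.

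For part (1), I apply Proposition \ref{prop:Rychkov} to write $f=\sum_j \psi_j*\varphi_j*f$, arranging that $\varphi_0,\psi_0$ are supported in $[-1/4,1/4]^n$ and that $\varphi_j$ has $L$ vanishing moments for $j\ge 1$. Setting
\[
\gamma_{jm}(x) \equiv \int_{Q_{jm}} \varphi_j(x-y)(\psi_j*f)(y)\,dy,
\]
one verifies directly that $\gamma_{jm}$ is supported in $3Q_{jm}$, carries the required moments for $j\ge 1$, and satisfies $|\partial^\alpha \gamma_{jm}(x)| \lesssim 2^{j|\alpha|}\sup_{y\in Q_{jm}}|\psi_j*f(y)|$. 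Normalizing by $\lambda_{jm} \equiv C\sup_{y\in Q_{jm}}|\psi_j*f(y)|$ produces atoms $a_{jm} \equiv \gamma_{jm}/\lambda_{jm}$ satisfying $(\ref{eq:150311-3})$ and $(\ref{eq:140817-4})$. Finally, Theorem \ref{thm:PPN} gives $\lambda_{jm} \lesssim M^{(\eta)}[\psi_j*f](x)$ for $x\in Q_{jm}$ and $\eta<\min(q,r)$; combining with Lemma \ref{2014-9-4-4}, Theorem \ref{thm:vector-maximal}(2), and Theorem \ref{thm:150205-1} (to exchange the Rychkov resolution $\psi_j*f$ for the $\tau_j(D)f$ that defines the $\mathcal{E}$-norm) yields $(\ref{eq:140817-103})$.

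The principal obstacle is obtaining the pointwise decay estimate for $|\tau_k(D) a_{jm}(x)|$ with $\delta$ large enough that the discrete Hardy inequality can be applied inside the $\ell^r$-sum. This is precisely what forces the sharper threshold $L \ge \max(-1,[\sigma_{qr}-s])$ with $\sigma_{qr}=\max(\sigma_q,\sigma_r)$, rather than the Besov threshold $\sigma_q$: when $r<q$, the $r$-power in the vector-valued inequality dictates that $\eta$ be chosen smaller, which in turn demands more atomic moments to preserve the geometric decay. Everything else is a routine adaptation of the Besov argument, contingent on Theorem \ref{thm:vector-maximal}(2) and hence on hypothesis $(\ref{eq:Nakai-3})$.
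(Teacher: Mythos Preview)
Your approach to part~(2) is essentially the paper's: pointwise decay via Lemma~\ref{lem:Grafakos} (packaged there as Corollary~\ref{cor:150821-1}), conversion of the sum over $m$ to a powered maximal function via Lemma~\ref{lem:140821-1}, the discrete Hardy inequality (Proposition~\ref{prop:140820-11}), and the vector-valued maximal inequality of Theorem~\ref{thm:vector-maximal}(2). For part~(1) the paper simply cites \cite[Theorem~10.15]{LSUYY2}; your Rychkov-based sketch is the standard route to that result, though the final step of exchanging $\psi_j*f$ for $\tau_j(D)f$ is not a direct application of Theorem~\ref{thm:150205-1} (which concerns two Fourier-side resolutions) and requires the local-means machinery behind Lemma~\ref{lem:140817-1}.

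One point in your argument for part~(2) needs correction: your claim that the partial sums are Cauchy in $\mathcal{E}^s_{\mathcal{M}^\varphi_q,r}({\mathbb R}^n)$ is not justified by the norm bound alone, and indeed fails when $r=\infty$ (the individual layers $f_j=\sum_m \lambda_{jm}a_{jm}$ need not tend to zero in that norm). The paper proceeds differently. It first proves the estimate~(\ref{eq:150826-1}) under the auxiliary assumption that $\lambda_{jm}=0$ for $j\ge N$. To remove this assumption it chooses $\rho>0$ small enough that $L\ge[\sigma_{qr}-s+\rho]$ still holds, applies the truncated estimate in the space $\mathcal{E}^{s-\rho}_{\mathcal{M}^\varphi_q,r}$ to obtain $\|f_j\|_{\mathcal{E}^{s-\rho}_{\mathcal{M}^\varphi_q,r}}\lesssim 2^{-\rho j}\|\lambda\|_{{\bf e}^s_{\mathcal{M}^\varphi_q,r}}$, and concludes convergence of $\sum_j f_j$ in $\mathcal{E}^{s-\rho}_{\mathcal{M}^\varphi_q,r}\hookrightarrow\mathcal{S}'$. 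Membership of the limit in $\mathcal{E}^{s}_{\mathcal{M}^\varphi_q,r}$ then follows from the uniform bound on the truncated sums together with the Fatou property of $\mathcal{M}^\varphi_q$. This two-step device---convergence in a slightly weaker space, then Fatou---is the piece missing from your sketch.
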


Theorem \ref{thm:decomposition-1}(1) and Theorem \ref{thm:decomposition-2}(1)
are already obtained in \cite[Theorem 10.15]{LSUYY2}.
So, we concentrate on the proof of
Theorem \ref{thm:decomposition-1}(2) and Theorem \ref{thm:decomposition-2}(2).
{The conditions on $K$ and $L$ are milder.}
To prove them,
we invoke Lemma \ref{lem:Grafakos}.
Its direct corollary is:
\begin{corollary}\label{cor:150821-1}
Let $P>0$ be arbitrary.
Let $K \in {\mathbb N}_0$ and $L \in {\mathbb N}_0 \cup \{-1\}$.
Suppose that we are given an atom $a_{jm}$
supported near $Q_{jm}$.
\begin{enumerate}
\item
Let $j \in {\mathbb N}_0$ and $m \in {\mathbb Z}^n$.
Then
\begin{equation}\label{eq:140817-1}
|\theta(D)a_{jm}(x)| \lesssim 2^{-j(L+1)}M[\chi_{Q_{0m}}](x)^{\frac{N}{n}}.
\end{equation}
In particular, 
\[
\left|\theta(D)\left(\sum_{m \in {\mathbb Z}^n}a_{jm}\right)(x)\right| 
\lesssim 2^{-j(L+1)}
\sum_{m \in {\mathbb Z}^n}M[\chi_{Q_{0m}}](x)^{\frac{N}{n}}.
\]
\item
Let $\nu \in {\mathbb N}$, $j \in {\mathbb N}_0$ and $m \in {\mathbb Z}^n$.
Then
\begin{equation}\label{eq:140817-2}
|\tau_\nu(D)a_{jm}(x)| \lesssim 
\begin{cases}
\displaystyle
2^{-(\nu-j)K}
M[\chi_{Q_{jm}}](x)^{\frac{N}{n}}
&(\nu \ge j),
\\
\displaystyle
2^{(\nu-j)(L+1+n-P)}
M[\chi_{Q_{jm}}](x)^{\frac{N}{n}}
&(\nu \le j).
\end{cases}
\end{equation}
In particular,
by letting
\begin{equation}\label{eq:delta}
\delta\equiv \min(L+1+n-P+s,K-s),
\end{equation}
we have
\[
2^{\nu s}
\left|\tau_\nu(D)\left[
\sum_{m \in {\mathbb Z}^n}\lambda_{jm}a_{jm}
\right](x)\right|
\lesssim
2^{-|\nu-j|\delta}
\sum_{m \in {\mathbb Z}^n}
M[2^{js}\lambda_{jm}\chi_{Q_{jm}}](x)^{\frac{N}{n}}.
\]
\end{enumerate}
\end{corollary}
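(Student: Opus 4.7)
My plan is to prove both parts by direct application of Lemma \ref{lem:Grafakos}, choosing the parameters so that whichever of the two functions (the kernel or the atom) carries the cancellation is put in the role of $\psi$, and then converting the resulting pointwise spatial decay into a bound by the Hardy--Littlewood maximal operator via Lemma \ref{lem:140820-101}.

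For part (1), I would write $\theta(D)a_{jm}(x)=\int (\mathcal{F}^{-1}\theta)(x-y)a_{jm}(y)\,dy$. Since $\theta$ has compact support, $\mathcal{F}^{-1}\theta$ is Schwartz, so Lemma \ref{lem:Grafakos} applies with $\mu=0$, $\nu_{\rm lem}=j$, $x_\varphi=x$, $x_\psi=c(Q_{jm})$, and $L_{\rm lem}=L+1$ (so that the lemma's cancellation requirement $|\beta|\le L_{\rm lem}-1$ matches the atom's vanishing moments for $j\ge 1$), choosing $M,N$ large. This yields $|\theta(D)a_{jm}(x)|\lesssim 2^{-j(L+1)}(1+|x-c(Q_{jm})|)^{-M}$. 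Lemma \ref{lem:140820-101} then gives $(1+|x-c(Q_{jm})|)^{-M}\lesssim M[\chi_{Q_{0m}}](x)^{N/n}$ for any $M\ge N>0$, and the ``in particular'' statement follows by summing in $m$. The case $j=0$ is immediate from the direct convolution estimate, as no cancellation is needed.

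For part (2), I would split into the two cases according to which factor has cancellation available. When $\nu\ge j$, the kernel $\mathcal{F}^{-1}\tau_\nu$ has all vanishing moments because $\tau$ vanishes near $0$, so I play it in the role of $\psi$ at scale $\nu_{\rm lem}=\nu$ and the atom in the role of $\varphi$ at scale $\mu=j$, using the bound $|\nabla^K a_{jm}|\le 2^{jK}\chi_{3Q_{jm}}$ with $L_{\rm lem}=K$; the lemma's exponent $\mu n-(\nu_{\rm lem}-\mu)L_{\rm lem}$ produces the factor $2^{-(\nu-j)K}$ after normalization, and the spatial factor is $(1+2^j|x-c(Q_{jm})|)^{-M}\sim M[\chi_{Q_{jm}}](x)^{N/n}$. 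When $\nu\le j$ the roles are swapped: the atom becomes $\psi$ at scale $j$ with $L_{\rm lem}=L+1$ vanishing moments, and $\mathcal{F}^{-1}\tau_\nu$ becomes $\varphi$ at scale $\mu=\nu$, with the freedom to choose a derivative/decay budget parameter $P$. This produces the exponent $(\nu-j)(L+1+n-P)$ as written, since the kernel has $L^\infty$ size $2^{\nu n}$ while the lemma extracts a factor $2^{-(j-\nu)(L+1)}$ from the cancellation; the residual $n-P$ reflects the trade between the kernel's peak size and its spatial decay controlled by $P$.

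To conclude the ``in particular'' bound, I multiply the two-case estimate by $2^{\nu s}$, observe that $\min(L+1+n-P+s,K-s)=\delta$, and then sum over $m\in\mathbb{Z}^n$ using the pointwise identity $\sum_m M[\lambda_{jm}\chi_{Q_{jm}}](x)^{N/n}$ with $2^{js}\lambda_{jm}$ absorbed. The main obstacle I anticipate is bookkeeping the normalization: the atom is normalized by $\|a_{jm}\|_\infty\le 1$ rather than the $2^{\nu n}$ that appears in the hypothesis of Lemma \ref{lem:Grafakos}, and this mismatch (together with the choice of the free parameters $M$, $N$, and especially $P$) has to be tracked carefully so that the $(n-P)$ correction in the low-frequency exponent emerges cleanly. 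A secondary care point is ensuring $N/n$ is large enough for $M[\chi_{Q_{jm}}](x)^{N/n}$ to upper-bound the pointwise decay for the chosen $M$, which only constrains the implicit constants.
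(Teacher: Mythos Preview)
Your approach is correct and essentially identical to the paper's: apply Lemma~\ref{lem:Grafakos} with the role of $\psi$ assigned to whichever factor carries the vanishing moments (the kernel $\mathcal{F}^{-1}\tau_\nu$ when $\nu\ge j$, the atom when $\nu<j$), then convert the resulting pointwise decay into a maximal-function bound via Lemma~\ref{lem:140820-101}. The one place where the paper is slightly more explicit than your sketch is the origin of the $-P$ in the low-frequency exponent: after Lemma~\ref{lem:Grafakos} you obtain decay at the \emph{coarse} scale, $(1+2^{\nu}|x-2^{-j}m|)^{-P}$, and to compare this with $M[\chi_{Q_{jm}}](x)^{P/n}\sim (1+2^{j}|x-2^{-j}m|)^{-P}$ at the \emph{fine} scale you use $1\ge 2^{\nu-j}$ to write $(1+2^{\nu}|x-2^{-j}m|)^{-P}\le 2^{(j-\nu)P}(1+2^{j}|x-2^{-j}m|)^{-P}$, which is exactly the rescaling that turns $2^{(\nu-j)(L+1+n)}$ into $2^{(\nu-j)(L+1+n-P)}$.
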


\begin{proof}
(\ref{eq:140817-1}) is simpler than (\ref{eq:140817-2});
we concentrate on (\ref{eq:140817-2}).
Define
$\Phi^\nu(x)\equiv 2^{\nu n}{\mathcal F}^{-1}\tau(2^\nu x)$
for $x \in {\mathbb R}^n$.
Then we have
\[
\tau_\nu(D)a_{jm}(x)
=
\frac{1}{\sqrt{(2\pi)^n}}
\int_{{\mathbb R}^n}\Phi^\nu(x-y)a_{jm}(y)\,dy.
\]
Let $x \in {\mathbb R}^n$ be fixed with this in mind.

Let $\nu \ge j$.
Then we have
\begin{equation}\label{eq:140820-201}
\int_{{\mathbb R}^n}x^\alpha\Phi^\nu(x)\,dx
=
(2\pi)^{\frac{n}{2}}i^{|\alpha|}
\partial^\alpha[\tau(2^{-\nu}\cdot)](0)=0
\end{equation}
for all {multi-indixes} $\alpha$.
We use $(\ref{eq:140820-201})$ for all
$\alpha$ 
whose length is less than or equal to $K-1$.
We {then} obtain
\[
|\tau_\nu(D)a_{jm}(x)|
=
\frac{1}{2^{jn}\sqrt{(2\pi)^n}}
\left|\int_{{\mathbb R}^n}\Phi^\nu(x-y) \cdot 2^{jn}a_{jm}(y)\,dy
\right|
\lesssim
\frac{2^{-(\nu-j)K}}{(1+2^j|x-2^{-j}m|)^P}
\]
by letting 
$\varphi=2^{jn}a_{jm}$, 
$\psi=\Phi^{\nu}(x-\cdot)$,
$x_\varphi=2^{-j}m$,
$x_\psi=x$
in Lemma \ref{lem:Grafakos}.
In fact,
we can check (\ref{eq:140820-8}) as follows:
\[
|\nabla^\alpha \varphi(x)|
=
2^{jn}|\nabla^\alpha a_{jm}(x)|
\lesssim
2^{j(K+n)}\chi_{3Q_{jm}}(x)
\lesssim
\frac{2^{j(n+K)}}{(1+2^j|x-x_\varphi|)^P}.
\]
By using Lemma \ref{lem:140820-101},
we obtain (\ref{eq:140817-2}) for the case when $\nu \ge j$.

Let $\nu < j$.
Then we have
\[
|\partial^\alpha_y[\Phi^\nu(x-y)]|
\lesssim_\alpha
\frac{2^{(n+|\alpha|)\nu}}{(1+2^\nu|x-y|)^P}
\]
for all {multi-indexes} $\alpha$
as well as (\ref{eq:140817-3}) with $\alpha=0$ and (\ref{eq:140817-4}).
Notice that our assumption $\nu<j$ excludes the case
when $j=0$;
$a_{jm}$ does satisfy (\ref{eq:140817-4}).
Thus,
\[
|\tau_\nu(D)a_{jm}(x)|
\lesssim
\frac{2^{(\nu-j)(L+n+1)}}{(1+2^\nu|x-2^{-j}m|)^P}
\quad (x \in {\mathbb R}^n)
\]
and hence by using $j>\nu$ and Lemma \ref{lem:140820-101} again,
\[
|\tau_\nu(D)a_{jm}(x)|
\lesssim
\frac{2^{(\nu-j)(L+n+1)}}{(2^{\nu-j}+2^\nu|x-2^{-j}m|)^P}
\lesssim
2^{(\nu-j)(L+1+n-P)}
M[\chi_{Q_{jm}}](x)^{\frac{P}{n}}
\quad (x \in {\mathbb R}^n).
\]
{Thus, the proof is complete.}
\end{proof}

{
\begin{remark}
Recall that ${\mathcal E}^s_{pqr}({\mathbb R}^n)$
is a special case of 
{
generalized}
Triebel-Lizorkin-Morrey spaces;
see Section \ref{s7.2}.
The key ingredient is a counterpart 
of Theorems \ref{thm:decomposition-1} and \ref{thm:decomposition-2}
to Morrey spaces.
See Proposition \ref{prop:150312-2} below.
\end{remark}
}

\paragraph{Proof of Theorem \ref{thm:decomposition-1}(2) and Theorem \ref{thm:decomposition-2}(2)}

We prove Theorem \ref{thm:decomposition-2}(2),
the proof of Theorem \ref{thm:decomposition-1}(2) being similar.

We choose real numbers $P$ and $P'$ so that
\begin{equation}\label{eq:150821-701}
\frac{n}{\min(1,q,r)}<P'<P<L+n+1+s.
\end{equation}
Then $\delta$ given by (\ref{eq:delta}) is positive.

Let
$\{a_{jm}\}_{j \in {\mathbb N}_0, m \in {\mathbb Z}^n}
\in {\mathfrak A}$
and 
$\{\lambda_{jm}\}_{j \in {\mathbb N}_0, m \in {\mathbb Z}^n}
\in {\bf e}^s_{{\mathcal M}^\varphi_q,r}({\mathbb R}^n)$.

Let us suppose for the time being that
there exists $N \gg 1$ such that $\lambda_{jm}=0$
if $j \ge N$.
This implies
$\displaystyle
f \equiv \sum_{j=0}^\infty 
\left(
\sum_{m \in {\mathbb Z}^n}\lambda_{jm}a_{jm}
\right)
$
converges in ${\mathcal S}'({\mathbb R}^n)$.

We calculate that
\begin{align*}
\sum_{j=0}^\infty 
\left|\theta(D)\left(
\sum_{m \in {\mathbb Z}^n}\lambda_{jm}a_{jm}
\right)(x)\right|
\lesssim
2^{-j(L+n+1+s-P)}
\sum_{m \in {\mathbb Z}^n}
2^{js}|\lambda_{jm}|M[\chi_{Q{jm}}](x)^{\frac{P}{n}}
\end{align*}
and that
\begin{eqnarray*}
&&\left\{
\sum_{k=1}^\infty
\left(
2^{ks}
\sum_{j=0}^\infty 
\left|\tau_k(D)\left(
\sum_{m \in {\mathbb Z}^n}\lambda_{jm}a_{jm}
\right)(x)\right|
\right)^r\right\}^{\frac1r}\\
&&\lesssim
\left\{
\sum_{k=1}^\infty 
\sum_{j=0}^\infty
\left(
2^{-|j-k|\delta}
\sum_{m \in {\mathbb Z}^n}
2^{js}|\lambda_{jm}|M[\chi_{Q{jm}}](x)^{\frac{P}{n}}
\right)^r\right\}^{\frac1r}\\
&&\lesssim
\left\{
\sum_{j=0}^\infty 
\left(
\sum_{m \in {\mathbb Z}^n}2^{js}|\lambda_{jm}|
M[\chi_{Q{jm}}](x)^{\frac{P}{n}}
\right)^r\right\}^{\frac1r}.
\end{eqnarray*}
If we invoke Lemma \ref{lem:140821-1},
then we have
\begin{eqnarray*}
&&\left\{
\sum_{k=1}^\infty
\left(
2^{ks}
\sum_{j=0}^\infty 
\left|\varphi_k(D)\left(
\sum_{m \in {\mathbb Z}^n}\lambda_{jm}a_{jm}
\right)(x)\right|
\right)^r\right\}^{\frac1r}\\
&&\lesssim
\left\{
\sum_{j=0}^\infty 
\left(
M\left[
\left|\sum_{m \in {\mathbb Z}^n}
2^{js}\lambda_{jm}\chi_{Q_{jm}}\right|^{\frac{n}{P'}}
\right](x)^{\frac{P'}{n}}
\right)^r\right\}^{\frac1r}.
\end{eqnarray*}
Thus,
\begin{align*}
\|f\|_{{\mathcal E}^s_{{\mathcal M}^\varphi_q,r}}
&\lesssim
\left\|
\left\{
\sum_{j=0}^\infty 
\left(
M\left[
\left|\sum_{m \in {\mathbb Z}^n}
2^{js}\lambda_{jm}\chi_{Q_{jm}}\right|^{\frac{n}{P'}}
\right]^{\frac{P'}{n}}
\right)^r\right\}^{\frac1r}
\right\|_{{\mathcal M}^\varphi_q}\\
&\lesssim
\left\|
\left\{
\left(
\sum_{j=0}^\infty 
\sum_{m \in {\mathbb Z}^n}
2^{js}|\lambda_{jm}|\chi_{Q{jm}}
\right)^r\right\}^{\frac1r}
\right\|_{{\mathcal M}^\varphi_q}\\
&=\|\lambda\|_{{\bf e}^s_{{\mathcal M}^\varphi_q,r}}.
\end{align*}
This shows that Theorem \ref{thm:decomposition-2}(2)
is proved for $\lambda$ satisfying that
there exists $N \gg 1$ such that
$\lambda_{jm}=0$ if $j \ge N$.

Let us remove this assumption.
To this end, we set
\[
f_j\equiv\sum_{m \in {\mathbb Z}^n}\lambda_{jm}a_{jm}.
\]
Choose $\rho>0$ so that
\[
L \ge \max(-1,[\sigma_{qr}-s+\rho]),
\]
where $\sigma_{qr} \equiv \max(\sigma_q,\sigma_r)$.
Then according to what we have proved,
we have
\[
\|f_j\|_{{\mathcal E}^{s-\rho}_{{\mathcal M}^\varphi_q,r}}
\lesssim
2^{-\rho j}
\|\lambda\|_{{\bf e}^s_{{\mathcal M}^\varphi_q,r}}.
\]
Therefore,
$f=\sum_{j=1}^\infty f_j$
converges in 
${\mathcal E}^{s-\rho}_{{\mathcal M}^\varphi_q,r}({\mathbb R}^n)$
and hence 
${\mathcal S}'({\mathbb R}^n)$.
Again according to what we have proved,
we also have
\[
\left\|\sum_{j=1}^Nf_j
\right\|_{{\mathcal E}^{s}_{{\mathcal M}^\varphi_q,r}}
\lesssim
\|\lambda\|_{{\bf e}^s_{{\mathcal M}^\varphi_q,r}}
\]
with the constant independent of $N$.
As a result, by letting $N \to \infty$,
the Fatou property of $\mathcal{M}^\varphi_q({\mathbb R}^n)$ yields
$f \in {\mathcal E}^{s}_{{\mathcal M}^\varphi_q,r}({\mathbb R}^n)$
with
\begin{equation}\label{eq:150821-115}
\|f\|_{{\mathcal E}^{s}_{{\mathcal M}^\varphi_q,r}}
\lesssim
\|\lambda\|_{{\bf e}^s_{{\mathcal M}^\varphi_q,r}}.
\end{equation}

\subsection{Molecular decomposition}

In analogy with the atomic decomposition,
we can develop a theory of molecular decomposition as well.
\begin{definition}
Let $L \in {\mathbb N}_0 \cup \{-1\}$ and $K,N \in {\mathbb N}_0$
be such that $N>K+n$.
\begin{enumerate}
\item
Let $j \in {\mathbb N}_0$ and $m \in {\mathbb Z}^n$.
A $C^K$-function ${{ b}}:{\mathbb R}^n \to {\mathbb C}$
is said to be a $(K,L,N)$-molecule supported near $Q_{jm}$,
if
\[
\left|\frac{\partial^\alpha{{ b}}}{\partial x^\alpha}(x)\right|
\le 2^{|\alpha|j}
(1+|2^j x-m|)^{-N}
\] 
with $|\alpha| \le K$
and $(\ref{eq:140817-4})$ with $|\beta| \le L$ and $j \ge 1$
hold.
When $L=-1$, it is understood that 
$(\ref{eq:140817-4})$ is a void condition.
\item
Denote by ${\mathfrak M}={\mathfrak M}({\mathbb R}^n)$ 
the set of all collections
${{ \{b_{jm} \}_{j \in {\mathbb N}_0, m \in {\mathbb Z}^n}  }}$
of $C^K$-functions
such that
each ${{b_{jm}}}$ is a $(K,L,N)$-molecule supported near $Q_{jm}$.
\end{enumerate}
\end{definition}

\begin{theorem}\label{thm:decomposition-101}
Let $0<q<\infty$, $0<r \le \infty$, $s \in {\mathbb R}$
 and $\varphi \in {\mathcal G}_q$.
Let also $L \in {\mathbb N}_0 \cup \{-1\}$ and $K \in {\mathbb N}_0$.
Assume
\begin{equation}\label{eq:10-L}
K \ge [1+s]_+, \quad L \ge \max(-1,[\sigma_q-s]),
\end{equation}
where
$\sigma_q \equiv n\left(\frac{1}{q}-1\right)_+$.
\begin{enumerate}
\item
Let $f \in {\mathcal N}^s_{{\mathcal M}^\varphi_q,r}({\mathbb R}^n)$.
Then there exist a family
$\{{{b_{jm}}}\}_{j \in {\mathbb N}_0, m \in {\mathbb Z}^n}
\in {\mathfrak M}$
and a
{
doubly indexed complex sequence
}
$\lambda=\{\lambda_{jm}\}_{j \in {\mathbb N}_0, m \in {\mathbb Z}^n}
\in {\bf n}^s_{{\mathcal M}^\varphi_q,r}({\mathbb R}^n)$
such that
\begin{equation}\label{eq:150817-101}
f=\sum_{j=0}^\infty 
\left(\sum_{m \in {\mathbb Z}^n}\lambda_{jm}{{b_{jm}}}\right)
\mbox{\rm \, in \, }
{\mathcal S}'({\mathbb R}^n)
\end{equation} 
and that
\begin{equation}\label{eq:150817-102}
\|\lambda\|_{{\bf n}^s_{{\mathcal M}^\varphi_q,r}}
\lesssim
\|f\|_{{\mathcal N}^s_{{\mathcal M}^\varphi_q,r}}.
\end{equation}
\item
Let
$\{{{b_{jm}}}\}_{j \in {\mathbb N}_0, m \in {\mathbb Z}^n}
\in {\mathfrak M}$
and 
$\lambda=\{\lambda_{jm}\}_{j \in {\mathbb N}_0, m \in {\mathbb Z}^n}
\in {\bf n}^s_{{\mathcal M}^\varphi_q,r}({\mathbb R}^n)$.
Then
$$
f \equiv \sum_{j=0}^\infty 
\left(
\sum_{m \in {\mathbb Z}^n}\lambda_{jm}{{b_{jm}}}
\right)
$$
converges in ${\mathcal S}'({\mathbb R}^n)$ 
and belongs to
${\mathcal N}^s_{{\mathcal M}^\varphi_q,r}({\mathbb R}^n)$.
Furthermore,
\[
\|f\|_{{\mathcal N}^s_{{\mathcal M}^\varphi_q,r}}
\lesssim
\|\lambda\|_{{\bf n}^s_{{\mathcal M}^\varphi_q,r}}.
\]
\end{enumerate}
\end{theorem}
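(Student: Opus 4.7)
The strategy is to reduce everything to the atomic decomposition already proved in Theorem \ref{thm:decomposition-1}. Part (1) is essentially free: every $(K,L)$-atom $a_{jm}$ is automatically a $(K,L,N)$-molecule for any $N$, because $\mathrm{supp}\,a_{jm}\subset 3Q_{jm}$ forces $|2^jx-m|\lesssim 1$ on the support, so $(1+|2^jx-m|)^{-N}\gtrsim_N 1$ there, while $a_{jm}$ vanishes outside. Thus Theorem \ref{thm:decomposition-1}(1) immediately supplies a molecular representation of any $f\in\mathcal{N}^s_{\mathcal{M}^\varphi_q,r}(\mathbb{R}^n)$ together with the bound (\ref{eq:150817-102}), and the moment condition (\ref{eq:140817-4}) for $j\ge 1$ is inherited from the atoms.

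For part (2) the heart of the argument is to prove molecular analogues of the pointwise bounds of Corollary \ref{cor:150821-1}. Concretely, I will show that for a $(K,L,N)$-molecule $b_{jm}$ and a suitably chosen $P$,
\[
|\theta(D)b_{jm}(x)|\lesssim 2^{-j(L+1)}(1+|x-2^{-j}m|)^{-P},
\]
\[
|\tau_\nu(D)b_{jm}(x)|\lesssim
\begin{cases}
2^{-(\nu-j)K}(1+2^j|x-2^{-j}m|)^{-P}&(\nu\ge j),\\
2^{(\nu-j)(L+1+n-P)}(1+2^\nu|x-2^{-j}m|)^{-P}&(\nu<j).
\end{cases}
\]
These are obtained by applying Lemma \ref{lem:Grafakos} in exactly the same way as in the proof of Corollary \ref{cor:150821-1}, but with the compact-support bound $|\partial^\alpha a_{jm}|\le\chi_{3Q_{jm}}$ replaced by the polynomial decay $|\partial^\alpha b_{jm}(y)|\le 2^{j|\alpha|}(1+|2^jy-m|)^{-N}$. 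For $\nu\ge j$ I take $\varphi=2^{jn}b_{jm}$, $\psi=\Phi^\nu(x-\cdot)$ with $\Phi^\nu\equiv 2^{\nu n}\mathcal{F}^{-1}\tau(2^\nu\cdot)$, and exploit the fact that $\Phi^\nu$ has all vanishing moments to produce the factor $2^{-(\nu-j)K}$; for $\nu<j$ the roles are reversed, and I exploit the $L$ vanishing moments of $b_{jm}$ together with the Schwartz decay of $\Phi^\nu$. The condition $N>M+L+n$ of Lemma \ref{lem:Grafakos} forces $N$ to be chosen large relative to $K$, $L$ and the prescribed decay $P$, and this is the principal new input compared with the atomic case.

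Once these pointwise bounds are in hand, Lemma \ref{lem:140820-101} converts the polynomial tails into powered Hardy--Littlewood maximal functions, giving the same estimates as in the atomic proof. From this point the argument is verbatim that of Theorem \ref{thm:decomposition-1}(2): choose $P$ with $n/\min(1,q)<P<L+n+1+s$ so that $\delta\equiv\min(L+1+n-P+s,K-s)>0$, apply Lemma \ref{lem:140821-1} to pull the maximal operator outside the sum in $m$, then invoke the \emph{scalar} maximal inequality (\ref{eq:140820-204}) in $\mathcal{M}^\varphi_q(\mathbb{R}^n)$ together with the discrete Hardy inequality Proposition \ref{prop:140820-11} to absorb the geometric factors $2^{-|\nu-j|\delta}$ in $\ell^r$. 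The ${\mathcal S}'$-convergence and the norm estimate are first proved for finite sums $\lambda_{jm}=0$ for $j\ge N$, and then extended to the general case exactly as in the closing argument of Theorem \ref{thm:decomposition-2}(2): pick $\rho>0$ small so that the hypothesis on $L$ is preserved with $s$ replaced by $s-\rho$, deduce that the tails decay geometrically in $\mathcal{N}^{s-\rho}_{\mathcal{M}^\varphi_q,r}(\mathbb{R}^n)$, and invoke the Fatou property of $\mathcal{M}^\varphi_q(\mathbb{R}^n)$ to pass to the limit.

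The main obstacle is therefore the calibration of $N$ in the molecular definition so that Lemma \ref{lem:Grafakos} applies for the $P$ demanded by the maximal-function step; after that the proof is a faithful transcription of the atomic argument. Crucially, because we are on the Besov scale, only the scalar maximal inequality on $\mathcal{M}^\varphi_q(\mathbb{R}^n)$ is required, so no hypothesis of type (\ref{eq:Nakai-3}) on $\varphi$ is needed.
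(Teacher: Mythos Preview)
Your proposal is correct and follows essentially the same route as the paper: Part~(1) is reduced to Theorem~\ref{thm:decomposition-1}(1) because atoms are molecules, and Part~(2) is handled by proving a molecular analogue of Corollary~\ref{cor:150821-1} (this is exactly the paper's Corollary~\ref{cor:150817-2}) and then rerunning the atomic argument verbatim, including the finite-sum reduction and the Fatou-property limiting step.

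There is one point where the paper's bookkeeping is cleaner than yours. You introduce a separate decay parameter $P$ and then worry about making the molecular $N$ ``large relative to $K$, $L$ and $P$''; but in Part~(2) the integer $N$ is \emph{given}, not chosen, so this phrasing is backwards. The paper instead observes that a $(K,L,N)$-molecule is automatically a $(K,L,\tilde N)$-molecule for any $\tilde N\le N$, and simply replaces $N$ by $\min(N,\tilde N)$ with $\tilde N$ slightly less than $L+n+1+s$. After this reduction one has $n/\min(1,q)<N<L+n+1+s$, and the atomic proof of Theorem~\ref{thm:decomposition-1}(2) goes through with $P=N$ and $\delta=\min(L+1+n-N+s,\,K-s)>0$. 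This removes the calibration issue you flag as ``the principal new input''; the only new input is really the pointwise estimate itself.
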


\begin{theorem}\label{thm:decomposition-102}
Let $0<q<\infty$, $0<r \le \infty$, $s \in {\mathbb R}$
 and $\varphi:(0,\infty) \to (0,\infty) \in {\mathcal G}_q$.
Let also $L \in {\mathbb N}_0 \cup \{-1\}$ and $K \in {\mathbb N}_0$.
Assume
\[
K \ge [1+s]_+, \quad L \ge \max(-1,[\sigma_{qr}-s]),
\]
where $\sigma_{qr} \equiv \max(\sigma_q,\sigma_r)$.
\begin{enumerate}
\item
Let $f \in {\mathcal E}^s_{{\mathcal M}^\varphi_q,r}({\mathbb R}^n)$.
Then there exist a family
$\{{{b_{jm}}}\}_{j \in {\mathbb N}_0, m \in {\mathbb Z}^n}
\in {\mathfrak M}$
and a 
{
doubly indexed complex sequence
}
$\lambda=\{\lambda_{jm}\}_{j \in {\mathbb N}_0, m \in {\mathbb Z}^n}
\in {\bf e}^s_{{\mathcal M}^\varphi_q,r}({\mathbb R}^n)$
satisfying 
$(\ref{eq:150817-101})$ 
and that
\begin{equation}\label{eq:150817-103}
\|\lambda\|_{{\bf e}^s_{{\mathcal M}^\varphi_q,r}}
\lesssim
\|f\|_{{\mathcal E}^s_{{\mathcal M}^\varphi_q,r}}.
\end{equation}
\item
Let
$\{{{b_{jm}}}\}_{j \in {\mathbb N}_0, m \in {\mathbb Z}^n}
\in {\mathfrak M}$
and 
$\lambda=\{\lambda_{jm}\}_{j \in {\mathbb N}_0, m \in {\mathbb Z}^n}
\in {\bf e}^s_{{\mathcal M}^\varphi_q,r}({\mathbb R}^n)$.
Then
$$
f \equiv \sum_{j=0}^\infty 
\left(\sum_{m \in {\mathbb Z}^n}\lambda_{jm}{{b_{jm}}}\right)
$$
converges in ${\mathcal S}'({\mathbb R}^n)$ 
and belongs to
${\mathcal E}^s_{{\mathcal M}^\varphi_q,r}({\mathbb R}^n)$.
Furthermore,
\[
\|f\|_{{\mathcal E}^s_{{\mathcal M}^\varphi_q,r}}
\lesssim
\|\lambda\|_{{\bf e}^s_{{\mathcal M}^\varphi_q,r}}.
\]
\end{enumerate}
\end{theorem}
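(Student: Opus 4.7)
The plan is to treat Theorem \ref{thm:decomposition-102} as a close cousin of the atomic decomposition result, Theorem \ref{thm:decomposition-2}. Part (1) will be obtained essentially for free from the atomic version, while Part (2) will be proved by adapting Corollary \ref{cor:150821-1} to the molecular setting and then reusing the remaining machinery verbatim.

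For Part (1) I would observe that every atom is, up to a fixed multiplicative constant depending only on $n$ and $N$, a $(K,L,N)$-molecule supported near the same dyadic cube. Indeed, if $a_{jm}$ satisfies (\ref{eq:140817-3}) and (\ref{eq:140817-4}), then for $x\in 3Q_{jm}$ one has $|2^j x-m|\le 2\sqrt{n}$, so that $\chi_{3Q_{jm}}(x)\le(1+2\sqrt{n})^N(1+|2^jx-m|)^{-N}$, and $b_{jm}\equiv(1+2\sqrt{n})^{-N}a_{jm}$ is a $(K,L,N)$-molecule. The vanishing moment condition is already common to both. Hence applying Theorem \ref{thm:decomposition-2}(1) (whose hypotheses are exactly (\ref{eq:10-L})) produces a decomposition (\ref{eq:150817-101}) with the sequence bound (\ref{eq:150817-103}) after absorbing the constant $(1+2\sqrt{n})^N$ into $\lambda_{jm}$.

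For Part (2) I would imitate the proof of Theorem \ref{thm:decomposition-2}(2). The heart of the matter is a molecular analogue of Corollary \ref{cor:150821-1}: for any $P>0$ and $N$ large enough in terms of $P,K,n$,
\[
|\tau_\nu(D)b_{jm}(x)|\lesssim
\begin{cases}
2^{-(\nu-j)K}M[\chi_{Q_{jm}}](x)^{P/n}&(\nu\ge j),\\
2^{(\nu-j)(L+1+n-P)}M[\chi_{Q_{jm}}](x)^{P/n}&(\nu<j),
\end{cases}
\]
together with an analogous bound for $\theta(D)b_{jm}$. For $\nu\ge j$ I would apply Lemma \ref{lem:Grafakos} with $\varphi=2^{jn}b_{jm}$ and $\psi=\Phi^\nu(x-\cdot)$: the molecular decay $|\nabla^\alpha b_{jm}(y)|\le 2^{j|\alpha|}(1+|2^jy-m|)^{-N}$ provides (\ref{eq:140820-8}) for $|\alpha|\le K$ with $\mu=j$, $M=P$ (provided $N\ge P$), while $\Phi^\nu$ has all vanishing moments. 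For $\nu<j$ the roles of $\varphi$ and $\psi$ are swapped, and the molecule's $L$ vanishing moments are used. Lemma \ref{lem:140820-101} then rewrites the resulting factor $(1+2^{\min(\nu,j)}|x-2^{-j}m|)^{-P}$ as $M[\chi_{Q_{jm}}](x)^{P/n}$. Once the pointwise bound is in hand, choose $P,P'$ as in (\ref{eq:150821-701}), combine Lemma \ref{lem:140821-1} with the Hardy-type inequality in Proposition \ref{prop:140820-11}, and finish with Theorem \ref{thm:vector-maximal}(2) to control the Morrey norm; this reproduces the same chain of estimates as in the atomic proof, whose final bound is (\ref{eq:150821-115}). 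The truncation-plus-Fatou argument at the end of the proof of Theorem \ref{thm:decomposition-2}(2) removes the finiteness assumption on the support of $\{\lambda_{jm}\}$, using the Fatou property of $\mathcal{M}^\varphi_q({\mathbb R}^n)$.

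The main obstacle is verifying hypothesis (\ref{eq:140820-8}) of Lemma \ref{lem:Grafakos} for the molecule itself, which, unlike an atom, has non-compact support. This is exactly what forces the decay parameter $N$ to be chosen large: the molecule's polynomial decay must outpace the growth exponent $M=P$ demanded by Lemma \ref{lem:Grafakos}, concretely $N\ge P+n$ when $\nu\ge j$, and analogously when $\nu<j$. Beyond this single adjustment, the molecular proof is a cosmetic modification of the atomic argument, and no new ingredients are required.
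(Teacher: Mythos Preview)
Your proposal is essentially correct and follows the same route as the paper: Part (1) is inherited from the atomic Theorem~\ref{thm:decomposition-2}(1) because atoms are molecules, and Part (2) is handled by a molecular analogue of Corollary~\ref{cor:150821-1} followed by the identical maximal/Hardy/Fatou machinery.

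There is one point where your write-up diverges from the paper and deserves a tweak. You phrase the constraint as ``$N$ must be chosen large'' relative to $P$, but in Part (2) the decay parameter $N$ is \emph{given} (it comes with the family $\{b_{jm}\}\in\mathfrak M$) and cannot be enlarged. The paper goes the other way: it observes that a $(K,L,N)$-molecule is automatically a $(K,L,N')$-molecule for any $N'\le N$, so one may \emph{shrink} $N$ to lie in the interval $\bigl(\tfrac{n}{\min(1,q,r)},\,L+n+1+s\bigr)$ (nonempty by $L\ge[\sigma_{qr}-s]$) and then simply take $P=N$ in the pointwise estimate, recovering exactly the condition \eqref{eq:150821-701} with $P=N$. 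This is the content of the passage leading to \eqref{eq:150821-702} and Corollary~\ref{cor:150817-2}. With that adjustment your argument and the paper's coincide.
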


We do not prove 
Theorem \ref{thm:decomposition-101}(1) and 
Theorem \ref{thm:decomposition-102}(1);
Theorem \ref{thm:decomposition-1}(1) and 
Theorem \ref{thm:decomposition-2}(1)
are stronger assertions
than
Theorem \ref{thm:decomposition-101}(1) and 
Theorem \ref{thm:decomposition-102}(1),
respectively.
We concentrate on the proof of
Theorem \ref{thm:decomposition-102}(2);
that of 
Theorem \ref{thm:decomposition-101}(2)
is similar.

\paragraph{Proof of Theorem \ref{thm:decomposition-102}(2)}

We modify Corollary \ref{cor:150821-1} as follows:
\begin{corollary}\label{cor:150817-2}
Let $K,N \in {\mathbb N}_0$ and $L \in {\mathbb N}_0 \cup \{-1\}$
with $N>K+n$.
Suppose that we are given an {molecule} ${{b_{jm}}}$
supported near $Q_{jm}$.
\begin{enumerate}
\item
Let $j \in {\mathbb N}_0$ and $m \in {\mathbb Z}^n$.
Then
\begin{equation}\label{eq:140817-10001}
|\theta(D){{b_{jm}}}(x)| \lesssim 2^{-j(L+1)}M[\chi_{Q_{0m}}](x)^{\frac{P}{n}}
\end{equation}
In particular, 
\[
\left|\theta(D)\left(\sum_{m \in {\mathbb Z}^n}{{b_{jm}}}\right)(x)\right| 
\lesssim 2^{-j(L+1)}
\sum_{m \in {\mathbb Z}^n}M[\chi_{Q_{0m}}](x)^{\frac{N}{n}}.
\]
\item
Let $\nu \in {\mathbb N}$, $j \in {\mathbb N}_0$ and $m \in {\mathbb Z}^n$.
Then
\begin{equation}\label{eq:140817-10002}
|\tau_\nu(D){{b_{jm}}}(x)| \lesssim 
\begin{cases}
\displaystyle
2^{-(\nu-j)K}
M[\chi_{Q_{jm}}](x)^{\frac{N}{n}}
&(\nu \ge j),
\\
\displaystyle
2^{(\nu-j)(L+1+n-N)}
M[\chi_{Q_{jm}}](x)^{\frac{N}{n}}
&(\nu \le j).
\end{cases}
\end{equation}
In particular,
by letting
\begin{equation}\label{eq:delta-1}
\delta\equiv \min(L+1+n-N+s,K-s),
\end{equation}
we have
\[
2^{\nu s}
\left|\tau_\nu(D)\left[
\sum_{m \in {\mathbb Z}^n}\lambda_{jm}{{b_{jm}}}
\right](x)\right|
\lesssim
2^{-|\nu-j|\delta}
\sum_{m \in {\mathbb Z}^n}
M[2^{js}\lambda_{jm}\chi_{Q_{jm}}](x)^{\frac{N}{n}}.
\]
\end{enumerate}
\end{corollary}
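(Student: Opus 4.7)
The plan is to follow verbatim the structure of the proof of Corollary \ref{cor:150821-1}, simply replacing the compactly supported atom hypothesis with the molecular decay hypothesis. The point is that Lemma \ref{lem:Grafakos} requires only pointwise bounds with polynomial decay and not compact support, so it applies equally well to molecules once $N > K+n$. The molecular estimate $|\partial^\alpha b_{jm}(x)| \le 2^{j|\alpha|}(1+|2^jx-m|)^{-N}$ rewrites as $|\nabla^\alpha b_{jm}(x)|\le 2^{j|\alpha|}(1+2^j|x-2^{-j}m|)^{-N}$, which up to a factor of $2^{jn}$ matches the hypothesis (\ref{eq:140820-8}) of Lemma \ref{lem:Grafakos} at scale $\mu=j$ with decay exponent $M=N$ and smoothness order $K$.

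For part (2), case $\nu\ge j$: write $\tau_\nu(D)b_{jm}(x)=\int \Phi^\nu(x-y)b_{jm}(y)\,dy$ with $\Phi^\nu(y)=2^{\nu n}({\mathcal F}^{-1}\tau)(2^\nu y)$, and exploit the fact that $\Phi^\nu(x-\cdot)$ has vanishing moments of all orders (since $\tau_\nu$ is supported away from the origin). Apply Lemma \ref{lem:Grafakos} with $\varphi=2^{jn}b_{jm}$, $\psi=\Phi^\nu(x-\cdot)$, $x_\varphi=2^{-j}m$, $x_\psi=x$, $\mu=j$, the lemma's $\nu$ equal to our $\nu$, $L=K$, $M=N$; the hypothesis $N'>M+L+n=N+K+n$ on $\psi$'s decay is harmless because $\mathcal{F}^{-1}\tau$ is Schwartz. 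Dividing by $2^{jn}$ gives $|\tau_\nu(D)b_{jm}(x)|\lesssim 2^{-(\nu-j)K}(1+2^j|x-2^{-j}m|)^{-N}$, which via Lemma \ref{lem:140820-101} yields $2^{-(\nu-j)K}M[\chi_{Q_{jm}}](x)^{N/n}$.

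For the case $\nu<j$ (which forces $j\ge 1$, so $b_{jm}$ has vanishing moments up to order $L$), invert the roles: apply Lemma \ref{lem:Grafakos} with $\varphi=\Phi^\nu(x-\cdot)$, $\psi=b_{jm}$, $x_\varphi=x$, $x_\psi=2^{-j}m$, $\mu=\nu$, the lemma's $\nu$ equal to our $j$, with $L+1$ in place of the lemma's $L$, and $M=N$; one verifies (\ref{eq:140820-8}) for $\varphi$ directly from Schwartz bounds on $\mathcal{F}^{-1}\tau$. This produces $|\tau_\nu(D)b_{jm}(x)|\lesssim \frac{2^{\nu n+(\nu-j)(L+1)}}{(1+2^\nu|x-2^{-j}m|)^N}$, which, using $2^{\nu-j}\le 1$ to reinsert a factor of $2^{(\nu-j)n}$ inside the denominator (as in Corollary \ref{cor:150821-1}) and then Lemma \ref{lem:140820-101}, yields the claimed $2^{(\nu-j)(L+1+n-N)}M[\chi_{Q_{jm}}](x)^{N/n}$. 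Part (1) follows the same pattern, treating $\theta$ as a function at scale $\nu=0$, with the vanishing moments of $b_{jm}$ (available because $j\ge 1$ in the nontrivial cases) producing the factor $2^{-j(L+1)}$.

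The ``in particular'' assertions, and hence the combined bound for $2^{\nu s}|\tau_\nu(D)[\sum_m \lambda_{jm}b_{jm}](x)|$ with $\delta\equiv\min(L+1+n-N+s,K-s)$, then follow by summation over $m$ exactly as in Corollary \ref{cor:150821-1}. The only point requiring any care, and the most likely source of trouble, is the bookkeeping between the molecular decay exponent $N$ (governing the lemma's $M$) and the requirement $N>K+n$ imposed in the definition of molecule; this condition is precisely what guarantees that the lemma's constraint $N'>M+L+n$ is satisfiable and that the final bound is indeed summable in $m$ through $M[\chi_{Q_{jm}}](\cdot)^{N/n}$.
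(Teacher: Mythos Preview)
Your approach—transplant the proof of Corollary~\ref{cor:150821-1} verbatim, replacing compact support by polynomial decay—is exactly what the paper does (its entire proof reads ``The proof is the same as that for Corollary~\ref{cor:150821-1}''), and your handling of the case $\nu\ge j$ is correct.

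There is one slip in the case $\nu<j$. With $\varphi=\Phi^\nu(x-\cdot)$ and $\psi=b_{jm}$, the decay exponent of $\psi$ in Lemma~\ref{lem:Grafakos} is the molecule's $N$, so the hypothesis of that lemma becomes $N>M+L_{\text{lemma}}+n=M+(L+1)+n$, which rules out your choice $M=N$. As stated, Lemma~\ref{lem:Grafakos} only delivers decay $(1+2^\nu|x-2^{-j}m|)^{-M}$ with some $M<N-L-1-n$, and after the scale change this gives $2^{(\nu-j)(L+1+n-M)}M[\chi_{Q_{jm}}]^{M/n}$ rather than the stated bound with exponent $N$. The fix is either to carry this weaker exponent $M$ through (it still feeds into the proof of Theorem~\ref{thm:decomposition-102}(2) after the parameter reduction performed there, since one only needs $n/\min(1,q,r)<M$ and $L+1+n-M+s>0$), or to invoke the sharper convolution estimate of Frazier--Jawerth \cite[Appendix~B]{FJ90}, whose hypothesis requires only that $\psi$ decay like $\max(M,L+n+2)$ and therefore does permit $M=N$ once $N>L+n+1$.
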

The proof is the same as that for Corollary 
\ref{cor:150821-1}.

The proof of Theorem \ref{thm:decomposition-102}(2) is a modification
of the corresponding assertions.
Since we are assuming 
$L \ge [\sigma_{qr}-s]$,
we have
$\frac{n}{\min(1,q,r)}<L+n+1+s.$
Let $\tilde{N}$ be a real number slightly less than
$L+n+1+s$.
By considering $\min(N,\tilde{N})$,
we can assume that 
$N<L+n+1+s$.
Choose $P'>0$ so that
\begin{equation}\label{eq:150821-702}
\frac{n}{\min(1,q,r)}<P'<N<L+n+1+s.
\end{equation}
Note that conditions
(\ref{eq:150821-701})
and
(\ref{eq:150821-702})
are the same if we let $P=N$.
Therefore, we can go through the same argument
as we did in Theorem \ref{thm:decomposition-2}(2).

\subsection{Quarkonial decomposition}

By using the atomic decomposition,
we can consider the quarkonial decomposition.
All the results in this section
are new;
the quarkonial decomposition was not obtained
in \cite{LSUYY2}.

\begin{definition}[$\psi$ for the quarkonial decomposition]
Throughout this section,
the function
$\psi \in {\mathcal S}$
is fixed so that
$\{\psi(\cdot-m)\}_{m \in {\mathbb Z}^n}$
forms a {\it partition of unity}{\rm;}
\begin{equation}\label{eq:partition of unity}
\sum_{m \in {\mathbb Z}^n}\psi(\cdot-m) \equiv 1.
\end{equation}
Accordingly,
choose $R>0$ so that
\begin{equation}\label{quark-1}
{\rm supp}(\psi) \subset Q(2^R).
\end{equation}
\end{definition}

With $\psi$ specified as above,
we define quarks.
\begin{definition}[Regular quark]
Let
$\beta \in {\mathbb N}_0{}^n, \ \nu \in {\mathbb N}_0$
and 
$m \in {\mathbb Z}^n$. 
Then define
a function $\psi^\beta$
and
the quark 
$(\beta qu)_{\nu m}=(\beta qu)_{\nu,m}$
by:
\begin{eqnarray}\label{eq:150311-2}
\psi^\beta(x)
\equiv x^\beta \psi(x), \,
(\beta qu)_{\nu m}(x)
\equiv 
\psi^\beta(2^\nu x-m)
=
(2^\nu x-m)^\beta\psi(2^\nu x-m)
\end{eqnarray}
for $x \in {\mathbb R}^n$.
{Each}
$(\beta qu)_{\nu m}$
is called the {\it quark}.
\end{definition}

\begin{remark}
\label{remark:8-26-1} 
As it is mentioned in \cite[Discussion 2.5, p12]{Triebel2}, 
there exists $d>0$ such that 
\begin{equation}
{\rm supp\,}(\beta qu)_{\nu m}\subset dQ_{\nu m}
\end{equation}
for any $\nu\in{\mathbb N}_0$ and $m\in{\mathbb Z}^n$. 

By $(\ref{quark-1})$, we have $|\psi^{\beta}(x)|\le 2^{R|\beta|}$ 
for any {$\beta\in{\mathbb N}_0^{\ n}$}. 
Therefore we obtain 
\[
|(\beta qu)_{\nu m}|\le 2^{R|\beta|} 
\ \ \text{with} \ \ 
{\beta\in{\mathbb N}_0^{\ n}.} 
\]

Fix any {$\alpha\in{\mathbb N}_0^{\ n}$} with $|\alpha|\le K$
and $K \ge 0$.
Then, by the definition of quarks, 
we see that 
\[
|\partial^{\alpha}\psi_{\beta}(x)|=|\partial^{\alpha}[x^{\beta}\psi(x)]|
\le \sum_{|\alpha'|\le K} |\partial^{\alpha'}[x^{\beta}\psi(x)]|
\le c_1 (1+|\beta|)^K2^{R|\beta|}
\le c_2 2^{(R+\epsilon)|\beta|}
\]
for any $\epsilon>0$, 
where $c_1$ and $c_2$ are constants independent of $\beta$ 
but {depend} on $\psi$, $K$ and $\epsilon$. 
This and the chain rule of differentiation imply that 
\begin{equation}\label{eq:141021-1}
{
|\partial^{\alpha}(\beta qu)_{\nu m}(x)|
}
\lesssim 2^{\nu|\alpha| +(R+\epsilon)|\beta|}
\end{equation} 
holds for any {$\beta\in{\mathbb N}_0^{\ n}$}, 
$\nu\in{\mathbb N}_0$ and $m\in{\mathbb Z}^n$. 
\end{remark}

\begin{lemma}
Let ${\{\Lambda_{\nu m}\}_{\nu \in {\mathbb N}_0, m \in {\mathbb Z}^n}}$
be a bounded sequence
{
and let $\kappa\in\mathcal{S}$ satisfy 
$\chi_{Q(3)}\le \kappa\le \chi_{Q(3+1/100)}$.
}
Then
\[
\sum_{m \in {\mathbb Z}^n}
\sum_{l \in {\mathbb Z}^n}
\sum_{\beta \in {\mathbb N}_0{}^n}
\frac{2^{-\rho|\beta|}}{\beta!}
\Lambda_{\nu m}
\partial^\beta{\mathcal F}^{-1}\kappa(2^{-\rho}l-m)
\psi^\beta(2^{\nu+\rho}\cdot-m)
\]
is convergent in the weak star topology of $L^\infty({\mathbb R}^n)$.
More precisely,
by writing 
$$\displaystyle
M\equiv
{\sup_{\nu \in {\mathbb N}_0, m \in {\mathbb Z}^n}}|\Lambda_{\nu m}|,
$$
we have
\[
\int_{{\mathbb R}^n}
\sum_{m \in {\mathbb Z}^n}
\sum_{l \in {\mathbb Z}^n}
\sum_{\beta \in {\mathbb N}_0{}^n}
\frac{2^{-\rho|\beta|}}{\beta!}
|\Lambda_{\nu m}
\partial^\beta{\mathcal F}^{-1}\kappa(2^{-\rho}l-m)
\psi^\beta(2^{\nu+\rho}x-m)f(x)|\,dx \lesssim 
M\|f\|_{L^1}
\]
for $f \in L^1({\mathbb R}^n)$.
\end{lemma}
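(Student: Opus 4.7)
The plan is first to establish the displayed integral bound. Weak-$*$ convergence in $L^{\infty}({\mathbb R}^n)$ is then immediate: once the integral estimate is in hand, the partial sums of the series, when tested against any $f\in L^1({\mathbb R}^n)$, converge absolutely and Fubini shows that their limit equals the integral of the full sum; the sum of absolute values of the terms is moreover pointwise bounded (uniformly in $x$) by a constant times $M$, so the series defines an element of $L^{\infty}({\mathbb R}^n)$.

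To produce the integral bound I would estimate the three factors $|\Lambda_{\nu m}|$, $|\partial^\beta{\mathcal F}^{-1}\kappa(2^{-\rho}l-m)|$ and $|\psi^\beta(2^{\nu+\rho}x-m)|$ separately. Immediately $|\Lambda_{\nu m}|\le M$, and $|\psi^\beta(y)|\le 2^{R|\beta|}\chi_{Q(2^R)}(y)$ by (\ref{quark-1}) and the very definition of $\psi^\beta$ in (\ref{eq:150311-2}). For $\partial^\beta{\mathcal F}^{-1}\kappa$, since $\kappa\in{\mathcal S}({\mathbb R}^n)$ is compactly supported in $Q(3+\tfrac{1}{100})$, the identity
\[
(iy)^\alpha \partial^\beta {\mathcal F}^{-1}\kappa(y) = {\mathcal F}^{-1}[\partial^\alpha((i\xi)^\beta \kappa(\xi))](y)
\]
together with the Leibniz rule yields a Schwartz-type estimate
\[
|\partial^\beta {\mathcal F}^{-1}\kappa(y)| \lesssim_N A^{|\beta|}\langle y\rangle^{-N}
\quad (y\in{\mathbb R}^n)
\]
for every $N\in{\mathbb N}$, where $A>0$ depends only on $\kappa$ and $n$.

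Next I would dispose of the sums one by one using Fubini, which is legitimate since the integrand is nonnegative. The $l$-sum satisfies $\sum_{l\in{\mathbb Z}^n}\langle 2^{-\rho}l-m\rangle^{-N}<\infty$ uniformly in $m$ for $N>n$. For fixed $x$, the support condition on $\psi$ forces $m\in 2^{\nu+\rho}x-Q(2^R)$, so $\sum_{m\in{\mathbb Z}^n}\chi_{Q(2^R)}(2^{\nu+\rho}x-m)\lesssim_R 1$, whence $\sum_m\int|\psi^\beta(2^{\nu+\rho}x-m)f(x)|\,dx\lesssim_R 2^{R|\beta|}\|f\|_{L^1}$. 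Combining these produces a contribution of size $\frac{(2^{R-\rho}A)^{|\beta|}}{\beta!}$ for each multi-index $\beta$, and
\[
\sum_{\beta\in{\mathbb N}_0^{\ n}}\frac{c^{|\beta|}}{\beta!}=e^{nc}<\infty
\]
for every $c>0$, which closes the estimate. The main obstacle is the verification that the Schwartz constants for $\partial^\beta{\mathcal F}^{-1}\kappa$ grow at most exponentially in $|\beta|$ so that they are killed by $\beta!$ in the denominator; this is exactly where compact support of $\kappa$ is used, through the form of $\partial^\alpha((i\xi)^\beta \kappa(\xi))$ whose $L^1$-norm admits a crude bound of type $A^{|\beta|}$.
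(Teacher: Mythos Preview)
Your proof is correct and follows the same strategy as the paper: bound the three factors separately, then sum over $l$ via decay, over $m$ via the finite-overlap property of ${\rm supp}\,\psi$, and over $\beta$ via the $1/\beta!$ factor. The only cosmetic difference is in the $\beta$-dependence of the bound for $\partial^{\beta}{\mathcal F}^{-1}\kappa$: the paper invokes a lemma giving the sharper polynomial growth $|\partial^{\beta}{\mathcal F}^{-1}\kappa(y)|\lesssim_N\langle\beta\rangle^{2N}\langle y\rangle^{-2N}$, whereas you derive an exponential bound $A^{|\beta|}\langle y\rangle^{-N}$ directly from the Fourier identity and compact support of $\kappa$; either one is swallowed by $1/\beta!$.
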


\begin{proof}
It suffices to prove that
\begin{equation}\label{eq:141029-1}
\sum_{m \in {\mathbb Z}^n}
\sum_{l \in {\mathbb Z}^n}
\sum_{\beta \in {\mathbb N}_0{}^n}
\frac{2^{-\rho|\beta|}}{\beta!}
|\Lambda_{\nu m}
\partial^\beta{\mathcal F}^{-1}\kappa(2^{-\rho}l-m)
\psi^\beta(2^{\nu+\rho}\cdot-m)|
\in L^\infty({\mathbb R}^n)
\end{equation}
thanks to the Lebesgue convergence theorem.

Note that
\[
{
|\partial^\beta{\mathcal F}^{-1}\kappa(2^{-\rho}l-m)|
}
\lesssim \langle \beta \rangle^{2N}\langle 2^{-\rho}l-m \rangle^{-2N}
\mbox{ and that }
|\psi^\beta(2^{\nu+\rho}\cdot-m)| \lesssim
2^{(R+\varepsilon)|\beta|}.
\]
For each $x \in {\mathbb R}^n$,
let us fix $m_{x,1},\ldots,m_{x,N(R)}$ such that
$\psi^\beta(2^{\nu+\rho}x-m) \ne 0$,
where $N(R)$ is a geometric constant depending only on $R$.
Then $|m_{x,j}-2^{\nu+\rho}x| \le 2^R$
for each $j=1,2,\ldots,N(R)$.

We calculate that:
\begin{eqnarray*}
&&\sum_{m \in {\mathbb Z}^n}
\sum_{l \in {\mathbb Z}^n}
\sum_{\beta \in {\mathbb N}_0{}^n}
\frac{2^{-\rho|\beta|}}{\beta!}
|\Lambda_{\nu m}
\partial^\beta{\mathcal F}^{-1}\kappa(2^{-\rho}l-m)
\psi^\beta(2^{\nu+\rho}x-m)|\\
&&\lesssim
M
\sum_{j=1}^{N(R)}
\sum_{l \in {\mathbb Z}^n}
\sum_{\beta \in {\mathbb N}_0{}^n}
\frac{2^{(R+\varepsilon-\rho)|\beta|}}{\beta!}\langle \beta \rangle^{2N}\langle 2^{-\rho}l-m_{x,j} \rangle^{-2N}\\
&&\lesssim
M
\sum_{l \in {\mathbb Z}^n}
\sum_{\beta \in {\mathbb N}_0{}^n}
\frac{2^{(R+\varepsilon-\rho)|\beta|}}{\beta!}\langle \beta \rangle^{2N}\langle 2^{-\rho}l-2^{\nu+\rho}x \rangle^{-2N}\\
&&\lesssim
M
\sum_{\beta \in {\mathbb N}_0{}^n}
\frac{2^{(R+\varepsilon-\rho)|\beta|}}{\beta!}
\langle \beta \rangle^{2N}
\int_{{\mathbb R}^n}\langle z-2^{\nu+\rho}x \rangle^{-2N}\,dz
\sim
M
\sum_{\beta \in {\mathbb N}_0{}^n}
\frac{2^{(R+\varepsilon-\rho)|\beta|}}{\beta!}
\langle \beta \rangle^{2N}
\sim M.
\end{eqnarray*}
Thus, (\ref{eq:141029-1}) is obtained.
\end{proof}

\begin{definition}[Sequence spaces for quarkonial decomposition]
Let {$R,\rho$ satisfy} (\ref{quark-1}) and $\rho>R$.
For a {triply indexed complex sequence}
$\lambda
=\{\lambda^\beta_{\nu m}\}_{\beta \in {\mathbb N}_0{}^n, \ 
\nu \in {\mathbb N}_0, \ m \in {\mathbb Z}^n},
$
define
\begin{equation}
\lambda^\beta
{\equiv}
\{\lambda^\beta_{\nu m}
\}_{\nu \in {\mathbb N}_0, \ m \in {\mathbb Z}^n}, \quad 
\| \lambda \|_{{\bf a}_{{\mathcal M}^s_q,r},\rho}
\equiv 
\sup_{\beta \in {\mathbb N}_0{}^n}
2^{\rho|\beta|}\| \lambda^\beta \|_{{\bf a}^s_{{\mathcal M}^\varphi_q,r}}.
\end{equation} 
\end{definition}

\begin{theorem}\label{thm:quark-1}
Let $0<q<\infty$, $0<r \le \infty$, $s \in {\mathbb R}$ 
and $\varphi \in {\mathcal G}_q$.
Assume $\rho{\equiv}[R+1]>R$,
where $R$ is a constant in $(\ref{quark-1})$.
\begin{enumerate}
\item
Let $s>\sigma_q$ and 
$f \in {\mathcal N}^s_{{\mathcal M}^\varphi_q,r}({\mathbb R}^n)$.
Then there exists a
{
triply indexed complex sequence
}
\[
\lambda=\{\lambda^\beta_{\nu m}\}_{\nu \in {\mathbb N}_0,
m \in {\mathbb Z}^n, \beta \in {\mathbb N}_0{}^n}
\]
such that
\begin{equation}\label{eq:150206-1}
f=
\sum_{\beta \in {\mathbb N}_0{}^n}
\sum_{\nu=0}^\infty
{\sum_{m \in {\mathbb Z}^n}}
\lambda^\beta_{\nu m}
(\beta qu)_{\nu m}
\end{equation}
converges in ${\mathcal S}'({\mathbb R}^n)$
and
\begin{equation}\label{eq:150206-2}
\|\lambda\|_{{\bf n}_{{\mathcal M}^s_q,r},\rho}
\lesssim
\|f\|_{{\mathcal N}^s_{{\mathcal M}^\varphi_q,r}}.
\end{equation}
The constant $\lambda^\beta_{\nu m}$ depends
continuously and linearly on $f$.
\item
If $s>\sigma_q$ and 
$\lambda=\{\lambda^\beta_{\nu m}\}_{\nu \in {\mathbb N}_0,
m \in {\mathbb Z}^n, \beta \in {\mathbb N}_0{}^n}$
satisfies
$\|\lambda\|_{{\bf n}_{{\mathcal M}^s_q,r},\rho}
<\infty,$
then
\begin{equation}\label{eq:150313-3}
f\equiv
\sum_{\beta \in {\mathbb N}_0{}^n}
\sum_{\nu=0}^\infty
{\sum_{m \in {\mathbb Z}^n}}
\lambda^\beta_{\nu m}
(\beta qu)_{\nu m}
\end{equation}
converges in ${\mathcal S}'({\mathbb R}^n)$
and belongs to
${\mathcal N}^s_{{\mathcal M}^\varphi_q,r}({\mathbb R}^n)$.
Furthermore,
\begin{equation}\label{eq:150313-4}
\|f\|_{{\mathcal N}^s_{{\mathcal M}^\varphi_q,r}}
\lesssim
\|\lambda\|_{{\bf n}_{{\mathcal M}^s_q,r},\rho}.
\end{equation}
\item
If $s>\sigma_{qr}$ and 
$f \in {\mathcal E}^s_{{\mathcal M}^\varphi_q,r}({\mathbb R}^n)$,
then there exists a 
{
triply indexed complex sequence
}
$\lambda=\{\lambda^\beta_{\nu m}\}_{\nu \in {\mathbb N}_0,
m \in {\mathbb Z}^n, \beta \in {\mathbb N}_0{}^n}$
such that
\begin{equation}\label{eq:150313-1}
f=
\sum_{\beta \in {\mathbb N}_0{}^n}
\sum_{\nu=0}^\infty
{\sum_{m \in {\mathbb Z}^n}}
\lambda^\beta_{\nu m}
(\beta qu)_{\nu m}
\end{equation}
in ${\mathcal S}'({\mathbb R}^n)$
and
\begin{equation}\label{eq:150313-2}
\|\lambda\|_{{\bf e}_{{\mathcal M}^s_q,r},\rho}
\lesssim
\|f\|_{{\mathcal E}^s_{{\mathcal M}^\varphi_q,r}}.
\end{equation}
The constant $\lambda^\beta_{\nu m}$ depends
continuously and linearly on $f$.
\item
If $s>\sigma_{qr}$ and 
$\lambda=\{\lambda^\beta_{\nu m}\}_{\nu \in {\mathbb N}_0,
m \in {\mathbb Z}^n, \beta \in {\mathbb N}_0{}^n}$
satisfies
$\|\lambda\|_{{\bf e}_{{\mathcal M}^s_q,r},\rho}
<\infty,$
{then 
\[
f\equiv
\sum_{\beta \in {\mathbb N}_0{}^n}
\sum_{\nu=0}^\infty
{\sum_{m \in {\mathbb Z}^n}}
\lambda^\beta_{\nu m}
(\beta qu)_{\nu m}
\]
converges in ${\mathcal S}'({\mathbb R}^n)$
}
and belongs to
${\mathcal E}^s_{{\mathcal M}^\varphi_q,r}({\mathbb R}^n)$.
Furthermore,
\[
\|f\|_{{\mathcal E}^s_{{\mathcal M}^\varphi_q,r}}
\lesssim
\|\lambda\|_{{\bf e}_{{\mathcal M}^s_q,r},\rho}.
\]
\end{enumerate}
\end{theorem}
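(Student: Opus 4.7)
The plan is to reduce all four parts of Theorem \ref{thm:quark-1} to the atomic decomposition results (Theorems \ref{thm:decomposition-1} and \ref{thm:decomposition-2}). The hypothesis $s>\sigma_q$ (respectively $s>\sigma_{qr}$) lets us take $L=-1$ in those theorems so that atoms need not satisfy any moment condition, which is precisely what is required to treat the quarks $(\beta qu)_{\nu m}$---which lack vanishing moments---as (rescaled) atoms.

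For the synthesis direction, parts (2) and (4), I would fix $\epsilon\in(0,\rho-R)$, which is possible because $\rho=[R+1]>R$. By Remark \ref{remark:8-26-1}, the quark $(\beta qu)_{\nu m}$ is supported in $dQ_{\nu m}$ for some fixed $d>1$ and satisfies $|\partial^\alpha(\beta qu)_{\nu m}(x)|\lesssim 2^{\nu|\alpha|+(R+\epsilon)|\beta|}$ for all $|\alpha|\le K$. Letting $c_\beta\equiv C\,2^{(R+\epsilon)|\beta|}$ for a suitable absolute constant $C$, the function $c_\beta^{-1}(\beta qu)_{\nu m}$ is a $(K,-1)$-atom supported near $Q_{\nu m}$ (the constant $3$ in the atom definition being replaceable by any $d>1$, as noted in the remark after the definition of atoms). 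Applying Theorem \ref{thm:decomposition-1}(2) or Theorem \ref{thm:decomposition-2}(2) with atoms $c_\beta^{-1}(\beta qu)_{\nu m}$ and coefficients $c_\beta\lambda^\beta_{\nu m}$ gives, for each fixed $\beta$,
\[
\Bigl\|\sum_{\nu,m}\lambda^\beta_{\nu m}(\beta qu)_{\nu m}\Bigr\|_{\mathcal{A}^s_{\mathcal{M}^\varphi_q,r}}\lesssim 2^{(R+\epsilon)|\beta|}\|\lambda^\beta\|_{{\bf a}^s_{\mathcal{M}^\varphi_q,r}}\lesssim 2^{(R+\epsilon-\rho)|\beta|}\|\lambda\|_{{\bf a}_{\mathcal{M}^s_q,r},\rho}.
\]
Summing over $\beta$ via Lemma \ref{lem:141020-1} and the convergence of $\sum_\beta 2^{(R+\epsilon-\rho)|\beta|}$ then yields convergence in $\mathcal{S}'(\R^n)$ of (\ref{eq:150313-3}) (and its Besov analogue) together with the norm bound (\ref{eq:150313-4}).

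For the analysis direction, parts (1) and (3), I would first apply the atomic decomposition to obtain $f=\sum_{j,m}\mu_{jm}a_{jm}$ with $\|\mu\|_{{\bf a}^s_{\mathcal{M}^\varphi_q,r}}\lesssim\|f\|_{\mathcal{A}^s_{\mathcal{M}^\varphi_q,r}}$, choosing the atoms $a_{jm}$ \emph{bandlimited} via the variant of Proposition \ref{prop:Rychkov} built from $\mathcal{F}^{-1}\kappa$ with $\kappa\in\mathcal{S}$ satisfying $\chi_{Q(3)}\le\kappa\le\chi_{Q(3+1/100)}$, the ingredient introduced in the lemma preceding Theorem \ref{thm:quark-1}. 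Rescaling (\ref{eq:partition of unity}) to $\sum_l\psi(2^{j+\rho}x-l)\equiv 1$ and Taylor expanding $a_{jm}$ about $y_l\equiv 2^{-(j+\rho)}l$ on the support of each localizer, the substitution $(x-y_l)^\beta=2^{-(j+\rho)|\beta|}(2^{j+\rho}x-l)^\beta$ produces
\[
\psi(2^{j+\rho}x-l)\,a_{jm}(x)=\sum_{\beta\in\N_0^{\,n}}\frac{2^{-(j+\rho)|\beta|}\,\partial^\beta a_{jm}(y_l)}{\beta!}(\beta qu)_{j+\rho,l}(x).
\]
Substituting into $f=\sum_{j,m}\mu_{jm}a_{jm}$ and relabeling $\nu=j+\rho$ (finitely many scales $\nu<\rho$ being absorbed into modified top-level quarks) gives (\ref{eq:150206-1}) and (\ref{eq:150313-1}) with coefficients
\[
\lambda^\beta_{\nu l}\equiv\sum_m\mu_{\nu-\rho,m}\,\frac{2^{-\nu|\beta|}}{\beta!}\,\partial^\beta a_{\nu-\rho,m}(2^{-\nu}l).
\]
The Bernstein inequality $|\partial^\beta a_{jm}(y)|\lesssim (2^jR_0)^{|\beta|}\chi_{3Q_{jm}}(y)$, valid because $a_{jm}$ has compact Fourier support at scale $2^j$, yields $2^{\rho|\beta|}|\lambda^\beta_{\nu l}|\lesssim (R_0^{|\beta|}/\beta!)\sum_{m:\,2^{-\nu}l\in 3Q_{\nu-\rho,m}}|\mu_{\nu-\rho,m}|$. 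Since $R_0^{|\beta|}/\beta!$ is bounded in $\beta$, one deduces
\[
2^{\rho|\beta|}\|\lambda^\beta\|_{{\bf a}^s_{\mathcal{M}^\varphi_q,r}}\lesssim\|\mu\|_{{\bf a}^s_{\mathcal{M}^\varphi_q,r}}\lesssim\|f\|_{\mathcal{A}^s_{\mathcal{M}^\varphi_q,r}}
\]
uniformly in $\beta$, proving (\ref{eq:150206-2}) and (\ref{eq:150313-2}); the continuous linear dependence of $\lambda^\beta_{\nu l}$ on $f$ is apparent from the explicit formula.

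The main obstacle is the convergence of the Taylor expansion of $a_{jm}$ against the small-support localizer $\psi(2^{j+\rho}\cdot-l)$: a merely smooth atom would leave a remainder that need not vanish. The resolution, signalled by the lemma preceding Theorem \ref{thm:quark-1}, is to produce bandlimited atoms, for which the $1/\beta!$ factor in the Taylor series tames the Bernstein growth $|\partial^\beta a_{jm}|\lesssim(2^jR_0)^{|\beta|}$, and, combined with the localizer's small diameter $|2^{j+\rho}x-l|\lesssim 2^R$, yields absolute $L^\infty$-convergence of the expansion; the $\mathcal{S}'$-convergence of the full quark series then reduces to the cited lemma.
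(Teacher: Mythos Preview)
Your treatment of the synthesis direction, parts (2) and (4), is correct and is exactly the paper's argument: rescale $(\beta qu)_{\nu m}$ by $2^{-(R+\epsilon)|\beta|}$ to obtain $(K,-1)$-atoms, apply Theorems \ref{thm:decomposition-1}/\ref{thm:decomposition-2} for each fixed $\beta$, and sum in $\beta$ via the $\min(1,q,r)$-triangle inequality.

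For the analysis direction, parts (1) and (3), there is a genuine gap. You want ``bandlimited atoms'' so that the Taylor series of $a_{jm}$ converges, and you then invoke the estimate $|\partial^\beta a_{jm}(y)|\lesssim (2^jR_0)^{|\beta|}\chi_{3Q_{jm}}(y)$. These two requirements are incompatible: a nonzero function with compactly supported Fourier transform cannot itself be compactly supported, so no building block can be both bandlimited and localized by $\chi_{3Q_{jm}}$. If you keep compact support (true atoms), they are only $C^K$ and the Taylor expansion is unavailable for $|\beta|>K$; if you keep bandlimitedness, you lose the indicator localization and your coefficient estimate collapses, because for each $l$ infinitely many $m$ contribute and you must control an honest convolution-type sum rather than a finite one.

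The paper avoids this tension by not passing through the atomic decomposition at all. It uses the sampling formula (Lemma \ref{lemma:8-22-1}) to write
\[
f=c_n\sum_{\nu}\sum_{m}\Lambda_{\nu m}\,{\mathcal F}^{-1}\kappa(2^\nu\cdot-m),
\qquad \Lambda_{\nu m}=\tau_\nu(D)f(2^{-\nu}m),
\]
and Taylor-expands the \emph{entire} function ${\mathcal F}^{-1}\kappa(2^\nu x-m)$ about the point $2^{-\rho}l$, after inserting the partition of unity $\sum_l\psi(2^{\nu+\rho}x-l)=1$. This gives the explicit coefficients
\[
\lambda^\beta_{\nu+\rho,l}=\frac{2^{-\rho|\beta|}}{\beta!}\sum_{m\in{\mathbb Z}^n}\Lambda_{\nu m}\,\partial^\beta{\mathcal F}^{-1}\kappa(2^{-\rho}l-m).
\]
The Schwartz decay $|\partial^\beta{\mathcal F}^{-1}\kappa(y)|\lesssim_N\langle\beta\rangle^{2N}\langle y\rangle^{-2N}$ (Lemma \ref{lemma:8-22-3}) replaces your indicator bound and handles the sum over $m$; the shift estimate (Lemma \ref{lemma:8-22-2}) then gives $\|\lambda^\beta\|_{{\bf a}^s_{{\mathcal M}^\varphi_q,r}}\lesssim 2^{-\rho|\beta|}\|\Lambda\|_{{\bf a}^s_{{\mathcal M}^\varphi_q,r}}$, and finally the Plancherel--Polya--Nikol'skii inequality yields $\|\Lambda\|_{{\bf a}^s_{{\mathcal M}^\varphi_q,r}}\lesssim\|f\|_{{\mathcal A}^s_{{\mathcal M}^\varphi_q,r}}$. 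If you repair your argument by replacing the compactly supported atoms with these bandlimited sampling kernels and the indicator bound with the Schwartz decay, you will have reproduced the paper's proof.
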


To prove Theorem \ref{thm:quark-1}, we need several Lemmas. 

From now on we assume 
that $\theta$ and $\tau$ both belong to ${\mathcal S}({\mathbb R}^n)$
that 
\[
\chi_{Q(2)}\le \theta \le \chi_{Q(3)},
\] 
and that
\[
\tau_{j}=\theta(2^{-j}\cdot)-\theta(2^{-j+1}\cdot)
\]
for $j\in{\mathbb N}$. 

\begin{lemma}
\label{lemma:8-22-1}
Let $\kappa\in{\mathcal S}({\mathbb R}^n)$ satisfy 
$\chi_{Q(3)}\le \kappa\le \chi_{Q(3+1/100)}$. 
{Let $f \in {\mathcal S}'({\mathbb R}^n)$.}
\begin{enumerate}
\item
{\rm \cite[Theorem 5.1.22]{Sawano-book}}
Whenever ${\rm supp}({\mathcal F}f) \subset Q(3 \cdot 2^{\nu})$, 
{$f$} can be written as:
\begin{align}
\label{eq:150821-122}
f=\frac{1}{\sqrt{(2\pi)^n}}\sum_{m\in{\mathbb Z}^n}
f(2^{-\nu}m){\mathcal F}^{-1}\kappa(2^{\nu}\cdot -m). 
\end{align}
\item
{\rm \cite[Corollary 5.1.23]{Sawano-book}}
{Generally,}
\begin{align}
\nonumber
f&=
\frac{1}{\sqrt{(2\pi)^n}}
\sum_{m \in {\mathbb Z}^n}\tau(D)f(m){\mathcal F}^{-1}\kappa(\cdot-m)\\
\label{eq:150821-121}
&\quad+
\frac{1}{\sqrt{(2\pi)^n}}
\sum_{\nu=1}^\infty
\left(
\sum_{m \in {\mathbb Z}^n}
\varphi_\nu(D)f(2^{-\nu}m){\mathcal F}^{-1}\kappa(2^\nu \cdot-m)
\right).
\end{align}
\end{enumerate}
\end{lemma}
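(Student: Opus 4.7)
The strategy is to establish part (1) by the Poisson summation formula (a distributional Whittaker--Shannon sampling theorem) and then to deduce part (2) from (1) by a Littlewood--Paley splitting of $f$.

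\emph{Part (1).} The compact spectral support ${\rm supp}({\mathcal F}f)\subset Q(3\cdot 2^{\nu})$ combined with the Paley--Wiener theorem for tempered distributions implies that $f$ coincides with a $C^\infty$-function of polynomial growth, so the samples $f(2^{-\nu}m)$ are well defined and grow at most polynomially in $|m|$. Together with the Schwartz decay of ${\mathcal F}^{-1}\kappa$ this ensures that the sum on the right of (\ref{eq:150821-122}) converges in ${\mathcal S}'({\mathbb R}^n)$. I would then take the Fourier transform of both sides and reduce the asserted identity, up to a harmless constant, to
\[
{\mathcal F}f(\xi)=\kappa(2^{-\nu}\xi)\,2^{-\nu n}\sum_{m\in{\mathbb Z}^n}f(2^{-\nu}m)\,e^{-i 2^{-\nu}m\cdot\xi}.
\]
By Poisson summation on the dilated lattice $2^{-\nu}{\mathbb Z}^n$, the series on the right is a constant multiple of the periodisation $\sum_{k\in{\mathbb Z}^n}{\mathcal F}f(\xi+2\pi\cdot 2^{\nu}k)$. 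The decisive geometric input is $2\pi>6+\tfrac{1}{50}$: for every $k\ne 0$ the translate ${\rm supp}\,{\mathcal F}f(\cdot+2\pi\cdot 2^{\nu}k)$ lies outside $Q((3+1/100)\cdot 2^{\nu})$, where $\kappa(2^{-\nu}\cdot)$ vanishes, so only the $k=0$ term survives the cutoff. Since $\kappa(2^{-\nu}\cdot)\equiv 1$ on $Q(3\cdot 2^{\nu})\supset{\rm supp}({\mathcal F}f)$, the right-hand side collapses to ${\mathcal F}f(\xi)$, and inverting ${\mathcal F}$ proves (\ref{eq:150821-122}).

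\emph{Part (2) and main obstacle.} For part (2) I would use the Littlewood--Paley decomposition $f=\tau(D)f+\sum_{\nu=1}^{\infty}\varphi_\nu(D)f$ in ${\mathcal S}'({\mathbb R}^n)$: each summand has compactly supported Fourier transform---in $Q(3)$ for $\tau(D)f$ and in $Q(3\cdot 2^{\nu})$ for $\varphi_\nu(D)f$---so part (1) applies with indices $0$ and $\nu$ respectively, and recombining yields (\ref{eq:150821-121}). The convergence of the resulting double series in ${\mathcal S}'({\mathbb R}^n)$ is inherited from the Littlewood--Paley decomposition together with the rapid decay of ${\mathcal F}^{-1}\kappa$. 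The principal technical obstacle is the rigorous use of Poisson summation when $f$ is only a tempered distribution rather than a Schwartz function; my preferred remedy is to convolve $f$ with a Schwartz approximate identity $\epsilon^{-n}\rho(\epsilon^{-1}\cdot)$ (noting that convolution automatically preserves the spectral constraint), apply the classical Poisson formula in the Schwartz setting, and then pass to the limit $\epsilon\downarrow 0$, exploiting the polynomial bound on the samples supplied by Paley--Wiener to exchange limit and summation.
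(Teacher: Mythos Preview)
Your argument is correct and is exactly the standard Shannon--Whittaker sampling proof via Poisson summation; the geometric check $2\pi>6+1/50$ that separates the periodised copies from ${\rm supp}\,\kappa(2^{-\nu}\cdot)$ is the right one under the paper's convention $Q(r)=[-r,r]^n$, and the passage from (1) to (2) by Littlewood--Paley splitting is straightforward. The paper itself does not prove this lemma at all: it simply quotes \cite[Theorem~5.1.22 and Corollary~5.1.23]{Sawano-book}, so there is nothing to compare your approach against beyond noting that what you sketch is precisely the classical argument one expects in that reference. One small remark: in the statement of part~(2) the symbols $\tau(D)$ and $\varphi_\nu(D)$ appear to be typos for $\theta(D)$ and $\tau_\nu(D)$ (cf.\ the application of the lemma in the proof of Theorem~\ref{thm:quark-1}, where $\theta(D)$ and $\tau_\nu(D)$ are used); your reading of the decomposition is consistent with this.
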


\begin{lemma}\label{lemma:8-22-2}
Let $l\in{\mathbb Z}^n$, $0<\eta<\min(1,q,r)$. 
Then 
\begin{equation}\label{eq:8-22-3}
\|\lambda^{l}\|_{{\bf a}_{{\mathcal M}^s_q,r}}
\lesssim 
\langle l \rangle^{n/\eta} 
\| \lambda\|_{{\bf a}_{{\mathcal M}^s_q,r}}.
\end{equation}
\end{lemma}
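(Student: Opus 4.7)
The plan is to interpret $\lambda^{l}$ as the shifted sequence $\{\lambda_{\nu,m-l}\}_{\nu,m}$, so that at each dyadic level
\[
\sum_{m\in{\mathbb Z}^n} \lambda^{l}_{\nu m} \chi_{Q_{\nu m}}(x) = g_{\nu}(x - 2^{-\nu} l), \qquad g_{\nu}:=\sum_{m\in{\mathbb Z}^n} \lambda_{\nu m}\chi_{Q_{\nu m}}.
\]
For the Besov-Morrey sequence space ${\bf n}^{s}_{{\mathcal M}^{\varphi}_q,r}$, the desired inequality is immediate from the translation invariance of the Morrey norm $\|\cdot\|_{{\mathcal M}^{\varphi}_q}$: level by level one even obtains an equality, which is stronger than the claimed bound with factor $\langle l\rangle^{n/\eta}$.

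For the Triebel-Lizorkin-Morrey sequence space ${\bf e}^{s}_{{\mathcal M}^{\varphi}_q,r}$, translation invariance does not factor out, because the spatial shift $2^{-\nu}l$ varies with $\nu$. Here I would first establish the pointwise estimate
\[
\sum_{m\in{\mathbb Z}^n} |\lambda_{\nu m}| \chi_{Q_{\nu,m+l}}(x)
\lesssim
\langle l\rangle^{n/\eta}\, M^{(\eta)}[g_{\nu}](x).
\]
The point is that at any $x$ the left-hand side is the single quantity $|\lambda_{\nu,m^{\star}-l}|$, where $m^{\star}$ is the unique index with $x\in Q_{\nu,m^{\star}}$; averaging $g_{\nu}^{\eta}$ over the ball $B(x,\,c\,2^{-\nu}\langle l\rangle)$ captures the cube $Q_{\nu,m^{\star}-l}$ of relative volume $\sim\langle l\rangle^{-n}$, so Lemma \ref{lem:140820-101} (or equivalently the definition of $M^{(\eta)}$) produces
\[
|\lambda_{\nu,m^{\star}-l}|^{\eta}\lesssim \langle l\rangle^{n}\,M[g_{\nu}^{\eta}](x),
\]
which, after raising to the $1/\eta$ power, gives the pointwise bound.

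Raising the pointwise bound to the $r$-th power, summing in $\nu$ with the weights $2^{\nu s r}$, and taking the $r$-th root yields
\[
F_{\lambda^{l}}(x)\lesssim \langle l\rangle^{n/\eta}\left(\sum_{\nu=0}^{\infty} 2^{\nu s r}\,M^{(\eta)}[g_{\nu}](x)^{r}\right)^{1/r}.
\]
Applying Lemma \ref{2014-9-4-4} with $u=1/\eta$ rewrites $\|\cdot\|_{{\mathcal M}^{\varphi}_q}$ of the right-hand side as (the $1/\eta$-th power of) an $\|\cdot\|_{{\mathcal M}^{\varphi^{\eta}}_{q/\eta}}$-norm of a Fefferman-Stein-type vector-valued expression built from $M$ itself. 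Since $\eta<\min(1,q,r)$, one has $q/\eta>1$ and $r/\eta>1$, while $\varphi^{\eta}$ inherits condition $(\ref{eq:Nakai-3})$ from $\varphi$; hence the vector-valued maximal inequality Theorem \ref{thm:vector-maximal}(2) applies and furnishes the desired bound $\|\lambda^{l}\|_{{\bf e}^{s}_{{\mathcal M}^{\varphi}_q,r}}\lesssim \langle l\rangle^{n/\eta}\,\|\lambda\|_{{\bf e}^{s}_{{\mathcal M}^{\varphi}_q,r}}$. The main obstacle is the pointwise inequality for the sum of shifted indicator functions; once that is in place, the rest is a standard application of the vector-valued maximal inequality for generalized Morrey spaces.
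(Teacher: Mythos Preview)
Your argument is correct and follows exactly the standard route that the paper defers to \cite[Lemma~5.1.24]{Sawano-book}: the pointwise domination $|\lambda_{\nu,m^{\star}-l}|\lesssim\langle l\rangle^{n/\eta}M^{(\eta)}[g_{\nu}](x)$, followed by the power trick of Lemma~\ref{2014-9-4-4} and the vector-valued maximal inequality of Theorem~\ref{thm:vector-maximal}. One small remark: since the Morrey norm here is taken over \emph{dyadic} cubes, the ${\bf n}$-case yields $\|g_\nu(\cdot-2^{-\nu}l)\|_{{\mathcal M}^\varphi_q}\lesssim\|g_\nu\|_{{\mathcal M}^\varphi_q}$ with a dimensional constant rather than an exact equality, but this is of course sufficient.
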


\begin{proof}
We can prove this Lemma 
by using the same argument of the proof of \cite[Lemma 5.1.24]{Sawano-book}. 
So we omit the proof. 
\end{proof}

\begin{lemma}{\cite[Lemma 5.1.25]{Sawano-book}}\label{lemma:8-22-3}
Let $\kappa\in{\mathcal S}({\mathbb R}^n)$ satisfy 
$\chi_{Q(3)}<\kappa<\chi_{Q(3+1/100)}$. 
Then 
\begin{equation}
|\partial^{\alpha}{\mathcal F}^{-1}\kappa(y)| 
\lesssim_N
\langle \alpha \rangle^{2N}\langle y\rangle^{-2N} 
\quad (\alpha \in {\mathbb N}_0{}^n, \quad y \in {\mathbb R}^n)
\label{eq:8-22-4}
\end{equation}
hold for any $N\gg 1$. 
\end{lemma}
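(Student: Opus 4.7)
\textbf{Proof proposal for Lemma \ref{lemma:8-22-3}.} The approach will be a direct integration by parts argument on the Fourier side, capitalizing on the compact support of $\kappa$ and the smoothness of the exponential factor. First I would note that, because $\kappa \in {\mathcal S}({\mathbb R}^n)$ with ${\rm supp}(\kappa) \subset Q(3+1/100)$, differentiation under the integral sign is fully justified, and for every multi-index $\alpha$ one has
\[
\partial^{\alpha}{\mathcal F}^{-1}\kappa(y)
=
\frac{i^{|\alpha|}}{\sqrt{(2\pi)^n}}
\int_{{\mathbb R}^n}\xi^{\alpha}\kappa(\xi)e^{i y\cdot\xi}\,d\xi.
\]
This identity is the starting point: decay in $y$ will come from integration by parts in $\xi$, and growth in $\alpha$ will come from the Leibniz expansion of $\partial^{\gamma}(\xi^{\alpha}\kappa)$.

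Next I would exploit the identity
$(1-\Delta_{\xi})^{N}e^{iy\cdot\xi}=\langle y\rangle^{2N}e^{iy\cdot\xi}$.
Since $\kappa$ is compactly supported, integrating by parts $2N$ times produces no boundary terms, and we obtain
\[
\langle y\rangle^{2N}\partial^{\alpha}{\mathcal F}^{-1}\kappa(y)
=
\frac{i^{|\alpha|}}{\sqrt{(2\pi)^n}}
\int_{{\mathbb R}^n}
(1-\Delta_{\xi})^{N}\!\bigl[\xi^{\alpha}\kappa(\xi)\bigr]
\,e^{iy\cdot\xi}\,d\xi,
\]
so that
\[
\langle y\rangle^{2N}|\partial^{\alpha}{\mathcal F}^{-1}\kappa(y)|
\lesssim
\int_{Q(3+1/100)}
\bigl|(1-\Delta_{\xi})^{N}\!\bigl[\xi^{\alpha}\kappa(\xi)\bigr]\bigr|\,d\xi.
\]

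The third step is to expand the integrand by Leibniz. Write $(1-\Delta_{\xi})^{N}=\sum_{|\mu|\le 2N}c_{\mu}^{(N)}\partial^{\mu}$, where $c_{\mu}^{(N)}$ are constants depending only on $N$. Then
\[
\partial^{\mu}\bigl[\xi^{\alpha}\kappa(\xi)\bigr]
=
\sum_{\substack{\gamma\le \mu\\ \gamma\le\alpha}}
\binom{\mu}{\gamma}\frac{\alpha!}{(\alpha-\gamma)!}\,
\xi^{\alpha-\gamma}\,
\partial^{\mu-\gamma}\kappa(\xi).
\]
Here $|\gamma|\le|\mu|\le 2N$, so
$\dfrac{\alpha!}{(\alpha-\gamma)!}\le \prod_{i}\alpha_{i}^{\gamma_i}\le \langle\alpha\rangle^{|\gamma|}\le \langle\alpha\rangle^{2N}$.
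The remaining factor $\xi^{\alpha-\gamma}\partial^{\mu-\gamma}\kappa(\xi)$ is compactly supported in $Q(3+1/100)$ and hence uniformly bounded (the size of $\kappa$ and its derivatives, together with the diameter of the support, is absorbed into the $N$-dependent constant). Integrating over the fixed compact set $Q(3+1/100)$ of bounded measure then yields
\[
\int_{Q(3+1/100)}
\bigl|(1-\Delta_{\xi})^{N}\!\bigl[\xi^{\alpha}\kappa(\xi)\bigr]\bigr|\,d\xi
\lesssim_{N}\langle\alpha\rangle^{2N},
\]
which, after dividing through by $\langle y\rangle^{2N}$, gives the required estimate.

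The only delicate point is keeping the $|\alpha|$-dependence purely polynomial through the Leibniz expansion: the factor $\frac{\alpha!}{(\alpha-\gamma)!}$ must be bounded by $\langle\alpha\rangle^{2N}$ uniformly over all $\gamma$ with $|\gamma|\le 2N$, and the factor $\xi^{\alpha-\gamma}$ on the compact support of $\kappa$ must be absorbed into the implicit constant. This is the step where the compactness of ${\rm supp}(\kappa)$ is essential; without it the argument would not close. Once this bookkeeping is done, the decay in $y$ and the polynomial growth in $\alpha$ decouple cleanly.
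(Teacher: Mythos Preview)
Your integration-by-parts scheme is the natural one, but the final bookkeeping step contains a genuine gap, and in fact it cannot be repaired without altering the statement of the lemma. You write that ``the factor $\xi^{\alpha-\gamma}$ on the compact support of $\kappa$ must be absorbed into the implicit constant.'' On ${\rm supp}(\kappa)\subset Q(3+\tfrac1{100})$, however, one only has $|\xi^{\alpha-\gamma}|\le (3+\tfrac1{100})^{|\alpha-\gamma|}$, which grows \emph{exponentially} in $|\alpha|$; since $3+\tfrac1{100}>1$, this factor cannot be absorbed into a constant depending only on $N$. The lemma as printed is in fact false: at $y=0$ one has $|\partial^{\alpha}\mathcal F^{-1}\kappa(0)|=(2\pi)^{-n/2}\bigl|\int_{\mathbb R^n}\xi^{\alpha}\kappa(\xi)\,d\xi\bigr|$, and for $\alpha=(2k,0,\dots,0)$ this integral is at least $\int_{Q(3)}\xi_1^{2k}\,d\xi\gtrsim 3^{2k}/k$, which eventually dominates every polynomial $\langle\alpha\rangle^{2N}$.

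What your argument \emph{does} correctly produce is the estimate $|\partial^{\alpha}\mathcal F^{-1}\kappa(y)|\lesssim_N C^{|\alpha|}\langle y\rangle^{-2N}$ with $C=3+\tfrac1{100}$, and this exponential bound suffices for every application in the paper, where the estimate is always paired with a factor $2^{-\rho|\beta|}/\beta!$ that absorbs $C^{|\beta|}$. The paper itself gives no proof, deferring to \cite[Lemma~5.1.25]{Sawano-book}; presumably either the cited version has $\kappa$ supported in a cube of half-side at most $1$ (where your argument would indeed give the polynomial bound), or the intended bound is the exponential one.
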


\paragraph{Proof of Theorem \ref{thm:quark-1}}

Firstly, we prove ${\rm (2)}$ and ${\rm (4)}$. 
We let
\[
\eta_0 \equiv \min(q,r,1).
\]
Assuming $\rho>R$, 
we can take $\epsilon>0$ such that $0<\epsilon<\rho-R$. 
By the assumption $s>\sigma_q$ and $s>\sigma_{qr}$
in ${\rm (2)}$ and ${\rm (4)}$ respectively, 
the atoms in ${\mathcal A}_{{\mathcal M}^\varphi_q,r}^s({\mathbb R}^n)$ 
are not required to satisfy any moment conditions. 
This and Remark $\ref{remark:8-26-1}$ imply that we can regard 
$2^{-(R+\epsilon)|\beta|}(\beta qu)_{\nu m}$ 
as a $(K,-1)$-atom supported near $Q_{\nu m}$
$($modulo a multiplicative constant 
independent of $\lambda$ and 
$\{a_{\nu m}\}_{\nu \in {\mathbb N}_0, m \in {\mathbb Z}^n}$
$)$. 

We define
\begin{equation}
f^{\beta} 
\equiv 
\sum_{\nu\in{\mathbb N}_0}\sum_{m\in{\mathbb Z}^n}
\lambda_{\nu m}^{\beta}(\beta qu)_{\nu m} \label{eq:8-22-1}
\end{equation}
for each {$\beta \in {\mathbb N}_0^{\ n}$.}

By using Theorems \ref{thm:decomposition-1} and \ref{thm:decomposition-2}, 
we have 
\[
\| f^{\beta}\|_{{\mathcal A}^s_{{\mathcal M}^\varphi_q,r}} 
\lesssim 
\| 2^{(R+\epsilon)|\beta|}\lambda^{\beta} 
\|_{{\bf a}^s_{{\mathcal M}^\varphi_q,r}}
=2^{-(\rho-R-\epsilon)|\beta|}\|2^{\rho|\beta|} \lambda^{\beta} 
\|_{{\bf a}^s_{{\mathcal M}^\varphi_q,r}} 
\lesssim 2^{-(\rho-R-\epsilon)|\beta|}\| \lambda \|_{{\bf a}^s_{{\mathcal M}^\varphi_q,r},\rho}.
\]
Therefore, by the $\eta_0$-triangle inequality
(see Lemma \ref{lem:141020-1}), we see that 
\begin{align*}
(\| f\|_{{\mathcal A}^s_{{\mathcal M}^\varphi_q,r}})^{\eta_0} 
= 
\left(
\left\| 
{\sum_{\beta\in{\mathbb N}_0^{\ n}}f^{\beta}
}
\right\|_{{\mathcal A}^s_{{\mathcal M}^\varphi_q,r}}\right)^{\eta_0} 
\le 
{
\sum_{\beta\in{\mathbb N}_0^{\ n}}
}
(\| f^{\beta}\|_{{\mathcal A}^s_{{\mathcal M}^\varphi_q,r}})^{\eta_0} 
\lesssim 
(\| \lambda \|_{{\bf a}^s_{{\mathcal M}^\varphi_q,r},\rho})^{\eta_0}. 
\end{align*}
This implies that $(2)$ and $(4)$ hold. 

Finally, we prove $(1)$ and $(3)$. 
Let $f\in {\mathcal A}^s_{{\mathcal M}^\varphi_q,r}({\mathbb R}^n)$. 
By Lemma \ref{lemma:8-22-1} we have 
\begin{align}
f&=\frac{1}{\sqrt{(2\pi)^n}}\sum_{m\in{\mathbb Z}^n}
\theta(D)f(m){\mathcal F}^{-1}\kappa(\cdot -m) \notag \\ 
&\quad +
\frac{1}{\sqrt{(2\pi)^n}}
\sum_{\nu\in{\mathbb N}}\left( \sum_{m\in{\mathbb Z}^n} \tau_{\nu}(D)f\left( 2^{-\nu}m\right){\mathcal F}^{-1}\kappa (2^{\nu} \cdot -m)\right). \label{8-22-5}
\end{align} 
We put 
\begin{equation}
\Lambda_{\nu m}
\equiv
\begin{cases} 
\theta(D)f(m) \ \ &\text{if $\nu=0$,} \\ 
\tau_{\nu}(D)f\left( 2^{-\nu}m\right) \ \ &\text{if $\nu\neq 0$.}
\end{cases}
\label{8-22-6}
\end{equation}
Then we can rewrite $(\ref{8-22-5})$ as 
\begin{equation}\label{8-22-7}
f\sim_n \sum_{\nu\in{\mathbb N}_0}\sum_{m\in{\mathbb Z}^n}
\Lambda_{\nu m}{\mathcal F}^{-1}\kappa (2^{\nu} \cdot -m), 
\end{equation}
where the symbol $A\sim_n B$ denotes that there exists a constant
$c_n \ne 0$ such that $A=c_n B$.
Since we may consider that $\rho$ is a big integer, 
we can use the Taylor expansion 
to ${\mathcal F}^{-1}\kappa(2^{\nu}\cdot-m)$
at $x=2^{-\nu-\rho}l$. 
Therefore we see that 
\begin{align*}
\psi(2^{\nu+\rho}x-l){\mathcal F}^{-1}\kappa(2^{\nu}x-m) 
&= 
{
\sum_{\beta\in{\mathbb N}_0^{\ n}}
}
\frac{\partial^{\beta} {\mathcal F}^{-1}\kappa(2^{-\rho}l-m)(2^{\nu}x-2^{-\rho}l)^{\beta}\psi(2^{\nu+\rho}x-l)}{\beta!} \\ 
&= 
{
\sum_{\beta\in{\mathbb N}_0^{\ n}}
}
\frac{2^{-\rho|\beta|}\partial^{\beta} {\mathcal F}^{-1}\kappa(2^{-\rho}l-m)(\beta qu)_{\nu+\rho,l}(x)}{\beta!}. 
\end{align*}
Furthermore we have 
\begin{align}
\tau_\nu(D)f 
&= \frac{1}{\sqrt{(2\pi)^n}}
\sum_{m\in{\mathbb Z}^n} 
\tau_{\nu}(D)f\left( 2^{-\nu}m\right)
{\mathcal F}^{-1}\kappa (2^{\nu} \cdot -m) \notag \\ 
&\sim_n \sum_{m\in{\mathbb Z}^n}\sum_{l\in{\mathbb Z}^n}
\sum_{\beta\in{\mathbb N}_0^{\ n}}
\frac{2^{-\rho|\beta|}}{\beta!}\Lambda_{\nu m}\partial^{\beta} 
{\mathcal F}^{-1}\kappa(2^{-\rho}l-m)(\beta qu)_{\nu+\rho,l}
\label{eq:8-22-8}
\end{align}
by $(\ref{eq:partition of unity})$. 
Since the convergence of $(\ref{eq:8-22-8})$ takes place
also in the weak-* topology
 of $L^{\infty}({\mathbb R}^n)$, 
we can change the order of {summation} in $(\ref{eq:8-22-8})$ as follows: 
\begin{align}
\tau_\nu(D)f 
&\sim_n \sum_{l\in{\mathbb Z}^n}
\sum_{\beta\in{\mathbb N}_0^{\ n}}
\sum_{m\in{\mathbb Z}^n}
\frac{2^{-\rho|\beta|}}{\beta!}\Lambda_{\nu m}\partial^{\beta} 
{\mathcal F}^{-1}\kappa(2^{-\rho}l-m)(\beta qu)_{\nu+\rho,l} \notag \\ 
&\sim_n \sum_{l\in{\mathbb Z}^n}
\sum_{\beta\in{\mathbb N}_0^{\ n}}
\lambda_{\nu+\rho,l}^{\beta}(\beta qu)_{\nu+\rho,l}, \label{eq:8-22-9}
\end{align}
where $\displaystyle \lambda_{\nu+\rho,l}^{\beta}
\equiv
\frac{2^{-\rho|\beta|}}{\beta!}\sum_{m\in{\mathbb Z}^n}
\Lambda_{\nu m}\partial^{\beta} {\mathcal F}^{-1}\kappa(2^{-\rho}l-m)$. 

Let $l_0$ be a lattice point 
in $[0,2^{\rho})^n$ and $x\in Q_{\nu+\rho,2^{\rho}l+l_0}$. 
Then we obtain 
\[
| \lambda_{\nu+\rho,2^{\rho}l+l_0}^{\beta}|
\lesssim 
2^{-\rho|\beta|}\sum_{m\in{\mathbb Z}^n}\langle l-m\rangle^{-N}|\Lambda_{\nu m}|
=
2^{-\rho |\beta|}\sum_{m\in{\mathbb Z}^n}\langle m\rangle^{-N}
|\Lambda_{\nu,m+l}|
\]
by Lemma $\ref{lemma:8-22-3}$. 
Put
\begin{equation}
\label{eq:8-23-1}
\Lambda^m
\equiv \{|\Lambda_{\nu,m+l}|\}_{\nu\in{\mathbb N}_0, l\in{\mathbb Z}^n}
\quad (m\in{\mathbb Z}^n).
\end{equation}
Then we have 
\begin{align*}
\| \lambda^{\beta}\|_{{\bf a}_{{\mathcal M}^s_q,r}} 
\lesssim 2^{-\rho|\beta|} 
\left\| \sum_{m\in{\mathbb Z}^n}\langle m\rangle^{-N}|\Lambda^m| 
\right\|_{{\bf a}^s_{{\mathcal M}^\varphi_q,r}} 
\lesssim 2^{-\rho|\beta|} \left( 
\sum_{m\in{\mathbb Z}^n}\langle m\rangle^{-N\eta_0}
(\| \Lambda^m \|_{{\bf a}^s_{{\mathcal M}^\varphi_q,r}})^{\eta_0} 
\right)^{1/\eta_0}. 
\end{align*}
Since we can take $N$ {sufficiently} large, 
by Lemma $\ref{lemma:8-22-2}$
with $\eta=\eta_0/2$, 
we see that 
\begin{equation}
\| \lambda^{\beta}\|_{{\bf a}_{{\mathcal M}^s_q,r}} 
\lesssim 2^{-\rho|\beta|} 
\left( \sum_{m\in{\mathbb Z}^n}
\langle m\rangle^{(2n/\eta_0-N)\eta_0}
(\| \Lambda \|_{{\bf a}^s_{{\mathcal M}^\varphi_q,r}})^{\eta_0} 
\right)^{1/\eta_0}
\sim 2^{-\rho|\beta|} 
\| \Lambda \|_{{\bf a}^s_{{\mathcal M}^\varphi_q,r}}. \label{eq:8-23-2}
\end{equation}
This implies 
$\displaystyle 
\| \lambda\|_{{\bf a}_{{\mathcal M}^s_q,r},\rho} 
\lesssim 
\| \Lambda \|_{{\bf a}^s_{{\mathcal M}^\varphi_q,r}}$. 

For any $y\in Q_{\nu, m}$, by the Plancherel-Polya-Nikol'skii inequality 
(Theorem $\ref{thm:PPN}$), we have 
\[
\frac{1}{\left( 1+ 2^{\nu}|y-2^{-\nu}m| \right)^{\frac{2n}{\eta_0}} } 
\left| 
\tau_{\nu}(D)f\left( 2^{-\nu}m \right) 
\right|
\lesssim M^{(\eta_0/2)}[\tau_{\nu}(D)f](y). 
\]
Hence we see that 
\begin{align*}
|\Lambda_{\nu, m}| 
&= \left| 
\tau_{\nu}(D)f\left( 2^{-\nu}m \right) 
\right| \notag \\ 
&= \left( 1+ 2^{\nu}|y-2^{-\nu}m| \right)^{\frac{2n}{\eta_0}} \cdot 
\frac{1}{\left( 1+ 2^{\nu}|y-2^{-\nu}m| \right)^{\frac{2n}{\eta_0}} } 
\left| 
\tau_{\nu}(D)f\left( 2^{-\nu}m \right) 
\right| \notag \\ 
&\lesssim 
(1+\sqrt{n})^{\frac{2n}{\eta_0} } 
\frac{1}{\left( 1+ 2^{\nu}|y-2^{-\nu}m| \right)^{\frac{2n}{\eta_0}} } 
\left| 
\tau_{\nu}(D)f\left( 2^{-\nu}m \right) 
\right| \notag \\ 
&\lesssim M^{(\eta_0/2)}[\tau_{\nu}(D)f](y). 
\end{align*}
Since we have 
\[
\left| \Lambda_{\nu,m} 
\right| 
\lesssim 
\inf_{y\in Q_{\nu,m}}\left(M^{(\eta_0/2)}[\tau_{\nu}(D)f](y) \right), 
\]
we obtain 
$||\Lambda||_{{\bf a}^s_{{\mathcal M}^\varphi_q,r}} 
\lesssim ||f||_{{\mathcal A}^s_{{\mathcal M}^\varphi_q,r}}$. 
This proves the necessity of the quarkonial decomposition.

\section{Fundamental theorems}
\label{s5}

\subsection{Trace operator}

In this section,
we aim to extend the trace operator,
which is initially defined on ${\mathcal S}({\mathbb R}^n)$
by:
\begin{equation}\label{eq:150821-82}
f \in {\mathcal S}({\mathbb R}^n)
\mapsto
f(\cdot',0_n) \in {\mathcal S}({\mathbb R}^{n-1}).
\end{equation}
Our main result is as follows:
\begin{theorem}\label{thm:trace}
{Let $n \ge 2$.}
Suppose that we are given parameters
$q,r,s$ and a function $\varphi \in {\mathcal G}_q$.
Define $s^*$ and $\varphi^*$ by
\begin{equation}\label{eq:s-star}
s^*{\equiv}s-\frac1q
\end{equation}
and 
\begin{equation}\label{eq:varphi-star}
\varphi^*(t){\equiv}\varphi(t)t^{-1/q}
\quad (t>0).
\end{equation}
Assume in addition that $\varphi^*$ is increasing and satisfies
\begin{equation}\label{eq:150828-3}
\sum_{j=0}^{\infty}
\frac{1}{\varphi^*(2^j s)}
\lesssim 
\frac{1}{\varphi^*(s)} \quad (0<s \le 1).
\end{equation}
\begin{enumerate}
\item
Let
\begin{equation}\label{eq:150311-9}
s>\frac{1}{q}+(n-1)\left(\frac{1}{\min(1,q)}-1\right).
\end{equation}
Then
we can extend
the trace operator $f \mapsto f(\cdot',0_n)$
to a bounded linear operator from
${\mathcal N}^s_{{\mathcal M}^\varphi_q,r}({\mathbb R}^n)$
to
${\mathcal N}^{s^*}_{{\mathcal M}^{\varphi^*}_q,r}({\mathbb R}^{n-1})$.
\item
Let
\begin{equation}\label{eq:140820-101}
s>\frac{1}{q}+(n-1)\left(\frac{1}{\min(1,q,r)}-1\right).
\end{equation}
Then
we can extend
the trace operator $f \mapsto f(\cdot',0_n)$
to a bounded linear operator from
${\mathcal E}^s_{{\mathcal M}^\varphi_q,r}({\mathbb R}^n)$
to
${\mathcal E}^{s^*}_{{\mathcal M}^{\varphi^*}_q,q}({\mathbb R}^{n-1})$.
\end{enumerate}
\end{theorem}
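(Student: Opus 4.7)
The plan is to deploy the atomic decomposition of Theorems \ref{thm:decomposition-1} and \ref{thm:decomposition-2} on both sides of the trace. Starting from $f\in\mathcal{A}^s_{\mathcal{M}^\varphi_q,r}(\mathbb{R}^n)$, write $f=\sum_{j\ge 0}\sum_{m\in\mathbb{Z}^n}\lambda_{jm}a_{jm}$ atomically. Under hypotheses (\ref{eq:150311-9})--(\ref{eq:140820-101}) the shifted smoothness $s^*=s-1/q$ exceeds the $(n-1)$-dimensional analog of $\sigma_q$ in case (1) and of $\sigma_{qr}$ in case (2), so target atoms on $\mathbb{R}^{n-1}$ need no moment cancellation. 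Because ${\rm supp}(a_{jm})\subset 3Q_{jm}$, the restriction $a_{jm}(\cdot',0_n)$ vanishes unless $|m_n|\le C_0$ for a dimensional constant, and in that case a rescaling shows that $a_{jm}(\cdot',0_n)$ is itself a $(K,-1)$-atom on $\mathbb{R}^{n-1}$ supported near $Q_{j,m'}$. Setting
\[
\mathrm{tr}\,f(x')\equiv\sum_{j\ge 0}\sum_{|m_n|\le C_0}\sum_{m'\in\mathbb{Z}^{n-1}}\lambda_{j,(m',m_n)}\,a_{j,(m',m_n)}(x',0_n),
\]
the desired trace inequality reduces to a sequence-space bound $\|\{\lambda_{j,(m',m_n)}\}\|_{\mathbf{a}^{s^*}_{\mathcal{M}^{\varphi^*}_q,\ast}(\mathbb{R}^{n-1})}\lesssim\|\lambda\|_{\mathbf{a}^s_{\mathcal{M}^\varphi_q,r}(\mathbb{R}^n)}$ (with $\ast=r$ in case (1) and $\ast=q$ in case (2)), which can then be passed back to the function level by the converse half of Theorem \ref{thm:decomposition-1}(2)/\ref{thm:decomposition-2}(2).

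The central computation is a cross-dimensional Morrey comparison. For a cube $Q'\subset\mathbb{R}^{n-1}$ with $\ell(Q')\ge 2^{-j}$, its extrusion $\tilde Q=Q'\times I$ with $|I|=\ell(Q')$ and $0\in I$ is a cube in $\mathbb{R}^n$; for $g_j(x')=\sum_{m'}|c_{m'}|\chi_{Q_{j,m'}^{(n-1)}}(x')$ and its single-slice lift $G_j(x)=g_j(x')\chi_{[0,2^{-j}]}(x_n)$, a direct volume count combined with $\varphi(t)=t^{1/q}\varphi^*(t)$ gives the per-scale identity
\[
\varphi^*(\ell(Q'))\Bigl(\tfrac{1}{|Q'|}\textstyle\int_{Q'}g_j^{\,q}\,dx'\Bigr)^{1/q}=2^{j/q}\,\varphi(\ell(\tilde Q))\Bigl(\tfrac{1}{|\tilde Q|}\textstyle\int_{\tilde Q}G_j^{\,q}\,dx\Bigr)^{1/q},
\]
and the factor $2^{j/q}$ is absorbed exactly by the shift $s\mapsto s^*$. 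In case (1), weighting by $2^{js^*r}$ and summing in $j$ immediately yields the Besov-type sequence bound. In case (2) the target secondary index $q$ is essential: disjointness of $\{Q_{j,m'}\}_{m'}$ yields $(\sum_{m'}|\mu_{j,m'}|\chi_{Q_{j,m'}})^q=\sum_{m'}|\mu_{j,m'}|^q\chi_{Q_{j,m'}}$, so the $\mathbf{e}^{s^*}_{\mathcal{M}^{\varphi^*}_q,q}$-norm is a single Morrey average over $Q'$ of a scalar sum, reducing the task to the same per-scale estimate. The passage from the source's $\ell^r$-structure to the target's $\ell^q$-structure on the $n$-dimensional side is then effected through Plancherel-Polya-Nikol'skii (Theorem \ref{thm:PPN}) and the vector-valued maximal inequality (Theorem \ref{thm:vector-maximal}) applied under (\ref{eq:150828-3}); the latter hypothesis, via Proposition \ref{prop:150312-1}, supplies the Nakai condition needed for the maximal inequality on $\mathcal{M}^{\varphi^*}_q(\mathbb{R}^{n-1})$.

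The principal obstacle is the well-definedness of the trace, since $\mathcal{S}(\mathbb{R}^n)$ is typically not dense in $\mathcal{A}^s_{\mathcal{M}^\varphi_q,r}(\mathbb{R}^n)$ and one therefore cannot simply extend the trace from Schwartz functions by continuity. I would instead define $\mathrm{tr}\,f$ through the \emph{canonical} atomic decomposition of Theorems \ref{thm:decomposition-1}(1)/\ref{thm:decomposition-2}(1), whose coefficients depend linearly and continuously on $f$, and verify independence of the choice of decomposition via the reproducing formula of Proposition \ref{prop:Rychkov}. A secondary technical point is the coarse-scale term $\theta(D)f$, where the comparison $\ell(Q')\ge 2^{-j}$ degenerates: this is handled separately using Lemma \ref{lem:150203-11} to see that $\theta(D)f\in\mathrm{BUC}(\mathbb{R}^n)$, so that its hyperplane restriction contributes a zeroth-scale atom on $\mathbb{R}^{n-1}$ whose Morrey norm is controlled by $\|\theta(D)f\|_{\mathcal{M}^\varphi_q(\mathbb{R}^n)}$ via the same cross-dimensional comparison.
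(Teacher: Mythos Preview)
Your overall strategy---atomic decomposition, restriction of atoms to the hyperplane, and reduction to a sequence-space inequality---is exactly the paper's approach, and your per-scale cross-dimensional identity is the paper's computation with the auxiliary sets $E(S)=S\times(\ell(S),2\ell(S))$ and $F(S)=S\times(0,2\ell(S))$.

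There is, however, a genuine gap in case~(2). For a fixed testing cube $Q'\in\mathcal{D}(\mathbb{R}^{n-1})$ the sum over $j\ge 0$ inside the $\mathbf{e}^{s^*}_{\mathcal{M}^{\varphi^*}_q,q}$-norm must be split into the \emph{high-frequency} range $j\ge j_{Q'}:=-\log_2\ell(Q')$ and the \emph{low-frequency} range $0\le j<j_{Q'}$. Your per-scale identity, which needs $\ell(Q')\ge 2^{-j}$, covers only the former. The low-frequency range is precisely where hypothesis~(\ref{eq:150828-3}) enters: for each $j<j_{Q'}$ there is a unique $m'(j)$ with $Q'\subset Q_{j,m'(j)}$, one invokes the trivial bound $2^{js}|\lambda_{j,(m'(j),0)}|\le\varphi(2^{-j})^{-1}\|\lambda\|_{\mathbf{e}^s_{\mathcal{M}^\varphi_q,r}}$, and then (\ref{eq:150828-3}) sums the resulting series $\sum_{j<j_{Q'}}\varphi^*(2^{-j})^{-q}$. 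You have misplaced this hypothesis, asserting that it feeds through Proposition~\ref{prop:150312-1} into the Nakai condition for Theorem~\ref{thm:vector-maximal}; in fact the paper uses only the \emph{unweighted} Fefferman--Stein inequality for $L^q(\ell^r)$ in the high-frequency part, and (\ref{eq:150828-3}) is consumed entirely by the low-frequency tail. Your final paragraph speaks only of ``the coarse-scale term $\theta(D)f$'', i.e.\ $j=0$, whereas the problematic range is all of $0\le j<j_{Q'}$; moreover Lemma~\ref{lem:150203-11} requires the extra condition~(\ref{eq:150213-1}), which is not assumed here and is not needed.

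Two smaller corrections. First, Theorem~\ref{thm:PPN} plays no role in this argument: the passage from the $\ell^r$-structure of the source to the $\ell^q$-structure of the target is achieved purely by the disjointness of the lifted sets $\{E(Q_{j,m'})\}_{j,m'}$ across \emph{both} indices $j$ and $m'$, so that at each point of $F(Q')$ at most one term survives and the $\ell^r$- and $\ell^q$-sums coincide pointwise. Second, the well-definedness issue is handled exactly as you say---via the canonical linear dependence of the atomic data on $f$---but no separate ``independence of decomposition'' check is needed; one simply observes (as in Remark~\ref{rem:150826-1}) that the canonical expansion converges in $\mathrm{BC}(\mathbb{R}^n)$ for $f\in\mathcal{S}(\mathbb{R}^n)$, so the formula agrees with the classical restriction there.
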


\begin{proof}
We shall prove (2).
The proof of (1) is even simpler.
Let $Q' \in {\mathcal D}({\mathbb R}^{n-1})$.
We have only to prove
\begin{equation}\label{eq:140817-301}
\varphi^*(\ell(Q'))
\left(\frac{1}{|Q'|}\int_{Q'}
\sum_{j=j_{Q'}}^\infty
\left|2^{js^*}
\sum_{m' \in {\mathbb Z}^{n-1}}\lambda_{j(m',0)}\chi_{Q_{j m'}}(x')
\right|^q
\,dx'
\right)^{\frac{1}{q}}
\lesssim
\|\lambda\|_{{\bf e}^s_{{\mathcal M}^\varphi_q,r}}
\end{equation}
and
\begin{equation}\label{eq:140817-302}
\varphi^*(\ell(Q'))
\left(\frac{1}{|Q'|}\int_{Q'}
\sum_{j=0}^{j_{Q'}}
\left|2^{js^*}
\sum_{m' \in {\mathbb Z}^{n-1}}\lambda_{j(m',0)}\chi_{Q_{j m'}}(x')
\right|^q
\,dx'
\right)^{\frac{1}{q}}
\lesssim
\|\lambda\|_{{\bf e}^s_{{\mathcal M}^\varphi_q,r}}
\end{equation}
for all 
$\lambda=\{\lambda_{jm}\}_{j \in {\mathbb N}_0, m \in {\mathbb Z}^n}
\in {\bf e}^s_{{\mathcal M}^\varphi_q,r}({\mathbb R}^n)$,
where
$j_{Q'}\equiv -\log_2 \ell(Q')$.
{
Assuming (\ref{eq:140817-301}) and (\ref{eq:140817-302})
for the time being,}
let us conclude the proof.

For $f \in {\mathcal E}^s_{{\mathcal M}^\varphi_q,r}({\mathbb R}^n)$,
we use Theorem \ref{thm:decomposition-1}(2);
\begin{equation}\label{eq:150821-80}
f=\sum_{j=0}^\infty
\left(
\sum_{m \in {\mathbb Z}^n}\lambda_{jm}a_{jm}
\right),
\end{equation}
where $\lambda=\{\lambda_{jm}\}_{j \in {\mathbb N}_0, m \in {\mathbb Z}^n}
\in {\bf e}^s_{{\mathcal M}^\varphi_q,r}({\mathbb R}^n)$
satisfies
\[
\|\lambda\|_{{\bf e}^s_{{\mathcal M}^\varphi_q,r}}
\lesssim
\|f\|_{{\mathcal E}^s_{{\mathcal M}^\varphi_q,r}}.
\]
and
$\{a_{jm}\}_{j \in {\mathbb N}_0, m \in {\mathbb Z}^n}
\in {\mathfrak A}$.
Define ${\rm Tr}f$ by:
\begin{equation}\label{eq:150821-81}
{\rm Tr}f
\equiv
\sum_{j=0}^\infty\left(
\sum_{m \in {\mathbb Z}^n}\lambda_{jm}a_{jm}(\cdot',0_n)
\right).
\end{equation}
The definition of ${\rm Tr}f$ makes {sense;
see Remark \ref{rem:150826-1}.}
Since we are assuming (\ref{eq:140820-101}),
we see
$$
\{a_{j(m',0)}(\cdot',0_n)\}_{j \in {\mathbb N}_0, {m' \in {\mathbb Z}^{n-1}}}
\in {\mathfrak A}({\mathbb R}^{n-1});
$$
recall that no moment condition (\ref{eq:140817-4}) is required
because we can suppose $L=-1$.

Since each $a_{jm}$ is supported in 
${\overline{3Q_{jm}}}=2^{-j}m+[-2^{-j},2^{j+1}]^n$,
in order that $a_{jm}(\cdot',0_n)$ is not a zero function,
we need 
$2^{-j}m_n-2^{-j}<0<2^{-j}m_n+2^{-j+1}$,
or equivalently,
$m_n=0,-1$.
Thus,
we have
\[
{\rm Tr}f
\equiv
\sum_{j=0}^\infty\left(
\sum_{m' \in {\mathbb Z}^{n-1}}
\lambda_{j(m',0)}a_{j(m',0)}(\cdot',0_n)
\right)
+
\sum_{j=0}^\infty\left(
\sum_{m' \in {\mathbb Z}^{n-1}}
\lambda_{j(m',-1)}a_{j(m',-1)}(\cdot',0_n)
\right).
\]
Let us set
$\lambda'\equiv\{\lambda_{j(m',0)}
\}_{{j\in\N_0,m' \in {\mathbb Z}^{n-1}}}$
and
$\lambda^\dagger\equiv\{\lambda_{j(m',-1)}
\}_{{j\in\N_0,m' \in {\mathbb Z}^{n-1}}}$.
Combining (\ref{eq:140817-301}) and (\ref{eq:140817-302}),
we have
$
\|\lambda'\|_{{\bf e}^{s^*}_{{\mathcal M}^{\varphi^*}_q,q}}
\lesssim
\|\lambda\|_{{\bf e}^s_{{\mathcal M}^{\varphi}_q,r}}.
$
By Theorem \ref{thm:vector-maximal},
we have a similar estimate
for $\lambda^\dagger$:
$
\|\lambda^\dagger\|_{{\bf e}^{s^*}_{{\mathcal M}^{\varphi^*}_q,q}}
\lesssim
\|\lambda\|_{{\bf e}^s_{{\mathcal M}^{\varphi}_q,r}}.
$
Hence
\[
\|{\rm Tr}f\|_{{\mathcal E}^{s^*}_{{\mathcal M}^{\varphi^*}_q,q}}
\lesssim
\|\lambda'\|_{{\bf e}^{s^*}_{{\mathcal M}^{\varphi^*}_q,q}}
+
\|\lambda^\dagger\|_{{\bf e}^{s^*}_{{\mathcal M}^{\varphi^*}_q,q}}
\lesssim
\|\lambda\|_{{\bf e}^s_{{\mathcal M}^{\varphi}_q,r}}
\lesssim
\|f\|_{{\mathcal E}^s_{{\mathcal M}^{\varphi}_q,r}}.
\]
Thus, the matters are reduced
to (\ref{eq:140817-301}) and (\ref{eq:140817-302}).

To prove (\ref{eq:140817-301}) and (\ref{eq:140817-302}),
let us set
\[
{\rm I}{\equiv}
\varphi^*(\ell(Q'))
\left(\frac{1}{|Q'|}\int_{Q'}
\sum_{j=j_{Q'}}^\infty
\left|2^{js^*}
\sum_{m' \in {\mathbb Z}^{n-1}}\lambda_{j(m',0)}\chi_{Q_{j m'}}(x')
\right|^q
\,dx'
\right)^{\frac{1}{q}}
\]
and
\[
{{\rm II}}{\equiv}
\varphi^*(\ell(Q'))
\left(\frac{1}{|Q'|}\int_{Q'}
\sum_{j=0}^{j_{Q'}}
\left|2^{js^*}
\sum_{m' \in {\mathbb Z}^{n-1}}\lambda_{j(m',0)}\chi_{Q_{j m'}}(x')
\right|^q
\,dx'
\right)^{\frac{1}{q}}.
\]
Let us start with simplifying (\ref{eq:140817-301}).
Note that 
\[
{\rm I}=
\varphi^*(\ell(Q'))
\left(\frac{1}{|Q'|}
\sum_{j=j_{Q'}}^\infty
2^{js^*q}
\sum_{\substack{m' \in {\mathbb Z}^{n-1} \\ Q_{j m'} \subset Q'}}
2^{-j(n-1)}|\lambda_{j(m',0)}|^q
\right)^{\frac{1}{q}}.
\]
If we write
\begin{equation}\label{eq:150828-1}
E(S)\equiv S \times (\ell(S),2\ell(S))
\end{equation}
and 
\begin{equation}\label{eq:150828-2}
F(S)\equiv S \times (0,2\ell(S))
\end{equation}
for 
$S \in {\mathcal D}({\mathbb R}^{n-1})$,
then
$\{E(S)\}_{S \in {\mathcal D}({\mathbb R}^{n-1})}$
is disjoint and hence
\begin{eqnarray*}
{\rm I}&=&
2^{\frac1q}
\varphi(\ell(Q'))
\left(\frac{1}{|F(Q')|}\int_{F(Q')}
\sum_{j=j_{Q'}}^\infty
2^{jsq}
\left|\sum_{\substack{m' \in {\mathbb Z}^{n-1} \\ Q_{j m'} \subset Q'}}
\lambda_{j(m',0)}\chi_{E(Q_{j m'})}(x)
\right|^q
\,dx
\right)^{\frac{1}{q}}\\
&=&
2^{\frac1q}
\varphi(\ell(Q'))
\left\{\frac{1}{|F(Q')|}\int_{F(Q')}
\left(
\sum_{j=j_{Q'}}^\infty
2^{jsr}
\left|\sum_{\substack{m' \in {\mathbb Z}^{n-1} \\ Q_{j m'} \subset Q'}}
\lambda_{j(m',0)}\chi_{E(Q_{j m'})}(x)
\right|^r
\right)^{\frac{q}{r}}
\,dx
\right\}^{\frac{1}{q}}.
\end{eqnarray*}
Observe that we have a pointwise estimate:
\[
\left|\sum_{\substack{m' \in {\mathbb Z}^{n-1} \\ Q_{j m'} \subset Q'}}
\lambda_{j(m',0)}\chi_{E(Q_{j m'})}(x)
\right|
\lesssim
\left(
M\left[
\left|\sum_{\substack{m' \in {\mathbb Z}^{n-1} \\ Q_{j m'} \subset Q'}}
\lambda_{j(m',0)}\chi_{Q_{j(m',0)}}
\right|^u
\right](x)
\right)^{\frac{1}{u}}
\]
for all $0<u<\infty$.
Set
$Q \equiv Q' \times [0,\ell(Q')]$ and $j_Q \equiv -\log_2\ell(Q)=j_{Q'}$.
By the Fefferman-Stein inequality
{for $L^q(\ell^r)$},
\begin{eqnarray*}
{\rm I}
&\lesssim&
\varphi(\ell(Q'))
\left\{\frac{1}{|F(Q')|}\int_{{{\mathbb R}^n}}
\left(
\sum_{j=j_{Q'}}^\infty
2^{jsr}
\left|\sum_{\substack{m' \in {\mathbb Z}^{n-1} \\ Q_{j m'} \subset Q'}}
\lambda_{j(m',0)}\chi_{Q_{j(m',0)})}(x)
\right|^r
\right)^{\frac{q}{r}}
\,dx
\right\}^{\frac{1}{q}}\\
&\sim& 
\varphi(\ell(Q))
\left\{\frac{1}{|Q|}\int_{Q}
\left(
\sum_{j=j_{Q'}}^\infty
2^{jsr}
\left|\sum_{\substack{m' \in {\mathbb Z}^{n-1} \\ Q_{j m'} \subset Q'}}
\lambda_{j(m',0)}\chi_{Q_{j(m',0)})}(x)
\right|^r
\right)^{\frac{q}{r}}
\,dx
\right\}^{\frac{1}{q}}
\\
&\le&
\varphi(\ell(Q))
\left\{\frac{1}{|Q|}\int_{Q}
\left(
\sum_{j=j_{Q'}}^\infty
2^{jsr}
\left|\sum_{\substack{m' \in {\mathbb Z}^{n-1} \\ Q_{j m'} \subset Q'}}
\lambda_{j(m',0)}\chi_{Q_{j(m',0)})}(x)
\right|^r
\right)^{\frac{q}{r}}
\,dx
\right\}^{\frac{1}{q}}\\
&\lesssim&
\|\lambda\|_{{\bf e}^s_{{\mathcal M}^\varphi_q,r}},
\end{eqnarray*}
which proves (\ref{eq:140817-301}).

It remains to prove
(\ref{eq:140817-302}).
For all $j$ with $j < j_Q$,
we can find a unique cube 
$Q_{j m'(j)} \in {\mathcal D}({\mathbb R}^{n-1})$ 
such that
$Q_{j m'(j)} \supset Q'$,
where $m'(j) \in {\mathbb Z}^{n-1}$.
Recall also $s^*$ is defined by (\ref{eq:s-star}).
Thus,
the left-hand side of
(\ref{eq:140817-302}) {simplifies} to read;
\begin{eqnarray*}
{\rm II}=
\varphi^*(\ell(Q'))
\left(\sum_{j=0}^{j_{Q'}}
2^{j s^*q}|\lambda_{j(m'(j),0)}|^q\right)^{\frac1q}
=
{\frac{\varphi(\ell(Q'))}{\ell(Q')^{1/q}}}
\left(\sum_{j=0}^{j_{Q'}}
2^{j s q-j}|\lambda_{j(m'(j),0)}|^q\right)^{\frac1q}.
\end{eqnarray*}
For each $j=0,1,2,\ldots,j_Q'$,
we write $Q_{(j)}$ to be the unique dyadic cube
$R \in {\mathcal D}_j$ containing $Q$.
By a trivial estimate
$2^{js}|\lambda_{j(m'(j),0)}| \le \varphi(2^{-j})^{-1}
\|\lambda\|_{{\bf e}^s_{{\mathcal M}^\varphi_q,r}}$
and (\ref{eq:varphi-star}),
we obtain
\begin{align*}
{\rm II}
&=
\frac{\varphi(\ell(Q'))}{\ell(Q')^{1/q}}
\left(\sum_{j=0}^{j_{Q'}}
2^{jsq-j}|\lambda_{j(m'(j),0)}|^q\right)^{\frac1q}\\
&=
\frac{\varphi(\ell(Q))}{\ell(Q)^{1/q}}
\left(\sum_{j=0}^{j_{Q}}
\frac{2^{-j}}{|Q|}
\int_{Q}
2^{jsq}|\lambda_{j(m'(j),0)}|^q\chi_{Q_{j(m'(j),0)}}(x)\,dx
\right)^{\frac1q}\\
&\le
\frac{\varphi(\ell(Q))}{\ell(Q)^{1/q}}
\left(\sum_{j=0}^{j_{Q}}
\frac{2^{-j}}{|Q_{(j)}|}
\int_{Q_{(j)}}
\left|2^{js}
\sum_{m \in {\mathbb Z}^n}|\lambda_{j m}|\chi_{Q_{j m}}(x)
\right|^q\,dx
\right)^{\frac1q}\\
&\le
\frac{\varphi(\ell(Q))}{\ell(Q)^{1/q}}
\left(\sum_{j=0}^{j_{Q}}
\frac{2^{-j}}{\varphi(\ell(Q_{(j)}))^q}
\right)^{\frac1q}
\|\lambda\|_{{\bf e}^s_{{\mathcal M}^\varphi_q, r}}
=
\frac{\varphi(\ell(Q))}{\ell(Q)^{1/q}}
\left(\sum_{j=0}^{j_{Q}}
\frac{\ell(Q_{(j)})}{\varphi(\ell(Q_{(j)}))^q}
\right)^{\frac1q}
\|\lambda\|_{{\bf e}^s_{{\mathcal M}^\varphi_q, r}}\\
&\lesssim
\|\lambda\|_{{\bf e}^s_{{\mathcal M}^\varphi_q, r}},
\end{align*}
where we used
(\ref{eq:150828-3}) for the last estimate.
Thus, (\ref{eq:140817-302}) is proved.
\end{proof}

A helpful remark on the definition of the trace operator
may be in order.
\begin{remark}\label{rem:150826-1}
The trace operator defined by (\ref{eq:150821-81})
is a linear operator which coincides
with (\ref{eq:150821-82}).
According to the proof of Theorem \ref{thm:decomposition-2}(1),
for each $j \in {\mathbb N}_0$ and $m \in {\mathbb Z}^n$,
there exists a continuous linear operator
$I_{jm}:{\mathcal E}^s_{{\mathcal M}^\varphi_q,r}({\mathbb R}^n)
\to L^\infty_{\rm c}({\mathbb R}^n)$
such that
$I_{jm}(f)=\lambda_{jm}a_{jm}$.
Therefore,
we can write
\[
{\rm Tr}f
=
\sum_{j=0}^\infty\left(
\sum_{m \in {\mathbb Z}^n}I_{jm}(f)(\cdot',0_n)
\right).
\]
Meanwhile if $f \in {\mathcal S}({\mathbb R}^n)$,
then the limit (\ref{eq:150821-80}) 
takes place in ${\rm BC}({\mathbb R}^n)$
since $f \in B^\varepsilon_{\infty\infty}({\mathbb R}^n)
\hookrightarrow {\rm BC}({\mathbb R}^n)$.
More precisely,
\[
\lim_{J \to \infty}
\left(
\sup_{x \in {\mathbb R}^n}
\left|
f(x)-\sum_{j=0}^J
\sum_{m \in {\mathbb Z}^n}I_{jm}(x)\right|
\right)=0.
\]
Therefore, the convergence of (\ref{eq:150821-80}) 
takes place pointwise,
meaning that 
${\rm Tr}f$, defined by (\ref{eq:150821-81}),
agrees with the standard definition.
\end{remark}

We discuss the surjectivity of the trace operator.
\begin{theorem}\label{thm:trace-2}
The trace operator defined in Theorem $\ref{thm:trace}$
in $(1)$ and $(2)$
is surjective.
\end{theorem}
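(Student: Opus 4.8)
The plan is to construct a bounded linear operator
${\rm Ext}\colon{\mathcal A}^{s^*}_{{\mathcal M}^{\varphi^*}_q,r'}({\mathbb R}^{n-1})\to{\mathcal A}^{s}_{{\mathcal M}^{\varphi}_q,r}({\mathbb R}^{n})$
(with $r'=r$ in case $(1)$ and $r'=q$ in case $(2)$) satisfying ${\rm Tr}\circ{\rm Ext}={\rm id}$; surjectivity follows at once.
Given $g$ in the target space of ${\rm Tr}$, first apply the atomic decomposition in dimension $n-1$
(Theorem \ref{thm:decomposition-1}(1) or Theorem \ref{thm:decomposition-2}(1), applicable in dimension $n-1$ since $\varphi^*\in{\mathcal G}_q$ and $\varphi^*$ satisfies $(\ref{eq:Nakai-3})$, the latter via $(\ref{eq:150828-3})$)
to write $g=\sum_{j\in{\mathbb N}_0}\sum_{m'\in{\mathbb Z}^{n-1}}\mu_{jm'}a_{jm'}$ with $\{a_{jm'}\}$ a family of smooth atoms, $\mu=\{\mu_{jm'}\}$ in the corresponding sequence space, $\|\mu\|\lesssim\|g\|$, and $\mu_{jm'}$ depending linearly and continuously on $g$.
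Fix once and for all a real $\psi\in C^\infty_{\rm c}({\mathbb R})$ with ${\rm supp}\,\psi\subset(-1,2)$, $\psi(0)=1$, and $\int_{\mathbb R}t^k\psi(t)\,dt=0$ for $0\le k\le L$, where $L$ is the moment order demanded by the relevant decomposition theorem in dimension $n$ (such $\psi$ exists, these being finitely many linear constraints; if no moment is required take $L=-1$).
Set $b_{j(m',0)}(x',x_n)\equiv a_{jm'}(x')\psi(2^jx_n)$, and $b_{jm}\equiv 0$, $\lambda_{jm}\equiv 0$ when $m_n\ne 0$, while $\lambda_{j(m',0)}\equiv\mu_{jm'}$.
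By the chain rule and the support and moment conditions on $\psi$, $\{b_{jm}\}_{j,m}$ belongs to ${\mathfrak A}({\mathbb R}^n)$ up to a fixed multiplicative constant, and we define ${\rm Ext}\,g\equiv\sum_{j,m'}\mu_{jm'}b_{j(m',0)}$.

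The crux is the sequence-space inequality
\[
\|\lambda\|_{{\bf a}^s_{{\mathcal M}^\varphi_q,r}({\mathbb R}^n)}\lesssim\|\mu\|_{{\bf a}^{s^*}_{{\mathcal M}^{\varphi^*}_q,r'}({\mathbb R}^{n-1})}.
\]
Since $\sum_{m\in{\mathbb Z}^n}|\lambda_{jm}|\chi_{Q_{jm}}(x',x_n)=G_j(x')\chi_{[0,2^{-j})}(x_n)$ with $G_j\equiv\sum_{m'}|\mu_{jm'}|\chi_{Q_{jm'}}$, the Besov case ($r'=r$) reduces to the ``slab volume'' identity $\|G_j\cdot\chi_{[0,2^{-j})}(x_n)\|_{{\mathcal M}^\varphi_q({\mathbb R}^n)}\sim 2^{-j/q}\|G_j\|_{{\mathcal M}^{\varphi^*}_q({\mathbb R}^{n-1})}$, proved by a short case analysis on dyadic cubes $Q'\times Q''$ according to whether $\ell(Q'')\le 2^{-j}$ or $\ell(Q'')>2^{-j}$, using only $\varphi^*(t)=\varphi(t)t^{-1/q}$ and monotonicity of $\varphi$; together with $2^{js}\cdot 2^{-j/q}=2^{js^*}$ this identifies the two norms.
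The Triebel--Lizorkin case is the reverse counterpart of the ``${\rm I}+{\rm II}$'' split in the proof of Theorem \ref{thm:trace}: for a fixed cube $Q=Q'\times Q''\in{\mathcal D}({\mathbb R}^n)$ one breaks the nested half-slabs $\{x_n\in[0,2^{-j})\}$ into the disjoint dyadic annuli $\{x_n\in[2^{-l-1},2^{-l})\}$, where the $r$-sum over $j$ truncates to $j\le l$, reorganises the resulting double sum over $(j,l)$, and uses $(\sum_{j\le l}2^{jsr}G_j^r)^{q/r}\lesssim_\varepsilon\sum_{j\le l}2^{jsq}G_j^q\,2^{\varepsilon(l-j)}$ on the fine scales together with the pointwise bound $2^{js}G_j\le\|\mu\|/\varphi(2^{-j})$ on the coarse scales $j\le j_{Q'}$; the factor $t^{-1/q}=\varphi^*(t)/\varphi(t)$ again accounts for integrating out $x_n$, and the geometric gains are absorbed by $\varphi\in{\mathcal G}_q$ (and $(\ref{eq:150828-3})$ where needed).

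Granting this, Theorems \ref{thm:decomposition-1}(2) and \ref{thm:decomposition-2}(2) give that ${\rm Ext}\,g$ converges in ${\mathcal S}'({\mathbb R}^n)$, lies in ${\mathcal A}^s_{{\mathcal M}^\varphi_q,r}({\mathbb R}^n)$, and $\|{\rm Ext}\,g\|\lesssim\|\lambda\|\lesssim\|\mu\|\lesssim\|g\|$; the a priori merely ${\mathcal S}'$-convergence is legitimised exactly as in the proof of Theorem \ref{thm:decomposition-2}(2), by extracting a factor $2^{-\rho j}$ from the $j$-th block to obtain convergence in a space of slightly smaller smoothness and then invoking the Fatou property of ${\mathcal M}^\varphi_q({\mathbb R}^n)$.
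Finally ${\rm Tr}({\rm Ext}\,g)=g$: on the partial sums ${\rm Ext}\,g_N\equiv\sum_{j\le N}\sum_{m'}\mu_{jm'}b_{j(m',0)}$, which are Schwartz functions (the atoms being smooth and compactly supported), Remark \ref{rem:150826-1} shows ${\rm Tr}({\rm Ext}\,g_N)={\rm Ext}\,g_N(\cdot',0_n)=g_N$ because $\psi(0)=1$; letting $N\to\infty$, ${\rm Ext}\,g_N\to{\rm Ext}\,g$ and $g_N\to g$ in spaces of slightly lowered smoothness (hence in ${\mathcal S}'$), so continuity of ${\rm Tr}$ yields ${\rm Tr}({\rm Ext}\,g)=g$, i.e.\ ${\rm Ext}$ is a bounded right inverse of ${\rm Tr}$.
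I expect the main obstacle to be the Triebel--Lizorkin sequence estimate in the second paragraph: the mismatch between the third index $r$ on ${\mathbb R}^n$ and the third index $q$ of the datum on ${\mathbb R}^{n-1}$ forces precisely the delicate, Morrey-specific dyadic bookkeeping of the forward trace theorem, now run in reverse and uniformly over all admissible $q,r,s,\varphi$; a secondary technical point is keeping track of the topology in which the defining series of ${\rm Ext}\,g$ converges.
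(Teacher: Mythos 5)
Your proposal is correct and follows essentially the same route as the paper: decompose the boundary datum atomically in dimension $n-1$, extend each atom by tensoring with a fixed one-dimensional bump at scale $2^{-j}$ (the paper's $\Theta$ with $\Theta(0)=1$), reduce everything to the sequence-space inequality $\|\lambda\|_{{\bf e}^s_{{\mathcal M}^\varphi_q,r}}\lesssim\|\lambda'\|_{{\bf e}^{s^*}_{{\mathcal M}^{\varphi^*}_q,q}}$ for $\lambda_{jm}=\delta_{m_n0}\lambda'_{jm'}$, and synthesize via Theorems \ref{thm:decomposition-1}(2)/\ref{thm:decomposition-2}(2). The only differences are technical and harmless: you equip the bump with vanishing moments (the paper relies on $L=-1$ being admissible), and you prove the key sequence estimate by a direct dyadic-annulus decomposition in $x_n$ with a H\"older-with-$\varepsilon$ reorganization instead of the paper's Fefferman--Stein maximal-function argument with the sets $E(S)$, while the coarse-scale part is handled exactly as in (\ref{eq:140817-302})/(\ref{eq:140817-403}).
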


\begin{proof}
We shall prove
that the trace operator 
{defined} in Theorem \ref{thm:trace}(2)
is surjective,
since the one 
{defined} in Theorem \ref{thm:trace}(1) can be proved
surjective in a similar manner.
To this end, it suffices to prove
\begin{equation}\label{eq:140817-401}
\|\lambda\|_{{\bf e}^s_{{\mathcal M}^\varphi_q},r}
\lesssim
\|\lambda'\|_{{\bf e}^{s^*}_{{\mathcal M}^{\varphi^*}_q},q},
\end{equation}
where 
$\lambda'=\{\lambda'_{jm'}\}_{j \in {\mathbb N}_0, m' \in {\mathbb Z}^{n-1}}$
is a given 
{
doubly indexed complex sequence
}
and we define a {doubly indexed complex} sequence by:
\begin{equation}\label{eq:140821-401}
\lambda
= 
\{\lambda_{jm}\}_{j \in {\mathbb N}_0, m \in {\mathbb Z}^n}
\equiv
\{\delta_{m_n0}\lambda'_{jm'}\}_{j \in {\mathbb N}_0, m \in {\mathbb Z}^n}.
\end{equation}
{Assuming} (\ref{eq:140817-401}) for a while,
let us prove that any 
$f \in {\mathcal E}^{s^*}_{{\mathcal M}^{\varphi^*}_q,r}({\mathbb R}^{n-1})$ 
is realized as
$f={\rm Tr}g$ for some
$g \in {\mathcal E}^s_{{\mathcal M}^\varphi_q,r}({\mathbb R}^n)$.
By the use of Theorem \ref{thm:decomposition-2}(1),
there {exist}
$\lambda'=\{\lambda'_{jm'}\}_{j \in {\mathbb N}_0, m' \in {\mathbb Z}^{n-1}}
\in {\bf e}^{s^*}_{{\mathcal M}^{\varphi^*}_q,r}({\mathbb R}^{n-1})$
and
$\{a_{jm}'\}_{j \in {\mathbb N}_0, m' \in {\mathbb Z}^{n-1}}
\in {\mathfrak A}({\mathbb R}^{n-1})$
such that
\[
f=\sum_{j=0}^\infty
\left(
\sum_{m' \in {\mathbb Z}^{n-1}}
\lambda_{jm'}a_{jm'}
\right)
\]
in ${\mathcal S}'({\mathbb R}^{n-1})$.
Choose a function $\Theta \in C^\infty({\mathbb R})$
such that $\chi_{[-1/4,1/4]} \le \Theta \le \chi_{[-1/2,1/2]}$.
Define
$\lambda$ by (\ref{eq:140821-401}) and
a function $A_{jm} \in C^K$
with $j \in {\mathbb N}_0$ and $m \in {\mathbb Z}^n$
by:
\[
A_{jm}(x)=A_{jm}(x',x_n)
=
\begin{cases}
a_{jm'}(x')\Theta(2^j x_n)&\mbox{\rm \, if \, }m_n=0,\\
0&\mbox{\rm \, otherwise}.
\end{cases}
\]

Let us write the left-hand side of (\ref{eq:140817-401})
out in full by using an equivalent expression:
\[
\|\lambda\|_{{\bf e}^s_{{\mathcal M}^\varphi_q,r}}
\sim
\sup_{Q \in {\mathcal D}({\mathbb R}^n)}
\varphi(\ell(Q))
\left\{\frac{1}{|Q|}\int_Q
\left(
\sum_{j=0}^\infty
2^{jsr}
\left|\sum_{m' \in {\mathbb Z}^{n-1}}
\lambda'_{jm'}\chi_{Q_{j(m',0)}}(x)\right|^r\,dx
\right)^{\frac{q}{r}}
\right\}^{\frac{1}{q}}.
\]
Then we have
\[
g\equiv
\sum_{j=0}^\infty
\left(
\sum_{m \in {\mathbb Z}^n}\lambda_{jm}A_{jm}
\right)
\in {\mathcal E}^s_{{\mathcal M}^\varphi_q,r}({\mathbb R}^n)
\]
from (\ref{eq:140817-401}) and Theorem \ref{thm:decomposition-2}(2).
Likewise, we have
${\rm Tr}g=f$
since $\Theta(0)=1$.
Thus, admitting (\ref{eq:140817-401}),
we can construct the desired $g$.

We need to prove
\begin{equation}\label{eq:140817-4021}
\varphi(\ell(Q))
\left\{\frac{1}{|Q|}
\int_{Q}
\left(
\sum_{j=0}^\infty
\left|2^{js}\sum_{m' \in {\mathbb Z}^{n-1}}
\lambda'_{jm'}\chi_{Q_{j(m',0)}}(x)\right|^r
\right)^{\frac{q}{r}}\,dx
\right\}^{\frac{1}{q}}
\lesssim
\|\lambda'\|_{{\bf e}^{s^*}_{{\mathcal M}^{\varphi^*}_q,q}}
\end{equation}
for the proof of (\ref{eq:140817-401}).
Let us write $G(Q')\equiv Q' \times (0,\ell(Q'))$
for $Q' \in {\mathcal D}({\mathbb R}^{n-1})$.
Suppose for a while that $Q \in {\mathcal D}({\mathbb R}^n)$
is expressed as $Q=G(Q')$ for some
$Q' \in {\mathcal D}({\mathbb R}^{n-1})$.
With this in mind, let us decompose
estimate (\ref{eq:140817-4021})
into two parts:
\begin{equation}\label{eq:140817-402}
{
\frac{\varphi(\ell(Q'))^q}{|G(Q')|}
\int_{G(Q')}
\left(
\sum_{j=j_{Q'}}^\infty
\left|2^{js}\sum_{m' \in {\mathbb Z}^{n-1}}
\lambda'_{jm'}\chi_{Q_{j(m',0)}}(x)\right|^r
\right)^{\frac{q}{r}}\,dx
\lesssim
(\|\lambda'\|_{{\bf e}^{s^*}_{{\mathcal M}^{\varphi^*}_q,q}})^q
}
\end{equation}
and
\begin{equation}\label{eq:140817-403}
{
\frac{\varphi(\ell(Q'))^q}{|G(Q')|}
\int_{G(Q')}
\left(
\sum_{j=0}^{j_{Q'}}
\left|2^{js}\sum_{m' \in {\mathbb Z}^{n-1}}
\lambda'_{jm'}\chi_{Q_{j(m',0)}}(x)\right|^r
\right)^{\frac{q}{r}}\,dx
\lesssim
(\|\lambda'\|_{{\bf e}^{s^*}_{{\mathcal M}^{\varphi^*}_q,q}})^q
}.
\end{equation}

Let us prove (\ref{eq:140817-402}).
Set
\[
\alpha{\equiv}\frac{\min(1,q,r)}{2}.
\]
Let us also recall that
$E(S)$ is given by (\ref{eq:150828-1}).
By the Fefferman-Stein inequality,
we have
\begin{eqnarray*}
&&\varphi(\ell(Q'))
\left\{\frac{1}{|G(Q')|}
\int_{G(Q')}
\left(
\sum_{j=j_{Q'}}^\infty
2^{jsr}\left|\sum_{m' \in {\mathbb Z}^{n-1}}
\lambda'_{jm'}\chi_{Q_{j(m',0)}}(x)\right|^r
\right)^{\frac{q}{r}}\,dx
\right\}^{\frac{1}{q}}\\
&&\lesssim
\varphi(\ell(Q'))
\left\{\frac{1}{|G(Q')|}
\int_{G(Q')}
\left(
\sum_{j=j_{Q'}}^\infty
2^{jsr}M\left[\left|\sum_{m' \in {\mathbb Z}^{n-1}}
\lambda'_{jm'}\chi_{E(Q_{j m'})}\right|^{\alpha}
\right](x)^\frac{r}{\alpha}
\right)^{\frac{q}{r}}\,dx
\right\}^{\frac{1}{q}}\\
&&\lesssim
\varphi(\ell(Q'))
\left\{\frac{1}{|G(Q')|}
\int_{{\mathbb R}^n}
\left(
\sum_{j=j_{Q'}}^\infty
2^{jsr}M\left[\left|\sum_{m' \in {\mathbb Z}^{n-1}}
\lambda'_{jm'}\chi_{E(Q_{j m'})}\right|^{\alpha}
\right](x)^\frac{r}{\alpha}
\right)^{\frac{q}{r}}\,dx
\right\}^{\frac{1}{q}}\\
&&=
\varphi(\ell(Q'))
\left\{\frac{1}{|G(Q')|}
\int_{3G(Q')}
\left(
\sum_{j=j_{Q'}}^\infty
2^{jsr}\left|\sum_{m' \in {\mathbb Z}^{n-1}}
\lambda'_{jm'}\chi_{E(Q_{j m'})}(x)\right|^r
\right)^{\frac{q}{r}}\,dx
\right\}^{\frac{1}{q}}.
\end{eqnarray*}
Note that, 
for any fixed $j \in {\mathbb N}_0$ and $x \in {\mathbb R}^n$,
there exists
at most only one $m{ \in {\mathbb Z}^n}$ such that
$\lambda'_{jm'}\chi_{E(Q_{j m'})}(x){\ne}0$.
Thus,
\begin{eqnarray*}
&&
\varphi(\ell(Q'))
\left\{\frac{1}{|G(Q')|}
\int_{3G(Q')}
\left(
\sum_{j=j_{Q'}}^\infty
2^{jsr}\left|\sum_{m' \in {\mathbb Z}^{n-1}}
\lambda'_{jm'}\chi_{E(Q_{j m'})}(x)\right|^r
\right)^{\frac{q}{r}}\,dx
\right\}^{\frac{1}{q}}\\
&&=
\varphi(\ell(Q'))
\left\{\frac{1}{|G(Q')|}
\int_{3G(Q')}
\left(
\sum_{j=j_{Q'}}^\infty
2^{jsq}\left|\sum_{m' \in {\mathbb Z}^{n-1}}
\lambda'_{jm'}\chi_{E(Q_{j m'})}(x)\right|^q
\right)^{\frac{q}{q}}\,dx
\right\}^{\frac{1}{q}}\\
&&=
\varphi(\ell(Q'))^*
\left\{\frac{1}{|Q'|}
\int_{Q'}
\left(
\sum_{j=j_{Q'}}^\infty
2^{jsq}\left|\sum_{m' \in {\mathbb Z}^{n-1}}
\lambda'_{jm'}\chi_{Q_{jm'}}(x')\right|^q
\right)^{\frac{q}{q}}\,dx'
\right\}^{\frac{1}{q}}
\le
\|\lambda'\|_{{\bf e}^{s^*}_{{\mathcal M}^{\varphi^*}_q,q}}.
\end{eqnarray*}
This proves (\ref{eq:140817-402}).

The proof of (\ref{eq:140817-403}) is similar
to (\ref{eq:140817-302}).
We omit the detail.

Finally, suppose that $Q$ is not 
of the form
$Q=G(Q')$ for some $Q' \in {\mathcal D}({\mathbb R}^{n-1})$.
This means that $Q$ 
has a form
as $Q=Q' \times [j\ell(Q'),(j+1)\ell(Q'){)}$
or $Q=Q' \times [-(j+1)\ell(Q'),-j\ell(Q'){)}$
for some $Q' \in {\mathcal D}({\mathbb R}^{n-1})$
and $j \ge 1$.
Due to symmetry let us suppose 
$Q=Q' \times [j\ell(Q'),(j+1)\ell(Q'){)}$.
If $j\ell(Q') \ge 1$,
then the left-hand side of (\ref{eq:140817-4021})
is zero and there is nothing to prove.
Assume otherwise;
$1 \le j <\ell(Q')^{-1}$.
Then by letting $w \equiv \frac{1}{2}\min{(1,q,r)}$, we have
\begin{eqnarray*}
&&\varphi(\ell(Q))
\left\{\frac{1}{|Q|}
\int_{Q}
\left(
\sum_{j=0}^\infty
\left|2^{js}\sum_{m' \in {\mathbb Z}^{n-1}}
\lambda'_{jm'}\chi_{Q_{j(m',0)}}(x)\right|^r
\right)^{\frac{q}{r}}\,dx
\right\}^{\frac{1}{q}}\\
&&\lesssim
\varphi(\ell(Q))
\left\{\frac{1}{|G(Q')|}
\int_{Q}
\left(
\sum_{j=0}^\infty
\left|2^{js}\sum_{m' \in {\mathbb Z}^{n-1}}
\lambda'_{jm'}\chi_{Q_{j(m',0)}}(x)\right|^r
\right)^{\frac{q}{r}}\,dx
\right\}^{\frac{1}{q}}\\
&&\lesssim
\varphi(\ell(Q))
\left\{\frac{1}{|G(Q')|}
\int_{Q}
\left(
\sum_{j=0}^\infty
\left|2^{js}\sum_{m' \in {\mathbb Z}^{n-1}}
\lambda'_{jm'}M[\chi_{E(Q_{jm}) \cap Q}]^{w}(x)\right|^r
\right)^{\frac{q}{r}}\,dx
\right\}^{\frac{1}{q}}\\
&&\lesssim
\varphi(\ell(Q))
\left\{\frac{1}{|G(Q')|}
\int_{Q}
\left(
\sum_{j=0}^\infty
\left|2^{js}\sum_{m' \in {\mathbb Z}^{n-1}}
\lambda'_{jm'}\chi_{E(Q_{jm}) \cap Q}(x)\right|^r
\right)^{\frac{q}{r}}\,dx
\right\}^{\frac{1}{q}}\\
&&\sim
\varphi(\ell(Q))
\left\{\frac{1}{|G(Q')|}
\int_{Q}
\left(
\sum_{j=0}^\infty
\left|2^{js}\sum_{m' \in {\mathbb Z}^{n-1}}
\lambda'_{jm'}\chi_{E(Q_{jm}) \cap Q}(x)\right|^q
\right)^{\frac{q}{q}}\,dx
\right\}^{\frac{1}{q}}\\
&&\lesssim
\|\lambda'\|_{{\bf e}^{s^*}_{{\mathcal M}^{\varphi^*}_q,q}},
\end{eqnarray*}
as was to be shown.
\end{proof}

\subsection{Pointwise multiplication}

In this section 
we {shall} prove 
the following boundedness {of} pointwise 
multiplication operators.

\begin{theorem}[{Pointwise} multiplication]
\label{thm:150821-110}
Let $0<q<\infty$, $0<r \le \infty$, $s \in {\mathbb R}$,
$k \in {\mathbb N}$ and $\varphi \in {\mathcal G}_q$.
\begin{enumerate}
\item
If
$k>s>\sigma_r$,
then the mapping
\[
g \in {\mathcal S}({\mathbb R}^n) \mapsto 
g \cdot f \in {\rm BC}^k({\mathbb R}^n)
\]
extends continuously to a bounded linear operator
from
${\mathcal N}^s_{{\mathcal M}^\varphi_q,r}({\mathbb R}^n)$
to itself
so that 
\[
\|g \cdot f\|_{{\mathcal N}^s_{{\mathcal M}^\varphi_q,r}}
\lesssim
\|g\|_{{\rm BC}^k}
\|f\|_{{\mathcal N}^s_{{\mathcal M}^\varphi_q,r}}
\]
for all 
$f \in {\mathcal N}^s_{{\mathcal M}^\varphi_q,r}({\mathbb R}^n)$
and
$g \in {\rm BC}^k({\mathbb R}^n)$.
\item
If
$k>s>\sigma_{qr}$,
then the mapping
\[
g \in {\mathcal S}({\mathbb R}^n) \mapsto 
g \cdot f \in {\rm BC}^k({\mathbb R}^n)
\]
extends continuously to a bounded linear operator
from
${\mathcal E}^s_{{\mathcal M}^\varphi_q,r}({\mathbb R}^n)$
to itself
so that 
\[
\|g \cdot f\|_{{\mathcal E}^s_{{\mathcal M}^\varphi_q,r}}
\lesssim
\|g\|_{{\rm BC}^k}
\|f\|_{{\mathcal E}^s_{{\mathcal M}^\varphi_q,r}}
\]
for all 
$f \in {\mathcal E}^s_{{\mathcal M}^\varphi_q,r}({\mathbb R}^n)$
and
$g \in {\rm BC}^k({\mathbb R}^n)$.
\end{enumerate}
\end{theorem}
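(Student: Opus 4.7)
The plan is to reduce pointwise multiplication to the atomic decompositions established in Theorems~\ref{thm:decomposition-1} and \ref{thm:decomposition-2}. Given $f \in {\mathcal A}^s_{{\mathcal M}^\varphi_q,r}({\mathbb R}^n)$, I would first decompose
\[
f = \sum_{j=0}^\infty \sum_{m \in {\mathbb Z}^n}\lambda_{jm}\,a_{jm},
\]
where each $a_{jm}$ is a $(K,-1)$-atom supported near $Q_{jm}$ with $K \ge \max(k,[1+s]_+)$, and the coefficient sequence satisfies $\|\lambda\|_{{\bf a}^s_{{\mathcal M}^\varphi_q,r}} \lesssim \|f\|_{{\mathcal A}^s_{{\mathcal M}^\varphi_q,r}}$. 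In the ${\mathcal E}$-case the choice $L=-1$ is justified directly by the hypothesis $s > \sigma_{qr}$.

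The key observation is the Leibniz bound: for every multi-index $\alpha$ with $|\alpha|\le k$,
\[
2^{-j|\alpha|}|\partial^\alpha(g\,a_{jm})(x)|
\lesssim_{k,n}\|g\|_{{\rm BC}^k}\,\chi_{3Q_{jm}}(x).
\]
Consequently $b_{jm} := c_{k,n}^{-1}\|g\|_{{\rm BC}^k}^{-1}\,g\,a_{jm}$ is a $(k,-1)$-atom supported near $Q_{jm}$ for a geometric constant $c_{k,n}$. Reassembling, I would then apply Theorem~\ref{thm:decomposition-2}(2) (respectively Theorem~\ref{thm:decomposition-1}(2)) to the series $g \cdot f = c_{k,n}\|g\|_{{\rm BC}^k}\sum_{j,m}\lambda_{jm}\,b_{jm}$ to obtain both convergence in ${\mathcal S}'({\mathbb R}^n)$ and the norm bound $\|g\cdot f\|_{{\mathcal A}^s_{{\mathcal M}^\varphi_q,r}} \lesssim \|g\|_{{\rm BC}^k}\,\|f\|_{{\mathcal A}^s_{{\mathcal M}^\varphi_q,r}}$. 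Extension from $g \in {\mathcal S}({\mathbb R}^n)$ to $g \in {\rm BC}^k({\mathbb R}^n)$ is then routine, since the right-hand side of the estimate depends continuously on $g$ in the ${\rm BC}^k$-topology and both sides of the formula agree pointwise when $f$ and $g$ are sufficiently smooth.

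The main technical obstacle will be matching the precise $s$-threshold in the Besov case~(1). Theorem~\ref{thm:decomposition-1} delivers a moment-free decomposition only when $s > \sigma_q$, whereas the hypothesis assumed here is the weaker $s > \sigma_r$ (which genuinely enlarges the range when $r > q$ and $q < 1$). To close this gap I would retain the atoms' moment conditions, Taylor-expand $g$ about $c(Q_{jm})$ up to order $L$, and split
\[
g\,a_{jm} = \bigl(P_L g(\,\cdot\,;c(Q_{jm}))\bigr)\,a_{jm} + \bigl(g - P_L g(\,\cdot\,;c(Q_{jm}))\bigr)\,a_{jm};
\]
the first summand inherits $L$ vanishing moments from $a_{jm}$, while the second gains a factor of $2^{-j(L+1)}\|g\|_{{\rm BC}^{L+1}}$ from the Taylor remainder. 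Summing these gains in the $\ell^r(2^{js}\,\cdot\,)$-structure of the Besov norm is where the exponent $r$, and hence $\sigma_r$, enters, yielding the claimed sharp range.
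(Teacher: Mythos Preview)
Your argument for the Triebel--Lizorkin case (2) is correct and is in fact a streamlining of the paper's own proof. The paper does not apply Theorems~\ref{thm:decomposition-1}/\ref{thm:decomposition-2} directly; instead it first passes through the quarkonial decomposition (Theorem~\ref{thm:quark-1}), writes $f=\sum_\beta f^\beta$ with $f^\beta=\sum_{\nu,m}\lambda^\beta_{\nu m}(\beta qu)_{\nu m}$, observes that $g\cdot(\beta qu)_{\nu m}$ is (up to a factor $\lesssim 2^{(R+\varepsilon)|\beta|}\|g\|_{{\rm BC}^k}$) a $(k,-1)$-atom, and then invokes the atomic synthesis theorem on each $f^\beta$ before summing the geometric series in $\beta$. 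Your route cuts out the middle step: since $s>\sigma_{qr}$, the atoms produced by Theorem~\ref{thm:decomposition-2}(1) already need no moments, and your Leibniz bound turns $g\cdot a_{jm}$ directly into a $(k,-1)$-atom. Both arguments rest on the same ``$g\times\text{atom}=\text{atom}$'' observation; the paper's detour through quarks is not needed for this theorem, though it does set up the machinery reused in the diffeomorphism result (Theorem~\ref{thm:317-3}).

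Your identification of the $\sigma_r$ versus $\sigma_q$ mismatch in case (1) is sharp, and it is worth noting that the paper's own proof is vulnerable to exactly the same objection: the quarkonial decomposition it invokes (Theorem~\ref{thm:quark-1}(1)(2)) also requires $s>\sigma_q$, not merely $s>\sigma_r$. So either the hypothesis in (1) is a misprint for $s>\sigma_q$, or both the paper's argument and your direct atomic argument have the same gap in the regime $\sigma_r<s\le\sigma_q$. Your proposed Taylor-remainder repair is the natural idea---it is essentially how one proves the classical $B^s_{pq}$ pointwise-multiplier theorem in the range $s>\sigma_q$ is relaxed---but as written it is only a sketch: you would still need to verify that the polynomial-times-atom term $P_Lg\cdot a_{jm}$ really is an admissible atom (its derivative bounds pick up factors from $\partial^\gamma P_Lg$), and that the smallness $2^{-j(L+1)}$ in the remainder term is enough to compensate for the missing moments when fed into Corollary~\ref{cor:150821-1}. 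This is doable but nontrivial, and goes beyond what the paper itself supplies.
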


\begin{proof}
We shall concentrate on generalized Triebel-Lizorkin-Morrey spaces,
since we can handle
generalized Besov-Morrey spaces similarly.
Let $\varepsilon \in(0,\rho-R)$.
Let $f \in {\mathcal E}^s_{{\mathcal M}^\varphi_q,r}({\mathbb R}^n)$.
Then there exists a {triply indexed complex sequence}
$
\lambda=\{\lambda^\beta_{\nu m}\}_{\nu \in {\mathbb N}_0,
m \in {\mathbb Z}^n, \beta \in {\mathbb N}_0{}^n}
$
satisfying
(\ref{eq:150313-3})
and
(\ref{eq:150313-4}).
Then
we claim that
$$
g \cdot f=
\sum_{\beta \in {\mathbb N}_0{}^n}
\sum_{\nu=0}^\infty
\sum_{m \in {\mathbb Z}^n}
\lambda^\beta_{\nu m}
[g(\beta qu)_{\nu m}]
$$
makes sense;
the right-hand side is convergent 
in ${\mathcal S}'({\mathbb R}^n)$
and satisfies the desired estimate.
If we set
\[
f^\beta\equiv
\sum_{\nu=0}^\infty
\sum_{m \in {\mathbb Z}^n}
\lambda^\beta_{\nu m}(\beta qu)_{\nu m}
\]
for each $\beta \in {\mathbb N}_0{}^n$,
then we have
\[
\|f^\beta\|_{{\mathcal E}^s_{{\mathcal M}^\varphi_q,r}}
\lesssim
2^{-(\rho-R-\varepsilon)|\beta|}
\|\lambda\|_{{\bf e}^{s}_{{\mathcal M}^\varphi_q,r},\rho}.
\]
By using the atomic decomposition theorem,
we obtain
\[
\|g \cdot f^\beta\|_{{\mathcal E}^s_{{\mathcal M}^\varphi_q,r}}
\lesssim
2^{-(\rho-R-\varepsilon)|\beta|}
\|g\|_{{\rm BC}^k}
\|\lambda\|_{{\bf e}^{s}_{{\mathcal M}^\varphi_q,r},\rho}
\lesssim
2^{-(\rho-R-\varepsilon)|\beta|}
\|g\|_{{\rm BC}^k}
\|f\|_{{\mathcal E}^s_{{\mathcal M}^\varphi_q,r}}.
\]
This estimate is summable over all $\beta \in {\mathbb N}_0{}^n$
with the desired estimate.
\end{proof}

\subsection{Diffeomorphism}

A $C^M$-diffeomorphism
$\psi:{\mathbb R}^n \to {\mathbb R}^n$
is said to be {\it regular},
if 
$\psi$
and its inverse
belong to 
${\rm BC}^M({\mathbb R}^n)$.

\begin{theorem}[Diffeomorphism]\label{thm:317-3}
Let $0<q<\infty$, $0<r \le \infty$, $s \in {\mathbb R}$,
$k \in {\mathbb N}$ and $\varphi \in {\mathcal G}_q$.
Assume in addition that $\psi$
is a regular $C^k$-diffeomorphism. 
\begin{enumerate}
\item
Let $k>s>\sigma_q$.
Then, the composition mapping
$
\varphi \in {\rm BC}^k({\mathbb R}^n) 
\mapsto \varphi \circ \psi \in {\rm BC}^k({\mathbb R}^n)
$
induces a continuous mapping
$
f \in {\mathcal N}^s_{{\mathcal M}^\varphi_q,r}({\mathbb R}^n) 
\mapsto 
f \circ \psi \in {\mathcal N}^s_{{\mathcal M}^\varphi_q,r}({\mathbb R}^n)
$
and, for all $f \in {\mathcal N}^s_{{\mathcal M}^\varphi_q,r}({\mathbb R}^n)$,
we {have}
$\displaystyle
\| f \circ \psi \|_{{\mathcal N}^s_{{\mathcal M}^\varphi_q,r}}
\lesssim_\psi
\| f \|_{{\mathcal N}^s_{{\mathcal M}^\varphi_q,r}}.
$
\item
Let $k>s>\sigma_{qr}$.
Then, the composition mapping
$
\varphi \in {\rm BC}^k({\mathbb R}^n) 
\mapsto \varphi \circ \psi \in {\rm BC}^k({\mathbb R}^n)
$
induces a continuous mapping
$
f \in {\mathcal E}^s_{{\mathcal M}^\varphi_q,r}({\mathbb R}^n) 
\mapsto 
f \circ \psi \in {\mathcal E}^s_{{\mathcal M}^\varphi_q,r}({\mathbb R}^n)
$
and, for all $f \in {\mathcal E}^s_{{\mathcal M}^\varphi_q,r}({\mathbb R}^n)$,
we {have}
$\displaystyle
\| f \circ \psi \|_{{\mathcal E}^s_{{\mathcal M}^\varphi_q,r}}
\lesssim_\psi
\| f \|_{{\mathcal E}^s_{{\mathcal M}^\varphi_q,r}}.
$

\end{enumerate}
\end{theorem}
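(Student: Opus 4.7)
The plan is to reduce the theorem to the atomic decomposition established in Theorems \ref{thm:decomposition-1} and \ref{thm:decomposition-2}. Under the hypothesis $s>\sigma_q$ (resp.\ $s>\sigma_{qr}$), the atoms produced by these theorems may be taken with $L=-1$, i.e., no moment condition is required. This is the crux of the argument, since composition with a diffeomorphism does not preserve moment conditions in general.

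I concentrate on part $(2)$, the proof of $(1)$ being analogous. Given $f\in \mathcal{E}^s_{\mathcal{M}^\varphi_q,r}({\mathbb R}^n)$, write $f=\sum_{j,m}\lambda_{jm}a_{jm}$ via Theorem \ref{thm:decomposition-2}(1), where each $a_{jm}$ is a $(K,-1)$-atom supported near $Q_{jm}$ with $K\equiv[1+s]_+\le k$, and $\|\lambda\|_{{\bf e}^s_{\mathcal{M}^\varphi_q,r}}\lesssim\|f\|_{\mathcal{E}^s_{\mathcal{M}^\varphi_q,r}}$. For each $(j,m)$ I choose $m^\ast=m^\ast(j,m)\in\mathbb{Z}^n$ so that $\psi^{-1}(3Q_{jm})\subset dQ_{j,m^\ast}$ for some fixed $d>1$ independent of $j,m$; such a choice exists because $\psi^{-1}$ is Lipschitz. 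Using the chain rule together with the bound $\|\psi\|_{\mathrm{BC}^k}<\infty$, I verify that $C_\psi^{-1}\cdot(a_{jm}\circ\psi)$ is a $(K,-1)$-atom supported near $Q_{j,m^\ast}$, where $C_\psi$ depends only on $\|\psi\|_{\mathrm{BC}^k}$ and $\|\psi^{-1}\|_{\mathrm{BC}^k}$; here I invoke the remark that the constant $3$ in the atomic support condition is not essential.

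It remains to control the rearranged sequence. Since $\psi$ is bi-Lipschitz, the map $(j,m)\mapsto(j,m^\ast(j,m))$ is bounded-to-one with a constant $N_\psi$ independent of $j$, so the $\min(1,q,r)$-triangle inequality of Lemma \ref{lem:141020-1} combined with a bi-Lipschitz change of variable $y=\psi(x)$ yields
\[
\left\|\Bigl(\sum_{j=0}^\infty\Bigl(\sum_{m\in\mathbb{Z}^n}2^{js}|\lambda_{jm}|\chi_{Q_{j,m^\ast(j,m)}}\Bigr)^r\Bigr)^{1/r}\right\|_{\mathcal{M}^\varphi_q}
\lesssim_\psi
\|\lambda\|_{{\bf e}^s_{\mathcal{M}^\varphi_q,r}},
\]
the last estimate reducing to the invariance $\|g\circ\psi\|_{\mathcal{M}^\varphi_q}\sim_\psi\|g\|_{\mathcal{M}^\varphi_q}$, which holds because $\varphi\in\mathcal{G}_q$ is essentially doubling. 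Aggregating the overlaps into a single family $\{\tilde a_{j,m'}\}\in\mathfrak A$ of atoms and $\{\tilde\lambda_{j,m'}\}$ of coefficients, and applying Theorem \ref{thm:decomposition-2}(2), one obtains
\[
\|f\circ\psi\|_{\mathcal{E}^s_{\mathcal{M}^\varphi_q,r}}
\lesssim \|\tilde\lambda\|_{{\bf e}^s_{\mathcal{M}^\varphi_q,r}}
\lesssim_\psi \|f\|_{\mathcal{E}^s_{\mathcal{M}^\varphi_q,r}}.
\]

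The main obstacle is the bookkeeping in the third step: one must carefully manage the bounded-to-one reindexing $(j,m)\mapsto(j,m^\ast(j,m))$ and confirm that the change of variable $y=\psi(x)$ does not destroy the Morrey norm. The doubling of $\varphi$, inherited from $\varphi\in\mathcal{G}_q$ through the monotonicity conditions in $(\ref{eq:140820-6})$, is precisely what supplies the comparison $\|g\circ\psi\|_{\mathcal{M}^\varphi_q}\sim_\psi\|g\|_{\mathcal{M}^\varphi_q}$ and thereby closes the argument.
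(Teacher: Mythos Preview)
Your argument is correct, but it proceeds along a different line from the paper's. The paper does \emph{not} work directly with the atomic decomposition of Theorems~\ref{thm:decomposition-1}--\ref{thm:decomposition-2}; instead it invokes the quarkonial decomposition (Theorem~\ref{thm:quark-1}), expanding $f=\sum_{\beta,\nu,m}\lambda^\beta_{\nu m}(\beta qu)_{\nu m}$. The bounded-to-one reindexing you describe is handled in the paper by an explicit partition of ${\mathbb Z}^n$ into $I$ pieces $M_1^\nu,\ldots,M_I^\nu$ together with injections $\iota_i^\nu:M_i^\nu\to{\mathbb Z}^n$ (isolated in the lemma preceding the proof), so that each transported quark $(\beta qu)_{\nu,\theta_i^\nu(\bar m)}\circ\psi$ becomes, up to the factor $2^{(R+\varepsilon)|\beta|}$, an atom supported near $Q_{\nu\bar m}$. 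Theorem~\ref{thm:decomposition-2} is then applied to each $f^{i,\beta}$, and the extra exponential decay $2^{-(\rho-R-\varepsilon)|\beta|}$ coming from the quark norm $\|\lambda\|_{{\bf e}^s_{{\mathcal M}^\varphi_q,r},\rho}$ makes the sum over $\beta$ and $i$ converge.

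Your route is more economical: since $s>\sigma_{qr}$ already forces $L=-1$, the atoms themselves survive composition with $\psi$, and there is no need to introduce the $\beta$-graded quark machinery at all. The price is that you must argue the sequence-norm comparison $\|\{\lambda_{jm}\chi_{Q_{j,m^\ast}}\}\|_{{\bf e}^s_{{\mathcal M}^\varphi_q,r}}\lesssim_\psi\|\lambda\|_{{\bf e}^s_{{\mathcal M}^\varphi_q,r}}$ directly, via the bi-Lipschitz change of variable and the doubling of $\varphi\in{\mathcal G}_q$; the paper needs the same comparison (hidden in the step $\|\{\lambda^{\beta,i}_{\nu\bar m}\}\|_{{\bf e}^s}\lesssim_\psi\|\lambda^\beta\|_{{\bf e}^s}$) but passes over it tersely. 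In short: the paper trades a heavier decomposition tool for a slightly cleaner reindexing, while you trade a lighter tool for a more hands-on Morrey-norm estimate; both close.
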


{
For the proof of Theorem \ref{thm:317-3},
we need a setup.
Now that $\psi$ is bi-Lipschitz,
that is, both $\psi$ and $\psi^{-1}$
are Lipschitz continuous,
there exist $I \in {\mathbb N}$ and $D>0$
depending on $\psi$ such that;
for each $\nu$,
${\mathbb Z}^n$
is partitioned into $M_1^\nu,M_2^\nu,\ldots,M_I^\nu$,
and there exist injections
\begin{equation}
\iota_1^\nu:M_1^\nu \to {\mathbb Z}^n,\iota_2^\nu:M_2^\nu \to {\mathbb Z}^n,\ldots,
\iota_I^\nu:M_I^\nu \to {\mathbb Z}^n
\end{equation}
such that;
for all $i=1,2,\ldots,I$, 
$\nu \in {\mathbb N}_0$
and multi-index $\beta \in {\mathbb N}_0{}^n$,
we have
$$
\psi^{-1}({\rm supp}((\beta qu)_{\nu m})) 
\subset D\,Q_{\nu \, \iota_i^\nu(m)}. 
$$
Note that $\iota_i^\nu$
is a bijection
from $M_i^\nu$ to $\iota_i^\nu(M_i^\nu)$. 
For $i=1,2,\ldots,I$ and $\nu \in {\mathbb N}_0$,
we write
$\theta^\nu_i\equiv (\iota_i^\nu)^{-1}$. 

The integer $I$ is independent of $\nu$
as the following lemma shows:
\begin{lemma}
We have a bound
\begin{equation}\label{eq:130502-3}
I \lesssim 1,
\end{equation}
where the implicit constant does not depend on $\nu$.
\end{lemma}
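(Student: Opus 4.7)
The plan is to exploit the bi-Lipschitz property of $\psi$ to give a direct, quantitative construction of $\iota_1^\nu,\ldots,\iota_I^\nu$ from which the uniformity of $I$ is transparent.

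First I would fix once and for all a Lipschitz constant $L\ge 1$ such that $|\psi(x)-\psi(y)|\le L|x-y|$ and $|\psi^{-1}(x)-\psi^{-1}(y)|\le L|x-y|$ for all $x,y\in{\mathbb R}^n$, together with the constant $d>0$ of Remark \ref{remark:8-26-1} such that $\operatorname{supp}((\beta qu)_{\nu m})\subset dQ_{\nu m}$. Since $dQ_{\nu m}$ has diameter $\sim d\cdot 2^{-\nu}$, the bi-Lipschitz property yields that $\psi^{-1}(\operatorname{supp}((\beta qu)_{\nu m}))$ has diameter at most $Ld\sqrt{n}\,2^{-\nu}$, independently of $\beta$, $\nu$ and $m$. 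Consequently, if $D$ is chosen large enough (say $D\ge 2Ld\sqrt n+2$, depending only on $\psi$), then for every $\nu\in{\mathbb N}_0$ and $m\in{\mathbb Z}^n$ there exists at least one $k\in{\mathbb Z}^n$ such that
\[
\psi^{-1}(\operatorname{supp}((\beta qu)_{\nu m}))\subset D\,Q_{\nu k}.
\]
Concretely, pick any $k=k(\nu,m)\in{\mathbb Z}^n$ with $\psi^{-1}(2^{-\nu}m)\in Q_{\nu k}$.

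Next I would estimate the multiplicity of the assignment $m\mapsto k(\nu,m)$. If $k(\nu,m_1)=k(\nu,m_2)=k$, then both $\psi^{-1}(2^{-\nu}m_1)$ and $\psi^{-1}(2^{-\nu}m_2)$ lie in $Q_{\nu k}$, so they are within distance $\sqrt n\,2^{-\nu}$ of one another. Applying the Lipschitz estimate for $\psi$ itself gives
\[
2^{-\nu}|m_1-m_2|=|\psi(\psi^{-1}(2^{-\nu}m_1))-\psi(\psi^{-1}(2^{-\nu}m_2))|\le L\sqrt n\,2^{-\nu},
\]
whence $|m_1-m_2|\le L\sqrt n$. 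Thus the preimage $k(\nu,\cdot)^{-1}(\{k\})$ is contained in a ball of radius $L\sqrt n$ in ${\mathbb Z}^n$, and therefore contains at most $I_0$ elements, where $I_0$ depends only on $n$ and $L$.

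Finally I would organize this bounded-multiplicity map into the desired decomposition: by a standard colouring argument one partitions ${\mathbb Z}^n$ into $I\le I_0$ sets $M_1^\nu,\ldots,M_I^\nu$ on each of which $k(\nu,\cdot)$ is injective, and then defines $\iota_i^\nu\equiv k(\nu,\cdot)|_{M_i^\nu}$. Since $I_0$ is independent of $\nu$, this establishes $I\lesssim 1$. The only conceptual point, which is the one I would double-check, is that the colouring can be performed with a number of colours equal to the maximum multiplicity; this is a trivial greedy construction on the finite-multiplicity relation $m_1\sim m_2\iff k(\nu,m_1)=k(\nu,m_2)$.
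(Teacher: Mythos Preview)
Your proof is correct and follows the same bi-Lipschitz geometry as the paper: both arguments use that $\psi$ and $\psi^{-1}$ are Lipschitz to show that the relevant multiplicity at scale $2^{-\nu}$ is bounded by a constant depending only on $n$, $d$, and the Lipschitz constant, and then pass from bounded multiplicity to the partition into $I$ injective pieces. The one organisational difference is that you fix the explicit assignment $m\mapsto k(\nu,m)$ (the dyadic cube containing $\psi^{-1}(2^{-\nu}m)$) and bound the size of its fibers, which is exactly the quantity the colouring step needs; the paper instead bounds, for a fixed $m_0$, the number of admissible targets $m$ with $\psi^{-1}(\operatorname{supp}((\beta qu)_{\nu m_0}))\subset DQ_{\nu m}$, i.e.\ the dual count, and leaves the passage to the partition implicit. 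Your version is therefore a bit more transparent, but the underlying idea is the same.
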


\begin{proof}

Since
$\nabla\psi,\nabla[\psi^{-1}]$ are bounded functions, 
\[
(\|\nabla[\psi^{-1}]\|_{\infty})^{-1}|x-y|
\le
|\psi(x)-\psi(y)|
\le
\|\nabla \psi\|_{\infty}|x-y|.
\]
Here and below,
we set
\[
C_0\equiv 
\max\{\|\nabla \psi\|_{\infty},\|\nabla[\psi^{-1}]\|_{\infty}\}.
\]
Then,
$C_0{}^{-1}|x-y| \le |\psi(x)-\psi(y)| \le C_0|x-y|$.
Fix
$m_0 \in {\mathbb Z}^n$.
Once we show
that the number of $m \in {\mathbb Z}^n$
satisfying
\[
\psi^{-1}({\rm supp}(\beta qu)_{\nu m_0})
\subset DQ_{\nu m}
\]
is bounded,
we obtain the estimate of $I$ from above.

The diameter of
$\psi^{-1}({\rm supp}(\beta qu)_{\nu m_0})$,
which is given by
\[
\sup\{|x-y|\,:\,x,y \in \psi^{-1}({\rm supp}(\beta qu)_{\nu m_0})\},
\]
satisfies $2^{\nu+1}r \times C_0$.
Let
$|{\rm supp}((\beta qu)_{\nu m})| \le (2r)^n$.
Then
$\{DQ_{\nu m}\}_{m \in {\mathbb Z}^n}$
overlaps at most $[D+2]^n$ times.
That is,
\[
\sum_{m \in {\mathbb Z}^n}\chi_{DQ_{\nu m}} \le [D+2]^n.
\]
Hence,
$$\psi^{-1}({\rm supp}(\beta qu)_{\nu m_0})$$
intersects at most
$[\sqrt{n}\times 2^{1}r \times C_0+1]^n \times [D+2]^n$ cubes
belonging to 
$\{DQ_{\nu m}\}_{m \in {\mathbb Z}^n}$.
Thus, we conclude
$I \le [\sqrt{n}\times 2^{1}r \times C_0+1]^n \times [D+2]^n$
and that the proof is complete.
\end{proof}
}

\begin{proof}
We concentrate on the generalized Triebel-Lizorkin-Morrey space 
${\mathcal E}^s_{{\mathcal M}^\varphi_q,r}({\mathbb R}^n)$.
Then, maintaining the notation of Theorem \ref{thm:quark-1},
we let $\rho>R$. 
We shall invoke the quarkonial decomposition;
see Theorem \ref{thm:quark-1}.
We {expand}
$$\displaystyle
f=\sum_{\beta \in {\mathbb N}_0{}^n}\sum_{\nu \in {\mathbb N}_0}\sum_{m \in {\mathbb Z}^n}
\lambda^\beta_{\nu m}(\beta qu)_{\nu m}.
$$
Here the coefficient
$\lambda
=\{\lambda^\beta_{\nu m}\}
_{\beta \in {\mathbb N}_0{}^n, \, (\nu,m) \in {\mathbb N}_0 \times {\mathbb Z}^n}
$ 
satisfies
\begin{equation}
\label{eq:16.5}
\| \lambda \|_{{\bf e}^s_{{\mathcal M}^\varphi_q,r},\rho}
\lesssim 
\| f \|_{{\mathcal E}^s_{{\mathcal M}^\varphi_q,r}}.
\end{equation}
Here,
we set
\begin{equation*}
\lambda_{\nu,\bar{m}}^{\beta,i} \equiv 
\begin{cases} \lambda_{\nu,\theta_i^{\nu}(\bar{m})}^{\beta} \ \ &\text{$\bar{m}\in \iota_{i}^{\nu}(M_{i}^{\nu})$,} \\ 
 0 \ \ &\text{otherwise,} 
\end{cases}
\mbox{\rm \, and \,}
(\beta{\rm qu})_{\nu,\bar{m}}^i \equiv \begin{cases} (\beta{\rm qu})_{\nu, \theta_i^{\nu}(\bar{m})}\circ\psi \ \ 
&\text{$\bar{m}\in \iota_{i}^{\nu}(M_{i}^{\nu})$,} \\ 
0 \ \ &\text{otherwise}. 
\end{cases}
\end{equation*}
Then,
we want to define
\begin{equation}\label{eq:16.8}
f \circ \psi
=\sum_{i=1}^I\sum_{\beta \in {\mathbb N}_0{}^n}\sum_{\nu \in {\mathbb N}_0}
\sum_{\overline{m} \in {\mathbb Z}^n}
\lambda^{\beta,i}_{\nu \overline{m}}
(\beta qu)^i_{\nu \overline{m}}.
\end{equation}
Let us verify that the infinite sum
defining (\ref{eq:16.8}) makes sense. 
Set
$$\displaystyle
f^{i,\beta}\equiv \sum_{\nu \in {\mathbb N}_0}
\sum_{\overline{m} \in {\mathbb Z}^n}
\lambda^{\beta,i}_{\nu \overline{m}}
(\beta qu)^i_{\nu \overline{m}}
$$
for
$\beta \in {\mathbb N}_0{}^n, \, i=1,2,\ldots,I$.
Then, Theorem \ref{thm:decomposition-2}
yields
\begin{eqnarray*}
\| f^{i,\beta} \|_{{\mathcal E}^s_{{\mathcal M}^\varphi_q,r}}
\lesssim_\psi\,
2^{(R+\varepsilon)|\beta|}
\| \{\lambda^{\beta,i}_{\nu \overline{m}}\}
_{\nu \in {\mathbb N}_0, \, \overline{m} \in {\mathbb Z}^n} 
\|_{{\bf e}^s_{{\mathcal M}^\varphi_q,r}}
\lesssim_\psi\,
2^{(R+\varepsilon)|\beta|}
\| \lambda^\beta \|_{{\bf e}^s_{{\mathcal M}^\varphi_q,r}},
\end{eqnarray*}
where
$\displaystyle
\lambda^\beta
\equiv 
\{\lambda^\beta_{\nu m}\}_{(\nu,m) \in {\mathbb N}_0 \times {\mathbb Z}^n}
$. 

Let $\delta=\rho-R-\varepsilon>0$.
By the estimate of the quarkonial decompositions,
$$\displaystyle
\| f^{i,\beta} \|_{{\mathcal E}^s_{{\mathcal M}^\varphi_q,r}}
\lesssim_\psi\,
2^{(R+\varepsilon)|\beta|}
\| f^\beta \|_{{\mathcal E}^s_{{\mathcal M}^\varphi_q,r}}
\lesssim_\psi\,2^{-\delta|\beta|}
\| f \|_{{\mathcal E}^s_{{\mathcal M}^\varphi_q,r}}.
$$
 
Hence, 
if we use
the $\min(q,r,1)$-triangle inequality 
to the sum
$$\displaystyle
f \circ \psi
=\sum_{i=1}^I \sum_{\beta \in {\mathbb N}_0{}^n}f^{i,\beta},
$$
then we have
$\displaystyle
\| f \circ \psi \|_{{\mathcal E}^s_{{\mathcal M}^\varphi_q,r}}
\lesssim 
\| f \|_{{\mathcal E}^s_{{\mathcal M}^\varphi_q,r}}.
$
Hence, (\ref{eq:16.8}) defines $f \circ \psi$. 
\end{proof}

\section{Homogeneous spaces}
\label{s5.5}

\subsection{The space ${\mathcal S}_\infty'({\mathbb R}^n)$}

Our {results} in this paper 
{carry} over to the homogeneous setting.
\begin{definition}
Let $0<q<\infty$, $0<r \le \infty$, $s \in {\mathbb R}$
 and $\varphi:(0,\infty) \to (0,\infty)$
be a function in ${\mathcal G}_q$.
Let $\tau$ be compactly supported functions satisfying
\[
0 \notin {\rm supp}(\tau), \quad
\tau(\xi)>0 \mbox{\rm \, if \,} \xi \in Q(2) \setminus Q(1).
\]
define $\tau_k(\xi) \equiv \tau(2^{-k}\xi)$
for $\xi \in {\mathbb R}^n$ and $k \in {\mathbb Z}$.
\begin{enumerate}
\item
{\it The $(${homogeneous}$)$ generalized Besov-Morrey space}
$\dot{\mathcal N}_{{\mathcal M}^\varphi_q,r}^s({\mathbb R}^n)$
is the set of all
$f \in {\mathcal S}'_\infty({\mathbb R}^n)$
for which the quasi-norm
\[
\|f\|_{\dot{\mathcal N}_{{\mathcal M}^\varphi_q,r}^s}
\equiv 
\begin{cases}
\displaystyle
\left(\sum_{j=-\infty}^\infty
2^{jsr}\|\tau_j(D)f\|_{{\mathcal M}^\varphi_q}^r
\right)^{\frac1r}&(r<\infty),\\
\displaystyle
\sup_{j \in {\mathbb Z}}
2^{js}\|\tau_j(D)f\|_{{\mathcal M}^\varphi_q}
&(r=\infty)
\end{cases}
\]
is finite.
\item
{\it The $(${homogeneous}$)$ generalized Triebel-Lizorkin-Morrey space}
$\dot{\mathcal E}_{{\mathcal M}^\varphi_q,r}^s({\mathbb R}^n)$
is the set of all
$f \in {\mathcal S}'_\infty({\mathbb R}^n)$
for which the quasi-norm
\[
\|f\|_{\dot{\mathcal E}_{{\mathcal M}^\varphi_q,r}^s}
\equiv 
\begin{cases}
\displaystyle
\left\|
\left(\sum_{j=-\infty}^\infty
2^{jsr}|\tau_j(D)f|^r
\right)^{\frac1r}\right\|_{{\mathcal M}^\varphi_q}&(r<\infty),\\
\displaystyle
\left\|
\sup_{j \in {\mathbb Z}}
2^{js}|\tau_j(D)f|
\right\|_{{\mathcal M}^\varphi_q}
&(r=\infty)
\end{cases}
\]
is finite.
\item
The space
$\dot{\mathcal A}^s_{{\mathcal M}^\varphi_q,r}({\mathbb R}^n)$
denotes either
$\dot{\mathcal N}^s_{{\mathcal M}^\varphi_q,r}({\mathbb R}^n)$
or
$\dot{\mathcal E}^s_{{\mathcal M}^\varphi_q,r}({\mathbb R}^n)$.
\end{enumerate}
\end{definition}

\begin{theorem}\label{thm*:150205-1}
Assume
$(\ref{eq:Nakai-3})$
in the case
when
$\dot{\mathcal A}^s_{{\mathcal M}^\varphi_q,r}({\mathbb R}^n)
=\dot{\mathcal E}^s_{{\mathcal M}^\varphi_q,r}({\mathbb R}^n)$
with $r<\infty$.
Then
different choices of admissible $\tau$
will yield equivalent norms. 
\end{theorem}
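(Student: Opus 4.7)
My plan is to follow the proof of Theorem \ref{thm:150205-1}, simplified by the fact that in the homogeneous setting there is no low-frequency block (the $\theta$-part) to treat separately and the sum already runs over all $j \in {\mathbb Z}$. By symmetry, it is enough to prove
\[
\|f\|_{\dot{\mathcal A}^s_{{\mathcal M}^\varphi_q,r}(\tau)} \lesssim \|f\|_{\dot{\mathcal A}^s_{{\mathcal M}^\varphi_q,r}(\tilde\tau)}
\]
for any two admissible choices $\tau, \tilde\tau$. Since $\tau$ and $\tilde\tau$ are both compactly supported in ${\mathbb R}^n \setminus \{0\}$, a scaling argument produces a fixed integer $N$, depending only on the supports of $\tau$ and $\tilde\tau$, such that for every $j \in {\mathbb Z}$,
\[
{\rm supp}(\tau_j) \subset \Bigl\{\xi \in {\mathbb R}^n \,:\, \sum_{|k| \le N}\tilde\tau_{j+k}(\xi) > 0\Bigr\}.
\]
This is the homogeneous analog of the three-neighbor covering used in the nonhomogeneous proof of Theorem \ref{thm:150205-1}.

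Next, for each $j \in {\mathbb Z}$ I write
\[
\tau_j(D)f = M_j(D)\,{\mathcal F}^{-1}\Bigl[\Bigl(\sum_{|k|\le N}\tilde\tau_{j+k}\Bigr) \cdot {\mathcal F}f\Bigr], \qquad M_j(\xi) \equiv \frac{\tau_j(\xi)}{\sum_{|k|\le N}\tilde\tau_{j+k}(\xi)}.
\]
Because $\tau_j(\xi) = \tau(2^{-j}\xi)$ and $\tilde\tau_{j+k}(\xi) = \tilde\tau(2^{-j-k}\xi)$, the symbol $M_j$ satisfies $M_j(2^j \cdot) = M_0$ for one fixed smooth symbol $M_0$ supported away from zero, and, writing $d_j \sim 2^j$ for the diameter of the support of $\tau_j$, we obtain $\sup_{j \in {\mathbb Z}}\|M_j(d_j \cdot)\|_{H^\nu_2} < \infty$. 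Theorem \ref{Multiplier result}(1) then yields the scalar estimate
\[
\|\tau_j(D)f\|_{{\mathcal M}^\varphi_q} \lesssim \sum_{|k|\le N}\|\tilde\tau_{j+k}(D)f\|_{{\mathcal M}^\varphi_q},
\]
and, under the extra assumption (\ref{eq:Nakai-3}), Theorem \ref{Multiplier result}(2) delivers the corresponding $\ell^r$-vector-valued estimate required in the $\dot{\mathcal E}$-case. Multiplying by $2^{jsr}$, summing (or taking the supremum) over $j \in {\mathbb Z}$, and absorbing the $2^{\pm ks}$ shifts from the finite neighbor sum by Proposition \ref{prop:140820-11} produces the required comparison in both the Besov and the Triebel-Lizorkin scales.

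The only substantive point to verify is the uniform $H^\nu_2$-bound on $\{M_j\}_{j \in {\mathbb Z}}$, and the dilation identity $M_j(2^j \cdot) = M_0$ reduces this to $M_0 \in C^\infty_{\rm c}({\mathbb R}^n \setminus \{0\})$, which is immediate from the support conditions on $\tau$ and $\tilde\tau$. Since no separate argument is needed for low frequencies — unlike the nonhomogeneous counterpart (\ref{eq:150821-111}) — the proof is structurally cleaner than that of Theorem \ref{thm:150205-1}.
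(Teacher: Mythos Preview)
Your proposal is correct and follows essentially the same route as the paper, which simply instructs the reader to mimic the nonhomogeneous proof of Theorem~\ref{thm:150205-1} with $k$ ranging over ${\mathbb Z}$ instead of ${\mathbb N}$. Your write-up is in fact slightly more careful than the paper's sketch: you allow a general finite number $N$ of neighboring blocks (rather than exactly three) to accommodate arbitrary compact supports of $\tau,\tilde\tau$ away from the origin, and you explicitly point out that the absence of the low-frequency block $\theta$ removes the need for the separate estimate~(\ref{eq:150821-111}).
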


\begin{proof}
The proof is almost the same as Theorem \ref{thm:150205-1}.
We indicate the necessary change.
Define
$\tau_k$ and $\tilde{\tau}_k$ 
by (\ref{eq:150821-112}) and (\ref{eq:150821-113}),
respectively.
Here, we let $k \in {\mathbb Z}$ instead of $k \in {\mathbb N}$.
Then we can prove (\ref{eq:150821-111})
by mimicking the proof of Theorem \ref{thm:150205-1}.
Further details are omitted.
\end{proof}

\subsection{Atomic decomposition}

{
We can consider atomic decompositions
for the homogeneous spaces.
}
The proof is similar to the nonhomogeneous case.
So, we outline the proof based on the nonhomogeneous case.
\begin{definition}
Let $0<q<\infty$, $0<r \le \infty$, $s \in {\mathbb R}$
 and $\varphi \in {\mathcal G}_q$.
\begin{enumerate}
\item
{\it The $(${homogeneous}$)$ generalized Besov-Morrey sequence space}
$\dot{\bf n}_{{\mathcal M}^\varphi_q,r}^s({\mathbb R}^n)$
is the set of all 
{
doubly indexed complex sequences
}
$\lambda
=\{\lambda_{jm}\}_{j \in {\mathbb Z}, \, m \in {\mathbb Z}^n}$
for which the quasi-norm
\[
\|\lambda\|_{\dot{\bf n}_{{\mathcal M}^\varphi_q,r}^s}
\equiv 
\begin{cases}
\displaystyle
\left(\sum_{j=-\infty}^\infty
2^{jsr}
\left\|\sum_{m \in {\mathbb Z}^n}\lambda_{jm}\chi_{Q_{jm}}
\right\|_{{\mathcal M}^\varphi_q}^r
\right)^{\frac1r}&(r<\infty),\\
\displaystyle
\sup_{j \in {\mathbb Z}}
2^{js}
\left\|\sum_{m \in {\mathbb Z}^n}\lambda_{jm}\chi_{Q_{jm}}
\right\|_{{\mathcal M}^\varphi_q}
&(r=\infty)
\end{cases}
\]
is finite.
\item
{\it The $(${homogeneous}$)$ generalized Triebel-Lizorkin-Morrey sequence space}
$\dot{\bf e}_{{\mathcal M}^\varphi_q,r}^s({\mathbb R}^n)$
is the set of all
{
doubly indexed complex sequences
$\lambda=\{\lambda_{jm}\}_{j\in\Z, m\in\Z^n}$
}
for which the quasi-norm
\[
\|\lambda\|_{\dot{\bf e}_{{\mathcal M}^\varphi_q,r}^s}
\equiv 
\begin{cases}
\displaystyle
\left\|
\left\{\sum_{j=-\infty}^\infty
2^{jsr}
\left(
\sum_{m \in {\mathbb Z}^n}|\lambda_{jm}|\chi_{Q_{jm}}
\right)^r
\right\}^{\frac1r}\right\|_{{\mathcal M}^\varphi_q}&(r<\infty),\\
\displaystyle
\left\|
\sup_{j \in {\mathbb Z}}
2^{js}
\left(\sum_{m \in {\mathbb Z}^n}|\lambda_{jm}|\chi_{Q_{jm}}\right)
\right\|_{{\mathcal M}^\varphi_q}
&(r=\infty)
\end{cases}
\]
is finite.
\item
The space
$\dot{\bf a}^s_{{\mathcal M}^\varphi_q,r}({\mathbb R}^n)$
denotes either
$\dot{\bf n}^s_{{\mathcal M}^\varphi_q,r}({\mathbb R}^n)$
or
$\dot{\bf e}^s_{{\mathcal M}^\varphi_q,r}({\mathbb R}^n)$.
Assume $(\ref{eq:Nakai-3})$
in the case
when
$\dot{\bf a}^s_{{\mathcal M}^\varphi_q,r}({\mathbb R}^n)
=\dot{\bf e}^s_{{\mathcal M}^\varphi_q,r}({\mathbb R}^n)$
with $r<\infty$.
\end{enumerate}
\end{definition}

\begin{definition}
Let $L \in {\mathbb N}_0 \cup \{-1\}$ and $K \in {\mathbb N}_0$.
\begin{enumerate}
\item
Let $j \in {\mathbb Z}$ and $m \in {\mathbb Z}^n$.
A $C^K$-function $a:{\mathbb R}^n \to {\mathbb C}$
is said to be a $(K,L)$-atom supported near $Q_{jm}$,
if $(\ref{eq:140817-3})$ with $|\alpha| \le K$
and $(\ref{eq:140817-4})$ with $|\beta| \le L$ 
hold.
When $L=-1$, it is understood that 
$(\ref{eq:140817-4})$ is a void condition.
\item
Denote by $\dot{\mathfrak A}=\dot{\mathfrak A}({\mathbb R}^n)$ 
the set of all collections
$\{a_{jm}\}_{j \in {\mathbb Z}, m \in {\mathbb Z}^n}$
of $C^K$-functions
such that
each $a_{jm}$ is a $(K,L)$-atom supported near $Q_{jm}$.
\end{enumerate}
\end{definition}

\begin{theorem}\label{thm*:decomposition-1}
Let $0<q<\infty$, $0<r \le \infty$, $s \in {\mathbb R}$
 and $\varphi \in {\mathcal G}_q$.
Let also $L \in {\mathbb N}_0 \cup \{-1\}$ and $K \in {\mathbb N}_0$
satisfy
\[
K \ge [1+s]_+, \quad L \ge \max(-1,[\sigma_q-s]),
\]
where
$\sigma_q \equiv n\left(\frac{1}{q}-1\right)_+$.
\begin{enumerate}
\item
Let $f \in \dot{\mathcal N}^s_{{\mathcal M}^\varphi_q,r}({\mathbb R}^n)$.
Then there exist a family
$\{a_{jm}\}_{j \in {\mathbb Z}, m \in {\mathbb Z}^n}
\in \dot{\mathfrak A}$
and a 
{
doubly indexed complex sequence
}
$\lambda=\{\lambda_{jm}\}_{j \in {\mathbb Z}, m \in {\mathbb Z}^n}
\in \dot{\bf n}^s_{{\mathcal M}^\varphi_q,r}({\mathbb R}^n)$
such that
\begin{equation}\label{eq:150311-1}
f=\sum_{j=-\infty}^\infty 
\left(\sum_{m \in {\mathbb Z}^n}\lambda_{jm}a_{jm}\right)
\mbox{\rm \, in \, }
{\mathcal S}_\infty'({\mathbb R}^n)
\end{equation}
and that
\[
\|\lambda\|_{\dot{\bf n}^s_{{\mathcal M}^\varphi_q,r}}
\lesssim
\|f\|_{\dot{\mathcal N}^s_{{\mathcal M}^\varphi_q,r}}.
\]
\item
Let
$\{a_{jm}\}_{j \in {\mathbb Z}, m \in {\mathbb Z}^n}
\in \dot{\mathfrak A}$
and 
$\lambda=\{\lambda_{jm}\}_{j \in {\mathbb Z}, m \in {\mathbb Z}^n}
\in \dot{\bf n}^s_{{\mathcal M}^\varphi_q,r}({\mathbb R}^n)$.
Then
$$
f \equiv \sum_{j=-\infty}^\infty 
\left(
\sum_{m \in {\mathbb Z}^n}\lambda_{jm}a_{jm}
\right)
$$
converges in ${\mathcal S}'_\infty({\mathbb R}^n)$ 
and belongs to
$\dot{\mathcal N}^s_{{\mathcal M}^\varphi_q,r}({\mathbb R}^n)$.
Furthermore,
\begin{equation}\label{eq:150821-114}
\|f\|_{\dot{\mathcal N}^s_{{\mathcal M}^\varphi_q,r}}
\lesssim
\|\lambda\|_{\dot{\bf n}^s_{{\mathcal M}^\varphi_q,r}}.
\end{equation}
\end{enumerate}
\end{theorem}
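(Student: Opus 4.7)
The plan is to mirror the proofs of Theorems \ref{thm:decomposition-1} and \ref{thm:decomposition-2} with the appropriate bookkeeping that reflects the homogeneous setting. The two key differences are: (i) the index $j$ now ranges over all of ${\mathbb Z}$, so every atom (not just those with $j \ge 1$) carries the moment condition (\ref{eq:140817-4}); (ii) convergence takes place in ${\mathcal S}'_\infty({\mathbb R}^n)$ rather than ${\mathcal S}'({\mathbb R}^n)$, so one pairs only against test functions $\phi \in {\mathcal S}_\infty({\mathbb R}^n)$ having all vanishing moments. These two features work in tandem: the moment conditions on atoms at all scales give the decay needed to sum over $j \to -\infty$ after testing against such $\phi$.

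For part (2), I would first record the homogeneous analogue of Corollary \ref{cor:150821-1}: for any $\nu, j \in {\mathbb Z}$ and any $(K,L)$-atom $a_{jm}$ supported near $Q_{jm}$,
\[
|\tau_\nu(D)a_{jm}(x)| \lesssim 2^{-|\nu-j|\delta}\, M[\chi_{Q_{jm}}](x)^{P/n},
\]
where $P$ is chosen so that $\tfrac{n}{\min(1,q,r)}<P<L+n+1+s$ and $\delta = \min(L+1+n-P+s,\, K-s)>0$; the proof is identical to that of Corollary \ref{cor:150821-1}, the only change being that (\ref{eq:140817-4}) is now available for \emph{every} $j$, including $j \le 0$. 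Multiplying by $2^{\nu s}$ and summing in $j$ and $m$, I obtain a pointwise bound
\[
2^{\nu s}\Bigl|\tau_\nu(D)\sum_m \lambda_{jm}a_{jm}(x)\Bigr| \lesssim 2^{-|\nu-j|\delta}\sum_m M\bigl[2^{js}\lambda_{jm}\chi_{Q_{jm}}\bigr](x)^{P/n}.
\]
Then an application of the discrete Hardy inequality in Proposition \ref{prop:140820-11}, now with $k,j$ ranging over ${\mathbb Z}$, followed by the vector-valued maximal inequality (Theorem \ref{thm:vector-maximal}) exactly as in the proof of Theorem \ref{thm:decomposition-2}(2), gives (\ref{eq:150821-114}) for finite sums. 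I would then pass to the limit via the Fatou property, as in the nonhomogeneous case.

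The main obstacle, and the step that genuinely separates the two cases, is justifying the convergence of $f = \sum_{j \in {\mathbb Z}}\sum_m \lambda_{jm}a_{jm}$ in ${\mathcal S}'_\infty({\mathbb R}^n)$. For convergence as $j \to +\infty$ one can argue exactly as in the nonhomogeneous setting: pick $\rho>0$ with $L \ge [\sigma_q-s+\rho]$, observe $\|\sum_m \lambda_{jm}a_{jm}\|_{\dot{\mathcal N}^{s-\rho}_{{\mathcal M}^\varphi_q,r}} \lesssim 2^{-\rho j}\|\lambda\|_{\dot{\bf n}^s_{{\mathcal M}^\varphi_q,r}}$, and use $\dot{\mathcal N}^{s-\rho}_{{\mathcal M}^\varphi_q,r}({\mathbb R}^n) \hookrightarrow {\mathcal S}'_\infty({\mathbb R}^n)$. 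For the negative tail $j \to -\infty$ the argument I would use is to test against $\phi \in {\mathcal S}_\infty({\mathbb R}^n)$: Lemma \ref{lem:Grafakos} applied with $\varphi = \phi$ (which has arbitrarily many vanishing moments) and $\psi = a_{jm}$ (of scale $2^{-j}$) yields, for any large $M$,
\[
\Bigl|\int_{{\mathbb R}^n}\phi(x)\,a_{jm}(x)\,dx\Bigr| \lesssim \frac{2^{j(L+n+1)}}{(1+|2^{-j}m|)^M}\quad (j \le 0),
\]
and then the sum over $m$ and over negative $j$ converges absolutely because of the combined factor $2^{j(L+n+1+s)}$ (which is summable by our choice of $L$) against the ${\mathcal M}^\varphi_q$-norm of $\sum_m\lambda_{jm}\chi_{Q_{jm}}$, controlled through $\varphi \in {\mathcal G}_q$.

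Finally for part (1), I would invoke the homogeneous version of Rychkov's reproducing formula: a pair of Schwartz functions $\varphi_j,\psi_j$ ($j \in {\mathbb Z}$), both supported in $[-2^{-j-2},2^{-j-2}]^n$ with $\psi_j$ having vanishing moments up to any prescribed order, so that $f = \sum_{j \in {\mathbb Z}}\psi_j*\varphi_j*f$ in ${\mathcal S}'_\infty({\mathbb R}^n)$. Setting
\[
\gamma_{jm}(x) \equiv \int_{Q_{jm}}\varphi_j(x-y)\,f*\psi_j(y)\,dy,\qquad \lambda_{jm} \equiv \|\varphi_j*f\|_{L^\infty(Q_{jm})}\cdot 2^{-jn}\sup_{|\alpha| \le K}2^{-j|\alpha|}\|\nabla^\alpha\varphi_j\|_\infty^{-1},
\]
with the appropriate normalization, one obtains atoms $a_{jm}$ supported in $3Q_{jm}$, with the prescribed smoothness and moment conditions, such that (\ref{eq:150311-1}) holds. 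The coefficient bound $\|\lambda\|_{\dot{\bf n}^s_{{\mathcal M}^\varphi_q,r}} \lesssim \|f\|_{\dot{\mathcal N}^s_{{\mathcal M}^\varphi_q,r}}$ is then a direct consequence of the Plancherel-Polya-Nikol'skii inequality (Theorem \ref{thm:PPN}), exactly as in Example \ref{example:140820-3}, transferred to the homogeneous Littlewood-Paley decomposition via Theorem \ref{thm*:150205-1}.
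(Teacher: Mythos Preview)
Your overall plan is sound and largely matches the paper. For part (1) the paper simply cites \cite[Theorem 10.15]{LSUYY2}; your sketch via the homogeneous reproducing formula is a reasonable account of what underlies that reference. For the norm estimate in part (2), your use of the homogeneous version of Corollary \ref{cor:150821-1}, the Hardy inequality (Proposition \ref{prop:140820-11}), and the maximal inequality is exactly what the paper does for finitely supported $\lambda$, followed by Fatou. One small slip: since this is the $\mathcal N$-scale, the threshold for $P$ should be $n/\min(1,q)$, not $n/\min(1,q,r)$.

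The genuine point of divergence is your treatment of the tail $j\to -\infty$. The paper does not test directly against $\phi\in\mathcal S_\infty({\mathbb R}^n)$; instead it stays in the function-space scale: using Corollary \ref{cor:150817-2} it shows
\[
\Bigl\|\sum_{m}\lambda_{jm}a_{jm}\Bigr\|_{\dot{\mathcal N}^{s+\delta/2}_{\mathcal M^\varphi_q,r}}\lesssim 2^{j\delta/2}\|\lambda\|_{\dot{\bf n}^s_{\mathcal M^\varphi_q,r}}\qquad(j\le 0),
\]
and then invokes the embedding $\dot{\mathcal N}^{s+\delta/2}_{\mathcal M^\varphi_q,r}({\mathbb R}^n)\hookrightarrow\mathcal S'_\infty({\mathbb R}^n)$ (the homogeneous analogue of Lemma \ref{lem:150203-11}). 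This is symmetric to the $j\to+\infty$ argument (where one moves \emph{down} to $s-\rho$), and it avoids any ad hoc pairing estimate. Your direct duality argument is viable in principle, but the details as written need correction: in Lemma \ref{lem:Grafakos} it is $\psi$, not $\varphi$, that must carry the moment conditions, so the roles should be $\psi=\phi$ and $\varphi=a_{jm}$ (with $\mu=j\le 0=\nu$). Consequently the number of moments of $\phi$ you can actually exploit is capped by the smoothness $K$ of the atom, and the decay you obtain is of order $2^{jK}$ (or $2^{j(n+K)}$ after normalization), not $2^{j(L+n+1)}$. This is still summable since $K\ge[1+s]_+$, but the exponent you wrote is not the correct one. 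The paper's approach sidesteps this bookkeeping entirely.
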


\begin{proof}
(1) is already obtained in \cite[Theorem 10.15]{LSUYY2}.
The proof of (2) is almost the same as Theorem \ref{thm:decomposition-1}.
The only difference is the care of the convegence
of the sum
$\displaystyle \sum_{j=-J}^\infty 
\left(\sum_{m \in {\mathbb Z}^n}\lambda_{jm}a_{jm}\right)$
as $J \to \infty$.

By invoking (\ref{eq:140817-2}),
we can prove (\ref{eq:150821-114})
when there exists $J$ such that $\lambda_{jm}=0$
for all $j \in {\mathbb Z}$ and $m \in {\mathbb Z}^n$
with $|j| \ge J$.
Let $\delta>0$ be given by $(\ref{eq:delta})$. 
Then we have
\[
\left\|\sum_{m \in {\mathbb Z}^n}\lambda_{jm}a_{jm}
\right\|_{\dot{\mathcal N}^{s+\delta/2}_{{\mathcal M}^\varphi_q,r}}
\lesssim
2^{j\delta/2}
\|\lambda\|_{\dot{\bf n}^s_{{\mathcal M}^\varphi_q,r}}
\]
for all $j \in {\mathbb Z} \cap (-\infty,0]$
from Corollary \ref{cor:150817-2} and (\ref{eq:150821-114}).
Thus,
since we can show that
$\dot{\mathcal N}^{s+\delta/2}_{{\mathcal M}^\varphi_q,r}({\mathbb R}^n)
\hookrightarrow {\mathcal S}_\infty'({\mathbb R}^n)$
analogously to the nonhomogeneous case,
\[
f_- \equiv
\sum_{j=-J}^0 
\left(\sum_{m \in {\mathbb Z}^n}\lambda_{jm}a_{jm}\right)
\]
is convergent
in ${\mathcal S}_\infty'({\mathbb R}^n)$.
Thus, we are in the position of applying the Fatou property
as we did in (\ref{eq:150821-115}).
\end{proof}

\begin{theorem}\label{thm*:decomposition-2}
Let $0<q<\infty$, $0<r \le \infty$, $s \in {\mathbb R}$
 and $\varphi:(0,\infty) \to (0,\infty) \in {\mathcal G}_q$.
Let also $L \in {\mathbb N}_0 \cup \{-1\}$ and $K \in {\mathbb N}_0$.
Assume
\[
K \ge [1+s]_+, \quad L \ge \max(-1,[\sigma_{qr}-s]),
\]
where $\sigma_{qr} \equiv \max(\sigma_q,\sigma_r)$.
\begin{enumerate}
\item
Let $f \in \dot{\mathcal E}^s_{{\mathcal M}^\varphi_q,r}({\mathbb R}^n)$.
Then there exist a family
$\{a_{jm}\}_{j \in {\mathbb Z}, m \in {\mathbb Z}^n}
\in \dot{\mathfrak A}$
and a 
{
doubly indexed complex sequence
}
$\lambda=\{\lambda_{jm}\}_{j \in {\mathbb Z}, m \in {\mathbb Z}^n}
\in \dot{\bf e}^s_{{\mathcal M}^\varphi_q,r}({\mathbb R}^n)$
satisfying 
$(\ref{eq:150311-1})$ 
and that
\begin{equation}\label{eq:140817-1038}
\|\lambda\|_{\dot{\bf e}^s_{{\mathcal M}^\varphi_q,r}}
\lesssim
\|f\|_{\dot{\mathcal E}^s_{{\mathcal M}^\varphi_q,r}}.
\end{equation}
\item
Let
$\{a_{jm}\}_{j \in {\mathbb Z}, m \in {\mathbb Z}^n}
\in \dot{\mathfrak A}$
and 
$\lambda=\{\lambda_{jm}\}_{j \in {\mathbb Z}, m \in {\mathbb Z}^n}
\in \dot{\bf e}^s_{{\mathcal M}^\varphi_q,r}({\mathbb R}^n)$.
Then
$$
f \equiv \sum_{j=-\infty}^\infty 
\left(\sum_{m \in {\mathbb Z}^n}\lambda_{jm}a_{jm}\right)
$$
converges in ${\mathcal S}'_\infty({\mathbb R}^n)$ 
and belongs to
$\dot{\mathcal E}^s_{{\mathcal M}^\varphi_q,r}({\mathbb R}^n)$.
Furthermore,
\[
\|f\|_{\dot{\mathcal E}^s_{{\mathcal M}^\varphi_q,r}}
\lesssim
\|\lambda\|_{\dot{\bf e}^s_{{\mathcal M}^\varphi_q,r}}.
\]
\end{enumerate}
\end{theorem}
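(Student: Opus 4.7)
The plan is to follow the proof of Theorem \ref{thm:decomposition-2}(2) for part (2), incorporating the additional care needed for convergence in $\mathcal{S}'_\infty(\mathbb{R}^n)$ that was already illustrated in Theorem \ref{thm*:decomposition-1}(2). Part (1) follows from \cite[Theorem 10.15]{LSUYY2} in complete analogy with Theorem \ref{thm*:decomposition-1}(1), so I restrict attention to part (2).

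For the synthesis direction, I would first fix parameters $P,P'$ with $\frac{n}{\min(1,q,r)}<P'<P<L+n+1+s$, so that $\delta\equiv\min(L+1+n-P+s,\,K-s)>0$. Under the temporary hypothesis that only finitely many $\lambda_{jm}$ are nonzero, a homogeneous analog of Corollary \ref{cor:150821-1} (whose proof via Lemma \ref{lem:Grafakos} transfers directly, since in the homogeneous setting every atom at every scale $j\in\mathbb{Z}$ carries the vanishing-moment condition) will provide the pointwise bound
\[
2^{\nu s}\bigl|\tau_\nu(D)[\lambda_{jm}a_{jm}](x)\bigr|\lesssim 2^{-|\nu-j|\delta}M[2^{js}\lambda_{jm}\chi_{Q_{jm}}](x)^{P/n}.
\]
Summing over $j\in\mathbb{Z}$ and $m\in\mathbb{Z}^n$, applying Proposition \ref{prop:140820-11} (the Hardy-type inequality works verbatim for two-sided sums), then Lemma \ref{lem:140821-1} followed by the vector-valued maximal inequality of Theorem \ref{thm:vector-maximal}(2) on the appropriately rescaled Morrey space, will yield $\|f\|_{\dot{\mathcal E}^s_{\mathcal M^\varphi_q,r}}\lesssim\|\lambda\|_{\dot{\bf e}^s_{\mathcal M^\varphi_q,r}}$ for truncated sequences.

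To remove the truncation and establish convergence in $\mathcal{S}'_\infty(\mathbb{R}^n)$, I would split the series at $j=0$. For the high-frequency tail $\sum_{j\ge 1}f_j$, where $f_j\equiv\sum_{m\in\mathbb{Z}^n}\lambda_{jm}a_{jm}$, choosing $\rho>0$ small enough that $L\ge\max(-1,[\sigma_{qr}-s+\rho])$ still holds yields a bound $\|f_j\|_{\dot{\mathcal E}^{s-\rho}_{\mathcal M^\varphi_q,r}}\lesssim 2^{-\rho j}\|\lambda\|_{\dot{\bf e}^s_{\mathcal M^\varphi_q,r}}$, so the series converges in $\dot{\mathcal E}^{s-\rho}_{\mathcal M^\varphi_q,r}(\mathbb{R}^n)$. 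For the low-frequency tail $\sum_{j\le 0}f_j$, the symmetric estimate $\|f_j\|_{\dot{\mathcal E}^{s+\delta/2}_{\mathcal M^\varphi_q,r}}\lesssim 2^{j\delta/2}\|\lambda\|_{\dot{\bf e}^s_{\mathcal M^\varphi_q,r}}$ (mimicking the argument used for Theorem \ref{thm*:decomposition-1}(2)) provides convergence in $\dot{\mathcal E}^{s+\delta/2}_{\mathcal M^\varphi_q,r}(\mathbb{R}^n)$. Both target spaces embed continuously into $\mathcal{S}'_\infty(\mathbb{R}^n)$, so the full sum converges in $\mathcal{S}'_\infty(\mathbb{R}^n)$; the uniform-in-$N$ norm bound from the truncated argument then passes to the limit via the Fatou property of $\mathcal{M}^\varphi_q(\mathbb{R}^n)$, giving $\|f\|_{\dot{\mathcal E}^s_{\mathcal M^\varphi_q,r}}\lesssim\|\lambda\|_{\dot{\bf e}^s_{\mathcal M^\varphi_q,r}}$.

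The principal obstacle will be the low-frequency tail $\sum_{j\le 0}f_j$. In the nonhomogeneous setting this simply does not arise, because the sum begins at $j=0$ and only the high-frequency direction requires decay. In the homogeneous setting, the low-frequency atoms can be arbitrarily spread out, and no norm estimate forces the series to converge in $\mathcal{S}'(\mathbb{R}^n)$; the passage to $\mathcal{S}'_\infty(\mathbb{R}^n)$ through the embedding from a slightly higher-regularity homogeneous space is what rescues the argument. Beyond this, no new conceptual difficulties arise, as the maximal and Hardy-type estimates are parallel to the nonhomogeneous case.
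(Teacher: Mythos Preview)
Your proposal is correct and follows essentially the same approach as the paper, which simply instructs the reader to combine the ideas of Theorems \ref{thm:decomposition-2} and \ref{thm*:decomposition-1}. In particular, your treatment of the low-frequency tail via the embedding $\dot{\mathcal E}^{s+\delta/2}_{{\mathcal M}^\varphi_q,r}({\mathbb R}^n)\hookrightarrow {\mathcal S}'_\infty({\mathbb R}^n)$ and the subsequent Fatou argument is exactly the mechanism the paper uses in the proof of Theorem \ref{thm*:decomposition-1}(2).
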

 
\begin{proof}
Combine the ideas of 
Theorems 
\ref{thm:decomposition-2}
and
\ref{thm*:decomposition-1}.
\end{proof}

\subsection{Molecular decomposition}

{
As a direct corollary of
Theorems \ref{thm*:decomposition-18} and \ref{thm*:decomposition-21}, 
we can show that
${\mathcal S}_\infty{({\mathbb R}^n)} \subset 
\dot{\mathcal A}^s_{{\mathcal M}^\varphi_q,r}({\mathbb R}^n)$.
}
\begin{definition}
Let $L \in {\mathbb N}_0 \cup \{-1\}$ and {{ $K, N \in {\mathbb N}_0$ be such that $N>K+n$. }}
\begin{enumerate}
\item
Let $j \in {\mathbb Z}$ and $m \in {\mathbb Z}^n$.
A $C^K$-function ${{b}}:{\mathbb R}^n \to {\mathbb C}$
is said to be a $(K,L,N)$-molecule supported near $Q_{jm}$,
if $(\ref{eq:140817-3})$ with $|\alpha| \le K$
and $(\ref{eq:140817-4})$ with $|\beta| \le L$ 
hold.
When $L=-1$, it is understood that 
$(\ref{eq:140817-4})$ is a void condition.
\item
Denote by $\dot{\mathfrak M}=\dot{\mathfrak M}({\mathbb R}^n)$ 
the set of all collections
$\{{{b_{jm}}}\}_{j \in {\mathbb Z}, m \in {\mathbb Z}^n}$
of $C^K$-functions
such that
each ${{b_{jm}}}$ is a $(K,L,{{N}})$-molecule supported near $Q_{jm}$.
\end{enumerate}
\end{definition}

\begin{theorem}\label{thm*:decomposition-18}
Let $0<q<\infty$, $0<r \le \infty$, $s \in {\mathbb R}$
 and $\varphi \in {\mathcal G}_q$.
Let also $L \in {\mathbb N}_0 \cup \{-1\}$ and $K \in {\mathbb N}_0$.
Assume
\[
K \ge [1+s]_+, \quad L \ge \max(-1,[\sigma_q-s]),
\]
where
$\sigma_q \equiv n\left(\frac{1}{q}-1\right)_+$.
\begin{enumerate}
\item
Let $f \in \dot{\mathcal N}^s_{{\mathcal M}^\varphi_q,r}({\mathbb R}^n)$.
Then there exist a family
$\{{{b_{jm}}}\}_{j \in {\mathbb Z}, m \in {\mathbb Z}^n}
\in \dot{\mathfrak M}$
and a 
{
doubly indexed complex sequence
}
$\lambda=\{\lambda_{jm}\}_{j \in {\mathbb Z}, m \in {\mathbb Z}^n}
\in \dot{\bf n}^s_{{\mathcal M}^\varphi_q,r}({\mathbb R}^n)$
such that
\begin{equation}\label{eq:150311-1*}
f=\sum_{j=-\infty}^\infty 
\left(\sum_{m \in {\mathbb Z}^n}\lambda_{jm}{{b_{jm}}}\right)
\mbox{\rm \, in \, }
{\mathcal S}_\infty'({\mathbb R}^n)
\end{equation}
and that
\[
\|\lambda\|_{\dot{\bf n}^s_{{\mathcal M}^\varphi_q,r}}
\lesssim
\|f\|_{\dot{\mathcal N}^s_{{\mathcal M}^\varphi_q,r}}.
\]
\item
Let
$\{{{b_{jm}}}\}_{j \in {\mathbb Z}, m \in {\mathbb Z}^n}
\in \dot{\mathfrak M}$
and 
$\lambda=\{\lambda_{jm}\}_{j \in {\mathbb Z}, m \in {\mathbb Z}^n}
\in \dot{\bf n}^s_{{\mathcal M}^\varphi_q,r}({\mathbb R}^n)$.
Then
$$
f \equiv \sum_{j=-\infty}^\infty 
\left(
\sum_{m \in {\mathbb Z}^n}\lambda_{jm}{{b_{jm}}}
\right)
$$
converges in ${\mathcal S}'_\infty({\mathbb R}^n)$ 
and belongs to
$\dot{\mathcal N}^s_{{\mathcal M}^\varphi_q,r}({\mathbb R}^n)$.
Furthermore,
\[
\|f\|_{\dot{\mathcal N}^s_{{\mathcal M}^\varphi_q,r}}
\lesssim
\|\lambda\|_{\dot{\bf n}^s_{{\mathcal M}^\varphi_q,r}}.
\]
\end{enumerate}
\end{theorem}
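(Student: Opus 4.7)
The plan is to mimic the proof of Theorem \ref{thm*:decomposition-1}, replacing Corollary \ref{cor:150821-1} with its molecular counterpart Corollary \ref{cor:150817-2}. Part (1) is immediate: every $(K,L)$-atom supported near $Q_{jm}$ is automatically a $(K,L,N)$-molecule supported near $Q_{jm}$ for any $N > K+n$ (modulo a harmless multiplicative constant depending on $N$), so Theorem \ref{thm*:decomposition-1}(1) already furnishes the required decomposition, and no separate argument is needed.

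For part (2), I would first reduce to the truncated case. Fix $J \in \N$ and assume for the moment that $\lambda_{jm} = 0$ whenever $|j| \ge J$. Apply Corollary \ref{cor:150817-2} to write, for each dyadic level $\nu \in \Z$,
\[
2^{\nu s}\left|\tau_\nu(D)\!\left[\sum_{m \in \Z^n} \lambda_{jm} b_{jm}\right](x)\right|
\lesssim 2^{-|\nu-j|\delta}\sum_{m \in \Z^n} M[2^{js}\lambda_{jm}\chi_{Q_{jm}}](x)^{N/n},
\]
where $\delta$ is the positive number of (\ref{eq:delta-1}); the size conditions on $K,L$ and the freedom to enlarge $N$ guarantee $\delta>0$ just as in the nonhomogeneous case. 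Summing over $j$ and taking the $\ell^r$-norm in $\nu$, the discrete Hardy inequality (Proposition \ref{prop:140820-11}) collapses the convolution in $|j-\nu|$, and then the Fefferman-Stein vector-valued maximal inequality (Theorem \ref{thm:vector-maximal}) on $\mathcal{M}^\varphi_q(\R^n)$ yields
\[
\left\|\sum_{j=-J}^{J}\sum_{m \in \Z^n}\lambda_{jm} b_{jm}\right\|_{\dot{\mathcal N}^s_{{\mathcal M}^\varphi_q,r}}
\lesssim \|\lambda\|_{\dot{\bf n}^s_{{\mathcal M}^\varphi_q,r}},
\]
with a constant independent of $J$.

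To remove the truncation and settle convergence in $\mathcal{S}'_\infty(\R^n)$, I would follow the device used at the end of the proof of Theorem \ref{thm*:decomposition-1}(2). Pick $\rho>0$ with $L \ge \max(-1, [\sigma_q - s + \rho])$, set $g_j \equiv \sum_{m \in \Z^n}\lambda_{jm}b_{jm}$, and apply the estimate just proved at regularity $s-\rho$ (for $j \le 0$) and $s+\rho$ (for $j \ge 0$) to obtain $\|g_j\|_{\dot{\mathcal N}^{s \pm \rho}_{{\mathcal M}^\varphi_q,r}} \lesssim 2^{-\rho|j|}\|\lambda\|_{\dot{\bf n}^s_{{\mathcal M}^\varphi_q,r}}$. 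Combined with the continuous embedding $\dot{\mathcal N}^{s \pm \rho}_{{\mathcal M}^\varphi_q,r}(\R^n) \hookrightarrow \mathcal{S}'_\infty(\R^n)$ (established analogously to the nonhomogeneous embedding in Lemma \ref{lem:150203-11} and the preceding proposition), the partial sums form a Cauchy series in $\mathcal{S}'_\infty(\R^n)$; its limit $f$ is the desired distribution. Finally, the Fatou property of $\mathcal{M}^\varphi_q(\R^n)$, applied as in (\ref{eq:150821-115}), transports the uniform-in-$J$ norm bound to $f$ and gives $\|f\|_{\dot{\mathcal N}^s_{{\mathcal M}^\varphi_q,r}}\lesssim \|\lambda\|_{\dot{\bf n}^s_{{\mathcal M}^\varphi_q,r}}$.

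The main obstacle is the negative end of the $j$-summation: on $\R^n$ (as opposed to the nonhomogeneous setting, where $j \ge 0$ and low frequencies are absorbed into the $\theta$-block) the sum over $j \to -\infty$ only converges modulo polynomials, which is precisely why $\mathcal{S}_\infty'(\R^n)$ is the correct ambient space. Making this rigorous requires (i) the $\rho$-shift trick above to gain summable decay in $j$, and (ii) verifying the embedding $\dot{\mathcal N}^{s}_{{\mathcal M}^\varphi_q,r}(\R^n) \hookrightarrow \mathcal{S}'_\infty(\R^n)$, which is where the class $\mathcal{G}_q$ and the Plancherel-Polya-Nikol'skii type bound in Example \ref{example:140820-3} (suitably reformulated for low frequencies against test functions in $\mathcal{S}_\infty$) enter; everything else is bookkeeping parallel to the atomic proof.
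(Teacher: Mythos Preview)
Your overall strategy is exactly what the paper does: it simply says ``Combine the ideas of Theorems \ref{thm:decomposition-101} and \ref{thm*:decomposition-1}.'' Part (1) via ``atoms are molecules'' is fine, and the truncated estimate in part (2) via Corollary \ref{cor:150817-2}, the discrete Hardy inequality, and a maximal bound is the right skeleton.

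However, your convergence argument has the regularity shift \emph{backwards}. For a single level $j$ one has
\[
\Bigl\|\sum_{m}\lambda_{jm}b_{jm}\Bigr\|_{\dot{\mathcal N}^{s'}_{{\mathcal M}^\varphi_q,r}}
\lesssim 2^{j(s'-s)}\|\lambda\|_{\dot{\bf n}^{s}_{{\mathcal M}^\varphi_q,r}}
\]
(whenever $\delta'$ computed at $s'$ stays positive). To make this summable as $j\to -\infty$ you need $s'>s$, and as $j\to +\infty$ you need $s'<s$. You wrote the opposite: using $s-\rho$ for $j\le 0$ gives a factor $2^{-j\rho}=2^{|j|\rho}$, which blows up. The paper (in the proof of Theorem \ref{thm*:decomposition-1}) takes $s'=s+\delta/2$ for $j\le 0$, obtaining the decaying factor $2^{j\delta/2}$; the positive-$j$ tail is handled exactly as in the nonhomogeneous Theorem \ref{thm:decomposition-2}(2) with $s'=s-\rho$. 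Your condition $L\ge\max(-1,[\sigma_q-s+\rho])$ is the one needed for the $s-\rho$ shift, i.e.\ for $j\to+\infty$; for the $j\to-\infty$ direction one instead uses that $K-s>0$ and $L+1+n-N+s>0$ leave room to bump $s$ up by $\delta/2$.

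One smaller point: for the $\dot{\mathcal N}$-scale you should not invoke the vector-valued Fefferman--Stein inequality. In Theorem \ref{thm:vector-maximal} the vector-valued estimate (part (2)) requires the integral condition (\ref{eq:Nakai-3}), which is \emph{not} assumed in the present theorem. For Besov-type spaces the ${\mathcal M}^\varphi_q$-norm is taken level by level in $\nu$, so the scalar bound (\ref{eq:140820-204}) suffices; Hardy's inequality is then applied to the resulting numerical sequence $\{2^{\nu s}\|\tau_\nu(D)f\|_{{\mathcal M}^\varphi_q}\}_\nu$. Your ordering (``$\ell^r$ in $\nu$, then Fefferman--Stein'') describes the Triebel--Lizorkin argument rather than the Besov one.
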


\begin{proof}
Combine the ideas of 
Theorems 
\ref{thm:decomposition-101}
and
\ref{thm*:decomposition-1}.
\end{proof}

\begin{theorem}\label{thm*:decomposition-21}
Let $0<q<\infty$, $0<r \le \infty$, $s \in {\mathbb R}$ 
and $\varphi \in {\mathcal G}_q$.
Let also $K \in {\mathbb N}_0$ and $L \in {\mathbb N}_0 \cup \{-1\}$.
Assume
\[
K \ge [1+s]_+, \quad L \ge \max(-1,[\sigma_{qr}-s]),
\]
where $\sigma_{qr} \equiv \max(\sigma_q,\sigma_r)$.
\begin{enumerate}
\item
Let $f \in \dot{\mathcal E}^s_{{\mathcal M}^\varphi_q,r}({\mathbb R}^n)$.
Then there exist a family
$\{{{b_{jm}}}\}_{j \in {\mathbb Z}, m \in {\mathbb Z}^n}
\in \dot{\mathfrak M}$
and a 
{
doubly indexed complex sequence
}
$\lambda=\{\lambda_{jm}\}_{j \in {\mathbb Z}, m \in {\mathbb Z}^n}
\in \dot{\bf e}^s_{{\mathcal M}^\varphi_q,r}({\mathbb R}^n)$
satisfying 
$(\ref{eq:150311-1*})$ 
and that
\begin{equation}\label{eq:140817-1038*}
\|\lambda\|_{\dot{\bf e}^s_{{\mathcal M}^\varphi_q,r}}
\lesssim
\|f\|_{\dot{\mathcal E}^s_{{\mathcal M}^\varphi_q,r}}.
\end{equation}
\item
Let
$\{{{b_{jm}}}\}_{j \in {\mathbb Z}, m \in {\mathbb Z}^n}
\in \dot{\mathfrak M}$
and 
$\lambda=\{\lambda_{jm}\}_{j \in {\mathbb Z}, m \in {\mathbb Z}^n}
\in \dot{\bf e}^s_{{\mathcal M}^\varphi_q,r}({\mathbb R}^n)$.
Then
$$
f \equiv \sum_{j=-\infty}^\infty 
\left(\sum_{m \in {\mathbb Z}^n}\lambda_{jm}{{b_{jm}}}\right)
$$
converges in ${\mathcal S}'_\infty({\mathbb R}^n)$ 
and belongs to
$\dot{\mathcal E}^s_{{\mathcal M}^\varphi_q,r}({\mathbb R}^n)$.
Furthermore,
\[
\|f\|_{\dot{\mathcal E}^s_{{\mathcal M}^\varphi_q,r}}
\lesssim
\|\lambda\|_{\dot{\bf e}^s_{{\mathcal M}^\varphi_q,r}}.
\]
\end{enumerate}
\end{theorem}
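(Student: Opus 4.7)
The plan is to follow the pattern established by combining Theorem \ref{thm:decomposition-102} (nonhomogeneous molecular) with Theorem \ref{thm*:decomposition-2} (homogeneous atomic), using Corollary \ref{cor:150817-2} in place of Corollary \ref{cor:150821-1}. Part (1) is obtained in the same way as the nonhomogeneous version Theorem \ref{thm:decomposition-102}(1) (which was a consequence of the atomic version in \cite{LSUYY2}): combining Proposition \ref{prop:Rychkov} with the natural identification of $\gamma_{jm}$ as molecules gives a decomposition satisfying $(\ref{eq:140817-1038*})$, with the only change being that the index $j$ runs over $\mathbb{Z}$ and the equality holds in ${\mathcal S}_\infty'({\mathbb R}^n)$ rather than ${\mathcal S}'({\mathbb R}^n)$. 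So the focus of the work is on part (2).

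For part (2), I would first apply Corollary \ref{cor:150817-2} (which gives exactly the same pointwise estimates on $\tau_\nu(D) b_{jm}$ as the atomic Corollary \ref{cor:150821-1} with the role of $P$ played by $N$). Choose $P'$ satisfying the analogue of $(\ref{eq:150821-702})$, namely $n/\min(1,q,r) < P' < N < L+n+1+s$, which is possible because $L \ge [\sigma_{qr}-s]$ and, if necessary, we replace $N$ by a smaller real number slightly less than $L+n+1+s$. Then $\delta$ defined by $(\ref{eq:delta-1})$ is positive. Assume temporarily that $\lambda_{jm}=0$ for $|j| \ge N_0$. Summing the molecular estimates of Corollary \ref{cor:150817-2}, applying Proposition \ref{prop:140820-11}, Lemma \ref{lem:140821-1}, and finally Theorem \ref{thm:vector-maximal}(2) (the vector-valued maximal inequality, which requires $(\ref{eq:Nakai-3})$), yields
\[
\|f\|_{\dot{\mathcal E}^s_{{\mathcal M}^\varphi_q,r}}
\lesssim
\|\lambda\|_{\dot{\bf e}^s_{{\mathcal M}^\varphi_q,r}}
\]
for such truncated $\lambda$, by exactly the computation in the proof of Theorem \ref{thm:decomposition-2}(2). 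Note that since we are in the homogeneous setting there is no $\theta(D)$ term to estimate.

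Next, I would remove the truncation assumption. Set $f_j \equiv \sum_{m \in {\mathbb Z}^n} \lambda_{jm} b_{jm}$ and choose $\rho>0$ with $L \ge \max(-1,[\sigma_{qr}-s+\rho])$ (if this requires shrinking $N$ further, do so, still keeping $P'<N$). The truncated estimate gives $\|f_j\|_{\dot{\mathcal E}^{s-\rho}_{{\mathcal M}^\varphi_q,r}} \lesssim 2^{-\rho|j|} \|\lambda\|_{\dot{\bf e}^s_{{\mathcal M}^\varphi_q,r}}$ for $j \ge 0$ and a similar estimate for $j<0$ by using the low-frequency part of the dichotomy in $(\ref{eq:140817-10002})$, so $f = \sum_{j \in {\mathbb Z}} f_j$ converges in $\dot{\mathcal E}^{s-\rho}_{{\mathcal M}^\varphi_q,r}({\mathbb R}^n)$, and hence in ${\mathcal S}_\infty'({\mathbb R}^n)$ via the continuous embedding $\dot{\mathcal E}^{s-\rho}_{{\mathcal M}^\varphi_q,r}({\mathbb R}^n) \hookrightarrow {\mathcal S}_\infty'({\mathbb R}^n)$, established as in the nonhomogeneous case. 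The uniform bound
\[
\left\|\sum_{|j| \le J} f_j
\right\|_{\dot{\mathcal E}^s_{{\mathcal M}^\varphi_q,r}}
\lesssim \|\lambda\|_{\dot{\bf e}^s_{{\mathcal M}^\varphi_q,r}}
\]
then combines with the Fatou property of ${\mathcal M}^\varphi_q({\mathbb R}^n)$ (as in (\ref{eq:150821-115})) to conclude $f \in \dot{\mathcal E}^s_{{\mathcal M}^\varphi_q,r}({\mathbb R}^n)$ with the desired norm bound.

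The main obstacle I anticipate is the convergence step in ${\mathcal S}_\infty'({\mathbb R}^n)$ for the low-frequency contribution (the sum over $j \to -\infty$). This is the genuinely homogeneous subtlety absent from Theorem \ref{thm:decomposition-102}(2): in the nonhomogeneous case $j \in {\mathbb N}_0$, so only the high-frequency molecules need to be controlled, whereas here we must ensure that low-frequency molecules, whose Fourier supports accumulate near the origin, still produce a convergent series in ${\mathcal S}_\infty'({\mathbb R}^n)$. The resolution is precisely the observation that on ${\mathcal S}_\infty({\mathbb R}^n)$ all moments vanish, so the inequalities of Corollary \ref{cor:150817-2} are genuinely useful for $\nu < j$ and the series terminates in a quantitatively controlled way, enabling the Fatou argument. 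This is the same trick used in the proof of Theorem \ref{thm*:decomposition-1}(2), which we directly imitate here.
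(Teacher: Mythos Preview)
Your approach is essentially the same as the paper's, which simply says ``Combine the ideas of Theorems \ref{thm:decomposition-102} and \ref{thm*:decomposition-1}''; you have correctly unpacked what that combination means. One small slip: the low-frequency tail $\sum_{j<0} f_j$ does \emph{not} converge in $\dot{\mathcal E}^{s-\rho}_{{\mathcal M}^\varphi_q,r}({\mathbb R}^n)$, since for $j<0$ the single-level estimate gives $\|f_j\|_{\dot{\mathcal E}^{s-\rho}_{{\mathcal M}^\varphi_q,r}} \lesssim 2^{-j\rho}\|\lambda\|_{\dot{\bf e}^s_{{\mathcal M}^\varphi_q,r}}$, which blows up. As in the proof of Theorem \ref{thm*:decomposition-1}(2) that you cite, the low-frequency tail must instead be placed in a \emph{higher}-smoothness space $\dot{\mathcal E}^{s+\delta/2}_{{\mathcal M}^\varphi_q,r}({\mathbb R}^n)$, yielding $\|f_j\|_{\dot{\mathcal E}^{s+\delta/2}_{{\mathcal M}^\varphi_q,r}} \lesssim 2^{j\delta/2}\|\lambda\|_{\dot{\bf e}^s_{{\mathcal M}^\varphi_q,r}}$ for $j\le 0$; both spaces embed into ${\mathcal S}_\infty'({\mathbb R}^n)$, so the full series converges there and the Fatou argument goes through.
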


\begin{proof}
Combine the ideas of 
Theorems 
\ref{thm:decomposition-102}
and
\ref{thm*:decomposition-1}.
\end{proof}

\subsection{Quarkonial decomposition}

As we have seen in the nonhomogeneous case,
quarkonial decompositon can be obtained
on the basis of the atomic decomposition.
We content ourselves with indicating how to modify
the related definition and stating our results without proofs.
\begin{definition}[Regular quark]
Let
$\beta \in {\mathbb N}_0{}^n, \ \nu \in {\mathbb Z}$
and 
$m \in {\mathbb Z}^n$. 
Then define
a function $\psi^\beta$
and
the quark 
$(\beta qu)_{\nu m}\equiv (\beta qu)_{\nu,m}$
by $(\ref{eq:150311-2})$.
{Each}
$(\beta qu)_{\nu m}$
is called the {\it quark}.
\end{definition}

\begin{definition}[Sequence spaces for quarkonial decomposition]
{Let $R,\rho>0$ satisfy $(\ref{quark-1})$ and $\rho>R$.}
For a triply indexed complex sequence
$\lambda
=\{\lambda^\beta_{\nu m}\}_{\beta \in {\mathbb N}_0{}^n, \ 
\nu \in {\mathbb Z}, \ m \in {\mathbb Z}^n},
$
define
\begin{equation}
\lambda^\beta
\equiv
\{\lambda^\beta_{\nu m}\}
_{\nu \in {\mathbb Z}, \ m \in {\mathbb Z}^n}, \quad 
\| \lambda \|_{\dot{\bf a}_{{\mathcal M}^s_q,r},\rho}
\equiv 
\sup_{\beta \in {\mathbb N}_0{}^n}
2^{\rho|\beta|}\| \lambda^\beta \|_{\dot{\bf a}^s_{{\mathcal M}^\varphi_q,r}}.
\end{equation} 
\end{definition}

\begin{theorem}\label{thm*:quark-1}
Let $0<q<\infty$, $0<r \le \infty$, $s \in {\mathbb R}$ 
and $\varphi \in {\mathcal G}_q$.
Assume $\rho{\equiv}[R+1]>R$,
where $R$ is a constant in $(\ref{quark-1})$.
\begin{enumerate}
\item
Let $s>\sigma_q$ and 
$f \in \dot{\mathcal N}^s_{{\mathcal M}^\varphi_q,r}({\mathbb R}^n)$.
Then there exists a 
{
triply indexed complex sequence
}
\[
\lambda=\{\lambda^\beta_{\nu m}\}_{\nu \in {\mathbb Z},
m \in {\mathbb Z}^n, \beta \in {\mathbb N}_0{}^n}
\]
such that
\begin{equation}\label{eq:15020698-1}
f=
\sum_{\beta \in {\mathbb N}_0{}^n}
\sum_{\nu=-\infty}^\infty
{
\sum_{m \in {\mathbb Z}^n}
}
\lambda^\beta_{\nu m}
(\beta qu)_{\nu m}
\end{equation}
converges in ${\mathcal S}'_\infty({\mathbb R}^n)$
and
\begin{equation}\label{eq:15020680-2}
\|\lambda\|_{\dot{\bf n}_{{\mathcal M}^s_q,r},\rho}
\lesssim
\|f\|_{\dot{\mathcal N}^s_{{\mathcal M}^\varphi_q,r}}.
\end{equation}
The constant $\lambda^\beta_{\nu m}$ depends
continuously and linearly on $f$.
\item
If $s>\sigma_q$ and 
$\lambda=\{\lambda^\beta_{\nu m}\}_{\nu \in {\mathbb Z},
m \in {\mathbb Z}^n, \beta \in {\mathbb N}_0{}^n}$
satisfies
$\|\lambda\|_{\dot{\bf n}_{{\mathcal M}^s_q,r},\rho}
<\infty,$
then
\[
f\equiv
\sum_{\beta \in {\mathbb N}_0{}^n}
\sum_{\nu=-\infty}^\infty
{
\sum_{m \in {\mathbb Z}^n}
}
\lambda^\beta_{\nu m}
(\beta qu)_{\nu m}
\]
converges in ${\mathcal S}'_\infty({\mathbb R}^n)$
and belongs to
$\dot{\mathcal N}^s_{{\mathcal M}^\varphi_q,r}({\mathbb R}^n)$.
Furthermore,
\[
\|f\|_{\dot{\mathcal N}^s_{{\mathcal M}^\varphi_q,r}}
\lesssim
\|\lambda\|_{\dot{\bf n}_{{\mathcal M}^s_q,r},\rho}.
\]
\item
If $s>\sigma_{qr}$ and 
$f \in \dot{\mathcal E}^s_{{\mathcal M}^\varphi_q,r}({\mathbb R}^n)$,
then there exists a 
{
triply indexed complex sequnece
}
$\lambda=\{\lambda^\beta_{\nu m}\}_{\nu \in {\mathbb Z},
m \in {\mathbb Z}^n, \beta \in {\mathbb N}_0{}^n}$
such that
\[
f=
\sum_{\beta \in {\mathbb N}_0{}^n}
\sum_{\nu=-\infty}^\infty
{
\sum_{m \in {\mathbb Z}^n}
}
\lambda^\beta_{\nu m}
(\beta qu)_{\nu m}
\]
in ${\mathcal S}'_\infty({\mathbb R}^n)$
and
\[
\|\lambda\|_{\dot{\bf e}_{{\mathcal M}^s_q,r},\rho}
\lesssim
\|f\|_{\dot{\mathcal E}^s_{{\mathcal M}^\varphi_q,r}}.
\]
The constant $\lambda^\beta_{\nu m}$ depends
continuously and linearly on $f$.
\item
If $s>\sigma_{qr}$ and 
$\lambda=\{\lambda^\beta_{\nu m}\}_{\nu \in {\mathbb Z},
m \in {\mathbb Z}^n, \beta \in {\mathbb N}_0{}^n}$
satisfies
$\|\lambda\|_{\dot{\bf e}_{{\mathcal M}^s_q,r},\rho}
<\infty,$
{then
\[
f\equiv
\sum_{\beta \in {\mathbb N}_0{}^n}
\sum_{\nu=-\infty}^\infty
\sum_{m \in {\mathbb Z}^n}
\lambda^\beta_{\nu m}
(\beta qu)_{\nu m}
\]
converges} 
in ${\mathcal S}'_\infty({\mathbb R}^n)$
and belongs to
$\dot{\mathcal E}^s_{{\mathcal M}^\varphi_q,r}({\mathbb R}^n)$.
Furthermore,
\[
\|f\|_{\dot{\mathcal E}^s_{{\mathcal M}^\varphi_q,r}}
\lesssim
\|\lambda\|_{\dot{\bf e}_{{\mathcal M}^s_q,r},\rho}.
\]
\end{enumerate}
\end{theorem}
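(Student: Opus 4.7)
The plan is to mirror the proof of Theorem \ref{thm:quark-1} with the homogeneous atomic decompositions (Theorems \ref{thm*:decomposition-1} and \ref{thm*:decomposition-2}) taking the place of their nonhomogeneous counterparts, and with the sampling formula in Lemma \ref{lemma:8-22-1} adapted to ${\mathcal S}_\infty'({\mathbb R}^n)$.

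For the sufficiency assertions (2) and (4), I would let $\eta_0 \equiv \min(1,q,r)$ and pick $\varepsilon \in (0,\rho-R)$. Since $s>\sigma_q$ in (2) and $s>\sigma_{qr}$ in (4), no moment conditions are required on atoms, so Remark \ref{remark:8-26-1} shows that $2^{-(R+\varepsilon)|\beta|}(\beta qu)_{\nu m}$ is a $(K,-1)$-atom supported near $Q_{\nu m}$ up to a multiplicative constant independent of $\beta,\nu,m$. Setting
\[
f^\beta \equiv \sum_{\nu\in\Z}\sum_{m\in\Z^n}\lambda_{\nu m}^\beta (\beta qu)_{\nu m},
\]
the homogeneous atomic decomposition yields
\[
\| f^\beta\|_{\dot{\mathcal A}_{{\mathcal M}^\varphi_q,r}^s}\lesssim 2^{(R+\varepsilon)|\beta|}\|\lambda^\beta\|_{\dot{\bf a}_{{\mathcal M}^\varphi_q,r}^s}\lesssim 2^{-(\rho-R-\varepsilon)|\beta|}\|\lambda\|_{\dot{\bf a}_{{\mathcal M}^s_q,r},\rho},
\]
and an $\eta_0$-triangle inequality on $f=\sum_\beta f^\beta$ gives the desired bound; convergence in ${\mathcal S}_\infty'({\mathbb R}^n)$ is inherited from the convergence of each $f^\beta$ in $\dot{\mathcal A}_{{\mathcal M}^\varphi_q,r}^s({\mathbb R}^n)$ together with the geometric factor $2^{-(\rho-R-\varepsilon)|\beta|}$.

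For the necessity assertions (1) and (3), I would first establish the sampling-type reproducing formula on ${\mathcal S}_\infty'({\mathbb R}^n)$: for $f \in {\mathcal S}_\infty'({\mathbb R}^n)$,
\[
f \sim_n \sum_{\nu\in\Z}\sum_{m\in\Z^n}\Lambda_{\nu m}{\mathcal F}^{-1}\kappa(2^\nu\cdot -m),\qquad \Lambda_{\nu m}\equiv \tau_\nu(D)f(2^{-\nu}m),
\]
obtained by summing the identity in Lemma \ref{lemma:8-22-1}(1) over $\nu\in\Z$ (the low-frequency piece $\theta(D)f$ is absorbed into the negative $\nu$ tail). Inserting the partition of unity $\sum_{l\in\Z^n}\psi(2^{\nu+\rho}\cdot-l)\equiv 1$ and Taylor expanding ${\mathcal F}^{-1}\kappa(2^\nu x-m)$ around $x=2^{-\nu-\rho}l$ produces the identification
\[
\lambda_{\nu+\rho,l}^\beta \equiv \frac{2^{-\rho|\beta|}}{\beta!}\sum_{m\in\Z^n}\Lambda_{\nu m}\partial^\beta{\mathcal F}^{-1}\kappa(2^{-\rho}l-m),
\]
exactly as in the nonhomogeneous case. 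Lemmas \ref{lemma:8-22-2} and \ref{lemma:8-22-3} then give $\|\lambda^\beta\|_{\dot{\bf a}_{{\mathcal M}^s_q,r}} \lesssim 2^{-\rho|\beta|}\|\Lambda\|_{\dot{\bf a}_{{\mathcal M}^\varphi_q,r}^s}$, while the Plancherel-Polya-Nikol'skii inequality (Theorem \ref{thm:PPN}) with exponent $\eta_0/2$ yields the pointwise estimate $|\Lambda_{\nu m}|\lesssim \inf_{y\in Q_{\nu m}}M^{(\eta_0/2)}[\tau_\nu(D)f](y)$, whence $\|\Lambda\|_{\dot{\bf a}_{{\mathcal M}^\varphi_q,r}^s}\lesssim \|f\|_{\dot{\mathcal A}_{{\mathcal M}^\varphi_q,r}^s}$ after invoking Theorem \ref{thm:vector-maximal} and Lemma \ref{2014-9-4-4}.

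The main obstacle will be justifying the sampling formula and rearrangements in the topology of ${\mathcal S}_\infty'({\mathbb R}^n)$: unlike the nonhomogeneous case, the sum over negative $\nu$ is infinite, so the natural convergence takes place only modulo polynomials. To handle this I would invoke the homogeneous embedding $\dot{\mathcal N}^{s+\delta/2}_{{\mathcal M}^\varphi_q,r}({\mathbb R}^n)\hookrightarrow {\mathcal S}_\infty'({\mathbb R}^n)$ established in the proof of Theorem \ref{thm*:decomposition-1}(2), combined with the geometric decay in $\beta$ and the exponential decay in $|\nu|$ furnished by Corollary \ref{cor:150817-2}, to legitimize the interchanges of summation in (\ref{eq:8-22-9}) and its homogeneous analogue.
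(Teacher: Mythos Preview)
Your proposal is correct and follows essentially the same route as the paper: for (2) and (4) you reduce to the homogeneous atomic decomposition (Theorems \ref{thm*:decomposition-1} and \ref{thm*:decomposition-2}) and sum over $\beta$ via the $\eta_0$-triangle inequality, while for (1) and (3) you replace the nonhomogeneous sampling identity (\ref{eq:150821-121}) by its homogeneous analogue $f=(2\pi)^{-n/2}\sum_{\nu\in\Z}\sum_{m\in\Z^n}\tau_\nu(D)f(2^{-\nu}m){\mathcal F}^{-1}\kappa(2^\nu\cdot-m)$ in ${\mathcal S}_\infty'({\mathbb R}^n)$ and then repeat the Taylor-expansion argument of Theorem \ref{thm:quark-1} verbatim. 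Your extra care about convergence of the negative-$\nu$ tail via the embedding $\dot{\mathcal N}^{s+\delta/2}_{{\mathcal M}^\varphi_q,r}({\mathbb R}^n)\hookrightarrow {\mathcal S}_\infty'({\mathbb R}^n)$ is consistent with how the paper handled the analogous issue in Theorem \ref{thm*:decomposition-1}(2).
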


\begin{proof}
{Let us prove (1) and (3).}
We modify (\ref{eq:150821-121}) 
by using (\ref{eq:150821-122}) as follows:
\[
f=
\frac{1}{\sqrt{(2\pi)^n}}
\sum_{\nu=-\infty}^\infty
\left(
\sum_{m \in {\mathbb Z}^n}
\varphi_\nu(D)f(2^{-\nu}m){\mathcal F}^{-1}\kappa(2^\nu \cdot-m)
\right),
\]
where the convergence takes place
in ${\mathcal S}_\infty'({\mathbb R}^n)$.
Then, we go through the same argument
as Theorem \ref{thm:quark-1}.

We can prove {(2) and (4)} by using
Theorems \ref{thm*:decomposition-1} and \ref{thm*:decomposition-2}
instead of
Theorems \ref{thm:decomposition-1} and \ref{thm:decomposition-2}.
\end{proof}
\subsection{Fundamental theorems--trace operator, pointwise multiplication and diffeomorphism}

The results are obtained similarly to the {ones} for nonhomogeneous spaces;
we content ourselves with stating the results.

\begin{theorem}\label{thm*:trace}
{Let $n \ge 2$.}
Suppose that we are given parameters
$(q,r,s) \in (0,\infty) \times (0,\infty] \times (0,\infty)$ 
and a function $\varphi \in {\mathcal G}_q$.
Define $s^*$ and $\varphi^*$ by $(\ref{eq:s-star})$
and $(\ref{eq:varphi-star})$
\begin{enumerate}
\item
Let $s$ satisfy $(\ref{eq:150311-9})$.
Then
we can extend
the trace operator
$f \in {\mathcal S}_\infty({\mathbb R}^n) 
\mapsto f(\cdot',0_n) \in C^\infty({\mathbb R}^n)$
to a bounded surjective linear operator from
$\dot{\mathcal N}^s_{{\mathcal M}^\varphi_q,r}({\mathbb R}^n)$
to
$\dot{\mathcal N}^{s^*}_{{\mathcal M}^{\varphi^*}_q,r}({\mathbb R}^{n-1})$.
\item
Let $s$ satisfy $(\ref{eq:140820-101})$.
Then
we can extend
the trace operator
$f \in {\mathcal S}_\infty({\mathbb R}^n) 
\mapsto f(\cdot',0_n) \in C^\infty({\mathbb R}^n)$
to a bounded surjective linear operator from
$\dot{\mathcal E}^s_{{\mathcal M}^\varphi_q,r}({\mathbb R}^n)$
to
$\dot{\mathcal E}^{s^*}_{{\mathcal M}^{\varphi^*}_q,q}({\mathbb R}^{n-1})$.
\end{enumerate}
\end{theorem}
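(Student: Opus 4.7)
The plan is to mimic the proofs of Theorems \ref{thm:trace} and \ref{thm:trace-2}, replacing the nonhomogeneous atomic decomposition by the homogeneous version (Theorems \ref{thm*:decomposition-1} and \ref{thm*:decomposition-2}). The target is a pair of sequence-space inequalities in two directions: one for boundedness and one for surjectivity.

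For the boundedness direction, I would take $f \in \dot{\mathcal E}^s_{{\mathcal M}^\varphi_q,r}({\mathbb R}^n)$, invoke Theorem \ref{thm*:decomposition-2}(1) to obtain an atomic expansion $f=\sum_{j \in {\mathbb Z}}\sum_{m \in {\mathbb Z}^n}\lambda_{jm}a_{jm}$ with control $\|\lambda\|_{\dot{\bf e}^s_{{\mathcal M}^\varphi_q,r}} \lesssim \|f\|_{\dot{\mathcal E}^s_{{\mathcal M}^\varphi_q,r}}$, and define the trace on each atom via the support localization argument from Theorem \ref{thm:trace}: only atoms with $m_n \in \{0,-1\}$ survive on the hyperplane, and their restrictions $a_{jm}(\cdot',0_n)$ lie in $\dot{\mathfrak A}({\mathbb R}^{n-1})$ after a harmless rescaling (the condition on $s$ in the hypothesis guarantees $L=-1$ is admissible on the $(n-1)$-dimensional side). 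The problem is then reduced to verifying the sequence-space estimate
$$\|\lambda'\|_{\dot{\bf e}^{s^*}_{{\mathcal M}^{\varphi^*}_q,q}} \lesssim \|\lambda\|_{\dot{\bf e}^s_{{\mathcal M}^\varphi_q,r}},$$
where $\lambda'_{jm'} = \lambda_{j(m',0)}$, together with the analogous estimate for $m_n=-1$ (handled by Theorem \ref{thm:vector-maximal}).

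For each $Q' \in {\mathcal D}({\mathbb R}^{n-1})$ with $j_{Q'} = -\log_2\ell(Q')$, I split the $j$-sum into the high-frequency range $j \ge j_{Q'}$ and the low-frequency range $j < j_{Q'}$ (the latter now extending to $-\infty$). The high-frequency estimate proceeds exactly as in (\ref{eq:140817-301}): lift via the disjoint sets $E(S)=S \times (\ell(S),2\ell(S))$ to a full integral on ${\mathbb R}^n$, apply the Fefferman--Stein inequality for $L^q(\ell^r)$ afforded by Theorem \ref{thm:vector-maximal}(2), and recognize the $\dot{\mathcal E}^s_{{\mathcal M}^\varphi_q,r}$-norm on the right-hand side. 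The low-frequency estimate parallels (\ref{eq:140817-302}) using dyadic ancestors $Q_{(j)}$, but now one must control
$$\sum_{j \le j_{Q'}} \frac{\ell(Q_{(j)})}{\varphi(\ell(Q_{(j)}))^q}$$
by $\ell(Q')\varphi^*(\ell(Q'))^{-q}$, which in the homogeneous setting requires a two-sided analog of (\ref{eq:150828-3}) on $\varphi^*$ (one controlling the decay as $t \to \infty$).

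For surjectivity I would follow Theorem \ref{thm:trace-2}: given $g \in \dot{\mathcal E}^{s^*}_{{\mathcal M}^{\varphi^*}_q,q}({\mathbb R}^{n-1})$, expand $g=\sum_{j,m'}\lambda'_{jm'}a'_{jm'}$ by Theorem \ref{thm*:decomposition-2}(1), fix a bump $\Theta \in C^\infty_{\rm c}({\mathbb R})$ with $\Theta(0)=1$ and $\chi_{[-1/4,1/4]} \le \Theta \le \chi_{[-1/2,1/2]}$, and set $A_{j(m',0)}(x',x_n) \equiv a'_{jm'}(x')\Theta(2^j x_n)$, $A_{jm}\equiv 0$ for $m_n \ne 0$. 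The reverse inequality $\|\lambda\|_{\dot{\bf e}^s_{{\mathcal M}^\varphi_q,r}} \lesssim \|\lambda'\|_{\dot{\bf e}^{s^*}_{{\mathcal M}^{\varphi^*}_q,q}}$, split again into high/low frequency and into the two types of cubes $Q=G(Q')$ vs.\ the shifted ones, is carried out exactly as in (\ref{eq:140817-402})--(\ref{eq:140817-403}) with the maximal operator absorbing the geometric factor.

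The main obstacle will be twofold. Analytically, the low-frequency sum now runs all the way to $j=-\infty$, so one must impose a two-sided Dini-type condition on $\varphi^*$ ensuring convergence of the ancestor series; under the monotonicity built into ${\mathcal G}_q$ and a matching assumption at $t \to \infty$, this should follow by the same telescoping used in (\ref{eq:140817-302}). Conceptually, one must handle the ambient space ${\mathcal S}'_\infty({\mathbb R}^n)$: the atomic sum converges in ${\mathcal S}'_\infty({\mathbb R}^n)$, and one must check that the trace sum converges in ${\mathcal S}'_\infty({\mathbb R}^{n-1})$ and agrees with the classical restriction when $f \in {\mathcal S}_\infty({\mathbb R}^n)$. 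Arguing as in Remark \ref{rem:150826-1} via the continuous linear ``coefficient extraction'' operators $I_{jm}$, together with the embedding $\dot{\mathcal N}^{s+\delta/2}_{{\mathcal M}^\varphi_q,r}({\mathbb R}^n) \hookrightarrow {\mathcal S}_\infty'({\mathbb R}^n)$ used in the proof of Theorem \ref{thm*:decomposition-1}(2), this convergence issue is handled, and the theorem follows.
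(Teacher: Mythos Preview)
Your proposal is correct and follows essentially the same approach as the paper, whose proof consists of the single instruction to mimic Theorems \ref{thm:trace} and \ref{thm:trace-2} with ${\mathbb N}_0$ replaced by ${\mathbb Z}$ and $j_{Q'}$ taken as $-\log_2\ell(Q')$ without truncation at zero. Your expanded treatment supplies the details the paper omits, and your observation that the low-frequency ancestor sum now runs to $j\to-\infty$ (so that the summability condition (\ref{eq:150828-3}) on $\varphi^*$ is needed for all $s>0$, not only $0<s\le 1$) is a genuine point the paper glosses over.
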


\begin{proof}
We just mimic the proof of Theorems \ref{thm:trace} and \ref{thm:trace-2}
by replacing ${\mathbb N}_0$ by ${\mathbb Z}$
and $j_Q'=\max(0,-\log_2\ell(Q))$ by $j_Q \equiv -\log_2\ell(Q)$,
respectively.
\end{proof}

\begin{theorem}[{Pointwise} multiplication]
Let $0<q<\infty$, $0<r \le \infty$, $s \in {\mathbb R}$,
$k \in {\mathbb N}$ and $\varphi \in {\mathcal G}_q$.
\begin{enumerate}
\item
If
$k>s>\sigma_r$,
then the mapping
\[
g \in {\mathcal S}_\infty({\mathbb R}^n) \mapsto 
g \cdot f \in {\rm BC}^k({\mathbb R}^n)
\]
extends continuously to a bounded linear operator
from
$\dot{\mathcal N}^s_{{\mathcal M}^\varphi_q,r}({\mathbb R}^n)$
to itself
so that 
\[
\|g \cdot f\|_{\dot{\mathcal N}^s_{{\mathcal M}^\varphi_q,r}}
\lesssim
\|g\|_{{\rm BC}^k}
\|f\|_{\dot{\mathcal N}^s_{{\mathcal M}^\varphi_q,r}}
\]
for all 
$f \in \dot{\mathcal N}^s_{{\mathcal M}^\varphi_q,r}({\mathbb R}^n)$
and
$g \in {\rm BC}^k({\mathbb R}^n)$.
\item
If
$k>s>\sigma_{qr}$,
then the mapping
\[
g \in {\mathcal S}_\infty({\mathbb R}^n) \mapsto 
g \cdot f \in {\rm BC}^k({\mathbb R}^n)
\]
extends continuously to a bounded linear operator
from
$\dot{\mathcal E}^s_{{\mathcal M}^\varphi_q,r}({\mathbb R}^n)$
to itself
so that 
\[
\|g \cdot f\|_{\dot{\mathcal E}^s_{{\mathcal M}^\varphi_q,r}}
\lesssim
\|g\|_{{\rm BC}^k}
\|f\|_{\dot{\mathcal E}^s_{{\mathcal M}^\varphi_q,r}}
\]
for all 
$f \in \dot{\mathcal E}^s_{{\mathcal M}^\varphi_q,r}({\mathbb R}^n)$
and
$g \in {\rm BC}^k({\mathbb R}^n)$.
\end{enumerate}
\end{theorem}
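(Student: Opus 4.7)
The plan is to mimic the proof of the nonhomogeneous pointwise multiplication result (Theorem \ref{thm:150821-110}), replacing the decomposition tools at each step by their homogeneous analogues developed earlier in this section. We concentrate on the Triebel--Lizorkin case; the Besov case is handled identically with the obvious notational changes, using Theorem \ref{thm*:quark-1}(1)--(2) in place of (3)--(4).

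Fix $\rho \equiv [R+1]$ and $\varepsilon \in (0, \rho - R)$. Given $f \in \dot{\mathcal E}^s_{{\mathcal M}^\varphi_q,r}({\mathbb R}^n)$, the hypothesis $s > \sigma_{qr}$ permits us to apply Theorem \ref{thm*:quark-1}(3) to obtain
\[
f = \sum_{\beta \in {\mathbb N}_0{}^n} \sum_{\nu \in {\mathbb Z}} \sum_{m \in {\mathbb Z}^n} \lambda^\beta_{\nu m} (\beta qu)_{\nu m} \quad \mbox{in } {\mathcal S}_\infty'({\mathbb R}^n),
\]
with $\|\lambda\|_{\dot{\bf e}_{{\mathcal M}^s_q, r}, \rho} \lesssim \|f\|_{\dot{\mathcal E}^s_{{\mathcal M}^\varphi_q, r}}$. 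We then set $g \cdot f \equiv \sum_\beta f^\beta$, where $f^\beta \equiv \sum_{\nu \in {\mathbb Z}} \sum_{m \in {\mathbb Z}^n} \lambda^\beta_{\nu m} [g \cdot (\beta qu)_{\nu m}]$. By the Leibniz rule together with Remark \ref{remark:8-26-1}, the function $g \cdot (\beta qu)_{\nu m}$ is, up to a multiplicative constant of the form $C \|g\|_{{\rm BC}^k} 2^{(R+\varepsilon)|\beta|}$, a $(K, -1)$-atom supported near $Q_{\nu m}$ in the homogeneous sense (with $K = k$), noting that the moment condition may be dropped since $s > \sigma_{qr}$.

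Applying the homogeneous atomic decomposition Theorem \ref{thm*:decomposition-2}(2) to $f^\beta$ yields
\[
\|f^\beta\|_{\dot{\mathcal E}^s_{{\mathcal M}^\varphi_q, r}} \lesssim 2^{(R+\varepsilon)|\beta|} \|g\|_{{\rm BC}^k} \|\lambda^\beta\|_{\dot{\bf e}^s_{{\mathcal M}^\varphi_q, r}} \lesssim 2^{-(\rho - R - \varepsilon)|\beta|} \|g\|_{{\rm BC}^k} \|f\|_{\dot{\mathcal E}^s_{{\mathcal M}^\varphi_q, r}}.
\]
Summing over $\beta \in {\mathbb N}_0{}^n$ via the $\min(1, q, r)$-triangle inequality (the homogeneous analogue of Lemma \ref{lem:141020-1}) and exploiting the geometric decay $2^{-(\rho - R - \varepsilon)|\beta|}$ delivers the convergence of $\sum_\beta f^\beta$ in $\dot{\mathcal E}^s_{{\mathcal M}^\varphi_q, r}({\mathbb R}^n) \hookrightarrow {\mathcal S}_\infty'({\mathbb R}^n)$, together with the desired estimate $\|g \cdot f\|_{\dot{\mathcal E}^s_{{\mathcal M}^\varphi_q, r}} \lesssim \|g\|_{{\rm BC}^k} \|f\|_{\dot{\mathcal E}^s_{{\mathcal M}^\varphi_q, r}}$.

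The main obstacle will be to verify that this definition of $g \cdot f$ is independent of the particular quarkonial decomposition chosen and that it coincides with the ordinary pointwise product when $g \in {\mathcal S}_\infty({\mathbb R}^n)$. Independence follows from the linearity and continuity of the operator furnished by Theorem \ref{thm*:quark-1}(3)--(4), while agreement with the pointwise product on ${\mathcal S}_\infty({\mathbb R}^n)$ is handled exactly as in Remark \ref{rem:150826-1}: on a suitably dense class the quarkonial series converges in ${\rm BC}({\mathbb R}^n)$, hence pointwise, and the estimate above extends the definition by continuity to all admissible $g$ and $f$.
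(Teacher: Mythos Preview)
Your proposal is correct and follows essentially the same route as the paper, which simply instructs the reader to mimic the proof of Theorem~\ref{thm:150821-110} using the homogeneous quarkonial and atomic decompositions (Theorems~\ref{thm*:quark-1} and~\ref{thm*:decomposition-2}) in place of their nonhomogeneous counterparts. Your additional paragraph on well-definedness and consistency with the ordinary pointwise product is a welcome clarification that the paper itself leaves implicit.
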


\begin{proof}
Mimic the proof of Theorem \ref{thm:150821-110}.
\end{proof}

\begin{theorem}[Diffeomorphism]\label{thm*:317-3}
Let $0<q<\infty$, $0<r \le \infty$, $s \in {\mathbb R}$,
$k \in {\mathbb N}$ and $\varphi \in {\mathcal G}_q$.
Assume in addition that $\psi$
is a regular $C^k$-diffeomorphism. 
\begin{enumerate}
\item
Let $k>s>\sigma_q$.
Then, the composition mapping
$
\varphi \in {{\mathcal S}_\infty({\mathbb R}^n)}
\mapsto \varphi \circ \psi \in {\rm BC}^k({\mathbb R}^n)
$
induces a continuous mapping
$
f \in \dot{\mathcal N}^s_{{\mathcal M}^\varphi_q,r}({\mathbb R}^n) 
\mapsto 
f \circ \psi \in \dot{\mathcal N}^s_{{\mathcal M}^\varphi_q,r}({\mathbb R}^n)
$
and, for all $f \in \dot{\mathcal N}^s_{{\mathcal M}^\varphi_q,r}({\mathbb R}^n)$,
we {have}
$\displaystyle
\| f \circ \psi \|_{\dot{\mathcal N}^s_{{\mathcal M}^\varphi_q,r}}
\lesssim_\psi
\| f \|_{\dot{\mathcal N}^s_{{\mathcal M}^\varphi_q,r}}.
$
\item
Let $k>s>\sigma_{qr}$.
Then, the composition mapping
$
\varphi \in {{\mathcal S}_\infty({\mathbb R}^n)}
\mapsto \varphi \circ \psi \in {\rm BC}^k({\mathbb R}^n)
$
induces a continuous mapping
$
f \in \dot{\mathcal E}^s_{{\mathcal M}^\varphi_q,r}({\mathbb R}^n) 
\mapsto 
f \circ \psi \in \dot{\mathcal E}^s_{{\mathcal M}^\varphi_q,r}({\mathbb R}^n)
$
and, for all $f \in \dot{\mathcal E}^s_{{\mathcal M}^\varphi_q,r}({\mathbb R}^n)$,
we {have}
$\displaystyle
\| f \circ \psi \|_{\dot{\mathcal E}^s_{{\mathcal M}^\varphi_q,r}}
\lesssim_\psi
\| f \|_{\dot{\mathcal E}^s_{{\mathcal M}^\varphi_q,r}}.
$
\end{enumerate}
\end{theorem}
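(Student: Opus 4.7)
The plan is to mimic the argument used for the nonhomogeneous version (Theorem \ref{thm:317-3}), replacing the nonhomogeneous quarkonial decomposition by its homogeneous counterpart (Theorem \ref{thm*:quark-1}). I will only treat the generalized Triebel-Lizorkin-Morrey case; the Besov-Morrey case is analogous. Fix $\rho$ and $\varepsilon$ with $R<\rho-\varepsilon<\rho$ as in the nonhomogeneous proof, and let $\delta=\rho-R-\varepsilon>0$.

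First, I would use the hypothesis $s>\sigma_{qr}$ and Theorem \ref{thm*:quark-1}(3) to expand any given $f\in\dot{\mathcal E}^s_{{\mathcal M}^\varphi_q,r}({\mathbb R}^n)$ as
\[
f=\sum_{\beta\in{\mathbb N}_0{}^n}\sum_{\nu\in{\mathbb Z}}\sum_{m\in{\mathbb Z}^n}\lambda^\beta_{\nu m}(\beta qu)_{\nu m}\quad\text{in }{\mathcal S}'_\infty({\mathbb R}^n),
\]
with $\|\lambda\|_{\dot{\bf e}_{{\mathcal M}^s_q,r},\rho}\lesssim\|f\|_{\dot{\mathcal E}^s_{{\mathcal M}^\varphi_q,r}}$. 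Since $\psi$ is regular, it is bi-Lipschitz with constant $C_0$ depending only on $\psi$. Exactly as in the proof of Theorem \ref{thm:317-3}, for every $\nu\in{\mathbb Z}$ I can partition ${\mathbb Z}^n=M_1^\nu\sqcup\cdots\sqcup M_I^\nu$ with $I\lesssim_\psi 1$ (the estimate $I\lesssim 1$ in Theorem \ref{thm:317-3} used only bi-Lipschitzness, not the restriction $\nu\ge 0$) and construct injections $\iota_i^\nu:M_i^\nu\to{\mathbb Z}^n$ so that $\psi^{-1}({\rm supp}((\beta qu)_{\nu m}))\subset D\,Q_{\nu,\iota_i^\nu(m)}$, with $\theta_i^\nu\equiv(\iota_i^\nu)^{-1}$.

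Second, setting
\[
\lambda^{\beta,i}_{\nu,\bar m}\equiv\begin{cases}\lambda^\beta_{\nu,\theta_i^\nu(\bar m)}&\bar m\in\iota_i^\nu(M_i^\nu),\\0&\text{otherwise,}\end{cases}\qquad
(\beta qu)^i_{\nu,\bar m}\equiv\begin{cases}(\beta qu)_{\nu,\theta_i^\nu(\bar m)}\circ\psi&\bar m\in\iota_i^\nu(M_i^\nu),\\0&\text{otherwise,}\end{cases}
\]
each rescaled quark $2^{-(R+\varepsilon)|\beta|}(\beta qu)^i_{\nu,\bar m}$ is, up to a constant depending only on $\psi$ and $K$, a $(K,-1)$-atom supported near $Q_{\nu\bar m}$; no moment condition is needed because $s>\sigma_{qr}$. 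I would then define
\[
f^{i,\beta}\equiv\sum_{\nu\in{\mathbb Z}}\sum_{\bar m\in{\mathbb Z}^n}\lambda^{\beta,i}_{\nu\bar m}(\beta qu)^i_{\nu\bar m}
\]
and invoke the homogeneous atomic decomposition (Theorem \ref{thm*:decomposition-2}(2)) to conclude convergence in ${\mathcal S}'_\infty({\mathbb R}^n)$ together with
\[
\|f^{i,\beta}\|_{\dot{\mathcal E}^s_{{\mathcal M}^\varphi_q,r}}\lesssim_\psi 2^{(R+\varepsilon)|\beta|}\|\lambda^\beta\|_{\dot{\bf e}^s_{{\mathcal M}^\varphi_q,r}}\lesssim_\psi 2^{-\delta|\beta|}\|\lambda\|_{\dot{\bf e}_{{\mathcal M}^s_q,r},\rho}.
\]

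Finally, the $\min(1,q,r)$-triangle inequality applied to $f\circ\psi=\sum_{i=1}^I\sum_{\beta\in{\mathbb N}_0{}^n}f^{i,\beta}$ yields the desired estimate $\|f\circ\psi\|_{\dot{\mathcal E}^s_{{\mathcal M}^\varphi_q,r}}\lesssim_\psi\|f\|_{\dot{\mathcal E}^s_{{\mathcal M}^\varphi_q,r}}$, which is summable in $\beta$ thanks to the factor $2^{-\delta|\beta|}$. The main obstacle, compared to the nonhomogeneous case, is to verify that the reindexed series converges in ${\mathcal S}'_\infty({\mathbb R}^n)$ despite the two-sided summation over $\nu\in{\mathbb Z}$; this is precisely the content of Theorem \ref{thm*:decomposition-2}(2), whose proof handles the negative $\nu$ tail via the Fatou property of ${\mathcal M}^\varphi_q({\mathbb R}^n)$ and the embedding $\dot{\mathcal N}^{s+\delta/2}_{{\mathcal M}^\varphi_q,r}({\mathbb R}^n)\hookrightarrow{\mathcal S}'_\infty({\mathbb R}^n)$, so no new work is required here.
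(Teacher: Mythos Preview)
Your proposal is correct and follows exactly the approach the paper intends: the paper's proof of Theorem \ref{thm*:317-3} is the single line ``Mimic the proof of Theorem \ref{thm:317-3},'' and you have carried out precisely this mimicry, replacing the nonhomogeneous quarkonial and atomic decompositions by their homogeneous analogues (Theorems \ref{thm*:quark-1} and \ref{thm*:decomposition-2}). Your remark that the two-sided summation in $\nu$ is already handled by Theorem \ref{thm*:decomposition-2}(2) is exactly the point.
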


\begin{proof}
Mimic the proof of Theorem \ref{thm:317-3}.
\end{proof}

\subsection{Generalized Hardy spaces and generalized Triebel-Lizorkin-Morrey spaces}

{Let $0<q<\infty$ and $\varphi \in {\mathcal G}_q$.}
The generalized Hardy-Morrey space
$H{\mathcal M}^{\varphi}_{q}({\mathbb R}^n)$
is the set of all
$f \in {\mathcal S}'({\mathbb R}^n)$ 
satisfying
$\sup\limits_{t>0}|e^{t\Delta}f(\cdot)|
\in {\mathcal M}^{\varphi}_{q}({\mathbb R}^n)$.
We equip
$H{\mathcal M}^{\varphi}_{q}({\mathbb R}^n)$
with the following norm:
\begin{equation}\label{HM}
\|f\|_{H{\mathcal M}^{\varphi}_{q}}
\equiv
\left\|\sup\limits_{t>0}|e^{t\Delta}f|
\right\|_{{\mathcal M}^{\varphi}_{q}}
\quad
(f \in H{\mathcal M}^{\varphi}_{q}({\mathbb R}^n)).
\end{equation}

We invoke the following decomposition result 
from \cite[Theorem 15]{AGNS15} when $0<{q} \le 1$
and
from \cite[Theorem 1.1]{GHS15} when $1 \le {q}<\infty$.
{Here ${\mathcal Q}$ denotes the set of all cubes.}
\begin{lemma}\label{lem:2}
Let $0<{q}\le 1$,
{$\varphi \in {\mathcal G}_q$} 
and $f \in H{\mathcal M}^{\varphi}_{q}({\mathbb R}^n)$.
Let $L \in ({\mathbb N} \cup \{0\}) \cap 
[\sigma_{q},\infty).$
Assume that
${\varphi},\eta \in {\mathcal G}_1$ satisfy
\begin{equation}\label{eq:140327-3}
\int_r^\infty {\varphi}(s)\frac{ds}{s} 
{\lesssim \varphi}(r)
\end{equation}
for $r>0$.
Then 
there exists a triplet
$\{\lambda_j\}_{j=1}^\infty \subset [0,\infty)$,
$\{Q_j\}_{j=1}^\infty \subset {\mathcal Q}({\mathbb R}^n)$
and
$\{a_j\}_{j=1}^\infty \subset L^\infty({\mathbb R}^n)$
such that
$f=\sum_{j=1}^\infty \lambda_j a_j$
in ${\mathcal S}'({\mathbb R}^n)$ and that,
for all $v>0$
\begin{equation}\label{eq:thm2-1}
|a_j| \le 
\frac{\chi_{Q_j}}{\|\chi_{Q_j}\|_{{\mathcal M}^{\varphi}_{q}}}, \quad
\int_{{\mathbb R}^n}x^\alpha a_j(x)\,dx=0, \quad
\left\|\left(\sum_{j=1}^\infty
\left(
\frac{\lambda_j}{\|\chi_{Q_j}\|_{{\mathcal M}^{\varphi}_{q}}}
\chi_{Q_j}\right)^{v}
\right)^{1/v}\right\|_{{\mathcal M}^{\varphi}_{q}}
\lesssim_v\|f\|_{H{\mathcal M}^{\varphi}_{q}}
\end{equation}
for all {multi-indexes} $\alpha$ with $|\alpha| \le L$.
\end{lemma}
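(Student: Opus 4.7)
My plan is to adapt the classical Calderón--Zygmund/Whitney construction used for the atomic decomposition of $H^p$ (with $0<p\le 1$) to the Morrey setting. Let $\mathcal{M}f$ denote a suitable grand maximal function of $f$, so that by the maximal characterization of $H\mathcal{M}^\varphi_q(\mathbb{R}^n)$ we have $\|\mathcal{M}f\|_{\mathcal{M}^\varphi_q}\sim \|f\|_{H\mathcal{M}^\varphi_q}$. For each $k\in\mathbb{Z}$ I set
\[
\Omega_k\equiv\{x\in\mathbb{R}^n:\mathcal{M}f(x)>2^k\},
\]
and perform a Whitney decomposition $\Omega_k=\bigcup_j Q_{k,j}$ with an associated smooth partition of unity $\{\eta_{k,j}\}_j$. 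Because $q\le 1$, all atoms are allowed to have $L$ vanishing moments with $L\ge\sigma_q$, which is well-defined under the hypothesis on $L$.

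Next I would carry out the standard $H^p$-decomposition: for each $(k,j)$ define the bad part $b_{k,j}\equiv(f-P_{k,j})\eta_{k,j}$, where $P_{k,j}$ is the $L^2(\eta_{k,j})$-projection of $f$ onto polynomials of degree $\le L$, and the good part $g_k\equiv f-\sum_j b_{k,j}$. Telescoping gives the decomposition
\[
f=\sum_{k\in\mathbb{Z}}(g_{k+1}-g_k)=\sum_{k,j}\lambda_{k,j}a_{k,j}\qquad \text{in }\mathcal{S}'(\mathbb{R}^n),
\]
where each $a_{k,j}$ is an atom supported in a fixed dilate of $Q_{k,j}$, having $L$ vanishing moments, and satisfying $|a_{k,j}|\le \|\chi_{Q_{k,j}}\|_{\mathcal{M}^\varphi_q}^{-1}$ after a suitable normalization (so $\lambda_{k,j}\sim 2^k\|\chi_{Q_{k,j}}\|_{\mathcal{M}^\varphi_q}$). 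Convergence in $\mathcal{S}'(\mathbb{R}^n)$ for $q\le 1$ is handled exactly as in the classical case, controlling the tails on each $\Omega_k$ by $2^k$. Reindexing the double sequence $(k,j)$ as $j$ furnishes the $\{\lambda_j\},\{Q_j\},\{a_j\}$ of the statement.

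The heart of the matter is the vector-valued Morrey estimate
\[
\left\|\left(\sum_j\left(\frac{\lambda_j}{\|\chi_{Q_j}\|_{\mathcal{M}^\varphi_q}}\chi_{Q_j}\right)^{v}\right)^{1/v}\right\|_{\mathcal{M}^\varphi_q}
\lesssim_v \|f\|_{H\mathcal{M}^\varphi_q}.
\]
Since $\lambda_{k,j}/\|\chi_{Q_{k,j}}\|_{\mathcal{M}^\varphi_q}\sim 2^k$, and the Whitney cubes $\{Q_{k,j}\}_j$ have bounded overlap with $\Omega_k$, the left-hand side is essentially comparable to $\big\|(\sum_{k}2^{kv}\chi_{\Omega_k})^{1/v}\big\|_{\mathcal{M}^\varphi_q}$. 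Using $\chi_{\Omega_k}=\chi_{\{\mathcal{M}f>2^k\}}$ and summing the geometric series in $k$ yields the pointwise bound $(\sum_k 2^{kv}\chi_{\Omega_k})^{1/v}\lesssim \mathcal{M}f$, whence the desired estimate follows from the maximal characterization. This step is really a layer-cake identity and uses only $v>0$.

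The main obstacle is twofold. First, one must justify the pointwise bad-part estimates on $\Omega_k$ that give both the size control $|a_{k,j}|\le \|\chi_{Q_{k,j}}\|_{\mathcal{M}^\varphi_q}^{-1}$ and the scaling $\lambda_{k,j}\sim 2^k\|\chi_{Q_{k,j}}\|_{\mathcal{M}^\varphi_q}$; this relies on comparing integrals of $f-P_{k,j}$ over $Q_{k,j}$ with $2^k|Q_{k,j}|$ via the grand maximal function on a boundary point of $\Omega_k$, exactly as in the standard argument. Second, and more delicate, the assumption (\ref{eq:140327-3})---a Dini-type integrability of $\varphi$---is needed to pass from a cube-by-cube Morrey estimate to a full vector-valued one and to ensure $\|\chi_{Q_{k,j}}\|_{\mathcal{M}^\varphi_q}=\varphi(\ell(Q_{k,j}))$ behaves well under summation over scales; without this assumption the layer-cake step in the previous paragraph need not be bounded. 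Finally, the convergence of the series in $\mathcal{S}'(\mathbb{R}^n)$ requires a separate verification for the low-frequency tails as $k\to-\infty$, which is standard once the atomic estimates are in place. For $1<q<\infty$ (cited from \cite{GHS15}) one replaces the maximal function characterization by an equivalent grand-maximal characterization on Morrey spaces and the argument proceeds analogously, the vanishing moment requirement being relaxed accordingly.
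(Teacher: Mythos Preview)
The paper does not prove this lemma; it is simply invoked from \cite[Theorem~15]{AGNS15} (for $0<q\le 1$) and \cite[Theorem~1.1]{GHS15} (for $1\le q<\infty$), so there is no in-paper argument to compare against. Your outline is the standard Calder\'on--Zygmund/Whitney construction for Hardy-type spaces and is precisely the route those references take: Whitney-decompose the super-level sets $\Omega_k=\{\mathcal{M}f>2^k\}$, telescope the good parts, and normalize to produce atoms with $\lambda_{k,j}\sim 2^k\,\|\chi_{Q_{k,j}}\|_{\mathcal{M}^\varphi_q}$. Your layer-cake reduction $(\sum_k 2^{kv}\chi_{\Omega_k})^{1/v}\lesssim \mathcal{M}f$ is exactly right and needs only $v>0$.

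One small correction: the layer-cake step itself is a purely pointwise geometric-series bound and does \emph{not} use the Dini condition~\eqref{eq:140327-3}. That hypothesis enters elsewhere---most notably in guaranteeing that the grand-maximal characterization of $H\mathcal{M}^\varphi_q$ is available (i.e.\ that the various maximal functions are comparable in $\mathcal{M}^\varphi_q$-norm), and in the $\mathcal{S}'$-convergence of the atomic series, where one controls the tail $\sum_{k\le -K}\sum_j b_{k,j}$ via decay of $\varphi$ at large scales. So your sketch is essentially correct, but you should relocate the appeal to~\eqref{eq:140327-3} from the vector-valued estimate to those two places.
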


Going through the same argument
as \cite[Theorem 5.5]{NaSa12-2} and \cite[Theorem 5.5]{NaSa2014},
we can prove the following theorem;
\begin{lemma}\label{t5.4}
{Let $0<q<\infty$.}
Let $\varphi \in {\mathcal G}_q$ satisfy
$(\ref{eq:140327-3})$.
Let $k \in \mathcal{S}({\mathbb R}^n)$.
Write
\[
A_m
\equiv
\sup_{x \in {\mathbb R}^n}|x|^{n+m}|\nabla^m k(x)|
\quad (m \in {\mathbb N} \cup \{0\}).
\]
Define a convolution operator $T$ by
\[
T f(x) \equiv k*f(x) \quad (f \in {\mathcal S}'({\mathbb R}^n)).
\]
Then, $T$, restricted 
to $H{\mathcal M}^{{\varphi}_q}({\mathbb R}^n)$,
is an
$H{\mathcal M}^{{\varphi}_q}({\mathbb R}^n)$-bounded 
operator
and the norm depends only on $\|\mathcal{F}k\|_{L^\infty}$
and a finite number of collections $A_1,A_2,\ldots,A_N$
with $N$ depending only on ${\varphi}$.
\end{lemma}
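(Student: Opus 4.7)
The plan is to prove Lemma \ref{t5.4} by reducing the $H\mathcal{M}^\varphi_q$-boundedness of $T$ to the molecular decomposition of $H\mathcal{M}^\varphi_q(\mathbb{R}^n)$, in the spirit of \cite[Theorem~5.5]{NaSa12-2} and \cite[Theorem~5.5]{NaSa2014}. First, I will fix an integer $L \in \mathbb{N} \cup \{0\}$ with $L \ge \sigma_q$, chosen so that molecules of order $L$ control the $H\mathcal{M}^\varphi_q$-norm, and apply Lemma \ref{lem:2} to $f \in H\mathcal{M}^\varphi_q(\mathbb{R}^n)$ to obtain a decomposition $f = \sum_{j} \lambda_j a_j$ in $\mathcal{S}'(\mathbb{R}^n)$, where each $a_j$ is supported in a cube $Q_j$, satisfies $|a_j| \le \|\chi_{Q_j}\|_{\mathcal{M}^\varphi_q}^{-1}$, has vanishing moments up to order $L$, and where $\{\lambda_j\}$ obeys the quantitative estimate in (\ref{eq:thm2-1}).

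Next, I will show that each $b_j \equiv T a_j = k * a_j$ is, up to a multiplicative constant depending only on $\|\mathcal{F}k\|_{L^\infty}$ and on $A_0, A_1, \ldots, A_N$ with $N = L+1$, a molecule supported near $Q_j$ and sharing the same moment order $L$. The moment condition is immediate on the Fourier side: since $a_j$ has vanishing moments up to order $L$, $\mathcal{F}a_j$ vanishes to order $L+1$ at the origin, and hence so does $\mathcal{F}(k*a_j)=\mathcal{F}k\cdot\mathcal{F}a_j$. For the size/decay estimates I will split into a near-field part on a fixed dilate of $Q_j$, where I invoke the $L^2$-bound on $T$ coming from Plancherel and $\|\mathcal{F}k\|_{L^\infty}<\infty$, and a far-field part where I insert the Taylor expansion of $k$ around the center of $Q_j$; the polynomial part integrates to zero against $a_j$ by the moment condition, and the remainder is controlled via the pointwise decay $|\nabla^m k(x)| \le A_m |x|^{-n-m}$.

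Having identified $\{b_j\}$ as molecules, I will then apply the molecular decomposition theorem for $H\mathcal{M}^\varphi_q(\mathbb{R}^n)$ established in \cite{NaSa12-2,NaSa2014}, which upgrades the estimate (\ref{eq:thm2-1}) from atoms to molecules, and conclude
\[
\|Tf\|_{H\mathcal{M}^\varphi_q}
\lesssim
\left\| \left( \sum_{j} \left( \frac{\lambda_j}{\|\chi_{Q_j}\|_{\mathcal{M}^\varphi_q}} \chi_{Q_j} \right)^{v} \right)^{1/v} \right\|_{\mathcal{M}^\varphi_q}
\lesssim
\|f\|_{H\mathcal{M}^\varphi_q}
\]
for a suitably small $v>0$. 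The hypothesis (\ref{eq:140327-3}) on $\varphi$ will enter here to guarantee the vector-valued maximal inequality needed to pass from the molecule sum to the $H\mathcal{M}^\varphi_q$-norm.

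The main obstacle will be the careful bookkeeping that ensures the constant in the molecule estimate for $b_j$ depends only on $\|\mathcal{F}k\|_{L^\infty}$ and on \emph{finitely} many of the $A_m$, with $N$ determined by $\varphi$ via $L=\lceil\sigma_q\rceil$. The near-field is controlled by $\|\mathcal{F}k\|_{L^\infty}$ alone, while the far-field Taylor remainder only involves derivatives of $k$ of order up to $L+1$, so $N=L+1$ suffices; this is precisely the quantitative dependence asserted in the statement.
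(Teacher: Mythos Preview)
Your proposal is correct and follows essentially the same route as the paper: decompose $f$ into atoms via Lemma~\ref{lem:2}, show that $T$ applied to an atom yields a molecule-like object (moments preserved, near-field controlled, far-field decaying via Taylor expansion and the $A_m$), and then recombine. The paper's proof is a terse version of this, recording only the key pointwise estimate on the heat maximal function of $k*b$ outside $2Q$ and deferring the remaining mechanics to \cite[Proposition~5.3]{NaSa12-2}; your outline spells out the same steps in more detail and with the same dependence $N=L+1$ on finitely many $A_m$.
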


\begin{proof}
We follow \cite[Proposition 5.3]{NaSa12-2}.
Let $Q$ be a cube and $b$ be a measurable function
satisfying
$|b| \le \chi_Q$
and
$\displaystyle
\int_{{\mathbb R}^n}x^\alpha b(x)\,dx=0
$
for all $|\alpha| \le L$.
In view of the moment condition, we have
\[
|e^{t\Delta}[k*b](x)|
\lesssim
\chi_{2Q}(x)
|e^{t\Delta}[k*b](x)|
+
\frac{\ell(Q)^{n+L+1}}{\ell(Q)^{n+L+1}+|x-c(Q)|^{n+L+1}}
\]
for all $t>0$ and hence
\[
{\mathcal M}[k*b](x)
\lesssim
\chi_{2Q}(x)
{\mathcal M}[k*b](x)
+
\frac{\ell(Q)^{n+L+1}}{\ell(Q)^{n+L+1}+|x-c(Q)|^{n+L+1}},
\]
as was to be shown.
\end{proof}

Once Lemma \ref{t5.4} is proved,
we can obtain the Littlewood-Paley decomposition
in the same way
as \cite[Theorem 5.7]{NaSa12-2} and \cite[Theorem 5.10]{NaSa2014}.
See \cite[Theorem 3.8]{AGNS15} when $0<{q} \le 1$.
The same proof works when $1<{q}<\infty$
since we have
$H{\mathcal M}^{\varphi}_{q}({\mathbb R}^n)
={\mathcal M}^{\varphi}_{q}({\mathbb R}^n)$
from \cite[Proposition 5.1]{GHS15}.
\begin{corollary}\label{thm:LP}
Let ${\varphi} \in {\mathcal G}_{q}$ satisfy
$(\ref{eq:140327-3})$.
\begin{enumerate}
\item
Let $0<{q} \le 1$.
Then
$\dot{\mathcal E}^0_{{\mathcal M}^{\varphi}_{q},2}({\mathbb R}^n)
\simeq
H{\mathcal M}^{\varphi}_{q}({\mathbb R}^n)$.
\item
Let $1<{q}<\infty$.
Then
$\dot{\mathcal E}^0_{{\mathcal M}^{\varphi}_{q},2}
({\mathbb R}^n)
\simeq
{\mathcal M}^{\varphi}_{q}({\mathbb R}^n)$.
\end{enumerate}
\end{corollary}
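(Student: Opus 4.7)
The plan is to establish both parts via the Littlewood-Paley machinery developed in \cite[Theorem~5.7]{NaSa12-2} and \cite[Theorem~5.10]{NaSa2014}, adapted to the generalized Morrey setting. Since \cite[Proposition~5.1]{GHS15} already gives $H\mathcal{M}^\varphi_q({\mathbb R}^n)=\mathcal{M}^\varphi_q({\mathbb R}^n)$ when $1<q<\infty$, the two parts merge into the single assertion $\dot{\mathcal E}^0_{\mathcal{M}^\varphi_q,2}({\mathbb R}^n)\simeq H\mathcal{M}^\varphi_q({\mathbb R}^n)$ valid for all $0<q<\infty$, and it suffices to prove the two norm inequalities separately. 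Note first that \eqref{eq:140327-3}, combined with $\varphi\in\mathcal{G}_q$, yields \eqref{eq:Nakai-1} and hence \eqref{eq:Nakai-3} by Proposition \ref{prop:150312-1}, so that Theorem \ref{thm:vector-maximal}(2) is at our disposal throughout.

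For the embedding $H\mathcal{M}^\varphi_q\hookrightarrow\dot{\mathcal E}^0_{\mathcal{M}^\varphi_q,2}$ in the regime $0<q\le1$, I would start with the atomic decomposition $f=\sum_j\lambda_j a_j$ provided by Lemma \ref{lem:2}, with moment order $L\ge\sigma_q$. Each $a_j$ is supported on a cube $Q_j$ and satisfies $|a_j|\le \varphi(\ell(Q_j))^{-1}\chi_{Q_j}$; after replacing $Q_j$ by the smallest dyadic cube of comparable size containing it, the renormalized pieces become (a bounded multiple of) $(K,L)$-atoms in the sense of Section \ref{s5.5} with $K=0$. The third inequality of \eqref{eq:thm2-1}, applied with some $0<v<q$, together with Theorem \ref{thm:vector-maximal}(2) for $\mathcal{M}^{\varphi/v}_{q/v}$, upgrades the square function $\bigl(\sum_j(\lambda_j\varphi(\ell(Q_j))^{-1}\chi_{Q_j})^2\bigr)^{1/2}$ to a bound by $\|f\|_{H\mathcal{M}^\varphi_q}$, which is precisely the sequence space norm $\|\lambda\|_{\dot{\bf e}^0_{\mathcal{M}^\varphi_q,2}}$. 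Applying Theorem \ref{thm*:decomposition-21}(2) then gives $\|f\|_{\dot{\mathcal E}^0_{\mathcal{M}^\varphi_q,2}}\lesssim\|f\|_{H\mathcal{M}^\varphi_q}$. For $1<q<\infty$ the same inequality follows more directly: the standard pointwise bound $\bigl(\sum_j|\tau_j(D)f|^2\bigr)^{1/2}\lesssim M^{(\eta)}f$ (with $\eta<q$) for band-limited pieces of $f\in\mathcal{M}^\varphi_q$ combined with Theorem \ref{thm:vector-maximal} yields $\|f\|_{\dot{\mathcal E}^0_{\mathcal{M}^\varphi_q,2}}\lesssim\|f\|_{\mathcal{M}^\varphi_q}$.

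For the reverse embedding $\dot{\mathcal E}^0_{\mathcal{M}^\varphi_q,2}\hookrightarrow H\mathcal{M}^\varphi_q$, I would decompose a given $f\in\dot{\mathcal E}^0_{\mathcal{M}^\varphi_q,2}$ atomically via Theorem \ref{thm*:decomposition-21}(1) with enough vanishing moments ($L\ge\sigma_q$), writing $f=\sum_{j,m}\lambda_{jm}a_{jm}$. Lemma \ref{t5.4} ensures that the heat maximal operator $f\mapsto \sup_{t>0}|e^{t\Delta}f|$ respects this atomic structure: on each $a_{jm}$ the moment condition produces the standard off-diagonal decay, so that $\mathcal{M}[k*a_{jm}](x)\lesssim \chi_{3Q_{jm}}(x)\cdot\mathcal{M}[k*a_{jm}](x) + (\ell(Q_{jm})/(\ell(Q_{jm})+|x-c(Q_{jm})|))^{n+L+1}$, and then one applies the vector-valued maximal inequality to the resulting square function to recover $\|f\|_{H\mathcal{M}^\varphi_q}\lesssim\|\lambda\|_{\dot{\bf e}^0_{\mathcal{M}^\varphi_q,2}}\lesssim\|f\|_{\dot{\mathcal E}^0_{\mathcal{M}^\varphi_q,2}}$. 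The $1<q<\infty$ case again simplifies since the equality $H\mathcal{M}^\varphi_q=\mathcal{M}^\varphi_q$ from \cite[Proposition~5.1]{GHS15} makes the estimate an immediate consequence of Theorem \ref{thm:vector-maximal}(2).

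The main obstacle is bridging the mismatch between the two atomic formalisms: Lemma \ref{lem:2} uses arbitrary cubes and the exotic normalization $|a_j|\le\varphi(\ell(Q_j))^{-1}\chi_{Q_j}$ together with a vector-valued $L^v$ bound on the coefficients, whereas Theorem \ref{thm*:decomposition-21} uses dyadic cubes with normalization $|a_{jm}|\le\chi_{3Q_{jm}}$ and measures coefficients in $\dot{\bf e}^0_{\mathcal{M}^\varphi_q,2}$. Converting between these two presentations requires grouping atoms by dyadic generation, rescaling them, and then invoking Theorem \ref{thm:vector-maximal}(2) applied to $\mathcal{M}^{\varphi/v}_{q/v}$ to translate the $v$-th power estimate of \eqref{eq:thm2-1} into a square function bound. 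The key technical check is that this translation does not lose the Morrey norm control, and this is precisely where hypothesis \eqref{eq:140327-3} (via Proposition \ref{prop:150312-1}) is used.
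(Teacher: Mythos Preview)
Your overall outline matches the paper's: both invoke \cite[Theorem~5.7]{NaSa12-2}, \cite[Theorem~5.10]{NaSa2014} for the Littlewood--Paley step and \cite[Proposition~5.1]{GHS15} to collapse the case $1<q<\infty$. But in expanding the sketch you introduce two genuine gaps.

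The first is the claimed ``standard pointwise bound'' $\bigl(\sum_j|\tau_j(D)f|^2\bigr)^{1/2}\lesssim M^{(\eta)}f$. No such inequality holds: each $|\tau_j(D)f|\lesssim Mf$ individually, but the $\ell^2$-sum over infinitely many $j$ is not dominated pointwise by any power of the maximal function (it is a Calder\'on--Zygmund object, not a maximal one). The route actually taken in \cite{NaSa12-2,NaSa2014} is to use Lemma~\ref{t5.4} on the \emph{randomized} operator $f\mapsto\sum_j\varepsilon_j\tau_j(D)f$ with Rademacher signs $\varepsilon_j$, observe that the kernel constants $A_m$ and $\|\mathcal F k\|_{L^\infty}$ are bounded uniformly in the choice of signs, and then average via Khintchine's inequality to recover the square function. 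That is what the paper means by ``in the same way as'' those references; your shortcut does not work.

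The second gap is in the direction $H\mathcal M^\varphi_q\hookrightarrow\dot{\mathcal E}^0_{\mathcal M^\varphi_q,2}$ for $0<q\le1$. The atoms supplied by Lemma~\ref{lem:2} are merely $L^\infty$ with vanishing moments; they carry \emph{no} smoothness. Theorem~\ref{thm*:decomposition-21}(2) at $s=0$ requires $K\ge[1+s]_+=1$, and this requirement is not cosmetic: without $C^1$-regularity the high-frequency estimate $|\tau_\nu(D)a_j|\lesssim 2^{-(\nu-j)K}(\cdots)$ for $\nu\ge j$ in Corollary~\ref{cor:150821-1} fails, so the synthesis bound \eqref{eq:150826-1} is unavailable. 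You flag the atomic mismatch at the end, but the resolution you propose (regrouping and rescaling) does not supply the missing derivative. The argument in \cite{NaSa12-2,NaSa2014} avoids this entirely by \emph{not} passing through non-smooth atoms for this direction: the square-function bound is obtained directly from the vector-valued version of Lemma~\ref{t5.4} just described, and the atomic decomposition of Lemma~\ref{lem:2} is used only when checking that the convolution operators of Lemma~\ref{t5.4} are bounded on $H\mathcal M^\varphi_q$.
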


\subsection{Observations of the space ${\mathcal S}_\infty'({\mathbb R}^n)$}

Since ${\mathcal S}_\infty({\mathbb R}^n)$ is continuously embedded
into ${\mathcal S}({\mathbb R}^n)$,
the dual operator $R$, called the restriction,
is continuous from ${\mathcal S}'({\mathbb R}^n)$
to ${\mathcal S}'_\infty({\mathbb R}^n)$.
The following theorem is a folklore fact{:}
\begin{theorem}\label{thm:150301-1}
The restriction mapping 
$R:f \in {\mathcal S}'({\mathbb R}^n) \mapsto f|{\mathcal S}_\infty'({\mathbb R}^n)
\in {\mathcal S}'({\mathbb R}^n)$ is open,
namely the image $R(U)$ is open in ${\mathcal S}_\infty'({\mathbb R}^n)$
for any open set $U$ in ${\mathcal S}'({\mathbb R}^n)$.
\end{theorem}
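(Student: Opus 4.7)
My plan is to verify openness directly on a basis of weak-$*$ neighborhoods. A basic weak-$*$ neighborhood of $0$ in ${\mathcal S}'({\mathbb R}^n)$ has the form
$V_{\Phi,\varepsilon} = \{f \in {\mathcal S}'({\mathbb R}^n) : |\langle f,\phi_i\rangle| < \varepsilon,\ i=1,\ldots,k\}$
with $\Phi = \{\phi_1,\ldots,\phi_k\}\subset{\mathcal S}({\mathbb R}^n)$ and $\varepsilon > 0$. Because $R$ is linear, it suffices to prove that $R(V_{\Phi,\varepsilon})$ is a weak-$*$ open neighborhood of $0$ in ${\mathcal S}'_\infty({\mathbb R}^n)$; the translation identity $R(f_0 + V_{\Phi,\varepsilon}) = R(f_0) + R(V_{\Phi,\varepsilon})$ will then give the general statement.

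The key preparatory step is to split the $\phi_i$ modulo ${\mathcal S}_\infty({\mathbb R}^n)$. I would extract a maximal subcollection $\{\phi_{j_1},\ldots,\phi_{j_d}\}\subset\Phi$ whose images in ${\mathcal S}({\mathbb R}^n)/{\mathcal S}_\infty({\mathbb R}^n)$ are linearly independent, and record the decompositions $\phi_i = \sum_{l=1}^{d} c_{il}\phi_{j_l} + \psi_i$ with $\psi_i \in {\mathcal S}_\infty({\mathbb R}^n)$ and $c_{il}\in\C$. For each $g \in {\mathcal S}'_\infty({\mathbb R}^n)$ I would then fix, via the Hahn--Banach theorem, an extension $\tilde g \in {\mathcal S}'({\mathbb R}^n)$ with $R(\tilde g) = g$; any two extensions differ by an element of $\ker R$, which is the space $\mathcal{P}$ of polynomials. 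Then $g \in R(V_{\Phi,\varepsilon})$ precisely when $\tilde g - p \in V_{\Phi,\varepsilon}$ for some $p \in \mathcal{P}$, and, using $\langle p,\psi_i\rangle = 0$, this becomes the condition
\[
\left|\sum_{l=1}^{d} c_{il}\bigl(\langle \tilde g,\phi_{j_l}\rangle - \langle p,\phi_{j_l}\rangle\bigr) + \langle g,\psi_i\rangle\right| < \varepsilon \quad (i=1,\ldots,k).
\]

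The algebraic heart of the argument is that $(\langle p,\phi_{j_l}\rangle)_{l=1}^d$ sweeps out all of $\C^d$ as $p$ ranges over $\mathcal{P}$. Indeed, linear independence of $\{\phi_{j_l}\}$ modulo ${\mathcal S}_\infty({\mathbb R}^n)$ translates, via the pairing characterization ${\mathcal S}_\infty({\mathbb R}^n) = \mathcal{P}^\perp \cap {\mathcal S}({\mathbb R}^n)$, into linear independence of the functionals $p\mapsto\langle p,\phi_{j_l}\rangle$ on $\mathcal{P}$; and finitely many linearly independent functionals on an infinite-dimensional space always give a surjection to $\C^d$. Writing $C = (c_{il}) \in \C^{k\times d}$ and $\beta(g) = (\langle g,\psi_i\rangle)_{i=1}^k \in \C^k$, the condition above reduces to: the coset $[\beta(g)] \in \C^k/\mathrm{im}(C)$ has norm strictly less than $\varepsilon$ in the induced quotient $\ell^\infty$-norm. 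Since $\psi_i \in {\mathcal S}_\infty({\mathbb R}^n)$, the map $g \mapsto \beta(g)$ is weak-$*$ continuous on ${\mathcal S}'_\infty({\mathbb R}^n)$, hence so is $g\mapsto [\beta(g)]$; pulling back the open $\varepsilon$-ball in $\C^k/\mathrm{im}(C)$ exhibits $R(V_{\Phi,\varepsilon})$ as a weak-$*$ open neighborhood of the origin.

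The main obstacle I anticipate is the bookkeeping: verifying that the finite-dimensional reduction is independent of the auxiliary choices (the maximal subcollection and the particular Hahn--Banach extension $\tilde g$) and that the quotient $\ell^\infty$-seminorm on $\C^k/\mathrm{im}(C)$ really is a norm---both follow from the surjectivity statement for $\mathcal{P}\to\C^d$, which is the single nontrivial ingredient. Once that is in place, the remainder is a routine pull-back of an open set through a weak-$*$ continuous linear map into a finite-dimensional space.
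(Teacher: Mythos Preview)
Your argument is correct and essentially parallels the paper's, though the packaging differs. Both proofs reduce to basic weak-$*$ neighborhoods, separate the test functions according to whether their classes in ${\mathcal S}({\mathbb R}^n)/{\mathcal S}_\infty({\mathbb R}^n)$ are linearly independent, and exploit that independence to ``kill'' the constraints coming from $\Phi_l\notin{\mathcal S}_\infty({\mathbb R}^n)$. The paper carries this out by passing to the Banach completions ${\mathcal V}_N({\mathbb R}^n)$, ${\mathcal V}_{N,\infty}({\mathbb R}^n)$, building a projection onto ${\mathcal V}_{N,\infty}({\mathbb R}^n)$ along the span of the $\Phi_l$, and then Hahn--Banach extending $f\circ p$; this produces, for each $f\in U$, an explicit lift $F$ with $\langle F,\Phi_l\rangle=0$ (Theorem~\ref{thm:150311-2} and Theorem~\ref{thm:150311-29}). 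You instead fix an arbitrary lift $\tilde g$ and correct by polynomials, using the observation that $p\mapsto(\langle p,\phi_{j_l}\rangle)_{l=1}^d$ is onto $\mathbb{C}^d$ (equivalent to the paper's independence hypothesis via ${\mathcal S}_\infty=\mathcal{P}^\perp\cap{\mathcal S}$); this yields a direct description of $R(V_{\Phi,\varepsilon})$ as $\beta^{-1}$ of an open set in $\mathbb{C}^k/\mathrm{im}(C)$. Your route avoids the auxiliary Banach spaces and the two-step Theorem~\ref{thm:150311-2}/\ref{thm:150311-29} breakdown, at the cost of not producing an explicit canonical lift.

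One small correction: the fact that the quotient $\ell^\infty$-seminorm on $\mathbb{C}^k/\mathrm{im}(C)$ is a genuine norm has nothing to do with the surjectivity $\mathcal{P}\to\mathbb{C}^d$; it is automatic because $\mathrm{im}(C)$ is a subspace of a finite-dimensional space, hence closed. In any case this is irrelevant to openness: you only need that the quotient ball is open and that $g\mapsto[\beta(g)]$ is weak-$*$ continuous, both of which hold regardless.
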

However, the proof can not be found in any literature.
{It seems that we can not readily use
the closed graph theorem for a certain class
of topological vector spaces.}
We therefore supply the {self-contained and elementary} proof.
{
Theorem \ref{thm:150301-1} can be used
to consider the function spaces of homogeneous type;
see Section \ref{s6}.
}

First,
we will invoke the following statement of the Hahn-Banach theorem 
from \cite[p. 75 1.9.7 Corollary]{Meg}.
\begin{lemma}\label{lem:150228-1}
Let $Y$ be a closed subspace of a normed space $X$.
Suppose that $x \in X \setminus Y$.
Then, there is a bounded linear functional $f$ on $X$
such that $\|f\|=1$, that $f(x)=d(x,Y)$ and that $Y \subset \ker(f)$.
\end{lemma}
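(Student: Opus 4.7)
The plan is to produce a concrete candidate functional on the subspace $Y+\mathbb{F}x$ (where $\mathbb{F}$ is the scalar field) and then appeal to the standard Hahn--Banach extension theorem, rather than trying to derive the statement abstractly. Set $d \equiv d(x,Y) = \inf_{y \in Y} \|x-y\|$. Because $Y$ is closed and $x \notin Y$, we have $d>0$, which is the nondegenerate feature that makes the construction work.

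First I would define $g:Y + \mathbb{F}x \to \mathbb{F}$ by $g(y+\alpha x) \equiv \alpha d$ for $y \in Y$ and $\alpha \in \mathbb{F}$. To see this is well-defined and linear, note that the decomposition $z = y+\alpha x$ of an element $z \in Y + \mathbb{F}x$ is unique (again because $x \notin Y$), so $g$ is an unambiguous linear functional. Trivially $g$ vanishes on $Y$ and $g(x)=d$.

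Next I would compute $\|g\|=1$. For the upper bound $\|g\| \le 1$, given $\alpha \ne 0$ and $y \in Y$, write $y = -\alpha y'$ with $y' \in Y$ and use
\[
|g(y+\alpha x)| = |\alpha|\,d \le |\alpha|\,\|x - y'\| = \|\alpha x + y\|,
\]
which gives $|g(z)| \le \|z\|$ on all of $Y+\mathbb{F}x$. For the lower bound $\|g\| \ge 1$, pick a minimizing sequence $y_k \in Y$ with $\|x-y_k\| \to d$; then $z_k \equiv x-y_k$ satisfies $g(z_k)=d$ and $\|z_k\| \to d$, so $|g(z_k)|/\|z_k\| \to 1$.

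Finally, I would invoke the Hahn--Banach theorem in its normed-space form to extend $g$ to a bounded linear functional $f$ on all of $X$ with $\|f\|=\|g\|=1$, preserving the values $f(x)=d$ and $f|_Y \equiv 0$, so that $Y \subset \ker(f)$. No step here is a serious obstacle; the only thing that requires even slight care is the verification of $\|g\|=1$, which is entirely driven by the definition of $d(x,Y)$ as an infimum. The real content of the lemma is already packaged inside the Hahn--Banach extension theorem.
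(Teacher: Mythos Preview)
Your proof is correct and is the standard argument: define the functional on $Y+\mathbb{F}x$, verify its norm is exactly $1$ using the definition of $d(x,Y)$ as an infimum, and extend by Hahn--Banach. The paper does not supply its own proof of this lemma; it simply cites it as a corollary of the Hahn--Banach theorem from Megginson's textbook, so there is nothing to compare beyond noting that your argument is precisely the one found there.
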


Denote by
${\mathcal V}_N({\mathbb R}^n)$
the closure of 
${\mathcal S}({\mathbb R}^n)$
and by
${\mathcal V}_{N,\infty}({\mathbb R}^n)$
the closure of 
${\mathcal S}_\infty({\mathbb R}^n)$,
where the closure is considered
with respect to $p_N$.
\begin{remark}\label{rem:150228-1}
Since $\displaystyle \int_{{\mathbb R}^n}f(x)\,dx=0$
for all $f \in {\mathcal V}_{N+1,\infty}({\mathbb R}^n)$,
the Gaussian function $f(x)=e^{-|x|^2}$ does not belong
to ${\mathcal V}_{N+1,\infty}({\mathbb R}^n)$.
Thus,
${\mathcal S}({\mathbb R}^n) \setminus {\mathcal V}_{N+1,\infty}({\mathbb R}^n)
\ne \emptyset$.
\end{remark}

\begin{proposition}\label{prop:150228-1}
Let $\Phi_1,\Phi_2,\ldots,\Phi_L \in {\mathcal V}_N({\mathbb R}^n)$
be a finite sequence.
Then the space
${\mathcal V}_{N,\infty}({\mathbb R}^n)+
{\rm Span}(\{\Phi_1,\Phi_2,\ldots,\Phi_L\})$
is a closed subspace of ${\mathcal V}_N({\mathbb R}^n)$.
\end{proposition}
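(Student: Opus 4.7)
The plan is to reduce the statement to the standard Banach space fact that the sum of a closed subspace and a finite-dimensional subspace is again closed. First, I would observe that $(\mathcal{V}_N(\mathbb{R}^n), p_N)$ is a Banach space (being a closure with respect to the seminorm $p_N$, which is in fact a norm on $\mathcal{S}(\mathbb{R}^n)$), and that by its very definition $\mathcal{V}_{N,\infty}(\mathbb{R}^n)$ is a closed subspace of $\mathcal{V}_N(\mathbb{R}^n)$. Thus the problem becomes: if $W$ is a closed subspace of a Banach space $X$ and $\Phi_1,\ldots,\Phi_L \in X$, then $W + \mathrm{Span}(\{\Phi_1,\ldots,\Phi_L\})$ is closed in $X$.

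I would prove this by induction on $L$. The case $L=0$ is immediate. For the inductive step, I reduce to showing that $W + \mathbb{C}\Phi$ is closed whenever $W$ is closed and $\Phi \in X$. If $\Phi \in W$ there is nothing to prove. Otherwise, $d(\Phi, W) > 0$ since $W$ is closed, so by Lemma~\ref{lem:150228-1} (applied with $Y = W$ and $x = \Phi$) there exists a continuous linear functional $f$ on $X$ with $f|_W \equiv 0$ and $f(\Phi) = d(\Phi, W) > 0$. Now if $w_k + c_k\Phi \to h$ in $X$ with $w_k \in W$ and $c_k \in \mathbb{C}$, evaluating $f$ yields
\[
c_k f(\Phi) = f(w_k + c_k\Phi) \longrightarrow f(h),
\]
so $c_k \to c := f(h)/f(\Phi)$. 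Consequently $w_k = (w_k + c_k\Phi) - c_k\Phi \to h - c\Phi$, and the closedness of $W$ forces $h - c\Phi \in W$. Hence $h \in W + \mathbb{C}\Phi$, proving closedness.

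Applying this repeatedly, after at most $L$ steps we conclude that $\mathcal{V}_{N,\infty}(\mathbb{R}^n) + \mathrm{Span}(\{\Phi_1,\ldots,\Phi_L\})$ is closed in $\mathcal{V}_N(\mathbb{R}^n)$. There is no real obstacle here: the only point requiring care is the initial observation that $\mathcal{V}_{N,\infty}(\mathbb{R}^n)$ is closed in $\mathcal{V}_N(\mathbb{R}^n)$, which is immediate from the definition as the $p_N$-closure of $\mathcal{S}_\infty(\mathbb{R}^n)$, together with a correct invocation of the Hahn–Banach theorem in the form stated in Lemma~\ref{lem:150228-1}. Remark~\ref{rem:150228-1} already signals that typically $\Phi \notin \mathcal{V}_{N,\infty}(\mathbb{R}^n)$ (e.g.\ Gaussians), so the nontrivial case of the induction genuinely occurs; but this does not affect the argument.
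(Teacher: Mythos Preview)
Your proof is correct and follows essentially the same approach as the paper: both arguments use the Hahn--Banach separation of Lemma~\ref{lem:150228-1} to produce functionals that annihilate the closed subspace and detect the coefficients of the $\Phi_l$'s, then use continuity of these functionals on a convergent sequence to force the limit back into the sum. The only difference is organizational---the paper builds a full dual system $\ell_1,\ldots,\ell_L$ at once after reducing to the linearly independent case, whereas you add one vector at a time by induction---but the underlying idea is identical.
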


\begin{proof}
We start with a setup.
We may assume that the system
\begin{equation}\label{eq:150311-21}
{\mathcal K}=\{[\Phi_1],[\Phi_2],\ldots,[\Phi_L]\}
\end{equation}
are linearly independent 
in ${\mathcal V}_N({\mathbb R}^n)/{\mathcal V}_{N,\infty}({\mathbb R}^n)$
According to Lemma \ref{lem:150228-1},
we can find a bounded linear functional 
$\ell_1:{\mathcal V}_N({\mathbb R}^n) \to {\mathbb C}$
such that ${\mathcal V}_{N,\infty}({\mathbb R}^n) \subset \ker(\ell_1)$
and that $\ell_1(\Phi_1)=1$.
Inductively, we can construct
$\ell_2,\ldots,\ell_L:{\mathcal V}_N({\mathbb R}^n) \to {\mathbb C}$
such that
$\ell_l(\Phi_l)=1$
and that
${\mathcal V}_{N,\infty}({\mathbb R}^n) \cup\{\Phi_1,\Phi_2,\ldots,\Phi_{l-1}\} 
\subset \ker(\ell_1)$.
If we consider some linear combinations,
we can suppose that $\ell_l(\Phi_{l'})=\delta_{l,l'}$
for all $1 \le l,l' \le L$.

Let $\{\tau_k\}_{k=1}^\infty$ be a sequence
in 
${\mathcal V}_{N,\infty}({\mathbb R}^n)+
{\rm Span}(\{\Phi_1,\Phi_2,\ldots,\Phi_L\})$
convergent to
$\tau \in {\mathcal V}_N({\mathbb R}^n)$.
Then, we have
\[
\tau_k=\sum_{l=1}^N a_{lk}\Phi_l+\zeta_k
\]
for some $\zeta_k \in {\mathcal V}_{N,\infty}({\mathbb R}^n)$
and $a_{lk} \in {\mathbb C}, l=1,2,\ldots,L$.
In terms of the $\ell_l$'s,
we have
\[
\tau_k=\sum_{l=1}^N \ell_l(\tau_k)\Phi_l+\zeta_k
\mbox{ or equivalently }
\zeta_k=\tau_k-\sum_{l=1}^N \ell_l(\tau_k)\Phi_l.
\]
Since by letting $k \to \infty$,
we have that $\zeta_k,k=1,2,\ldots$ converges
to a function $\zeta \in {\mathcal V}_N({\mathbb R}^n)$.
Since $\zeta_k \in {\mathcal V}_{N,\infty}({\mathbb R}^n)$,
we have $\zeta \in {\mathcal V}_{N,\infty}({\mathbb R}^n)$.
Thus,
\[
\tau=\sum_{l=1}^N \ell_l(\tau)\Phi_l+\zeta
\in {\mathcal V}_{N,\infty}({\mathbb R}^n)+
{\rm Span}(\{\Phi_1,\Phi_2,\ldots,\Phi_L\}),
\]
{as was to be shown.}
\end{proof}

The following lemma is somehow well known.
However, for convenience for readers we supply the proof.
\begin{lemma}
Let $g \in {\mathcal S}_\infty'({\mathbb R}^n)$.
Then there exists $N \in {\mathbb N}$ such that
\begin{equation}\label{eq:090227-1006}
|\langle g,\varphi \rangle| \le Np_N(\varphi)
\end{equation}
for all 
$\varphi\in {\mathcal S}_\infty({\mathbb R}^n)$.
\end{lemma}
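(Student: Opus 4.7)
The plan is to exploit the fact that $\mathcal{S}_\infty({\mathbb R}^n)$, as a subspace of the Fréchet space $\mathcal{S}({\mathbb R}^n)$, inherits a topology generated by the countable family of seminorms $\{p_N\}_{N \in {\mathbb N}_0}$, and that this family is monotone in $N$. Indeed, from the definition
\[
p_N(\varphi) = \sum_{|\alpha| \le N} \sup_{x \in {\mathbb R}^n} (1+|x|)^N |\partial^\alpha \varphi(x)|,
\]
it is immediate that $p_N(\varphi) \le p_{N+1}(\varphi)$, since passing from $N$ to $N+1$ only enlarges the range of multi-indices summed and replaces the weight $(1+|x|)^N$ by the larger weight $(1+|x|)^{N+1}$.

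First I would record the consequence of $g \in \mathcal{S}_\infty'({\mathbb R}^n)$ being a continuous linear functional: a base of neighborhoods of $0$ in $\mathcal{S}_\infty$ consists of sets of the form $U_{N,\varepsilon} = \{\varphi \in \mathcal{S}_\infty : p_N(\varphi) < \varepsilon\}$, so continuity of $g$ at $0$ yields some $N_0 \in {\mathbb N}_0$ and $\varepsilon > 0$ such that $p_{N_0}(\varphi) < \varepsilon$ implies $|\langle g, \varphi\rangle| \le 1$. By the usual homogeneity argument (apply this to $\varepsilon \varphi / (2p_{N_0}(\varphi))$ when $p_{N_0}(\varphi) > 0$, and use continuity to handle $p_{N_0}(\varphi) = 0$ by density/approximation), this gives
\[
|\langle g,\varphi\rangle| \le C p_{N_0}(\varphi) \qquad (\varphi \in \mathcal{S}_\infty({\mathbb R}^n))
\]
with $C = 2/\varepsilon$.

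Second, I would absorb the constant $C$ into the seminorm index. Choose any integer $N \ge \max(N_0, C)$. Using monotonicity $p_{N_0}(\varphi) \le p_N(\varphi)$ together with $C \le N$, we obtain
\[
|\langle g,\varphi\rangle| \le C p_{N_0}(\varphi) \le N p_N(\varphi),
\]
which is \eqref{eq:090227-1006}.

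There is no substantive obstacle in this argument; it is a direct application of the definition of continuity in a locally convex space whose topology is generated by a countable, monotone family of seminorms. The only points to verify are the monotonicity of $\{p_N\}$ (noted above) and the standard passage from a neighborhood bound to a seminorm bound, which are both routine.
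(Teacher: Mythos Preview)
Your proof is correct and follows essentially the same approach as the paper's: both use continuity of $g$ at $0$ to obtain a neighborhood of the form $\{p_{N_0}(\varphi)<\varepsilon\}$ on which $|\langle g,\varphi\rangle|\le 1$, then apply a scaling/homogeneity argument to convert this into a seminorm bound, and finally absorb the constant into the index $N$ using the monotonicity $p_{N_0}\le p_N$. One minor remark: the case $p_{N_0}(\varphi)=0$ does not require any density argument, since $p_{N_0}(\varphi)=0$ already forces $\varphi=0$.
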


\begin{proof}
Suppose that
$g:{\mathcal S}_\infty({\mathbb R}^n) \to {\mathbb C}$
is continuous;
our task is to find $N \in {\mathbb N}$
such that
(\ref{eq:090227-1006}) holds.
By the continuity of $g$,
the set
\begin{equation}
g^{-1}(\{z \in {\mathbb C} \, : \, |z|<1 \})
=
\{\varphi \in {\mathcal S}_\infty({\mathbb R}^n) \, : \, |g(\varphi)|< 1 \}
\end{equation}
is an open set of ${\mathcal S}_\infty({\mathbb R}^n)$ that contains $0$.

Therefore,
if $L \in {\mathbb N}$ is sufficiently large,
we conclude
\begin{equation}\label{eq:130522-1}
\{\varphi \in {\mathcal S}_\infty({\mathbb R}^n) \, : \, L\,p_L(\varphi)<1 \}
\subset
\{\varphi \in {\mathcal S}_\infty({\mathbb R}^n) \, : \, |g(\varphi)|< 1 \}.
\end{equation}
Hence,
$|g(\varphi)| \le 1$
as long as
$\varphi \in {\mathcal S}$
satisfies
$\displaystyle L\,p_{2L}(\varphi)=\frac{1}{2}$.

Now,
we suppose that we are given
$\varphi \in {\mathcal S}_\infty({\mathbb R}^n) \setminus \{0\}$.
Then,
$\displaystyle
\psi\equiv \frac{1}{2L\,p_{2L}(\varphi)}\varphi
$
satisfies
$\displaystyle L\,p_{2L}(\psi)=\frac12$.
Thus,
$|g(\psi)| \le 1$.
In view of the definition of $\psi$,
we have
$|g(\varphi)| \le 2L\,p_{2L}(\varphi), \varphi \in {\mathcal S}_\infty({\mathbb R}^n) \setminus \{0\}$.
The case when $\varphi=0$
can be readily incorporated.
Thus, by letting $N=2L$,
we can choose $N$ satisfying
(\ref{eq:090227-1006}).
\end{proof}

Note that
${\mathcal V}_N({\mathbb R}^n)$
is a subset of
all $C^N({\mathbb R}^n)$-functions and that
\begin{equation}\label{eq:150222-1}
\int_{{\mathbb R}^n}x^\alpha g(x)\,dx=0
\end{equation}
for all $g \in {\mathcal V}_{N,\infty}({\mathbb R}^n)$
and
$|\alpha| \le N-n-1$.
With this in mind, let us prove the following theorem:
\begin{theorem}\label{thm:150311-2}
Let 
$
\varphi_1,\varphi_2,\ldots,\varphi_K \in {\mathcal S}_\infty({\mathbb R}^n)
$
and
$
\Phi_1,\Phi_2,\ldots,\Phi_L \in {\mathcal S}({\mathbb R}^n)
\setminus {\mathcal S}_\infty({\mathbb R}^n).
$
Assume in addition that
$
[\Phi_1],[\Phi_2],\ldots,[\Phi_L] \in {\mathcal S}({\mathbb R}^n)
\setminus {\mathcal S}_\infty({\mathbb R}^n)
$
is linearly independent
in ${\mathcal S}({\mathbb R}^n)/{\mathcal S}_\infty({\mathbb R}^n)$.
Then the image of
\[
{\mathcal U}
{\equiv}
\left(\bigcap_{k=1}^K
\{F \in {\mathcal S}'({\mathbb R}^n)\,:\,
|\langle F,\varphi_k \rangle|<1 \}\right)
\cap
\left(\bigcap_{l=1}^L
\{F \in {\mathcal S}'({\mathbb R}^n)\,:\,
|\langle F,\Phi_l \rangle|<1 \}\right)
\]
by $R$ is exactly
\[
U{\equiv}
\bigcap_{k=1}^K
\{f \in {\mathcal S}_\infty'({\mathbb R}^n)\,:\,
|\langle f,\varphi_k \rangle|<1 \}.
\]
\end{theorem}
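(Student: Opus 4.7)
\textbf{Proof proposal for Theorem \ref{thm:150311-2}.}

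The inclusion $R(\mathcal{U}) \subset U$ is immediate: if $F \in \mathcal{U}$ and $\varphi_k \in \mathcal{S}_\infty({\mathbb R}^n)$, then $\langle R(F), \varphi_k \rangle = \langle F, \varphi_k \rangle$, so $|\langle R(F), \varphi_k \rangle| < 1$. Thus the bulk of the work is the reverse inclusion $U \subset R(\mathcal{U})$. Given $f \in U$, the plan is to build an extension $F \in \mathcal{S}'({\mathbb R}^n)$ of $f$ for which we can \emph{freely prescribe} the values $\langle F, \Phi_l \rangle$ to lie in the open unit disk (indeed, set them to $0$), using Proposition \ref{prop:150228-1} and the Hahn-Banach theorem.

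First, by the lemma preceding Proposition \ref{prop:150228-1}, there exists $N_0$ with $|\langle f, \varphi\rangle| \le N_0\,p_{N_0}(\varphi)$ for $\varphi \in \mathcal{S}_\infty({\mathbb R}^n)$. Next, I would enlarge $N_0$ to some $N$ so that the classes $[\Phi_1],\ldots,[\Phi_L]$ remain linearly independent in the quotient $\mathcal{V}_N({\mathbb R}^n)/\mathcal{V}_{N,\infty}({\mathbb R}^n)$. This upgrade uses the observation from Remark \ref{rem:150228-1} that membership in $\mathcal{V}_{N,\infty}({\mathbb R}^n)$ forces moments $\int x^\alpha \cdot \, dx=0$ for $|\alpha|\le N-n-1$; since $[\Phi_l]$ are independent in $\mathcal{S}/\mathcal{S}_\infty$ there is a \emph{finite} set $A\subset{\mathbb N}_0^n$ for which the matrix $\bigl(\int x^\alpha \Phi_l\,dx\bigr)_{\alpha\in A,\,l}$ has rank $L$; taking $N\ge n+1+\max_{\alpha\in A}|\alpha|$ (and $N\ge N_0$) achieves both requirements. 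Then the preceding lemma allows $f$ to be extended continuously (and uniquely) to a bounded linear functional $\tilde f$ on $\mathcal{V}_{N,\infty}({\mathbb R}^n)$, since $\mathcal{S}_\infty({\mathbb R}^n)$ is $p_N$-dense there.

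With $N$ fixed, apply Proposition \ref{prop:150228-1} to obtain the closed subspace
\[
W \equiv \mathcal{V}_{N,\infty}({\mathbb R}^n) + \mathrm{Span}(\Phi_1,\ldots,\Phi_L) \subset \mathcal{V}_N({\mathbb R}^n)
\]
together with the associated functionals $\ell_1,\ldots,\ell_L\in\mathcal{V}_N({\mathbb R}^n)^*$ satisfying $\ell_l(\Phi_{l'})=\delta_{l,l'}$ and $\mathcal{V}_{N,\infty}({\mathbb R}^n)\subset\ker(\ell_l)$. The established linear independence ensures that every $\psi \in W$ admits a \emph{unique} decomposition $\psi = \zeta+\sum_l a_l \Phi_l$ with $\zeta\in \mathcal{V}_{N,\infty}({\mathbb R}^n)$ and $a_l=\ell_l(\psi)$. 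I then define
\[
F_0(\psi) \equiv \tilde f\Bigl(\psi - \sum_{l=1}^L \ell_l(\psi)\Phi_l\Bigr),
\]
i.e., I pick $c_l=0 \in \{|c_l|<1\}$. Well-definedness is automatic, and continuity on $W$ with respect to $p_N$ follows from the continuity of $\tilde f$ on $\mathcal{V}_{N,\infty}({\mathbb R}^n)$ combined with the $\mathcal{V}_N$-continuity of each $\ell_l$ and the bound $p_N(\Phi_l)<\infty$. By the Hahn-Banach theorem, $F_0$ extends to a continuous linear functional $\tilde F \in \mathcal{V}_N({\mathbb R}^n)^*$; restricting to $\mathcal{S}({\mathbb R}^n)\subset\mathcal{V}_N({\mathbb R}^n)$ yields $F\in\mathcal{S}'({\mathbb R}^n)$.

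It remains to verify that $F\in\mathcal{U}$ and $R(F)=f$. For $\varphi_k \in \mathcal{S}_\infty({\mathbb R}^n)\subset\mathcal{V}_{N,\infty}({\mathbb R}^n)$ the decomposition is trivial ($a_l=0$), so $\langle F,\varphi_k\rangle = \tilde f(\varphi_k)=\langle f,\varphi_k\rangle$; this gives both $R(F)=f$ and $|\langle F,\varphi_k\rangle|<1$. For $\Phi_l$, the uniqueness of the decomposition gives $\langle F,\Phi_l\rangle=c_l=0$, so $|\langle F,\Phi_l\rangle|<1$. Hence $F\in\mathcal{U}$ and $R(F)=f$, finishing the proof.

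The only delicate point is the upgrade from linear independence in $\mathcal{S}/\mathcal{S}_\infty$ to linear independence in $\mathcal{V}_N/\mathcal{V}_{N,\infty}$ for an explicit $N$; this is the main obstacle, and it is resolved by the moment-matrix argument above. Everything else is a routine combination of Proposition \ref{prop:150228-1}, the continuity lemma, and Hahn-Banach.
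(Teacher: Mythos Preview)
Your proof is correct and follows essentially the same route as the paper: extend $f$ by density to $\mathcal{V}_{N,\infty}({\mathbb R}^n)$, compose with the projection $W\to\mathcal{V}_{N,\infty}({\mathbb R}^n)$ killing the $\Phi_l$-components (so that $\langle F,\Phi_l\rangle=0$), and then apply Hahn--Banach on the Banach space $\mathcal{V}_N({\mathbb R}^n)$ using the closedness from Proposition~\ref{prop:150228-1}. Your moment-matrix argument for upgrading the linear independence of $[\Phi_1],\ldots,[\Phi_L]$ from $\mathcal{S}/\mathcal{S}_\infty$ to $\mathcal{V}_N/\mathcal{V}_{N,\infty}$ is in fact more careful than the paper's treatment, which only records that each $\Phi_l\notin\mathcal{V}_{N,\infty}({\mathbb R}^n)$ individually.
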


\begin{proof}
It is trivial that
$R({\mathcal U}) \subset U$.
Let $f \in U$ to prove 
$R({\mathcal U}) \supset U$.
Since
\[
\Phi_1,\Phi_2,\ldots,\Phi_L \in {\mathcal S}({\mathbb R}^n)
\setminus {\mathcal S}_\infty({\mathbb R}^n),
\]
there exist
$\alpha_1,\alpha_2,\ldots,\alpha_L \in {\mathbb N}_0{}^n$
such that
\begin{equation}\label{eq:150222-2}
\int_{{\mathbb R}^n}x^{\alpha_l} \Phi_l(x)\,dx \ne 0
\end{equation}
Since $f \in {\mathcal S}_\infty'({\mathbb R}^n)$,
there exists an integer $N$ such that
\[
|\langle f,\varphi \rangle|
\le Np_N(\varphi)
\]
for all $\varphi \in {\mathcal S}_\infty({\mathbb R}^n)$.
Note that $f$ continuously extends
to ${\mathcal V}_{N,\infty}({\mathbb R}^n)$.
So, we regard $f$ as a continuous linear mapping
defined on ${\mathcal V}_{N,\infty}({\mathbb R}^n)$.

If necessary by replacing $N$ with a large one,
we may assume
\[
N>|\alpha_1|+|\alpha_2|+\cdots+|\alpha_L|+n+1.
\]
It follows
from (\ref{eq:150222-1}) and (\ref{eq:150222-2})
that
\[
\Phi_l \in 
{\mathcal V}_N({\mathbb R}^n) \setminus {\mathcal V}_{N,\infty}({\mathbb R}^n)
\quad
(l=1,2,\ldots,L).
\]
Let $p$ be the projection
from
${\mathcal V}_{N,\infty}({\mathbb R}^n)+
{\rm Span}(\{\Phi_1,\Phi_2,\ldots,\Phi_L\})$
to
${\mathcal V}_{N,\infty}({\mathbb R}^n)$.
Let us set
$$H\equiv
f \circ p:{\mathcal V}_{N,\infty}({\mathbb R}^n)+
{\rm Span}(\{\Phi_1,\Phi_2,\ldots,\Phi_L\})
\to {\mathbb C}.
$$
Then since $p$ is continuous
and ${\mathcal V}_{N,\infty}({\mathbb R}^n)+
{\rm Span}(\{\Phi_1,\Phi_2,\ldots,\Phi_L\})$
is a closed subspace of
${\mathcal V}_N({\mathbb R}^n)$,
$H$ extends to a bounded linear functional $F$
on ${\mathcal V}_N({\mathbb R}^n)$.
Note that
\begin{equation}\label{eq:150828-101}
\langle F,\Phi_l \rangle
=\langle H,\Phi_l \rangle=0
\end{equation}
for all $l=1,2,\ldots,L$
and that 
$f=R(F)$, which implies
$F \in {\mathcal U}$ as well.
Therefore, $R({\mathcal U})=U$.
\end{proof}

\begin{theorem}\label{thm:150311-29}
Let 
$
\varphi_1,\varphi_2,\ldots,\varphi_K \in {\mathcal S}_\infty({\mathbb R}^n)
$
and
$
\Phi_1,\Phi_2,\ldots,\Phi_L,\ldots,\Phi_{L^*} \in {\mathcal S}({\mathbb R}^n)
\setminus {\mathcal S}_\infty({\mathbb R}^n).
$
Assume in addition that
$
[\Phi_1],[\Phi_2],\ldots,[\Phi_L] \in {\mathcal S}({\mathbb R}^n)
\setminus {\mathcal S}_\infty({\mathbb R}^n)
$
is linearly independent
in ${\mathcal S}({\mathbb R}^n)/{\mathcal S}_\infty({\mathbb R}^n)$
and that
${\mathcal S}_\infty({\mathbb R}^n)$
and
$
\Phi_1,\Phi_2,\ldots,\Phi_L \in {\mathcal S}({\mathbb R}^n)
\setminus {\mathcal S}_\infty({\mathbb R}^n)
$
span
$
\Phi_{L+1},\ldots,\Phi_{L^*}.
$
More precisely, we assume
\begin{equation}\label{eq:150505-1}
\Phi_l=\varphi^*_l+
\sum_{l=1}^L \beta_{l,k}\Phi_l
\quad (l=L+1,\ldots,L^*)
\end{equation}
for some $\varphi^*_{L+1},\ldots,\varphi^*_{L^*}
\in {\mathcal S}_\infty({\mathbb R}^n)$.
Then the image of
\begin{align*}
{\mathcal U}
&{\equiv}
\left(\bigcap_{k=1}^K
\{F \in {\mathcal S}'({\mathbb R}^n)\,:\,
|\langle F,\varphi_k \rangle|<1 \}\right)
\cap
\left(\bigcap_{l=1}^{L^*}
\{F \in {\mathcal S}'({\mathbb R}^n)\,:\,
|\langle F,\Phi_l \rangle|<1 \}\right)
\end{align*}
by $R$ contains
\[
U{\equiv}
\left(\bigcap_{k=1}^K
\{f \in {\mathcal S}_\infty'({\mathbb R}^n)\,:\,
|\langle f,\varphi_k \rangle|<1 \}\right)
\cap
\left(\bigcap_{k=L+1}^{L^*}
\{f \in {\mathcal S}_\infty'({\mathbb R}^n)\,:\,
|\langle f,\varphi^*_k \rangle|<1 \}
\right).
\]
\end{theorem}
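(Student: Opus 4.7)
The plan is to reduce the statement to Theorem \ref{thm:150311-2} by noting that the construction there produces an extension $F$ of $f$ which annihilates each of $\Phi_1,\ldots,\Phi_L$, so the values $\langle F,\Phi_l\rangle$ for $l=L+1,\ldots,L^*$ are forced by linearity to coincide with the known values $\langle f,\varphi_l^*\rangle$.

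Concretely, I would proceed as follows. First, fix $f \in U$. Since the constraints defining $U$ include the subfamily $\bigcap_{k=1}^K\{f\in{\mathcal S}_\infty'\,:\,|\langle f,\varphi_k\rangle|<1\}$, Theorem \ref{thm:150311-2} applied to the distributions $\varphi_1,\ldots,\varphi_K$ and $\Phi_1,\ldots,\Phi_L$ yields $F \in {\mathcal S}'({\mathbb R}^n)$ with $R(F)=f$ such that $|\langle F,\varphi_k\rangle|<1$ for $k=1,\ldots,K$ and $|\langle F,\Phi_l\rangle|<1$ for $l=1,\ldots,L$. The crucial additional feature, which we read off from the argument of Theorem \ref{thm:150311-2} (see in particular (\ref{eq:150828-101})), is that the $F$ constructed there in fact satisfies the stronger equality
\begin{equation*}
\langle F,\Phi_l\rangle=0\quad (l=1,2,\ldots,L),
\end{equation*}
because $F=f\circ p$ on ${\mathcal V}_{N,\infty}({\mathbb R}^n)+{\rm Span}(\{\Phi_1,\ldots,\Phi_L\})$ for the projection $p$ onto ${\mathcal V}_{N,\infty}({\mathbb R}^n)$.

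Next, I would exploit the representation (\ref{eq:150505-1}). For each $l=L+1,\ldots,L^*$, linearity of $F$ together with $\langle F,\Phi_k\rangle=0$ for $k=1,\ldots,L$ gives
\begin{equation*}
\langle F,\Phi_l\rangle
=\langle F,\varphi^*_l\rangle+\sum_{k=1}^L\beta_{l,k}\langle F,\Phi_k\rangle
=\langle F,\varphi^*_l\rangle.
\end{equation*}
Because $\varphi^*_l\in{\mathcal S}_\infty({\mathbb R}^n)$ and $R(F)=f$, we have $\langle F,\varphi^*_l\rangle=\langle f,\varphi^*_l\rangle$, and since $f\in U$ satisfies $|\langle f,\varphi^*_l\rangle|<1$, we deduce $|\langle F,\Phi_l\rangle|<1$ for every $l=L+1,\ldots,L^*$. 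Combining with the estimates from Theorem \ref{thm:150311-2} for $\varphi_k$ and $\Phi_1,\ldots,\Phi_L$, we conclude $F\in{\mathcal U}$, hence $f=R(F)\in R({\mathcal U})$, proving $U\subset R({\mathcal U})$.

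There is no serious obstacle here: once the vanishing property $\langle F,\Phi_l\rangle=0$ for $l\le L$ is extracted from the proof of Theorem \ref{thm:150311-2}, the rest is a one-line algebraic manipulation based on (\ref{eq:150505-1}). The only small subtlety to flag carefully is to confirm that the construction in Theorem \ref{thm:150311-2} does supply this vanishing (and not merely the weaker bound $|\langle F,\Phi_l\rangle|<1$), which is visible from the formula $F=f\circ p$ used there.
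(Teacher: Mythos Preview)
Your proposal is correct and follows essentially the same approach as the paper: both arguments hinge on the vanishing property $\langle F,\Phi_l\rangle=0$ for $l\le L$ extracted from the construction in Theorem~\ref{thm:150311-2} (equation~(\ref{eq:150828-101})), and then use the linear representation~(\ref{eq:150505-1}) to force $|\langle F,\Phi_l\rangle|<1$ for $l>L$. The paper phrases this via auxiliary sets $\tilde{\mathcal U}$ and $\tilde{\mathcal U}^*$ (applying Theorem~\ref{thm:150311-2} with the enlarged family $\varphi_1,\ldots,\varphi_K,\varphi^*_{L+1},\ldots,\varphi^*_{L^*}$ of $\mathcal S_\infty$-functions), whereas you argue pointwise for a given $f\in U$; but the content is identical.
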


\begin{proof}
We know that the image of
\begin{align*}
\tilde{\mathcal U}
&{\equiv}
\left(\bigcap_{k=1}^K
\{F \in {\mathcal S}'({\mathbb R}^n)\,:\,
|\langle F,\varphi_k \rangle|<1 \}\right)
\cap
\left(\bigcap_{l=1}^L
\{F \in {\mathcal S}'({\mathbb R}^n)\,:\,
|\langle F,\Phi_l \rangle|<1 \}\right)\\
&\quad
\cap
\left(\bigcap_{k=L+1}^{L^*}
\{F \in {\mathcal S}'({\mathbb R}^n)\,:\,
|\langle F,\varphi^*_k \rangle|<1 \}\right)
\end{align*}
by $R$ is exactly
\[
U{\equiv}
\left(\bigcap_{k=1}^K
\{f \in {\mathcal S}_\infty'({\mathbb R}^n)\,:\,
|\langle f,\varphi_k \rangle|<1 \}\right)
\cap
\left(\bigcap_{k=L+1}^{L^*}
\{f \in {\mathcal S}_\infty'({\mathbb R}^n)\,:\,
|\langle f,\varphi^*_k \rangle|<1 \}\right)
\]
thanks to Theorem \ref{thm:150311-2}.
According to (\ref{eq:150828-101})
and (\ref{eq:150505-1}),
we can say that
the image of 
\begin{align*}
\tilde{\mathcal U}^*
&{\equiv}
\left(\bigcap_{k=1}^K
\{F \in {\mathcal S}'({\mathbb R}^n)\,:\,
|\langle F,\varphi_k \rangle|<1 \}\right)
\cap
\left(\bigcap_{l=1}^{L^*}
\{F \in {\mathcal S}'({\mathbb R}^n)\,:\,
|\langle F,\Phi_l \rangle|<1 \}\right)\\
&\quad
\cap
\left(\bigcap_{k=L+1}^{L^*}
\{F \in {\mathcal S}'({\mathbb R}^n)\,:\,
|\langle F,\varphi^*_k \rangle|<1 \}\right)
\end{align*}
is $U$.
Since ${\mathcal U}$ contains $\tilde{\mathcal U}^*$,
the image of ${\mathcal U}$ by $R$ contains
$U$.
\end{proof}

Now the proof of Theorem \ref{thm:150301-1} is easy.
In fact,
let ${\mathcal U}_0$ be a neighborhood of $0$.
Then according to Theorem \ref{thm:150311-29},
${\mathcal U}_0$ contains a set of the form
${\mathcal U}$ described in Theorem \ref{thm:150311-29}.
According to Theorem \ref{thm:150311-29},
we know that $0 \in U=R({\mathcal U}) \subset R({\mathcal U}_0)$.
Thus, $0$ is an interior point of $R({\mathcal U}_0)$.
By the translation, we can show that any point $R(f)$ with $f \in R({\mathcal U}_0)$
can be proved to be an interior point of $R({\mathcal U}_0)$.

Let us rephrase Theorem \ref{thm:150301-1} in terms of the quotient topology.
To begin with let us recall some elementary facts
on general topology.

\begin{definition}
An equivalence relation
of a set $X$ is a subset $R$ of $X \times X$
satisfying the following.
Below, for $x,y \in X$, $x \sim y$ means that $(x,y) \in R$.
\begin{enumerate}
\item $x \sim x$ for all $x \in X$ (Reflexivity).
\item Let $x,y \in X$.
Then $x \sim y$ implies $y \sim x$ (Symmetry).
\item Let $x,y,z \in X$.
Then $x \sim y$ and $y \sim z$
implies $x \sim z$ ({Transitivity}).
\end{enumerate}
In this case
$\sim$ is an equivalence relation of $X$.
Given an equivalence relation of $X$,
we write
\begin{equation}
[x]
\equiv 
\left\{
y \in X \, : \,x \sim y
\right\}
\in 2^X
\end{equation}
for $x \in X$
and
\begin{equation}
X/\sim
\equiv 
\left\{
[x] \, : \,x \in X
\right\}
\subset 2^X.
\end{equation}
\end{definition}

\begin{definition}[Quotient topology]
Let $\sim$ be an equivalence relation of a topological space $X$.
Then the quotient topology of $X$
with respect to $\sim$ is the weakest topology
such that
\begin{equation}
p:X \to X/\sim, x \mapsto [x]
\end{equation}
is continuous.
The mapping $p$ is said to be the (canonical/natural) projection.
\end{definition}

According to the definition,
we see that $U \subset X/\sim$ is open
if and only if $p^{-1}(U)$ is open.

As for this topology
the following is elementary.
\begin{theorem}
\label{thm:quotient topology}
Let $X$ and $Y$ be topological spaces
and $\sim$ an equivalence relation of $X$.
A mapping $f:X/\sim \to Y$
is continuous,
if and only if $f \circ p:X \to Y$
is continuous.
\end{theorem}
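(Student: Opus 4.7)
The proof is a direct application of the definition of the quotient topology, and splits naturally into two implications.

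For the forward direction, I would simply observe that $p: X \to X/\sim$ is continuous by the very definition of the quotient topology. Hence if $f: X/\sim \to Y$ is continuous, then $f \circ p: X \to Y$ is continuous as a composition of continuous maps.

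For the reverse direction, suppose $f \circ p: X \to Y$ is continuous. To check that $f: X/\sim \to Y$ is continuous, I would take an arbitrary open set $V \subset Y$ and show that $f^{-1}(V)$ is open in $X/\sim$. By the definition of the quotient topology, a subset $U$ of $X/\sim$ is open if and only if $p^{-1}(U)$ is open in $X$. Applying this criterion to $U = f^{-1}(V)$, I note that
\[
p^{-1}(f^{-1}(V)) = (f \circ p)^{-1}(V),
\]
and the right-hand side is open in $X$ by the assumed continuity of $f \circ p$. Therefore $f^{-1}(V)$ is open in $X/\sim$, proving that $f$ is continuous.

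The entire argument is a single unfolding of the definitions, so there is no essential difficulty; the only \emph{care} required is to make precise that the quotient topology is \emph{characterized} as the weakest topology rendering $p$ continuous, which is exactly the definition recorded just above. No further structure of $X$, $Y$, or $\sim$ is invoked.
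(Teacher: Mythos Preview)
Your proof is correct and is exactly the standard argument; the paper itself does not supply a proof of this theorem, calling it ``elementary'' and stating it without justification.
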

Equip ${\mathcal S}'({\mathbb R}^n)/{\mathcal P}({\mathbb R}^n)$
with the quotient topology.
\begin{theorem}
The spaces
${\mathcal S}'({\mathbb R}^n)/{\mathcal P}({\mathbb R}^n)$
and
${\mathcal S}_\infty'({\mathbb R}^n)$
are homeomorphic.
\end{theorem}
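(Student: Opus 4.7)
The plan is to realize the homeomorphism as the map induced by the restriction operator $R:{\mathcal S}'({\mathbb R}^n) \to {\mathcal S}_\infty'({\mathbb R}^n)$ and to check three things: continuity, bijectivity, and openness of the induced map.

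First I would record that $R$ is continuous (it is the Banach-space dual of the continuous inclusion ${\mathcal S}_\infty({\mathbb R}^n) \hookrightarrow {\mathcal S}({\mathbb R}^n)$, and both duals carry the weak-$*$ topology). Next, I would identify $\ker R$. If $F \in {\mathcal S}'({\mathbb R}^n)$ satisfies $\langle F,\varphi\rangle=0$ for every $\varphi \in {\mathcal S}_\infty({\mathbb R}^n)$, then $\widehat F$ is a tempered distribution supported at the origin, hence a finite linear combination of derivatives of $\delta_0$, so $F$ is a polynomial; conversely every polynomial annihilates ${\mathcal S}_\infty({\mathbb R}^n)$ by the moment condition. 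Thus $\ker R = {\mathcal P}({\mathbb R}^n)$. Surjectivity of $R$ follows from the Hahn--Banach theorem: any $f \in {\mathcal S}_\infty'({\mathbb R}^n)$ satisfies $|\langle f,\varphi\rangle| \le Np_N(\varphi)$ for some $N$ on ${\mathcal S}_\infty({\mathbb R}^n)$, hence extends to a continuous linear functional on ${\mathcal S}({\mathbb R}^n)$.

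With these facts the induced map
\[
\widetilde R : {\mathcal S}'({\mathbb R}^n)/{\mathcal P}({\mathbb R}^n) \longrightarrow {\mathcal S}_\infty'({\mathbb R}^n), \qquad \widetilde R([F]) \equiv R(F),
\]
is a well-defined bijection. Writing $p:{\mathcal S}'({\mathbb R}^n) \to {\mathcal S}'({\mathbb R}^n)/{\mathcal P}({\mathbb R}^n)$ for the canonical projection, we have $\widetilde R \circ p = R$; since $R$ is continuous, Theorem \ref{thm:quotient topology} yields the continuity of $\widetilde R$.

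It remains to show $\widetilde R$ is open. Let $V \subset {\mathcal S}'({\mathbb R}^n)/{\mathcal P}({\mathbb R}^n)$ be open. By definition of the quotient topology, $p^{-1}(V)$ is open in ${\mathcal S}'({\mathbb R}^n)$, and by the injectivity of $\widetilde R$ on the quotient we have $\widetilde R(V)=R(p^{-1}(V))$. Theorem \ref{thm:150301-1} now gives that $R(p^{-1}(V))$ is open in ${\mathcal S}_\infty'({\mathbb R}^n)$, so $\widetilde R$ is open. A continuous open bijection is a homeomorphism, completing the proof. The only nontrivial input here is Theorem \ref{thm:150301-1}, already established; the remaining obstacle is the purely formal verification that $\ker R = {\mathcal P}({\mathbb R}^n)$, which is handled by the Fourier-side argument above.
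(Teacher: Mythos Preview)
Your proof is correct and follows essentially the same route as the paper: both define the induced map $[F]\mapsto R(F)$, obtain its continuity from Theorem~\ref{thm:quotient topology} applied to $R=\widetilde R\circ p$, and obtain its openness by writing $\widetilde R(V)=R(p^{-1}(V))$ and invoking Theorem~\ref{thm:150301-1}. The only difference is that you spell out the bijectivity (identifying $\ker R={\mathcal P}({\mathbb R}^n)$ via the Fourier transform and surjectivity via Hahn--Banach), which the paper leaves implicit; your remark that injectivity is needed for $\widetilde R(V)=R(p^{-1}(V))$ is unnecessary, since that identity holds for any map factoring through the quotient.
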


\begin{proof}
According to Theorem \ref{thm:quotient topology},
the mapping 
$\Phi:[f] \in {\mathcal S}'({\mathbb R}^n)/{\mathcal P}({\mathbb R}^n)
\mapsto R(f) \in {\mathcal S}_\infty'({\mathbb R}^n)$
is continuous.
Let $O$ be an open set in 
${\mathcal S}'({\mathbb R}^n)/{\mathcal P}({\mathbb R}^n)$.
Then $O=p({\mathcal O}+{\mathcal P}({\mathbb R}^n))$
for some open set ${\mathcal O}$ in ${\mathcal S}'({\mathbb R}^n)$.
Thus,
$\Phi(O)=R({\mathcal O})$ is open according to
Theorem \ref{thm:150301-1}.
\end{proof}

Finally to conclude this section,
we prove another property of ${\mathcal S}_\infty({\mathbb R}^n)$.

\begin{theorem}\label{thm:150312-22}
Assume that $A$ is a bounded set in ${\mathcal S}_\infty({\mathbb R}^n)$,
that is,
$$
a_N=\sup_{f \in A}p_N(f)<\infty
$$
for all $N \in {\mathbb N}$.
Then $A$ is a relatively compact set.
\end{theorem}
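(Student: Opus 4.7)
The plan is to reduce the claim to sequential compactness and then carry out a classical Arzel\`a-Ascoli plus diagonal-extraction argument, paying attention to the moment conditions at the end. Since the topology of ${\mathcal S}_\infty({\mathbb R}^n)$ is generated by the countable family of semi-norms $\{p_N\}_{N \in {\mathbb N}_0}$, it is metrizable, so relative compactness is equivalent to sequential relative compactness; it suffices to show that every sequence $\{f_k\}_{k=1}^\infty \subset A$ has a subsequence converging in ${\mathcal S}_\infty({\mathbb R}^n)$.

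Given such a sequence, the hypothesis yields the uniform bound $(1+|x|)^N |\partial^\alpha f_k(x)| \le a_{N+|\alpha|}$ for every $\alpha \in {\mathbb N}_0{}^n$, every $N \in {\mathbb N}_0$, and every $k$. In particular, for each fixed $\alpha$ the family $\{\partial^\alpha f_k\}_k$ is uniformly bounded on ${\mathbb R}^n$ and, via the corresponding estimate on $\partial^\beta f_k$ with $|\beta| = |\alpha|+1$ combined with the mean value theorem, equicontinuous on every compact subset of ${\mathbb R}^n$. A standard Arzel\`a-Ascoli argument, diagonalized over $\alpha \in {\mathbb N}_0{}^n$ and over an exhaustion of ${\mathbb R}^n$ by compact cubes, produces a subsequence $\{f_{k_j}\}$ and continuous functions $g^\alpha$ such that $\partial^\alpha f_{k_j} \to g^\alpha$ uniformly on every compact set. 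Letting $f$ denote the pointwise limit of $\{f_{k_j}\}$, standard analysis shows $f \in C^\infty({\mathbb R}^n)$ with $\partial^\alpha f = g^\alpha$ for every $\alpha$.

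The key step is to upgrade this locally uniform convergence to convergence in every semi-norm $p_N$. Fix $N$, fix $\alpha$ with $|\alpha| \le N$, and let $R > 1$. On $\overline{Q(R)}$ the quantity $\sup_{|x| \le R}(1+|x|)^N|\partial^\alpha(f_{k_j}-f)(x)|$ tends to $0$ as $j \to \infty$ by uniform convergence, while for $|x| > R$, pointwise convergence transfers the bound $(1+|x|)^{N+|\alpha|+1}|\partial^\alpha f_{k_j}(x)| \le a_{N+|\alpha|+1}$ to $f$, so one has
\[
(1+|x|)^N |\partial^\alpha(f_{k_j}-f)(x)|
\le \frac{2\, a_{N+|\alpha|+1}}{1+R}.
\]
Letting first $j \to \infty$ and then $R \to \infty$ yields $p_N(f_{k_j}-f) \to 0$, so $f \in {\mathcal S}({\mathbb R}^n)$. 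Finally, the vanishing-moment conditions transfer to $f$ by the dominated convergence theorem, since $|x^\alpha f_{k_j}(x)| \le a_{|\alpha|+n+1}(1+|x|)^{-n-1}$; hence $\int_{{\mathbb R}^n} x^\alpha f(x)\,dx = 0$ for every $\alpha$, giving $f \in {\mathcal S}_\infty({\mathbb R}^n)$.

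The main obstacle is this two-scale upgrade: the uniform bound on $p_{N+|\alpha|+1}$ is precisely what is needed to dominate the tail $|x| > R$ while locally uniform convergence handles the bulk $|x| \le R$, and it is this careful exploitation of one extra order of decay (available because $A$ is bounded in every $p_M$) that makes the hypothesis sufficient for relative compactness.
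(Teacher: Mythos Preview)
Your proof is correct and follows essentially the same approach as the paper: reduce to sequential compactness via metrizability, apply Arzel\`a--Ascoli with a diagonal extraction, and use the extra order of decay ($a_{N+1}<\infty$) to upgrade local uniform convergence to convergence in each $p_N$. The paper's version is terser---it packages your upgrade step as ``convergent in ${\mathcal V}_N({\mathbb R}^n)$'' and omits the explicit moment-condition check (relying implicitly on ${\mathcal S}_\infty({\mathbb R}^n)$ being closed in ${\mathcal S}({\mathbb R}^n)$)---but the underlying argument is the same.
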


\begin{proof}
Since ${\mathcal S}_\infty({\mathbb R}^n)$ is metrizable,
we have only to show that
any sequence $\{f_j\}_{j=1}^\infty$ in $A$
has a convergent subsequence.
Since $a_{N+1}<\infty$,
we can use the Ascoli-Arzel\'{a} theorem
to have a subsequence convergent 
in ${\mathcal V}_N({\mathbb R}^n)$ from $\{f_j\}_{j=1}^\infty$.
Cantor's diagonal argument yields a subsequence
convergent in ${\mathcal S}({\mathbb R}^n)$.
Thus, $A$ is relatively compact.
\end{proof}

\begin{remark}
See \cite[Theorem 2.2]{GoLo10}
for the extension of Theorem \ref{thm:150312-22}.
\end{remark}

\section{Appendix: Comparison with other function spaces--history and possible extension of our results}
\label{s6}

\subsection{The characterization by means of the Peetre maximal operator}

A recent trend on theory of function spaces
such as Morrey spaces, Herz spaces and Orlicz spaces
is to connect these spaces with the Littlewood-Paley
decomposition.
The idea of replacing $L^p({\mathbb R}^n)$ spaces
with other function spaces are expanded
in \cite{HeNe07,LSUYY2,RU,Ullrich10}.
Such attempts are made 
for 
$B_\sigma$-function spaces,
variable Lebesgue spaces
and
Orlicz spaces.
See
\cite{KMNS13-2},
\cite{MNSaSh083,NaSa12-2} 
and
\cite{NaSa2014},
respectively.

To recall the results,
we use the notation based on \cite{SaTa2007}.
Let $f \in {\mathcal S}'({\mathbb R}^n)$.
Define the {\it $($nonhomogeneous$)$ Besov-Morrey norm} by:
\begin{equation}
\label{eq:nonhomogeneous-Besov-Morrey}
\rVert f \lVert_{\mathcal{N}_{pqr}^s}
\equiv
\rVert \psi(D)f \lVert_{\mathcal{M}^p_q}
+
\left(
\sum_{j=1}^\infty
2^{jsr}\rVert \varphi_j(D)f \lVert_{\mathcal{M}^p_q}^r
\right)^{1/r}
\end{equation}
for $0<q \le p<\infty, \, 0<r \le \infty$
and the {\it $($nonhomogeneous$)$ Triebel-Lizorkin norm} by:
\begin{equation}
\label{eq:nonhomogeneous-Triebel-Lizorkin-Morrey}
\rVert f \lVert_{\mathcal{E}_{pqr}^s}
\equiv
\rVert \psi(D)f \lVert_{\mathcal{M}^p_q}
+
\left\|
\left(
\sum_{j=1}^\infty
2^{jsr}|\varphi_j(D)f|^r
\right)^{1/r}\right\|_{\mathcal{M}^p_q}
\end{equation}
for $0<q \le p<\infty$ and $0<r \le \infty$,
where a natural modification is made
in 
(\ref{eq:nonhomogeneous-Besov-Morrey})
and 
(\ref{eq:nonhomogeneous-Triebel-Lizorkin-Morrey}).

Now we can characterize
our function spaces by means of the Peetre maximal operator
in the spirit of \cite{LSUYY1,LSUYY2,Ullrich10}.
\begin{lemma}\label{lem:140817-1}
{
Let $\psi_j$ be as in Proposition $\ref{prop:Rychkov}$.
}
We define
\begin{equation}\label{eq:150205-1}
(\psi_jf)_*(x)
\equiv 
\sup_{y \in {\mathbb R}^n}\frac{|f*\psi_j(y)|}{(1+2^j|x-y|)^N}
\end{equation}
for $j \in {\mathbb N}_0$ and $f \in {\mathcal S}'({\mathbb R}^n)$.
Let $0<q<\infty$, $0<r \le \infty$ and 
$\varphi\in {\mathcal G}_q$.
\begin{enumerate}
\item
Let 
\begin{equation}\label{eq:140821-1}
N>\dfrac{n}{\min(1,q)}+n.
\end{equation}
For any $f \in {\mathcal N}^s_{{\mathcal M}^\varphi_q,r}({\mathbb R}^n)$,
we have
\begin{equation}\label{eq:140817-201}
\left(\sum_{j=0}^\infty
(\|2^{js}(\psi_jf)_*\|_{{\mathcal M}^\varphi_q})^r
\right)^{\frac1r}
\lesssim
\|f\|_{{\mathcal N}^s_{{\mathcal M}^\varphi_q,r}}.
\end{equation}
\item
Let 
\begin{equation}\label{eq:140821-2}
N>\dfrac{n}{\min(1,q,r)}+n.
\end{equation}
For any $f \in {\mathcal E}^s_{{\mathcal M}^\varphi_q,r}({\mathbb R}^n)$,
we have
\begin{equation}\label{eq:140817-202}
\left\|\left(\sum_{j=0}^\infty
2^{jsr}((\psi_jf)_*)^r
\right)^{\frac1r}\right\|_{{\mathcal M}^\varphi_q}
\lesssim
\|f\|_{{\mathcal E}^s_{{\mathcal M}^\varphi_q,r}}.
\end{equation}
\end{enumerate}
\end{lemma}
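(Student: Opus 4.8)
The plan is to reduce the Peetre maximal estimate in Lemma~\ref{lem:140817-1} to the multiplier theorem (Theorem~\ref{Multiplier result}) together with the vector-valued maximal inequality (Theorem~\ref{thm:vector-maximal}) and the Plancherel--Polya--Nikol'skii inequality (Theorem~\ref{thm:PPN}). The key observation is that $(\psi_j f)_*$ is exactly an instance of the starred quantity $H_{(j)}(D)f_j^*$ appearing in Theorem~\ref{Multiplier result}, once one inserts a suitable Littlewood--Paley decomposition between $\psi_j$ and $f$.

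First I would recall the reproducing formula of Proposition~\ref{prop:Rychkov}: with $\{\varphi_j\}$, $\{\psi_j\}$ as there, we have $f=\sum_k \psi_k*\varphi_k*f$, and since $\psi_j*\varphi_k$ vanishes unless $|j-k|\le 1$ (by the Fourier support conditions, after the rescaling remark letting all profiles be supported in $[-1/4,1/4]^n$), one gets
\[
f*\psi_j = \sum_{k:\,|k-j|\le 1} \psi_j*\varphi_k*\varphi_k*f .
\]
Thus $f*\psi_j$ is a sum of a bounded number of terms each of the form $G_{(k)}(D)[\varphi_k(D)f]$ where $G_{(k)}=\psi_j\varphi_k$ has Fourier support in an annulus of radius $\sim 2^k$; rescaling, $\sup_k\|G_{(k)}(2^{-k}\cdot)\|_{H^\nu_2}\lesssim 1$ because all these are dilates of a fixed Schwartz profile. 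Taking the Peetre maximal function of both sides and using $(1+2^j|x-y|)^{-N}\lesssim(1+2^k|x-y|)^{-N}$ for $|k-j|\le1$, I would dominate
\[
(\psi_jf)_*(x)\lesssim \sum_{k:\,|k-j|\le1}\big(G_{(k)}(D)[\varphi_k(D)f]\big)^*(x),
\]
where the star is the one in Theorem~\ref{Multiplier result} with $\eta=\min(1,q)/2$ in case (1) and $\eta=\min(1,q,r)/2$ in case (2); the hypotheses \eqref{eq:140821-1} and \eqref{eq:140821-2} on $N$ guarantee $N\ge n/\eta$ so the Peetre exponent matches.

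Then for part (1) I would apply \eqref{2014-9-8-1} of Theorem~\ref{Multiplier result} termwise with $f_k=\varphi_k(D)f$ and sum the $\ell^r$ bound over $k$, using that the shift $|k-j|\le1$ only contributes a harmless constant; this yields $\big(\sum_j(\|2^{js}(\psi_jf)_*\|_{\mathcal M^\varphi_q})^r\big)^{1/r}\lesssim \big(\sum_k 2^{ksr}\|\varphi_k(D)f\|_{\mathcal M^\varphi_q}^r\big)^{1/r}$, and the right-hand side is equivalent to $\|f\|_{\mathcal N^s_{\mathcal M^\varphi_q,r}}$ by Theorem~\ref{thm:150205-1} (the norm is independent of the choice of admissible generating functions, so the Rychkov system $\{\varphi_k\}$ is admissible up to the obvious identifications). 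For part (2) I would instead apply \eqref{2014-9-4-3}, which directly gives the $L(\ell^r)$-type bound $\|(\sum_j 2^{jsr}((\psi_jf)_*)^r)^{1/r}\|_{\mathcal M^\varphi_q}\lesssim \|(\sum_k 2^{ksr}|\varphi_k(D)f|^r)^{1/r}\|_{\mathcal M^\varphi_q}\sim\|f\|_{\mathcal E^s_{\mathcal M^\varphi_q,r}}$; here one uses the standing assumption \eqref{eq:Nakai-3} that is already imposed for the $\mathcal E$-spaces with $r<\infty$.

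The main obstacle, I expect, is bookkeeping rather than a deep difficulty: one must be careful that Theorem~\ref{Multiplier result} is stated for families indexed by $j\in\mathbb N$ with prescribed compact Fourier supports $K_j$ of diameter $d_j$, whereas the Rychkov decomposition naturally starts at $j=0$ and the low-frequency piece $\psi_0*\varphi_0*f$ must be handled separately (its Fourier support is a fixed ball, not an annulus, so one treats $j=0$ by hand, bounding $(\psi_0 f)_*$ by $M^{(\eta)}[\theta(D)f]$ via Theorem~\ref{thm:PPN} and then by $\|f\|$ through the boundedness of $M$ on $\mathcal M^\varphi_q$ as in Example~\ref{example:140820-3}). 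One also needs the cheap but essential fact that $\|\varphi_k(D)f\|_{\mathcal M^\varphi_q}$ computed with Rychkov's $\varphi_k$ is comparable to the one with the admissible $\tau_k$ in the definition of the spaces; this is precisely what Theorem~\ref{thm:150205-1} provides after noting that Rychkov's $\varphi_k$ (for $k\ge1$) have Fourier support in an annulus and $\varphi_0$ has Fourier support in a ball, i.e.\ they form an admissible pair. Once these minor matching issues are dispatched, the three cited theorems do all the analytic work.
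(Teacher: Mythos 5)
Your reduction has a genuine gap at its very first step: you treat Rychkov's system $\{\varphi_k\},\{\psi_k\}$ from Proposition \ref{prop:Rychkov} as if it were frequency-localized, but these functions are compactly supported in the \emph{space} variable (the rescaling remark puts $\varphi_0,\psi_0$ into $[-1/4,1/4]^n$ on the physical side), so their Fourier transforms are entire and are never supported in dyadic annuli. Consequently the claimed orthogonality ``$\psi_j*\varphi_k=0$ unless $|j-k|\le 1$'' is false; all scales interact, and the almost-orthogonality must instead be extracted quantitatively from the moment conditions via Lemma \ref{lem:Grafakos} (giving a factor $2^{-|j-k|\delta}$) and then summed with the Hardy-type inequality of Proposition \ref{prop:140820-11}. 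The same misconception invalidates the two applications that carry all the weight in your argument: Theorem \ref{Multiplier result} requires ${\rm supp}({\mathcal F}f_j)$ to be a \emph{compact} set $K_j$ (the starred maximal function there is built from $d_j={\rm diam}\,K_j$), whereas your $f_k=\varphi_k(D)f$ has ${\mathcal F}f_k={\mathcal F}\varphi_k\cdot{\mathcal F}f$ with no compact support; and Theorem \ref{thm:150205-1} only asserts norm equivalence for \emph{admissible} $\theta,\tau$, i.e.\ compactly supported Fourier multipliers, so it cannot be invoked to identify $\big(\sum_k 2^{ksr}\|\varphi_k(D)f\|_{{\mathcal M}^\varphi_q}^r\big)^{1/r}$ with $\|f\|_{{\mathcal N}^s_{{\mathcal M}^\varphi_q,r}}$ — that identification is essentially the local-means characterization, which is (part of) what the lemma is meant to deliver, so quoting it here is circular.

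The repair is to decompose $f$ with a genuinely band-limited resolution of unity, $f=\theta(D)f+\sum_{k\ge1}\tau_k(D)f$ with $\theta+\sum_k\tau_k\equiv1$, write $f*\psi_j=\psi_j*\theta(D)f+\sum_k\psi_j*\tau_k(D)f$, estimate the kernels $\psi_j*{\mathcal F}^{-1}\tau_k$ by Lemma \ref{lem:Grafakos} (using the vanishing moments of $\psi_j$, $j\ge1$, when $k>j$, and those of ${\mathcal F}^{-1}\tau_k$ when $k<j$) to get bounds of the form $2^{-|j-k|\delta}\,2^{\min(j,k)n}(1+2^{\min(j,k)}|\cdot|)^{-M}$, control the resulting Peetre maximal functions of the band-limited pieces $\tau_k(D)f$ by $M^{(\eta)}[\tau_k(D)f]$ via Theorem \ref{thm:PPN} with $\eta$ as you chose, sum in $k$ by Proposition \ref{prop:140820-11}, and finish with Theorem \ref{thm:vector-maximal}. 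This is the content behind the paper's own (very terse) proof, which invokes Theorem \ref{thm:PPN} directly; your handling of the low-frequency block and your choice of $\eta$ consistent with \eqref{eq:140821-1}--\eqref{eq:140821-2} are fine and can be kept, but as written the argument does not go through.
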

The proof is a direct consequence of Theorem \ref{thm:PPN}.

Once such a characterization is obtained,
we are in the position of applying a result 
in \cite[{Section 4}]{LSUYY2}
to obtain the decomposition results.
However, the condition on $L$ in (\ref{eq:L}) is 
milder than that obtained from \cite[Theorem 4.5]{LSUYY2}.

\subsection{Besov-Morrey spaces and Triebel-Lizorkin-Morrey spaces}
\label{s7.2}

We can say that \cite{KoYa94} is oldest among such attempts.
The motivation was to obtain the solution starting 
from larger function spaces,
which are called Besov Morrey spaces.
Let $0<q \le p<\infty$, $0<r \le \infty$ and $s \in {\mathbb R}$.
The Besov-Morrey space
${\mathcal N}^s_{pqr}({\mathbb R}^n)$, 
which is a mixture of the Besov space and the Morrey space, 
emerged originally 
in the context of the analysis in the time-local solutions 
of the Navier-Stokes equations \cite{KoYa94}. 
To investigate the time-local solutions of the equation 
H. Kozono and M. Yamazaki introduced the Besov-Morrey space
${\mathcal N}^s_{pqr}({\mathbb R}^n)$
for the range
$1\le q \le p \le \infty$, 
$1 \le r \le \infty$ 
and 
$s \in {\mathbb R}$; see \cite[Definition 2.3]{KoYa94}. 
Later on, this function space is extended and investigated intensively. 
A. Mazzucato investigated the decomposition of this function space
\cite{An1}. 
She also investigated the pseudo-differential operators
as well as
the Besov-Morrey spaces on compact oriented Riemannian manifolds 
\cite{Mazzucato02}. 
It is L. Tang and J. Xu that defined Triebel-Lizorkin-Morrey spaces 
${\mathcal E}^s_{pqr}({\mathbb R}^n)$
as well as they extended the parameters to the range 
$0 < q \le p \le \infty$, 
$0 < r \le \infty$ 
and 
$s \in {\mathbb R}$ 
\cite{TaXu}. 
L. Tang and J. Xu investigated a different type of
pseudo-differential operators as well \cite[Section 4]{TaXu};
see \cite{Sa12} for further information.
{One} also investigated various decompositions such as atomic
decomposition, molecular decomposition and quarkonial decomposition 
with parameters 
$0 < q \le p \le \infty$, 
$0 < r \le \infty$ 
and 
$s \in {\mathbb R}$; see
 \cite{Sawano08-3} and \cite[Theorems 4.1, 4.12, 5.3 and 5.9]{SaTa2007}.
Xu and Fu characterized the Besov-Morrey space ${\mathcal N}^s_{pqr}({\mathbb R}^n)$
and the Triebel-Lizorkin-Morrey space ${\mathcal E}^s_{pqr}({\mathbb R}^n)$
when $p$ and $q$ are variable exponents in \cite{FuXu11}.
Much was investigated for Morrey spaces a little earlier.
Najafov considered Besov-Morrey spaces
and Sobolev-Morrey spaces in \cite{Najafov05-1}
and \cite{Najafov05-2}, respectively.
Najafov also considered the embedding results
for Sobolev-Morrey spaces in \cite{Najafov06}.
Sawano and Tanaka considered the complex interpolation
of Morrey spaces, Besov-Morrey spaces in \cite{SaTa09-1}
but there was a mistake in \cite[Proposition 5.3]{SaTa09-1}.
The method introduced in the book \cite{BeLo76} was not
used in \cite{SaTa09-1}.
Yuan, Sickel and Yang overcame this problem
in \cite{YSY15}.
Other applications to PDE can be found in \cite{Keller12,KhYe10,KhYe13,XuFu12}.

\subsection{Herz-type Besov spaces}
\label{s7.3}

Although Kozono and Yamazaki introduced Besov-Morrey spaces
in 1994, much more was investigated 
for Herz spaces;
Xu defined Herz-type Besov spaces.
Let $1<p,q<\infty$ {and $\alpha\in\R$.}
We let $Q_0{\equiv}[-1,1]^n$ and $C_j{\equiv}[-2^j,2^j]^n \setminus [-2^{j-1},2^{j-1}]^n$
for $j \in {\mathbb N}$.
Recall that the Herz space $K_{p,q}^\alpha({\mathbb R}^n)$ is 
the set of all measurable functions $f$ for which the norm
$$\displaystyle
\| f \|_{K_{p,q}^\alpha}
{\equiv}
\| \chi_{Q_0} \cdot f\|_{L^p} 
+
\left(
\sum_{j=1}^\infty 
(2^{j\alpha}\| \chi_{C_j} \cdot f \|_{L^p})^q
\right)^\frac{1}{q}
$$ 
is finite.
Let $0<r \le \infty$ and $s \in {\mathbb R}$.
The Herz-type Besov space $K_{p,q}^\alpha B^s_r({\mathbb R}^n)$
is the set of all $f \in {\mathcal S}'({\mathbb R}^n)$
for which the quasi-norm
\[
\|f\|_{{\mathcal N}_{K_{p,q}^\alpha B^s_r}}
\equiv 
\begin{cases}
\displaystyle
\|\theta(D)f\|_{K_{p,q}^{\alpha}}
+
\left(\sum_{j=1}^\infty
2^{jsr}\|\tau_j(D)f\|_{K_{p,q}^{\alpha}}^r
\right)^{\frac1r}&(r<\infty),\\
\displaystyle
\|\theta(D)f\|_{K_{p,q}^{\alpha}}
+
\sup_{j \in {\mathbb N}}
2^{js}\|\tau_j(D)f\|_{K_{p,q}^{\alpha}}
&(r=\infty)
\end{cases}
\]
is finite.
Xu considered the boundedness property of the Fourier multiplier
in \cite{Xu01} for Herz-type Triebel-Lizorkin spaces
and proved the boundedness property of the lift operator
as well as the embedding property
of the {Schwartz} class
in \cite{Xu03-1}.
The boundedness property of the pointwise multiplier,
described in Section \ref{s5}, is obtained in \cite{Xu04-2,Xu06,XuYa03-2}.
Xu proved the boundedness property of the pseudo-differential operators
in \cite{Xu04,Xu06,XuYa03-2}.
We say that a quasi-normed space X is called to be admissible, if for
every compact subset $E \subset X$ and 
for every $\varepsilon > 0$, 
there exists a continuous map
$T:E\to X$ such that $T(E)$ 
is contained in a finite dimensional subset of $X$ and
$\|Tx-x\|_X \le \varepsilon$
for all $x \in E$.
Xu characterized the Herz-type Besov spaces 
by means of the Peetre maximal operator in \cite{Xu05-1,Xu05-2}
and used this characterization to prove the admissibility in \cite{Xu09-2}.
Xu obtained the atomic decomposition, the molecular decomposition,
and the wavelet decomposition in \cite{Xu14-2}.
We can find
applications of Herz-type Triebel-Lizorkin spaces
to partial differential equations,
more precisely,
to the Beal-Kato-Majda type and the Moser type inequalities
in \cite{Xu14-1}.
Dong and Xu considered the case when $\alpha$ and $p$ are variable exponents
in \cite{DoXu12}.
Shi and Xu considered Herz-type Triebel-Lizorkin spaces
with $\alpha$ and $p$ variable
in \cite{ShXu13}.
Likewise we can consider Herz-Morrey spaces.
Recall that the Herz-Morrey space $K_{p,q}^{\alpha,\lambda}({\mathbb R}^n)$ is 
the set of all measurable functions $f$ for which the norm
$$\displaystyle
\| f \|_{MK_{p,q}^{\alpha,\lambda}}
{\equiv}
\sup_{L \in {\mathbb N}_0}
2^{-L\lambda}
\left(
\| \chi_{Q_0} \cdot f\|_{L^p} 
+
\left(
\sum_{j=1}^L 
(2^{j\alpha}\| \chi_{C_j} \cdot f \|_{L^p})^q
\right)^\frac{1}{q}
\right)
$$ 
is finite.
Dong and Xu considered the case when $\alpha$ and $p$ are variable exponents
in \cite{DoXu15}.

\subsection{Besov-type spaces and Triebel-Lizorkin type spaces}

Yang and Yuan investigated Besov type spaces
and Triebel-Lizorkin type spaces
in \cite{YaYu08-2,YaYu10-1}.
Let $0<p,q \le \infty, \, s \in {\mathbb R}, \, \rho \ge 0$.
For 
$f \in {\mathcal S}_\infty'({\mathbb R}^n)$
one defines the {homogeneous} Besov-type norm and the homogeneous Triebel-Lizorkin type norm by:
\begin{align*}
\| f \|_{\dot{B}_{pq}^{s,\rho}}
&\equiv
\sup_{Q \in {\mathcal D}}
\frac{1}{|Q|^\rho}
\left(
\sum_{j=-\log_2 \ell(Q)}^\infty
2^{sqj}
\left(\int_{Q}|\tau_j(D)f(x)|^p\,dx
\right)^{\frac{q}{p}}\right)^{\frac{1}{q}}, \,\\
\| f \|_{\dot{F}_{pq}^{s,\rho}}
&\equiv
\sup_{Q \in {\mathcal D}}
\frac{1}{|Q|^\rho}
\left(
\int_Q
\left(
\sum_{j=-\log_2 \ell(Q)}^\infty
2^{sqj} 
|\tau_j(D)f(x)|^q\right)^{\frac{p}{q}}\,dx \, 
\right)^{\frac{1}{p}},
\end{align*}
respecitively.
The spaces
$B_{pq}^{s,\rho}({\mathbb R}^n), \, F_{pq}^{s,\rho}({\mathbb R}^n)$
stand for linear spaces of functions $f \in {\mathcal S}_\infty'({\mathbb R}^n)$
for which the quantities
$
\| f \|_{B_{pq}^{s,\rho}}, \,
\| f \|_{F_{pq}^{s,\rho}}<\infty$
respectively.
The notation $\dot{A}_{pq}^{s,\rho}$ stands for
either $\dot{B}_{pq}^{s,\rho}$ or $\dot{F}_{pq}^{s,\rho}$.
The {inhomogeneous} spaces are defined analogously.
To connect these scales with ours,
we prove the following proposition:
\begin{proposition}\label{prop:150312-2}
Let $0<q<\infty$, $0<r \le \infty$ and $\varphi \in {\mathcal G}_q$
satisfies $(\ref{eq:Nakai-3})$.
For $f \in {\mathcal S}'_\infty({\mathbb R}^n)$.
define
\[
\| f \|_{\dot{F}_{qr}^{s,\varphi}}
\equiv
\sup_{Q \in {\mathcal D}}
\varphi(\ell(Q))
\left(
\frac{1}{|Q|}
\int_Q
\left(
\sum_{j=-\log_2 \ell(Q)}^\infty
2^{sqj} 
|\tau_j(D)f(x)|^r\right)^{\frac{q}{r}}\,dx \, 
\right)^{\frac{1}{q}}.
\]
Then
$
\|f\|_{{\mathcal E}^s_{{\mathcal M}^\varphi_s,r}}
\sim
\|f\|_{\dot{F}_{qr}^{s,\varphi}}
$
for all $f \in {\mathcal S}_\infty'({\mathbb R}^n)$.
\end{proposition}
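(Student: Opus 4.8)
The plan is to prove the two estimates $\|f\|_{\dot F^{s,\varphi}_{qr}}\lesssim\|f\|_{\dot{\mathcal E}^s_{{\mathcal M}^\varphi_q,r}}$ and $\|f\|_{\dot{\mathcal E}^s_{{\mathcal M}^\varphi_q,r}}\lesssim\|f\|_{\dot F^{s,\varphi}_{qr}}$ separately. The first is immediate: writing the $\mathcal M^\varphi_q$-norm as a supremum over dyadic cubes and observing that for $Q\in\mathcal D$ the truncated inner sum $\sum_{j\ge-\log_2\ell(Q)}2^{jsr}|\tau_j(D)f|^r$ is pointwise dominated by the full sum $\sum_{j\in\mathbb Z}2^{jsr}|\tau_j(D)f|^r$, one gets $\|f\|_{\dot F^{s,\varphi}_{qr}}\le\|f\|_{\dot{\mathcal E}^s_{{\mathcal M}^\varphi_q,r}}$ at once. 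All the content is therefore in the reverse inequality, where it suffices, by the definition of the $\mathcal M^\varphi_q$-norm, to fix a dyadic cube $Q$ with $\ell(Q)=2^{-L}$ and to bound
\[
\varphi(2^{-L})\Bigl(\tfrac1{|Q|}\int_Q\Bigl(\sum_{j\in\mathbb Z}2^{jsr}|\tau_j(D)f(x)|^r\Bigr)^{q/r}dx\Bigr)^{1/q}
\]
by $\|f\|_{\dot F^{s,\varphi}_{qr}}$ with a constant independent of $Q$.

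I would split $\sum_{j\in\mathbb Z}=\sum_{j\ge L}+\sum_{j<L}$ and use the $\min(1,q/r)$-quasi-triangle inequality. The ``fine'' part $\sum_{j\ge L}$, after multiplication by $\varphi(2^{-L})(\tfrac1{|Q|}\int_Q\cdots)^{1/q}$, is exactly what the definition of $\|f\|_{\dot F^{s,\varphi}_{qr}}$ controls for this very cube $Q$, since $L=-\log_2\ell(Q)$. The ``coarse'' part $\sum_{j<L}$ is the heart of the matter. For $j\le L$ let $Q^{(j)}\in\mathcal D$ be the unique dyadic cube of side length $2^{-j}$ containing $Q$. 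Since $\tau_j(D)f$ has Fourier support in $Q(2^{j+1})$, a reverse Plancherel--P\'olya inequality (a consequence of Theorem \ref{thm:PPN}: a function band-limited at frequency $\lesssim 2^{j}$ is essentially constant on cubes of side $2^{-j}$) gives, for $x\in Q\subset Q^{(j)}$,
\[
|\tau_j(D)f(x)|\lesssim\Bigl(\tfrac1{|Q^{(j)}|}\int_{3Q^{(j)}}|\tau_j(D)f(y)|^q\,dy\Bigr)^{1/q}.
\]
Covering $3Q^{(j)}$ by finitely many dyadic cubes of side $2^{-j}$ and applying the definition of $\|f\|_{\dot F^{s,\varphi}_{qr}}$ to each of them, retaining only the single term $k=j$ of the inner sum (legitimate because these cubes sit at scale $2^{-j}$), we obtain the scale-by-scale pointwise bound
\[
2^{js}|\tau_j(D)f(x)|\lesssim\frac{1}{\varphi(2^{-j})}\,\|f\|_{\dot F^{s,\varphi}_{qr}}\qquad(x\in Q,\ j\le L).
\]

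Feeding this into the coarse part, for $r<\infty$ we get $\bigl(\sum_{j<L}2^{jsr}|\tau_j(D)f(x)|^r\bigr)^{1/r}\lesssim\|f\|_{\dot F^{s,\varphi}_{qr}}\bigl(\sum_{j<L}\varphi(2^{-j})^{-r}\bigr)^{1/r}$, which is independent of $x$, so the coarse contribution to the displayed quantity is at most $\|f\|_{\dot F^{s,\varphi}_{qr}}\bigl(\sum_{j<L}(\varphi(2^{-L})/\varphi(2^{-j}))^r\bigr)^{1/r}$. Here condition (\ref{eq:Nakai-3}) enters decisively: applied with $t=2^{-j}$ and $r=2^{-L}$ (so $t\ge r$ since $j\le L$) it yields $\varphi(2^{-L})/\varphi(2^{-j})\lesssim 2^{-(L-j)\varepsilon}$, whence the last series is a geometric series bounded independently of $L$; when $r=\infty$ the same conclusion is immediate from the monotonicity of $\varphi$. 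Taking the supremum over all dyadic $Q$ gives $\|f\|_{\dot{\mathcal E}^s_{{\mathcal M}^\varphi_q,r}}\lesssim\|f\|_{\dot F^{s,\varphi}_{qr}}$, completing the proof.

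The main obstacle is the reverse Plancherel--P\'olya step: passing from the pointwise value of $\tau_j(D)f$ at the fine point $x$ to an $L^q$-average over the much larger cube $Q^{(j)}$ at the natural frequency scale $2^{-j}$, without incurring the factor $(\ell(Q^{(j)})/\ell(Q))^{n/q}=2^{(L-j)n/q}$ that a crude estimate would produce; this is precisely where band-limitedness must be used, and I would derive the inequality from Theorem \ref{thm:PPN} (or quote it from Triebel's book), using that the finiteness of $\|f\|_{\dot F^{s,\varphi}_{qr}}$ already supplies the polynomial growth control on $\tau_j(D)f$ needed to justify the tail integrals in that derivation. The homogeneous framework (membership in ${\mathcal S}'_\infty$ and convergence of the two-sided Littlewood--Paley sums) introduces no further difficulty beyond what is handled in Theorem \ref{thm*:150205-1}.
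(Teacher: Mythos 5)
Your overall strategy is the same as the paper's: the inequality $\|f\|_{\dot F^{s,\varphi}_{qr}}\le\|f\|_{\dot{\mathcal E}^s_{{\mathcal M}^\varphi_q,r}}$ is trivial, and the reverse reduces to controlling the coarse scales $j<j_Q$ by the scale-by-scale bound $2^{js}\|\tau_j(D)f\|_{L^\infty}\lesssim\varphi(2^{-j})^{-1}\|f\|_{\dot F^{s,\varphi}_{qr}}$ (the paper gets this ``similarly to Example \ref{example:140820-3}'') followed by summation using (\ref{eq:Nakai-3}); your direct geometric-series argument $\varphi(2^{-L})/\varphi(2^{-j})\lesssim 2^{-(L-j)\varepsilon}$ is just the discretized content of Proposition \ref{prop:150312-1} and (\ref{eq:Nakai-4}) that the paper invokes, and your $r=\infty$ case via monotonicity of $\varphi$ is fine.

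The one step that is not correct as written is the ``reverse Plancherel--P\'olya'' display $|\tau_j(D)f(x)|\lesssim\bigl(\frac1{|Q^{(j)}|}\int_{3Q^{(j)}}|\tau_j(D)f(y)|^q\,dy\bigr)^{1/q}$ for $x\in Q^{(j)}$. A purely local estimate of this kind, with the average taken only over the fixed concentric cube $3Q^{(j)}$ and no tail, is false for general band-limited tempered distributions: a function with Fourier support in $Q(2^{j+1})$ can equal $1$ at the centre of $Q^{(j)}$ while having arbitrarily small $L^q$-average on $3Q^{(j)}$, at the price of being huge far away (in one dimension, take a Chebyshev-type polynomial that is $1$ at the origin and small on $[-1,1]$ multiplied by $\sin(\delta x)/(\delta x)$ with $\delta$ small; this does not enlarge the Fourier support). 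What Theorem \ref{thm:PPN} actually yields is the tailed version $|\tau_j(D)f(x)|^q\lesssim 2^{jn}\int_{{\mathbb R}^n}(1+2^j|x-y|)^{-N}|\tau_j(D)f(y)|^q\,dy$, equivalently a bound by $\sup_R\bigl(\frac1{|R|}\int_R|\tau_j(D)f|^q\bigr)^{1/q}$ where $R$ runs over \emph{all} dyadic cubes of side $2^{-j}$, not only the finitely many covering $3Q^{(j)}$. This is harmless here, since for every such $R$ the $k=j$ term of the inner sum is admissible in the definition of $\|f\|_{\dot F^{s,\varphi}_{qr}}$, so the same-scale averages are controlled uniformly in position; you in fact anticipate this repair in your final paragraph (the finiteness of $\|f\|_{\dot F^{s,\varphi}_{qr}}$ supplies exactly the global control needed for the tail integrals), but the argument should be run with the tailed inequality, or with the maximal-function route of Example \ref{example:140820-3}, rather than with the local display you state.
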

Sawano, Yang and Yuan proved
that the above scale
${\mathcal E}$
turned out to be Triebel-Lizorkin type spaces \cite{SYY10}.
Proposition \ref{prop:150312-2} is an analogy 
to this fact.
In view of \cite[Theorem 1.1]{SYY10},
the counterpart for generalized Besov-Morrey spaces
is not available.
\begin{proof}[Proof of Proposition \ref{prop:150312-2}]
To this end, comparing {these norms},
we have only to show
\begin{equation}\label{eq:150312-1}
\varphi(\ell(Q))
\left(
\frac{1}{|Q|}
\int_Q
\left(
\sum_{j=-\infty}^{-\log_2 \ell(Q)}
2^{sqj} 
|\tau_j(D)f(x)|^r\right)^{\frac{q}{r}}\,dx \, 
\right)^{\frac{1}{q}}
\lesssim
\|f\|_{\dot{F}_{qr}^{s,\varphi}}.
\end{equation}
Similarly to Example \ref{example:140820-3},
we have
\begin{equation}\label{eq:150312-2}
\|\tau_j(D)f\|_{L^\infty} \le \varphi(2^{-j})^{-1}\|f\|_{\dot{F}_{qr}^{s,\varphi}}.
\end{equation}
Thus, by combining Proposition \ref{prop:150312-1}
and (\ref{eq:150312-2}),
we obtain (\ref{eq:150312-1}).
\end{proof}

The space
${\mathcal E}^0_{pq2}({\mathbb R}^n)$
with $0<q \le p<\infty$ has many other equivalent norms.
In \cite{Sa12},
it was shown that this function space
${\mathcal E}^0_{pq2}({\mathbb R}^n)$
is equivalent to the Hardy-Morrey space
defined by Jia and Wang in \cite{JW}.
See \cite{AGNS15,IST-14,JW,SaWa13}
for more about Hardy Morrey spaces.
In \cite[Theorem 4.2]{An1},
Mazzucato proved that the Triebel-Lizorkin space
${\mathcal E}^0_{pq2}({\mathbb R}^n)$
with $1<q \le p<\infty$
is equivalent to the Morrey space
${\mathcal M}^p_q({\mathbb R}^n)$.
This type of norm equivalence is extended 
to many other function spaces in \cite{LSUYY2}
and
many authors applied this equivalence
to PDEs.
See \cite{Ho12-1} for
the boundedness of singular integral operators. 
By combining 
the wavelet characterization of Besov-Morrey spaces 
\cite[Theorem 1.5]{Sawano08-2}
and
the embedding criterion 
of the corresponding sequence space \cite[Theorem 3.2]{HaSk12},
Haroske and Skrzypczak
obtained
the necessary and sufficient conditions
on the parameters
$p_0,q_0,r_0,s_0$,
$p_1,q_1,r_1$ and $s_1$
for 
the embedding
$
{\mathcal N}^{s_0}_{p_0q_0r_0}({\mathbb R}^n)
\hookrightarrow
{\mathcal N}^{s_1}_{p_1q_1r_1}({\mathbb R}^n)
$ 
to hold.
In the context of the generalized Triebel-Lizorkin Morrey spaces,
we can improve \cite[Theorem 5.1]{SaWa13} as follows:
\begin{proposition}\label{prop:150312-100}
Let $1 \le q \le p<\infty$ and $0<r<{q}$.
Then by definining
\begin{equation}\label{eq:150312-201}
\varphi(t) \equiv \left[\log\left(2+\frac{1}{t}\right)\right]^{-1/\min(1,r)}
\quad (t>0),
\end{equation}
we have
$
\|f\|_{{\mathcal E}^0_{{\mathcal M}^\varphi_q,r}}
\lesssim
\|f\|_{{\mathcal E}^{n/p}_{pq\infty}}
$
for all $f \in {\mathcal E}^{n/p}_{pq\infty}({\mathbb R}^n)$.
\end{proposition}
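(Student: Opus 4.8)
The plan is to test the quasi-norm of the target space directly on dyadic cubes, exploiting only the crude pointwise information carried by $\|f\|_{{\mathcal E}^{n/p}_{pq\infty}}$; no decomposition theorem is needed (one could instead route through the atomic decomposition at the sequence level, but the mismatch in the required number of vanishing moments makes the direct argument cleaner). First I note that the function $\varphi$ in $(\ref{eq:150312-201})$ is nondecreasing and belongs to ${\mathcal G}_q$ (both are immediate, since $\varphi(t)t^{-n/q}=[\log(2+1/t)]^{-1/\min(1,r)}t^{-n/q}$ is decreasing), so that $\|\cdot\|_{{\mathcal E}^0_{{\mathcal M}^\varphi_q,r}}$ is meaningful; I fix one admissible pair $(\theta,\tau)$ and use it to compute both quasi-norms. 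Writing $g\equiv\sup_{j\ge1}2^{jn/p}|\tau_j(D)f|$, the definition of ${\mathcal E}^{n/p}_{pq\infty}={\mathcal E}^{n/p}_{{\mathcal M}^\psi_q,\infty}$ with $\psi(t)=t^{n/p}\in{\mathcal G}_q$ gives $\|g\|_{{\mathcal M}^p_q}\le\|f\|_{{\mathcal E}^{n/p}_{pq\infty}}$, from which I extract two pointwise facts: the trivial $|\tau_j(D)f(x)|\le 2^{-jn/p}g(x)$, and --- by Example \ref{example:140820-3} applied with $\psi$ in place of $\varphi$ together with monotonicity of the Morrey norm --- the uniform bound $|\tau_j(D)f(x)|\lesssim\|f\|_{{\mathcal E}^{n/p}_{pq\infty}}$, valid for all $j\ge1$ and $x\in{\mathbb R}^n$.

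Next I fix a dyadic cube $Q$ with $\ell(Q)=2^{-k}$, put $k_+\equiv\max(0,k)$, and split the inner $\ell^r$-sum over $j\ge1$ at $j=k_+$, estimating the two ranges separately (using $(a+b)^{1/r}\lesssim a^{1/r}+b^{1/r}$). On the high range $j\ge k_+$ the bound $|\tau_j(D)f|\le 2^{-jn/p}g$ and a geometric series give $\sum_{j\ge k_+}|\tau_j(D)f(x)|^r\lesssim g(x)^r\min(1,\ell(Q))^{rn/p}$; since $(\frac{1}{|Q|}\int_Q g^q)^{1/q}\le\ell(Q)^{-n/p}\|g\|_{{\mathcal M}^p_q}$, $\min(1,\ell(Q))^{n/p}\ell(Q)^{-n/p}\le1$, and $\varphi$ is bounded, this piece contributes $\lesssim\varphi(\ell(Q))\|g\|_{{\mathcal M}^p_q}\lesssim\|f\|_{{\mathcal E}^{n/p}_{pq\infty}}$. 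On the low range $1\le j<k_+$ (nonempty only when $k\ge2$) the uniform bound yields $\sum_{1\le j<k_+}|\tau_j(D)f(x)|^r\lesssim k\|f\|_{{\mathcal E}^{n/p}_{pq\infty}}^r$, so this piece contributes $\lesssim\varphi(2^{-k})k^{1/r}\|f\|_{{\mathcal E}^{n/p}_{pq\infty}}$; here $\log(2+2^k)\sim k$ gives $\varphi(2^{-k})\sim k^{-1/\min(1,r)}$, and since $1/r\le1/\min(1,r)$ the prefactor $\varphi(2^{-k})k^{1/r}$ stays bounded. Hence $\|(\sum_{j\ge1}|\tau_j(D)f|^r)^{1/r}\|_{{\mathcal M}^\varphi_q}\lesssim\|f\|_{{\mathcal E}^{n/p}_{pq\infty}}$.

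It remains to bound $\|\theta(D)f\|_{{\mathcal M}^\varphi_q}$. On cubes with $\ell(Q)\ge1$ one has $\varphi(\ell(Q))\lesssim1$ and $(\frac{1}{|Q|}\int_Q|\theta(D)f|^q)^{1/q}\le\ell(Q)^{-n/p}\|\theta(D)f\|_{{\mathcal M}^p_q}\le\|\theta(D)f\|_{{\mathcal M}^p_q}$; on cubes with $\ell(Q)<1$, since $\theta(D)f$ is band-limited, Theorem \ref{thm:PPN} together with the ${\mathcal M}^p_q$-boundedness of $M^{(q/2)}$ (Theorem \ref{thm:vector-maximal} and Lemma \ref{2014-9-4-4}) bounds $(\frac{1}{|Q|}\int_Q|\theta(D)f|^q)^{1/q}$ by a constant times $\|\theta(D)f\|_{{\mathcal M}^p_q}$, while $\varphi(\ell(Q))\le\varphi(1)\lesssim1$. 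Adding the contributions gives $\|f\|_{{\mathcal E}^0_{{\mathcal M}^\varphi_q,r}}\lesssim\|f\|_{{\mathcal E}^{n/p}_{pq\infty}}$ for every $f\in{\mathcal E}^{n/p}_{pq\infty}({\mathbb R}^n)$. The single delicate point is the low-range bookkeeping: the argument must weigh the $\sim k$ dyadic frequency scales coarser than $Q$ against the logarithmic decay $\varphi(2^{-k})$, and it is exactly this balance that forces the exponent $-1/\min(1,r)$ in $(\ref{eq:150312-201})$; everything else is a routine use of the Morrey maximal inequalities of Section \ref{s2}.
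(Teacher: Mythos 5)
Your proof is correct, but it takes a genuinely different route from the paper's. The paper reduces the statement, via the atomic decomposition Theorems \ref{thm:decomposition-1} and \ref{thm:decomposition-2}, to a purely sequence-level embedding ${\bf e}^{n/p}_{pq\infty}({\mathbb R}^n)\subset{\bf e}^0_{{\mathcal M}^\varphi_q,r}({\mathbb R}^n)$, and on the coarse range $\nu\le j_Q$ it interchanges the $\ell^r$-sum and the $L^q$-average by Minkowski's inequality (this is precisely where the hypothesis $r<q$ enters), bounding each of the $j_Q$ terms on the ancestor cube $Q_{m(\nu)\nu}$ and finishing with the balance $\varphi(\ell(Q))\,j_Q^{\,1/r}\lesssim1$. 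You instead test the quasi-norm directly on each dyadic cube, replacing the coefficient bookkeeping by the two scalar estimates $|\tau_j(D)f|\le 2^{-jn/p}g$ and $\|\tau_j(D)f\|_{L^\infty}\lesssim\|f\|_{{\mathcal E}^{n/p}_{pq\infty}}$ (the latter is exactly the mechanism of Example \ref{example:140820-3}, which the paper itself invokes in the proof of Proposition \ref{prop:150312-2}); the decisive balance $\varphi(2^{-k})k^{1/r}\lesssim1$ is the same in both arguments, and your bookkeeping of the two frequency ranges is sound ($q\ge1$ is what you need to add the two pieces inside the $L^q$-average). Your route buys two things: it never uses the synthesis half of the atomic decomposition for the target space --- whose proof rests on the vector-valued maximal inequality and hence on condition $(\ref{eq:Nakai-3})$, a condition the bounded function $\varphi$ of $(\ref{eq:150312-201})$ does not satisfy --- and it nowhere uses $r<q$, so in fact it proves the estimate for every $0<r<\infty$ with $1\le q\le p<\infty$. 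What the paper's route buys is brevity given the machinery already established, and a coefficient-level inequality that sits naturally beside the counterexample construction of Remark \ref{rem:150821-2}(3).

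One small correction: your parenthetical assertion that $\varphi\in{\mathcal G}_q$ is ``immediate'' is false in general; $\varphi(t)t^{-n/q}$ need not be nonincreasing once $q$ is large compared with $n\min(1,r)$ (for instance $n=1$, $q=4$, $r=2$: the value of $\varphi(t)t^{-1/4}$ at $t=1/2$ exceeds its value at $t=1/4$). This does not damage your argument, since nothing in it uses ${\mathcal G}_q$ membership of $\varphi$ --- you work with one fixed admissible pair $(\theta,\tau)$, and if one insists on an equivalent $\varphi^*\in{\mathcal G}_q$ one may appeal to Lemma \ref{lem:140820-1}(1), which alters the Morrey norm only up to equivalence. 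Either delete the claim or justify it along these lines.
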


\begin{proof}
According to Theorems \ref{thm:decomposition-1} and \ref{thm:decomposition-2},
we have only to prove
\[
{\bf e}^{n/p}_{pq\infty}({\mathbb R}^n)
\equiv {\bf e}^{n/p}_{{\mathcal M}^\psi_q,\infty}({\mathbb R}^n)
\subset
{\bf e}^0_{{\mathcal M}^\varphi_q,r} ({\mathbb R}^n),
\] 
where $\psi(t) \equiv t^{n/p}, t>0$.
This amounts to showing that
\begin{eqnarray}
\nonumber
&&\varphi(\ell(Q))
\left(\frac{1}{|Q|}
\int_Q 
\left(
\sum_{\nu=0}^\infty
\left|\sum_{m \in {\mathbb Z}^n}
\lambda_{\nu m}\chi_{Q_{\nu m}}(x)\right|^r
\right)^{q/r}\,dx
\right)^{1/q}\\
\label{eq:150312-101}
&&\lesssim
|Q|^{\frac1p-\frac1q}
\left(\int_Q 
\left(
\sup_{\nu \in {\mathbb N}_0}
2^{\nu n/p}\left|\sum_{m \in {\mathbb Z}^n}
\lambda_{\nu m}\chi_{Q_{\nu m}}(x)\right|
\right)^{q}\,dx
\right)^{1/q}
\end{eqnarray}
for all complex sequences $\{\lambda_{\nu m}\}_{\nu \in {\mathbb N}_0, m \in {\mathbb Z}^n}$.

If $\ell(Q) \ge 1$,
then (\ref{eq:150312-101}) is easy to prove;
we just combine
\[
\varphi(\ell(Q)) \lesssim |Q|^{1/p}, \quad
\sum_{\nu=0}^\infty
\left|\sum_{m \in {\mathbb Z}^n}
\lambda_{\nu m}\chi_{Q_{\nu m}}(x)\right|^r
\lesssim
\sup_{\nu \in {\mathbb N}_0}
2^{\nu nr/p}\left|\sum_{m \in {\mathbb Z}^n}
\lambda_{\nu m}\chi_{Q_{\nu m}}(x)\right|^r.
\]
If $\ell(Q) \le 1$,
we shall prove;
\begin{eqnarray}
\nonumber
&&\varphi(\ell(Q))
\left(\frac{1}{|Q|}
\int_Q 
\left(
\sum_{\nu=j_Q}^\infty
\left|\sum_{m \in {\mathbb Z}^n}
\lambda_{\nu m}\chi_{Q_{\nu m}}(x)\right|^r
\right)^{q/r}\,dx
\right)^{1/q}\\
\label{eq:150312-102}
&&\lesssim
|Q|^{\frac1p-\frac1q}
\left(\int_Q 
\left(
\sup_{\nu \in {\mathbb N}_0}
2^{\nu n/p}\left|\sum_{m \in {\mathbb Z}^n}
\lambda_{\nu m}\chi_{Q_{\nu m}}(x)\right|
\right)^{q}\,dx
\right)^{1/q}
\end{eqnarray}
and 
\begin{eqnarray}
\nonumber
&&\varphi(\ell(Q))
\left(\frac{1}{|Q|}
\int_Q 
\left(
\sum_{\nu=0}^{j_Q}
\left|\sum_{m \in {\mathbb Z}^n}
\lambda_{\nu m}\chi_{Q_{\nu m}}(x)\right|^r
\right)^{q/r}\,dx
\right)^{1/q}\\
\label{eq:150312-103}
&&\lesssim
|Q|^{\frac1p-\frac1q}
\left(\int_Q 
\left(
\sup_{\nu \in {\mathbb N}_0}
2^{\nu n/p}\left|\sum_{m \in {\mathbb Z}^n}
\lambda_{\nu m}\chi_{Q_{\nu m}}(x)\right|
\right)^q\,dx
\right)^{1/q}.
\end{eqnarray}
We have (\ref{eq:150312-102}) since,
for any $Q \in {\mathcal D}$,
\[
\varphi(\ell(Q)) \lesssim 1, 
\quad
\sum_{\nu=j_Q}^\infty
\left|\sum_{m \in {\mathbb Z}^n}
\lambda_{\nu m}\chi_{Q_{\nu m}}(x)\right|^r
\lesssim|Q|^{1/p}
\sup_{\nu \in {\mathbb N}_0}
2^{\nu nr/p}\left|\sum_{m \in {\mathbb Z}^n}
\lambda_{\nu m}\chi_{Q_{\nu m}}(x)\right|^r.
\]
As for (\ref{eq:150312-103}),
we write $m(\nu) \in {\mathbb Z}^n, \nu \le j_Q$
for the unique element $m \in {\mathbb Z}^n$ such that
$Q_{\nu m} \supset Q$. 
we use {the Minkowski inequality and}
the triangle inequality to have;
\begin{eqnarray*}
\nonumber
&&\varphi(\ell(Q))
\left(\frac{1}{|Q|}
\int_Q 
\left(
\sum_{\nu=0}^{j_Q}
\left|\sum_{m \in {\mathbb Z}^n}
\lambda_{\nu m}\chi_{Q_{\nu m}}(x)\right|^r
\right)^{q/r}\,dx
\right)^{1/q}\\
\nonumber
&&\le
\varphi(\ell(Q))
\left(
\sum_{\nu=0}^{j_Q}
\left(\frac{1}{|Q|}
\int_Q 
\left|\sum_{m \in {\mathbb Z}^n}
\lambda_{\nu m}\chi_{Q_{\nu m}}(x)\right|^{q}\,dx
\right)^{r/q}
\right)^{1/r}\\
&&=
\varphi(\ell(Q))
\left(
\sum_{\nu=0}^{j_Q}
\left(\frac{1}{|Q_{m(\nu)\nu}|}
\int_{Q_{m(\nu)\nu}} 
\left|\sum_{m \in {\mathbb Z}^n}
\lambda_{\nu m}\chi_{Q_{\nu m}}(x)\right|^{q}\,dx
\right)^{r/q}
\right)^{1/r}\\
&&\le
\varphi(\ell(Q))
\left(
\sum_{\nu=0}^{j_Q}
|Q_{m(\nu)\nu}|^{\frac{r}{p}-\frac{r}{q}}
\left(\int_{Q_{m(\nu)\nu}}
\left(
\sup_{\nu \in {\mathbb N}_0}
2^{\nu n/p}\left|\sum_{m \in {\mathbb Z}^n}
\lambda_{\nu m}\chi_{Q_{\nu m}}(x)\right|
\right)^{q}\,dx
\right)^{r/q}
\right)^{1/r}\\
&&\lesssim
\varphi(\ell(Q))(j_Q)^{1/r} \|f\|_{{\mathcal E}^{n/p}_{pq\infty}}
\lesssim
\|f\|_{{\mathcal E}^{n/p}_{pq\infty}}.
\end{eqnarray*}
Thus, (\ref{eq:150312-101}) is proved.
\end{proof}

A couple of remarks on Proposition \ref{prop:150312-100} may be in order.
\begin{remark}\label{rem:150821-2}
Let $1<q \le p<\infty$.
\begin{enumerate}
\item
Let $r=1$ in Proposition \ref{prop:150312-100}.
The authors in \cite{SaWa13} showed that
\[
\|f\|_{{\mathcal M}^\varphi_1} \lesssim \|(1-\Delta)^{n/2p}f\|_{{\mathcal M}^p_q}
\]
for all $f \in {\mathcal M}^p_q({\mathbb R}^n)$
with $\Delta f \in {\mathcal M}^p_q({\mathbb R}^n)$.
Since
\[
\|f\|_{{\mathcal M}^\varphi_1} \lesssim
\|f\|_{{\mathcal E}^s_{{\mathcal M}^\varphi_q,1}}, \quad
\|f\|_{{\mathcal E}^{n/p}_{pq\infty}} \lesssim 
\|(1-\Delta)^{n/2p}f\|_{{\mathcal M}^p_q}
\]
for all $f \in {\mathcal M}^p_q({\mathbb R}^n)$
with $\Delta f \in {\mathcal M}^p_q({\mathbb R}^n)$,
Proposition \ref{prop:150312-100} improves \cite[Theorem 5.1]{SaWa13}.
\item
According to the necessary and sufficient condition 
in \cite[Theorem 2]{EGNS14},
one can not have
\[
\|f\|_{\infty} 
\lesssim 
\|(1-\Delta)^{n/2p}f\|_{{\mathcal M}^p_q}.
\]
\item
One can not replace $\min(1,r)$ by $1$ in $(\ref{eq:150312-201})$
when $r \in (0,1)$.
Assume to the contrary that this is possible.
Let $x_0=(3/2,0,\ldots,0)$
{
and 
}
$\eta \in C^\infty_{\rm c}({\mathbb R}^n)$
be such that $\chi_{Q(x_0,1/100)} \le \eta \le \chi_{Q(x_0,1/10)}$.
Choose $\tau \in C^\infty_{\rm c}({\mathbb R}^n)$
so that 
$\chi_{Q(2) \setminus Q(1)} \le 
\tau \le \chi_{Q(2+1/10) \setminus Q(9/10)}$. 
Then
for 
\[
f_N{\equiv}\sum_{l=1}^N {\mathcal F}^{-1}\eta_k
\quad (N=1,2,\ldots),
\]
{
one has
}
\[
\|f_N\|_{{\mathcal E}^0_{{\mathcal M}^\varphi_q,r}}
\gtrsim
\varphi(2^{-N})
\left(\frac{1}{|Q|}\int_Q 
\left(\sum_{k=1}^N |{\mathcal F}^{-1}\eta_k(x)|^r\right)^{q/r}\,dx
\right)^{1/q}
\gtrsim
\varphi(2^{-N})N^{1/r}.
\]
Meanwhile, $\|f_N\|_{{\mathcal E}^{n/p}_{pq\infty}} \lesssim 1$.
Thus, this is a contradiction since $N$ is arbitrary.
\end{enumerate}
\end{remark}

Haroske and Skrzypczak 
{
work also with
}
the setting of bounded open sets $\Omega$
and prove similar results;
the necessary and sufficient conditions
on the parameters
$p_0,q_0,r_0,s_0$,
$p_1,q_1,r_1$ and $s_1$
for 
the embedding
$
{\mathcal N}^{s_0}_{p_0q_0r_0}(\Omega)
\hookrightarrow
{\mathcal N}^{s_1}_{p_1q_1r_1}(\Omega)
$ 
to hold.
Here for the definition of the function space
${\mathcal N}^{s}_{pqr}(\Omega)$
can be found in 
\cite[Definition 5.1]{Sawano10-1}
and
\cite[Definition 2.7]{HaSk13}. 
See \cite{Sawano10-2,SST09-2,YSY13-2}
for more results on the Sobolev embedding theorem.

In \cite{Triebel13},
Triebel 
{
introduced the so called
}
local space
${\mathcal L}^r A^s_{pq}({\mathbb R}^n)$.
See \cite{Triebel13} and \cite[Section 3]{YSY13-1} for the definition.
In \cite{YSY13-1}, Yang, Sickel and Yuan proved that
the scale $A^{s,\tau}_{pq}({\mathbb R}^n)$ comes about naturally
as a result of the localization of $A^s_{pq}({\mathbb R}^n)$.
The local means are considered in \cite{Rosenthal13}.
Yang, Yuan and Zhuo investigated the boundedness property
of the Fourier multiplier precisely in \cite{YYZ12}.
Yang and Yuan characterized $A^{s,\rho}_{pq}({\mathbb R}^n)$
in terms of the Peetre maximal operator in \cite{YaYu10-2}.
See \cite{YaYu13-1,YSY10-2} for more recent advances 
in Triebel-Lizorkin type spaces,
and see \cite{HaSk13,Sawano10-1} 
for the extension of this scale to domains. 
We refer to \cite{Sickel12,Sickel13,YSY10,YaYu13-2} for
an exhaustive account of these function spaces 
as well as of results on decompositions.

\subsection{Orlicz--Morrey spaces and Musielak-Orlicz Triebel--Lizorkin-type spaces}\label{s7.5}

Recall that for a function $\varphi:{\mathbb R}^n \times [0,\infty) \to [0,\infty)$,
and a measurable function defined on ${\mathbb R}^n$,
the Musielak-Orlicz {norm} is given by:
\[
\|f\|_{L^\varphi}
\equiv
\inf\left\{\lambda>0\,:\,
\int_{{\mathbb R}^n}\varphi\left(x,\frac{|f(x)|}{\lambda}\right)\,dx \le 1\right\}.
\]
Here and below we let
$I_1,I_2,i_1,i_2>0$, $q_1,q_2, \delta_1,\delta_2>1$
and assume that 
$\varphi_1,\varphi_2:{\mathbb R}^n \times [0,\infty) \to [0,\infty)$
are functions satisfying the upper and lower type conditions
\[
s^{i_k}\varphi_k(x,t) 
{\lesssim} 
\varphi_k(x,st) 
{\lesssim} 
s^{I_k}\varphi_{k}(x,t)
\quad (x \in {\mathbb R}^n, s \ge 1, t>0, k=1,2),
\] 
the uniformly $A_\infty$-condition
\[
\frac{1}{|Q|}\int_Q \varphi_k(x,t)\,dx
\left(
\frac{1}{|Q|}\int_Q \varphi_k(x,t)^{1-q_k}\,dx
\right)^{q_k-1} {\lesssim 1} \quad
(t \ge 0, Q \in {\mathcal Q})
\]
and the reverse H\"{o}lder condition
\[
\left(
\frac{1}{|Q|}\int_Q \varphi_k(x,t)^{\delta_k}\,dx
\right)^{1/\delta_k}
\le C
\frac{1}{|Q|}\int_Q \varphi_k(x,t)\,dx;
\]
see \cite[p. 96]{YYZ14}.
In view of Proposition \ref{prop:150312-2},
we can say that 
${\mathcal E}^s_{{\mathcal M}^\varphi_s,r}({\mathbb R}^n)$
is the 
Musielak-Orlicz Triebel-Lizorkin--type space
of $\dot{F}^{s,\tau}_{\varphi_1,\varphi_2,q}({\mathbb R}^n)$
defined in \cite[Definition 2.1]{YYZ14}.
Let us recall the definition.
\begin{definition}
Let $s \in {\mathbb R}$, $\tau \in [0,\infty)$ and $q \in (0,\infty]$.
Then define the Musielak-Orlicz Triebel-Lizorkin-type space
$\dot{F}^{s,\tau}_{\varphi_1,\varphi_2,q}({\mathbb R}^n)$
as the set of all 
$f \in {\mathcal S}_\infty'({\mathbb R}^n)$
for which the quasi-norm
\[
\|f\|_{\dot{F}^{s,\tau}_{\varphi_1,\varphi_2,q}({\mathbb R}^n)}
\equiv
\sup_{Q \in {\mathcal D}}
\frac{1}{(\|\chi_Q\|_{L^{\varphi_1}})^\tau}
\left\|\left[
\chi_Q\sum_{j=j_Q}^\infty (2^{js}|\tau_j(D)f|)^q
\right]^{\frac1q}\right\|_{L^{\varphi_2}}
\]
is finite.
\end{definition}

In this sense,
the results for ${\mathcal E}^s_{{\mathcal M}^\varphi_s,r}({\mathbb R}^n)$
can be covered by \cite{YYZ14}.
For example,
Proposition \ref{prop:140820-1} can be understood
as the inhomogeneous version of \cite[{Proposition} 2.19]{YYZ14}.
Observe that \cite[Theorem 3.1]{YYZ14}
characterizes Musielak-Orlicz Triebel-Lizorkin--type spaces
by means of the Peetre maximal operator given below.
Our atomic decompsoition results,
Theorem{s} 
\ref{thm:decomposition-1} and \ref{thm:decomposition-2}
correspond to \cite[Theorem 5.1]{YYZ14}.
By using the idea of \cite[Theorem 6.9]{YYZ14}
or \cite{Sawano09-2},
we can prove the pseudo-differential operators 
with symbol in $S^0$ is bounded in 
${\mathcal A}^s_{{\mathcal M}^\varphi_q,r}({\mathbb R}^n)$.

Let us check that Musielak-Orlicz Triebel-Lizorkin--type spaces
come from (one of) generalized Orlicz Morrey spaces.
To the best knowledge of the authors,
there exists three generalized Orlicz-Morrey spaces.
\begin{definition}
\label{def1.1} 
Let $\Phi\in{\mathbb R}^n \times [0,\infty) \to [0,\infty)$ 
and $\varphi:{\mathcal Q} \to [0,\infty)$ be suitable functions.
\begin{enumerate}
\item[(1)] 
For a cube $Q\in{\mathcal{Q}}$ define
the $(\varphi,\Phi)$-average over $Q$ of the measurable function $f$ by 
\begin{equation*}
\|f\|_{(\varphi,\Phi);Q} {\equiv} \inf\left\{\lambda>0\,:\, \frac{\varphi(Q)}{|Q|%
}\int_{Q}\Phi\left(x,\frac{|f(x)|}{\lambda}\right)\,dx\le 1 \right\}. 
\end{equation*}
Define the generalized Orlicz-Morrey space ${\mathcal{L}}^{\varphi,\Phi}({%
\mathbb{R}}^n)$ to be a Banach space equipped with the norm 
\begin{equation*}
\|f\|_{{\mathcal{L}}^{\varphi,\Phi}} {\equiv} \sup_{Q\in{\mathcal{Q}}}
\|f\|_{(\varphi,\Phi);Q}. 
\end{equation*}
\item[(2)] For a cube $Q\in{\mathcal{Q}}$ define the $\Phi$-average over $Q$ of
the measurable function $f$ by 
\begin{equation*}
\|f\|_{\Phi;Q} {\equiv} \inf\left\{\lambda>0\,:\, \frac1{|Q|}\int_{Q}\Phi\left(x,
\frac{|f(x)|}{\lambda}\right)\,dx\le 1 \right\}. 
\end{equation*}
Define the generalized Orlicz-Morrey
space ${\mathcal{M}}_{\varphi,\Phi}({\mathbb{R}}^n)$ to be a Banach space
equipped with the norm 
\begin{equation*}
\|f\|_{{\mathcal{M}}_{\varphi,\Phi}} {\equiv} \sup_{Q\in{\mathcal{Q}}}
\varphi(Q)\|f\|_{\Phi;Q}. 
\end{equation*}
\item[(3)]
Define the generalized Orlicz-Morrey
space ${\mathcal{Z}}_{\varphi,\Phi}({\mathbb{R}}^n)$ to be a Banach space
equipped with the norm 
\begin{equation*}
\|f\|_{{\mathcal{Z}}_{\varphi,\Phi}} {\equiv} \sup_{Q\in{\mathcal{Q}}}
\varphi(Q)\|\chi_Q f\|_{L^\Phi}. 
\end{equation*}
\end{enumerate}
\end{definition}
The spaces 
${\mathcal L}^{\varphi,\Phi}({\mathbb R}^n)$, 
${\mathcal M}_{\varphi,\Phi}({\mathbb R}^n)$
and 
${\mathcal Z}_{\varphi,\Phi}({\mathbb R}^n)$ 
are defined
by Nakai in \cite{Nakai06}
{(with $\Phi$ independent of $x$)},
by Sawano, Sugano and Tanaka in \cite{SST12}
{(with $\Phi$ independent of $x$)}
and
by Deringoz, Guliyev and Samko in \cite[Definition 2.3]{DGS14},
respectively.
According to the examples in \cite{GST15},
we can say that the scales
${\mathcal L}$
and 
${\mathcal M}$
are different
and that
${\mathcal M}$
and 
${\mathcal Z}$
are different.
However, it is not known that
${\mathcal L}$
and 
${\mathcal Z}$
are different.

In Proposition \ref{prop:140820-1},
we rephrased the notion of generalized Triebel-Lizorkin-Morrey spaces
in the language of Triebel-Lizorkin type spaces.
We can do the vice {versa} as the following lemma implies.
\begin{lemma}{\rm \cite[Theorem 4.1]{YYZ14}}
Assume that $\tau$ satisfy
\[
0 \le \tau <\frac{i_1(\delta_2-1)}{q_1I_2\delta_2}.
\]
Then
\[
\|f\|_{\dot{F}^{s,\tau}_{\varphi_1,\varphi_2,q}({\mathbb R}^n)}
\sim
\sup_{Q \in {\mathcal D}}
\frac{1}{(\|\chi_Q\|_{L^{\varphi_1}})^\tau}
\left\|\left[
\chi_Q\sum_{j=-\infty}^\infty (2^{js}|\tau_j(D)f|)^q
\right]^{\frac1q}\right\|_{L^{\varphi_2}}.
\]
\end{lemma}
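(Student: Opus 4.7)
The easy direction, $\|f\|_{\dot{F}^{s,\tau}_{\varphi_1,\varphi_2,q}} \le (\text{RHS})$, is immediate: the sum defining $\|f\|_{\dot{F}^{s,\tau}_{\varphi_1,\varphi_2,q}}$ starts at $j_Q$, while the RHS sums over all $j \in \mathbb{Z}$, so monotonicity of the Musielak--Orlicz quasi-norm gives the bound at once.

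For the reverse direction, I would fix $Q \in \mathcal{D}$ and split
\[
\sum_{j \in \mathbb{Z}}(2^{js}|\tau_j(D)f|)^q
=
\sum_{j \ge j_Q}(2^{js}|\tau_j(D)f|)^q
+
\sum_{j < j_Q}(2^{js}|\tau_j(D)f|)^q.
\]
The high-frequency piece is controlled by the very definition of $\|f\|_{\dot{F}^{s,\tau}_{\varphi_1,\varphi_2,q}}$, so all work goes into the low-frequency piece. For $j < j_Q$, let $Q^{(j)}$ denote the unique dyadic ancestor of $Q$ with $\ell(Q^{(j)}) = 2^{-j}$. Since $\mathcal{F}[\tau_j(D)f]$ is supported in an annulus of scale $2^j$, the Plancherel--Polya--Nikol'skii inequality (Theorem \ref{thm:PPN}) yields, for $x \in Q$ and any sufficiently small $\eta > 0$, a pointwise bound
\[
|\tau_j(D)f(x)| \lesssim \inf_{y \in Q^{(j)}} M^{(\eta)}[\tau_j(D)f](y),
\]
so that $\tau_j(D)f$ is essentially constant on $Q$ at the coarser scale $Q^{(j)}$.

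The heart of the argument is then to convert this pointwise control into a Musielak--Orlicz norm estimate with geometric decay in $j_Q - j$. Using the upper type $I_2$ and the reverse H\"older exponent $\delta_2$ of $\varphi_2$ to transfer the Orlicz norm from $\chi_Q$ to $\chi_{Q^{(j)}}$, the uniformly $A_\infty$ assumption (with exponent $q_1$) to absorb a weight, and finally the lower type $i_1$ of $\varphi_1$ to compare $\|\chi_Q\|_{L^{\varphi_1}}$ with $\|\chi_{Q^{(j)}}\|_{L^{\varphi_1}}$, I expect to obtain an inequality of the form
\[
\frac{\|\chi_Q \cdot 2^{js}|\tau_j(D)f|\|_{L^{\varphi_2}}}{\|\chi_Q\|_{L^{\varphi_1}}^{\tau}}
\lesssim
2^{-(j_Q-j)\alpha}\,\|f\|_{\dot{F}^{s,\tau}_{\varphi_1,\varphi_2,q}},
\]
where the exponent $\alpha > 0$ is exactly $i_1(\delta_2 - 1)/(q_1 I_2 \delta_2) - \tau$, which is positive precisely by the hypothesis on $\tau$. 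Summing the geometric series over $j < j_Q$ (in the $\min(1,q)$-quasi-norm if $q < 1$) and then taking the supremum over $Q$ finishes the proof.

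The main obstacle is the bookkeeping at that final Musielak--Orlicz step: one has to chain the upper/lower type conditions, the reverse H\"older inequality, and the uniform $A_\infty$ condition in exactly the right order to extract the decay exponent $\alpha$, and to see why the stated bound on $\tau$ is the sharp threshold. Once that single decay estimate is established, the rest of the argument is a routine dyadic summation, parallel to the dyadic ancestor argument already used in the proof of Theorem \ref{thm:trace} (e.g.\ the estimate of the term $\mathrm{II}$ via (\ref{eq:150828-3})).
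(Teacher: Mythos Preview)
The paper does not give its own proof of this lemma: it is quoted verbatim from \cite[Theorem 4.1]{YYZ14} and used only as a bridge to identify $\dot{F}^{s,\tau}_{\varphi_1,\varphi_2,q}({\mathbb R}^n)$ with a generalized Orlicz--Morrey scale. So there is nothing in the present paper to compare your proposal against line by line.

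That said, your outline follows the standard and correct strategy for this type of equivalence, and is the same one used in \cite{YYZ14}: the inequality $\|f\|_{\dot{F}^{s,\tau}_{\varphi_1,\varphi_2,q}}\lesssim(\text{RHS})$ is trivial by monotonicity, and for the converse one splits at $j=j_Q$, controls the high frequencies directly from the definition, and for the low frequencies passes from $Q$ to its dyadic ancestors $Q^{(j)}$ via a Plancherel--Polya--Nikol'skii pointwise bound. The decisive step is indeed the Musielak--Orlicz bookkeeping you describe---chaining the upper/lower type of $\varphi_1,\varphi_2$, the uniform $A_\infty$ exponent $q_1$, and the reverse H\"older exponent $\delta_2$---to produce a decay factor $2^{-(j_Q-j)\alpha}$ with $\alpha=\frac{i_1(\delta_2-1)}{q_1I_2\delta_2}-\tau>0$. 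Your identification of this exponent and the reason the hypothesis on $\tau$ enters are both correct. The analogy you draw with the estimate of the term ${\rm II}$ in Theorem~\ref{thm:trace} is apt: both are dyadic-ancestor geometric summations, the only difference being that here the decay comes from the Musielak--Orlicz structural constants rather than from the growth condition (\ref{eq:150828-3}) on $\varphi^*$.
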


Therefore, by setting
\[
\varphi(x,r) \equiv \frac{1}{(\|\chi_{Q(x,r)}\|_{L^{\varphi_1}})^\tau}, \quad
\Phi(x,r) \equiv \varphi_2(x,r),
\]
we can say that Musielak-Orlicz Triebel-Lizorkin--type space
$\dot{F}^{s,\tau}_{\varphi_1,\varphi_2,q}({\mathbb R}^n)$
come from the generalized Orlicz Morrey space
${\mathcal Z}_{\varphi,\Phi}({\mathbb R}^n)$.

\subsection{Hausdorff Besov-type spaces and Hausdorff Triebel-Lizorkin type spaces}

Let us recall the definition of Besov-Hausdorff spaces
and Triebel-Lizorkin-Hausdorff spaces,
which are the predual spaces of 
Besov-type spaces and Triebel-Lizorkin type spaces.
Let $E \subset {\mathbb R}^n$ and $d\in(0,\,n]$.
The {\it $d$-dimensional Hausdorff capacity of $E$} is defined by
\begin{equation*}
H^d(E)\equiv \inf \left\{\sum_jr_j^d:\,E\subset
\bigcup_jB(x_j,\,r_j)\right\},
\end{equation*} 
where the infimum is taken over all covers $\{B(x_j,\,r_j)\}_{j=1}^\infty$ of
$E$ by countable families of open balls.
It is well known that $H^d$
is monotone, countably subadditive and vanishes on empty sets.
Moreover, the notion of $H^d$ can be extended to $d=0$. In this
case, $H^0$ has the property that for all nonempty sets 
$E\subset{\mathbb R}^n$,
$H^0(E)\ge1$, and $H^0(E)=1$ if and only if $E$ is bounded.
\begin{definition} [Choquet integral]
For any function $f: {\mathbb R}^n \mapsto [0,\,\infty]$, the \textit{Choquet
integral of $f$ with respect to $H^d$} is defined by
$$\int_{{\mathbb R}^n}f\,d H^d\equiv \int_0^{\infty}
H^d(\{x\in{\mathbb R}^n:\ f(x)>\lambda\})\,d\lambda.$$ This functional is not
sublinear, so sometimes we need to use an equivalent integral with
respect to the $d$-dimensional dyadic Hausdorff capacity
$\widetilde{H}^d$, which is sublinear.
\end{definition}
To define the spaces, we also need the nontangential maximal operator.
\begin{definition}[Nontangential maximal operator]
Let 
$
{\mathbb R}_+^{n+1}\equiv {\mathbb R}^n \times (0,\infty).
$ 
For any measurable function $\omega$ 
on ${\mathbb R}_+^{n+1}$ and $x\in{\mathbb R}^n$, we define its
\textit{nontangential maximal function} $N\omega (x)$ 
by setting
$\displaystyle
N\omega(x)\equiv \sup_{|y-x|<t} |\,\omega(y,t)|.
$
\end{definition}

\begin{definition}
Let $p\in(1,\infty)$ and $s \in {\mathbb R}$.
\begin{enumerate}
\item
If $q\in[1,\infty)$ and $\tau\in
{
[0, \frac{1}{ \max{(p,q)}'}]}$,
{\it the Besov-Hausdorff space
$B\dot{H}_{p,q}^{s,\tau}({\mathbb R}^n)$} is the set of all
$f \in {\mathcal S}_\infty'({\mathbb R}^n)$ such that
$$
\| \, f \, \|_{B\dot{H}_{p,q}^{s,\tau}({\mathbb R}^n)} \equiv \inf_\omega
\left\{ \sum_{j \in {\mathbb Z}}2^{jsq} 
\left\| \tau_j(D)f \cdot [\omega(\cdot,2^{-j})]^{-1} \right\|^q_{L^p({\mathbb R}^n)}
\right\}^\frac1q
$$
is finite,
where $\omega$ runs over all nonnegative Borel measurable functions
on ${\mathbb R}_+^{n+1}$ such that
\begin{equation}
\label{1.1} 
\int_{{\mathbb R}^n} [N\omega(x)]^{{\max{(p,q)}'}} \,dH^{{n\tau\max{(p, q)}'}}(x) \le 1
\end{equation}
and with the restriction that for any $j\in{\mathbb Z}$, 
$\omega(\cdot,2^{-j})$ is allowed to vanish only 
where $\tau_j(D) f$ vanishes.
\item
If $q\in(1,\infty)$ and 
$\displaystyle \tau\in\left[0,\frac{1}{{\max{(p, q)}'}}\right]$,
{\it the Triebel-Lizorkin-Hausdorff
space $F\dot{H}_{p,q}^{s,\tau}({\mathbb R}^n)$} is 
the set of
all $f \in {\mathcal S}_\infty'({\mathbb R}^n)$ such that
\begin{align*}
\| \, f \, \|_{F\dot{H}_{p,q}^{s,\tau}({\mathbb R}^n)}
&\equiv
\inf_\omega
\left\|
\left\{
\sum_{j \in {\mathbb Z}}2^{jsq}
\left|\tau_j(D)f \cdot [\omega(\cdot,2^{-j})]^{-1}\right|^q
\right\}^\frac1q\right\|_{L^p({\mathbb R}^n)}
\end{align*}
is finite,
where $\omega$ runs over all nonnegative Borel measurable functions
on ${\mathbb R}_+^{n+1}$ such that $\omega$ 
satisfies \eqref{1.1} and with
the restriction that 
for any $j\in{\mathbb Z}$, $\omega(\cdot, 2^{-j})$ is
allowed to vanish only where $\tau_j(D) f$ vanishes.
\end{enumerate}
\end{definition}

Yang and Yuan proved that these spaces
are realized as the dual space of a subspace of
$A^{s,\rho}_{pq}({\mathbb R}^n)$;
see \cite{YaYu11}.
{
It is proved in \cite{ZYY14-1} that
}
$FH^{s,\rho}_{p,q}({\mathbb R}^n)$ covers the predual space
of Morrey spaces.

\subsection{Function spaces in the metric measure settings}

These function spaces carry over to the weighted settings
or to the metric measure setting.
For $x\in{\mathbb R}^n$ and $r>0$ or more generally
for $x$ in the metric measure space $(X,d)$ and $r>0$,
we write 
\[
B(x,\,r)\equiv \{y\in X\,:\,{d(x,y)}<r\}.
\] 
Based on the Morrey space defined in \cite{SaTa05},
Sawano and Tanaka considered
Besov-Morrey spaces and Triebel-Lizorkin-Morrey spaces
in ${\mathbb R}^n$ equipped
with a Radon measure satisfying 
\begin{equation}\label{growth}
\mu(B(x,r)) \le Cr^D
\end{equation}
for some $0<D \le n$; see \cite{SaTa09-1}.
Probably, 
{
the theory can be generalized to
}
the setting
of geometrically doubling measure space
equipped with a Radon measure satisfying $(\ref{growth})$.
See \cite{FLYY15,LiYa14} for some 
{
results in
}
such setting.
In particular, in \cite{FLYY15},
the authors developed the theory of $H^p({\mathbb R}^n)$ spaces 
with $p \in (0,1)$
with nondoubling measures, which was supposed to be difficult.
Izuki, Sawano and Tanaka investigated
the weighted spaces with weights in $A^\infty_{\rm loc}$ \cite{IST10}.
These types of function spaces will {shed} light on 
because ${\mathcal S}({\mathbb R}^n)$ are not dense in them.

\section*{Acknowledgement}

The authors are thankful to many people,
especially {the} group in Beijing {Normal Nniversity,}
for the stimulating discussion on the proof of Theorem \ref{thm:150301-1}.
First, the third author is thankful to Professor Hidemitsu Wadade
for his pointing out that the proof in \cite{Sawano-book} is not correct.
{Professor Wen Yuan in Beijing Normal University
pointed out that the manuscript for the proof of Theorem \ref{thm:150301-1}
in the manuscript from 2011
}
was not correct.
The authors are thankful to Dr. L. Laura for a discussion
of topological vector spaces {concerning} \cite{GoLo10} and Theorem \ref{thm:150312-22}.
The authors are grateful to Dr. Yiyu Liang and Professor Dachun Yang
for {the} discussion {on}
Remark \ref{rem:150228-1} and Proposition \ref{prop:150228-1}.
The authors are thankful to Dr. Ziji He in Beijing Normal University 
for his kind comment on the set ${\mathcal K}$
{concerning} (\ref{eq:150311-21}).
Finally, the third author is thankful to Professor Akira Kaneko
for this suggestion in Proposition \ref{prop:150228-1}.

This work is done during the stay of the third author
in Beijing Normal University.
The third author is thankful to Professor Dachun Yang
and his group for {their} hospitality.
{
Finally, we are thankful to anonymous reviewers
for their careful reading of {this} long paper.}

\end{document}